\documentclass[11pt, draft,leqno]{article}
\usepackage{amssymb,eucal,latexsym}
\usepackage{amsmath,amsthm,latexsym,amscd,amsbsy,fleqn,graphicx}
\usepackage{xy}
\xyoption{all}

\theoremstyle{plain}
\newtheorem{theorem}{Theorem}[section]
\newtheorem{lemma}[theorem]{Lemma}

\newtheorem{corollary}[theorem]{Corollary}

\newtheorem{definition}[theorem]{Definition}

\newtheorem{remark}[theorem]{Remark}

\theoremstyle{definition}

\numberwithin{equation}{section}
\parskip=\medskipamount
\setlength{\parindent}{0pt}

%opening
\title{Point-free theories of space and time}

\author{Dimiter Vakarelov\footnote{The author is sponsored by Contract DN02/15/19.12.2016
with Bulgarian NSF. Project title: Space, Time and Modality:
Relational, Algebraic and Topological Models.}
\\
 Department of Mathematical logic and applications,\\
 Faculty of mathematics and informatics,\\
 Sofia University "St. Kliment Ohridski", \\
 Sofia, Bulgaria\\
$\langle$dvak@fmi.uni-sofia.bg$\rangle$}

\begin{document}

\maketitle

\begin{quote}
\noindent\small\textsc{Abstract.}
The paper is in the field of Region Based Theory of Space and Time  (RBTST). This is an extension of the Region Based Theory of Space (RBTS) in which we incorporate also a time. RBTS is a kind of point-free theory of space based on the notion
of \emph{region}. Another name of RBTS is \emph{mereotopology}, because it combines notions and methods of mereology and topology  \cite{Simons}. The origin of this theory  goes back to   some ideas of Whitehead, De Laguna and  Tarski to build the theory of space without the use of the notion of point.
More information on RBTS, mereotopology and their applications can be found  in \cite{Vak2007,BD,HG,Pratt}. The notion of \emph{contact algebra } \cite{DiVak2006}  presents an  algebraic  formulation  of RBTS and in fact gives
 axiomatizations of the Boolean algebras of regular closed sets of various classes of topological spaces
 with an additional relation of \emph{contact}.
  \emph{Dynamic contact algebra } (DCA)  is introduced by the present author in  \cite{Vak2010,Vak2012,Vak2014} and can be considered as an algebraic formulation of RBTST. It is a generalization of
  contact algebra studying regions changing in time and   presents a formal explication of Whitehead's ideas of integrated point-free theory of space and time.
   DCA is an abstraction of  a special \emph{dynamic model of space}, called also  \emph{snapshot} or \emph{cinematographic} model.  In the present  paper we introduce a simplified version of DCA with the aim  to be used as a representative example of  DCA and to develop  for this example not only the snapshot models but also topological models and  the expected  topological duality theory, generalizing in a  certain sense the well known Stone  duality for Boolean algebras.  Due to these models DCA can be called also  \emph{dynamic mereotopology}. Abstract topological models of DCAs  present a new view on the nature of space and time  and show what happens if we are abstracting from their metric properties.
\end{quote}

%  Keywords, AMS classification, if appropriate
\noindent
\begin{quote}
{\bf Keywords:}
Boolean algebra, clan, cluster, (pre)-contact relation,
\newline dynamic mereotopology, (Stone-type)-duality, regular-closed set, space-time, temporal relation, ultrafilter.
\end{quote}
\noindent
\begin{quote}
%{\bf AMS classification (2010):}
%	2010 Mathematics Subject Classification:
\textbf{MSC}:  03B44,  03G05, 08A02,  18A23,  54D10, 54D30, 54H10.
\end{quote}

%%%%%%%%%%%%%%%%%%%%%%%%%%%%%%%%%%%%%%

%  Now the body of the paper.  This part will probably
%  simply transfer from article style with minimal changes.
\section*{Preface}\label{Section preface}

The present work can be considered as a continuation of the essay
`Region-Based Theory of Space: Algebras of Regions, Representation
Theory and Logics' (\cite{Vak2007}). The essay  contains a short
history of the Region-Based Theory of Space (RBTS) and a survey of
the corresponding  literature (till 2006), an exposition of the
mathematical apparatus of this approach based on contact algebras
and a description of some propositional spatial logics related to
RBTS. In this approach `region-based' means that the notion of
region, taken  as an abstraction of material or geometric body, is
considered as one of the base notions of the theory. The theory is
also `point-free' in a sense that the typical geometric notion of
`point' is not considered as a primitive (undefinable) notion of
the theory and should be defined in a later stage of the theory.
 Later on we  consider RBTS and `point-free theory of
space' as synonyms.

 The
motivation of the point-free  approach to the theory of space was
formulated for the first time by Alfred North Whitehead in 1915 in
his lecture \emph{Space, Time, and Relativity} (published as chapter VIII
of \cite{W1917}). In the same lecture Whitehead also claims that
the same approach should also be applied to the theory of time,
and, motivated by the relativity theory,  that the theory of time
should not be developed  separately from the theory of space and
they both should be  developed in one integrated point-free theory
of space and time. In this context `point-free' means that neither
space points, nor time points (instances of time, moments) are
considered as primitive notions of the theory.

The present essay is devoted mainly to the \emph{point-free
theories of space and time} and so the title. Point-free theories
of space and time are also `region-based' because they consider
changing or moving regions. So, we consider also another
equivalent name: Region-Based  Theory of Space and Time - RBTST.

The text of the paper  is structured as follows. \emph{Section 1}
is the \emph{Introduction}. We start with some discussion about
point free theory of space and time and present with more details the
discussions about the nature of space and time between Leibnitz and
Newton, Leibnitz' `relational' view on space and time and Newton's
`absolute space' and `absolute time'. We consider the Whitehead's
viewpoint on this subject and his motivations why the theory of
space and time should be `point-free' and `region-based'. We
describe shortly Whitehead's contributions to this idea and some
other sources and finally we present our concrete strategy of how
to build an integrated point-free theory of space and time. In
\emph{Section 2} we summarize some facts of \emph{contact
algebras} and \emph{precontact algebras} taken from
\cite{DiVak2005,Vak2007,DuVak2007}
 to be used later on. In \emph{Section 3} we
 introduce a concrete point-based model of  \emph{dynamic
 space}
 called \emph{snapshot model} or \emph{cinematographic model}. This model is used as a source
  of motivated  axioms for a various versions of the abstract notion of dynamic contact algebra.
  \emph{Section 4} is devoted to the abstract notion of one special version of dynamic contact algebra (DCA), considered as a representative
 example of DCA. The main result in this section is the representation theorem of DCA by means of snapshot models. In \emph{Section 5} we introduce
  topological point-based models called \emph{dynamic mereotopological spaces} (DMS) and
  develop the intended topological representation theory.
  \emph{Section 6} is devoted to the expected  topological
  duality theory for DCAs and DMSes, generalizing the famous Stone Duality Theorem for Boolean
  algebras.
  \emph{Section 7} is for some conclusions, discussions
  and open problems. In a separate
  Appendix we present a very
  short survey of results on RBTS obtained
  after 2007 making in this way a more close connection with the present
  essay \cite{Vak2007}.

We consider \cite{Sikorski}, \cite{E} and \cite{Category}  as
standard reference books correspondingly for Boolean algebras,
topology and category theory.

\section{Introduction }\label{Section Introduction}
%%%%%%%%%%%%%%%%%%%%%%%%%%%%%%%%%%%%%%%%%%%%%%%%%%%%%%%%%%%%%%%%%%%%%%%%%%%%%%%%%

\subsection{Point-based and point-free theories of space and time
}\label{Section Point-based and point-free theories of space and
time}
 In mathematics the theory of space is identified with  \emph{geometry}
  which  includes various geometrical  disciplines. Well
known  example is the classical Euclidean geometry. Typical for
all axiomatically presented geometries is that they follow the
standard Euclidean approach to consider the notion of `point'   as
one of the basic undefinable notions of the theory and similarly
for the notions `strait line' and `plain'. Sometimes strait lines
and plains are considered as certain sets of points satisfying
some additional axioms, so, point in geometry is always a
primitive notion. But neither points, nor strait lines and plains
have a separate existence  in reality, so the truths for these
notions do not correspond to some observational truths for the
real things. In a sense `points', `straight lines' and `plains'
are some kind of \emph{suitable fictions} and it is not good to
put fictions on the base of the so respectable mathematical theory
as geometry, considered as a certain theory of reality. This issue gives rise to serious discussions, which we will comment on below.

 So, what
is a point-free theory of space? Contemporary example is the
point-free topology \cite{Point-free topology}. Standardly
topology is considered as an abstract theory of space formalizing
the notion of continuity and is considered as a set of points with
some distinguished subsets called open sets. Instead, point-free
topology is based on lattice theory considering the members of the
lattice representing open sets. In general by a point-free theory
of space we mean an axiomatic theory of space in which the notion of point
is not assumed as a primitive notion. For a given (point-based)
geometry, for instance Euclidean geometry, its point-free
reformulation means it to be reaxiomatized equivalently on a
point-free basis of primitive notions. This means that points are
nor disregarded at all  but are given by certain definitions in
the new axiomatization. Among the first authors who criticized the
standard Euclidean point-based approach to the theory of space and
appealing to a point-free bases for the theory  I can mention
Whitehead \cite{W1917,W1919,W1920,W1925, W1929}, De Laguna
\cite{deL1,deL2,deL3} and Tarski \cite{Tarski}.

According to time we can say that there is no  specific  pure
mathematical area like geometry, which is devoted exclusively to
to the theory of time.  Only some investigations on temporal logic
(TL) (see, for instance, \cite{Benthem}) introduced the so called
\emph{time structures} devoted to a  separate study of time. Time
structures are systems  in the form $(T, \prec)$, where $T$ is a
nonempty set whose elements are called `time points' or `moments
of time' and $\prec$ is a binary relation between time points
called `before-after' relation, reading: $i\prec j$ - $i$ is
before $j$, or equivalently $j$ is after $i$ (other relations
between time points are also possible). Such structures are
studied to be used
 as a semantics of TL. The before-after relation may satisfy
various sets of some meaningful conditions which fact makes
possible to have various different time structures and hence
different TL systems. If, for instance, $T$ is the set of real
numbers and $\prec$ is the strong inequality $<$, then $(T,
\prec)$ is called `real time structure', and similarly for
`rational' or `integer (discrete) time structure'. Thus, by
definition all temporal structures of the above kind are
point-based. But moments of time, like space points, also are some
abstract fictions without a separate existence  in reality. So
the problem to avoid time points in TL also exists. And indeed
there are TL systems with a more realistic semantics based on
\emph{time intervals} and some relations between them according to
their possible positions to each other. However, the intuition of
time intervals and their interrelations is based on their
representation as ordered pairs of time points $(x,y)$ such that
$x\prec y$ and $x\not=y$, and $x,y$ taken from some linearly
ordered time structure (for instance real numbers). So, time
intervals and their interrelations again are reduced to time
points. There is also a point of view to consider interval
structures as intuitively more clear and to extract from their
structure the notion of time point and a kind of before-after
relation. But time intervals are also `suitable fictions',
abstract tings, so the
above criticism also holds.

Both time and space are central notions in physics, but physics
takes his mathematical apparatus from  mathematics (unless we can
treat mathematical physics just as a part of mathematics).
Newtonian physics adopts, for instance, Newtonian notions of
\emph{absolute space} and \emph{absolute time} considered them
independent from the material things, independent from each other
and having a separate existence in reality (see for this view, for
instance \cite{Evangelidis,Simultaneity}). In relativistic physics
space and time are not independent and are considered as one
spacetime system. In special relativity, this is the Minkowski
spacetime in which points are called events and are identified
with tuples of real numbers $(x_{1},x_{2},x_{3},x_{4})$ where
$x_{1},x_{2},x_{3}$ are meant as space coordinates of the event
and $x_{4}$ is meant as its time coordinate. So in
 Minkowski spacetime time is the fourth coordinate, which makes the
 system to be four dimensional with 3 spatial dimensions and one time dimension.
 Minkowski spacetime  differs from the  4-dimensional Euclidean space
because it has a different metrics convenient for describing
special relativity in which gravitation is not considered.

An axiomatic presentation  of Minkowskian spacetime geometry is
given by A. A. Robb in \cite{Robb}. Robb's system has only two
primitive notions: `instant' intuitively meant as a spacetime
point   and the   `before-after' relation between spacetime points
interpreted intuitively as a \emph{causal ordering} of things.
Robb named his relation `after' and its converse `before' and
presented for it an appealing  illustration   by means of the
Euclidean conic model of 3-dimensional Minkowski spacetime, which
motivated him to call this relation a  `conic order'. Because `after' is a
temporal relation and space features (as well as all other notions
of the system) are definable by it, this fact motivates Robb to
state that time is more fundamental than space and to call his
system `geometry of time and space' putting time on the first
place. Probably this shows in a certain sense that both time and
space are based on a more deep concept of causality. Spacetime
systems based on before-after relation interpreted as a causality
relation are called  \emph{causality theories of
spacetime} ( see, for instance \cite{Winnie}).

A readable axiomatic treatment of Minkowski spacetime and some
related spacetimes based on a more natural and classically
oriented basis of primitive concepts is given by R. Goldblatt in
\cite{G1987}. Modal logics with a relational semantics based on
some versions of Minkowski spacetime relation `after' are also
studied --- see Goldblatt \cite{G1980} and Shehtmann
\cite{Shehtmann}.

General relativity theory is a generalization of special
relativity by assuming the effects of gravitation.
 An intensive research  on axiomatic foundations  of
relativity theories is initiated  by a Hungarian group of
logicians organized by I. Nemeti and H. Andreka \cite{Andreka}.
But, let us note again, both Newtonian and relativistic spacetime
theories are not point-free and the problem of their point-free
reformulation is still open (the situation in quantum physics is
still unclear).

Spacetime systems in which space and time are considered together
 like in relativity theory are used in applied mathematics for  describing
certain systems of dynamically  changing spatial objects. Such
spacetime systems  are combinations of some spatial structure
(geometry) and some temporal structure (theory of time). For one
such construction of concrete spacetime system see, for instance,
\cite{KHWZ}. It was based on the so called \emph{ snapshot
construction} and it is natural to be named \emph{snapshot
spacetime}. As a rule such spatio-temporal systems are also
point-based, so their point-free reaxiomatization is an open problem.
Later on we will discuss such systems with more details and will
consider them as a starting point for various versions of an
integrated point-free theory of space and time.

%%%%%%%%%%%%%%%%%%%%%%%%%%%%%%%%%%%%%%%%%%%%%%%%%%%%%%%%%%%%%%%%%%%%%%%%%%
\subsection{Relational theory of space and time: Newton,
Leibniz and  Whitehead}\label{Section "Relational
theory of space and time"}
%%%%%%%%%%%%%%%%%%%%%%%%%%%%%%%%%%%%%%%%%%%%%%%%%%%%%%%%%%%%%%%%%%%%%%%%%%%%%%%

 The question of whether points of
space and time have to be considered as real things, raises hot
philosophical discussions and puts the  more serious question
whether space and time itself   are  also  `suitable fictions'.
A typical example
 is the discussion between Leibnitz and Newton about the nature of
space and time. Leibnitz' position is known now as the `relational
view of space and time': space and time are mathematical
fictions and the tings in reality are connected by some spacetime
relations and the mathematical theories of space and time just
describe the properties of these relations. Space expresses the
coexistence of things, while time expresses  an order of
successive things. Newton's position advocates the view of
`absolute space' and `absolute time' discussed in the previous
section (for more details about the discussion between Leibniz and
Newton see, for instance, \cite{Evangelidis,Simultaneity}).

 At the beginning of 20 Century probably the first who adopted in
 some form Leibnitz'
   relational view of space and
 time and formulated the problem of its correct mathematical
 reinterpretation as a point-free theory of space and time
  was Alfred North Whitehead.

  Whitehead is well known among logicians as a
co-author with Bernard Russell in their famous book Pricipia
Mathematica, published in three volumes in 1910-1913 and dedicated
to the foundation of mathematics \cite{Pincipia}. It is said in
the preface of volume III of the book that geometry is reserved
for the final volume IV. But probably due to some disagreements
between the authors about the nature of space (and probably of
time), volume IV had not been written.

The best articulation of the original Whitehead's view about space
and time  is given in the following quote (pages 194,195 of
\cite{W1917}) of Whitehead's lecture
\emph{Space, Time, and Reality}:

 \begin{quote}{\small ``...We may
conceive of the points of space as self-subsistent entities which
have the indefinable relation of being occupied by the ultimate
stuff (matter, I will call it) which is there. Thus, to say that
the sun is there (wherever it is) is to affirm the relation of
occupation between the set of positive and negative electrons
which we call the sun and a certain set of points, the points
having an existence essentially independent of the sun. This is
the absolute theory of space. The absolute theory is not popular
just now, but it has very respectable authority on its side
Newton, for one so treat it tenderly. The other theory is
associated with Leibnitz.

Our spare concepts are concepts of relations between things in
space. Thus there is no such entity as a self-subsistent point. A
point is merely the name for some peculiarity of the relations
between the matter which is, in common language, said to be in
space.

It follows from the relativity theory that a point should be
definable in terms of the relations between material things. So
far as I am aware, this outcome of the theory has escaped the
notice of mathematicians, who have invariably assumed the point as
the ultimate starting ground of their reasoning. Many years ago I
explained some types of ways in which we might achieve such a
definition, and more recently have added some others. Similar
explanations apply to time. Before the theories of space and time
have been carried to a satisfactory conclusion on the relational
basis, a long and careful scrutiny of the definitions of points of
space and instants of time will have to be undertaken, and many
ways of effecting these definitions will have to be tried and
compared. This is an unwritten chapter of mathematics, in much the
same state as was the theory of parallels in the eighteenth
century.''}
\end{quote}

It can be concluded from this quote that Whitehead accepted
Leibnitz' `relational theory of space and time' in a more relaxed
form: we have to build the theory of space staring from more
realistic primitive notions avoiding points, lines and plains and
introducing them by suitable definitions. From his other writings,
for instance from his main philosophical book Process and Reality
\cite{W1929} (which we will discuss with more details after words)
such more realistic notions are regions as abstractions of
material bodies and some natural relations between them. In
contemporary terminology the above quote is nothing but a
 program for building of a point-free theory of space, and also for building
of an integrated point-free theory of space and time as it is
considered in relativity theory. From the phrase

\begin{quote} ``This is an unwritten chapter of mathematics, in much the
same state as was the theory of parallels in the eighteenth
century'' \end{quote}

\noindent we may conclude that Whitehead considered this as a
difficult and a serious problem. This problem has two forms,
first, concerning only space, and second, concerning both space
and time taken together. Since geometry as a theory of space
exists as a branch of mathematics separately from the theory of
time, this is the problem to build the point-free theory of space.
And since the theory of time appeared mostly in mathematical
physics  as an integrated theory of space and time  - this is just
the  related problem to build  an integrated point-free theory of
space and time.

%%%%%%%%%%%%%%%%%%%%%%%%%%%%%%%%%%%%%%%%%%%%%%%%%%%%%%%%%%%%%%%%%%%
\subsection{Whitehead's contribution and other roots of
point-free \newline theories of space and time}\label{Section
Whitehead's contribution and other roots}
%%%%%%%%%%%%%%%%%%%%%%%%%%%%%%%%%%%%%%%%%%%%%%%%%%%%%%%%%%%%%%%%%5

In the lecture \emph{The Anatomy of Some Scientific Ideas } (Chapter VII
in the same book cited above \cite{W1919}) Whitehead describes,
among others, how such a `point-free theory' should be build.
First he considers as a base notion the notion of `event' a
feature existing in space and
 in time. Second, the theory should be based on the theory
of `whole and a part' (named  by other authors mereology - see, for instance
\cite{Simons} and more recently \cite{Petruz}) and definitions of the `points of time' and `points
of space' to be done  by his `principle of convergence', renamed
in his later publications  by `the method of extensive
abstraction'.

An attempt to build such a theory is given in the Whitehead's
books \cite{W1919} and \cite{W1920}. This attempt was
criticized from philosophical and from methodological points of
view by De Laguna in the papers \cite{deL1,deL2,deL3}, where he
presented his own approach for point-free theory of space based on
mereology.
 De Laguna's system  has   the primitives
`solid' as an abstraction of physical body and a ternary relation
between solids named `can connect'. Intuitively the solids $a$,
$b$ and $c$ are in the relation `can connect' if $a$ can be
`moved' so that `to connect' $b$ and $c$. Here `to connect' means
to touch or to overlap. De Laguna showed how to define points,
lines and surfaces using a modification of Whitehead's method of
extensive abstraction. We will not comment  De Laguna's critical
remarks, but it have to be mentioned that Whitehead considered
them seriously and changed radically his system, published in
Process and Reality (\textbf{P$\&$R}) \cite{W1929} (see page 440 of \textbf{P$\&$R} \cite{W1929}
where Whitehead correctly gives credits to  De Laguna's criticism and comments how to avoid the defects of his approach to the definition of point presented in \cite{W1919} and \cite{W1920}).
 Instead of De Laguna's notion of
`solid' Whitehead uses the term `region' with the same intuitive
meaning, and instead of the De Laguna's ternary relation `can
connect' he used the simplified binary relation of connection (called in the recent literature contact). The main idea of Whitehead's new approach is
described in Part IV of the book - `The theory of extension' and
the mathematical details are presented in Chapters II and III of
\textbf{P$\&$R}. The exposition is almost mathematical and
consists of a series of enumerated definitions and assumptions
without any attempt `to reduce these enumerated characteristics to
a logical minimum from which the remainder can be deduced by
strict deduction' ( p. 449). By means of the connection relation,
Whitehead  defines in Chap. II some other relations between
regions: part-of, overlap, external connection, and tangential
inclusion. Chapter II ends with the definition of a point ( Def.
16). Chapter III contains all preliminary formal definitions and
assumptions needed in the definitions of a straight line  (Def. 6)
and definition of  a plane ( Def. 8) as certain sets of regions
using the method of extensive abstraction. Because the text is
sketchy   these two chapters of \textbf{P$\&$R} have to be considered
as an \emph{extended program} containing all needed details in
order to develop Whitehead's new theory of space in a strictly
mathematical manner. Namely, this is  what is called now the root
of `region-based theory of space' (RBTS), or equivalently -
point-free theory of space. Another root is, of course, De
Laguna's   papers \cite{deL1,deL2,deL3}, but still De Laguna's
system has no  precise contemporary interpretation with adequate
models and representation theory. As  another root it have to be
mentioned Tarski \cite{Tarski}, who developed a point-free version
of Euclidean geometry called `Foundations of the geometry of
solids'. It is based on mereology extended with the primitive
notion of ball which is used in the definition of point. Also we
owe to Tarski the reinterpretation of mereology (the
mereological system of Lesniewski ) to the notion of Boolean
algebra (BA) (namely complete BA with deleted zero) and also the
good topological model of complete   BA as algebra of regular open
(or regular closed) subsets of a topological space. In an algebra
of regular closed sets solids (or regions) are just the regular
closed sets and the relation of `contact' has a very natural
definition - having a common point. These facts can be considered
as the roots of the first definitions of the notion of contact
algebra (CA) as an extension of BA with the contact relation (for
the history of CA see \cite{Vak2007}). Now the  version of CA from
\cite{DiVak2006} is commonly considered as the simplest point-free
formulation of RBTS with standard models the algebras of regular
closed  sets of topological spaces. This fact motivates some
authors to use another name of RBTS - \emph{mereotopology} - a
combination of mereology with  topology: the BA represents
mereologycal component and the contact relation which has a
topological nature represents the topological component of the system.

Let us mention that RBTS as a point-free approach to the theory of
space can be considered now as a well established branch of
mathematics with applications in computer science  which is open
for further research. For the results of RBTS till 2006 see our
essay \cite{Vak2007} as well as the survey papers \cite{BD,Pratt},
and \cite{HG} which contains also information of applications of
RBTS in computer science. Some possibly incomplete  information on
the further development of RBTS and some related topics after 2007
is given in the Appendix of this paper.

Let us return to the integrated point-free theory of space and
time. As we have mentioned spacetime systems from mathematical
physics are not point-free and the Whitehead's early program
formulated in his lecture \emph{Space, Time, and Relativity} can be
considered as a kind of program or a wish to build such a theory.
Whitehead's view on the nature of time developed in his books
\cite{W1917,W1919,W1925,W1929} is mainly philosophical and changed
over years. For instance  in \cite{W1917,W1919} he uses a more
common time terminology: instances of time, moments, but in
\cite{W1925,W1929} he renamed his theory of time as `epochal
theory of time' (ETT) considering \emph{epochs} as certain atomic
instances of time. Probably the reason for this new terminology is
that the Whitehead's notion of epoch is one of the central notions
of his later theory of time. Whitehead did not propose how ETT can
be formalized and integrated with the point-free theory of space.
Unlike his quite  detailed program for building point-free
mathematical theory of space, presented in \textbf{P$\&$R}
Whitehead did not describe analogous program for his ETT. He
introduced and analyzed many notions related to ETT but mainly in
an informal way using his own quite complicated  philosophical terminology which
makes extremely difficult to obtain clear mathematical theory
corresponding to ETT.

An attempt to build a theory incorporating both space and time was recently
made in \cite{DW2,DW3}, but the system is not point-free with
respect to time: time points are presented directly in the system.

%%%%%%%%%%%%%%%%%%%%%%%%%%%%%%%%%%%%%%%%%%%%%%%%%%%%%%%%%%%%%%%%
\subsection{The first attempts in building of  an integrated
point-free \newline  theories of space and time and   a possible strategy
for such a task}\label{Section The first attempts in
building}

 Having  in mind the situation about
building an integrated point-free theory of space and time
discussed at the end of the previous section, the present author
decided to make the first steps in building such a theory (or
examples of such theories). The results till now appeared in the
series of papers started from 2010:
\cite{Vak2010,Vak2012,Vak2014}, and (jointly with  P. Dimitrov) in
\cite{Plamen}. Because the notions of space and time are so rich,
our aim in this project was to start with a simple system
describing in a point free manner (some aspects of) both space and
time and their mutual relationships, and then to refine the system
step by step removing some defects and extending its expressive
power. First we had to find a strategy how to build such systems
and what requirements they should satisfy in order to treat them
as point-free axiomatic systems of space and time.

 We found that the following requirements will be useful.

1. \textbf{In order to follow Whitehead style the system should be
region-based and should be based on mereology}. Regions will
correspond to changing or moving objects and following Tarski the
regions should form a Boolean algebra.

2. \textbf{The regions should be equipped with a number of basic
spatio-temporal relations with well motivated meaning.} The
relations are called basic because they have to be used in the
definitions of some other meaningful relations.  The meaning of the
basic relations should be determined by an appropriate set of
axioms. What does this mean? - see the next two requirements:

3. \textbf{The system should have a meaningful standard adequate
set-theore-
tical point-based spacetime model  describing the
change of regions and the meaning of the spatio-temporal
relations}. `Meaningfull' means that the model is in accordance
with our point-based spatial and temporal intuition which we
obtained during our basic education in mathematics and physics.
`Standard' means that we consider that this model give the
intended point-based intuition of the basic relations.

4. \textbf{`Adequate' in 3. means that  we can extract from the
system in a canonical way a standard model, called the canonical
model of the system,   and to define an  isomorphism  mapping of the
system into its canonical model}. Here `to extract'
 means
to define both space points and time points within the system and
also all other ingredients needed to construct the model. `To
construct the model' means to use only the axioms of the system
and standard set-theoretical constructions. So the theory should
have the form of ordinary axiomatical mathematical theory.

5. \textbf{The main problem in realization of 2 and 4 is how to
find the needed axioms.} This is the most difficult part of the
realization of the program. One way, which we follow, is to start
with the standard model and to proof for it enough statements
considered further as possible axioms. But which true sentences to
accept as axioms? Practically this is the following informal task:
make an initial hypothesis of the possible steps of the
construction of the canonical model and look for the  axioms
which are needed to prove the correctness of the given step. If the
required axioms  are not in the list, see if  they are true
in the standard model and add them to the list. This is a long
experimental mathematical procedure which is not always
successful, and, as Whitehead commented in the quote from section
1.2, `many attempts have to be done in order to obtain a satisfying
result'.

If we succeed in the realization of the above five  requirements
then obviously the resulting system will be point-free, the
standard models indeed will be models of the system and the
isomorphism of the system into its canonical model will show that
the choice of the axioms is successful and that the standard
point-based model and the point-free axiomatic systems are in
certain sense equivalent. The expressivity power of the system
will depend on the choice of the basic spatio-temporal relations
between regions, so further steps of improving the system is to
consider larger and a richer system of basic relations.

As we have seen, the realization of such a strategy is to start
with the standard point-based model of spacetime and to find a
successful construction of space points, time points and other
ingredients of the model. Whitehead do this by his  method of
`extensive abstraction' which results to a complicated
constructions. In contemporary mathematics, for instance in the
Stone representation theory of Boolean algebras \cite{Stone} and
the  theory of proximity spaces \cite{Proximity, Thron} there are
more good methods for defining abstract points: ultrafilters,
clans, clusters and others. The success of the realization of the
above scheme depends also of what kind of concrete point-based
model is chosen to start with. Because standard point-based models
are concrete constructions involving space points and time points,
we adopted a special construction called `snapshot construction'
and the resulting models - called `snapshot spacetime models'.
This is a very simple and intuitive construction which we
mentioned in Section 1.1 \cite{KHWZ}. Intuitively the snapshot
construction is a formalization and generalization of the real method of describing
an area of changing objects by making a video: for each moment of
time the video camera makes a snapshot of the current spatial
configurations of the objects and the series of the snapshots can
be used to construct the point based spacetime model of change (see Remark \ref{Remarksnapshotconstruction} about the limitation of the analogy of the method of `snapshot construction with making video).

The first paper \cite{Vak2010} from the above mentioned series of
papers was experimental - we just wanted to see if the above
described strategy works.
 That is why we included only two spatio-temporal relations between changing objects
  which do not suppose that time flaws: $aC^{\forall}b$ - stable contact
  ($a$ and $b$ are always in a contact) and $aC^{\exists}b$ - unstable contact
  ($a$ and $b$ are sometimes in a contact). The paper \cite{Vak2012} makes the
  next step assuming that time flaws and in the point based model the moments of
  time are equipped with `before-after' relation. It contains two relations which
  do not depend on before-after relation: \emph{space contact} $aC^{s}b$ - there is
   a moment of time in which $a$ and $b$  are in  a space contact,  \emph{time contact}
   $aC^{t}b$ - there is a moment of time in which $a$ and $b$ exist simultaneously. The third relation, called
   \emph{preceding} just uses the before-after relation: there is a moment $s$ in which $a$ exists and a later moment $t$, $s\prec t$,
   in which $b$ exists. This is a quite rich system for space and time, but it was not
   able to describe \emph{past }, \emph{present} and \emph{future}. This was possible
   in the system from \cite{Vak2014} in which we added the notion of the so called time representative,
   a region existing only at a given moment of time, or epoch in Whitehead's terminology, which is using as a name of the
   corresponding epoch, for instance `the epoch of Leonardo'. The paper \cite{Plamen} studies
    some new spacetime systems extending the system from \cite{Vak2014} with new axioms and some
     propositional (quantifier-free) logics based on these
     systems. Other results in this direction are included in
     the papers \cite{NenchevVak} and
     \cite{Nenchev2011,Nenchev2013} which generalize
     \cite{Vak2010} putting the system on pure relational base and
     without operations on regions.

In this paper, starting from Section 3, we  will present with some
details one not very complicated spacetime system just in order to show
how the method works. The new thing is that we will supply the
system not only with snapshot models, but also with topological
models which will give more information on the nature of space
points and time points.

%%%%%%%%%%%%%%%%%%%%%%%%%%%%%%%%%%%%%%%%%%%%%%%%%%%%%%%%%%%%%%%%%%%%%%%%%%%%%%%%%%%%
\section{Contact and precontact algebras}\label{Section CA}
%%%%%%%%%%%%%%%%%%%%%%%%%%%%%%%%%%%%%%%%%%%%%%%%%%%%%%%%%%%%%%%%%%%%%%%%%%%%

In this section we summarize some facts about contact and
precontact algebras which are needed later on. We assume a
familiarity of the reader with the basic theory of Boolean
algebras, filters, ideals, ultrafilters and the Stone
representation of Boolean algebra  by ultrafilters. We consider only non-degenerate Boolean algebras, i.e. algebras with $0\not=1$.

\subsection{Definitions of contact and precontact algebras}
\begin{definition} \label{contact-relation}{\bf Contact algebra {\rm \cite{DiVak2006}}}. Let $(B,0,1,\leq, +,.,*)$ be a  Boolean algebra with complement denoted by $*$ and let $C$ be a binary relation in $B$. $C$ is  called a \textbf{contact} relation in $B$ if the  following axioms are satisfied:

(C1)  If $aCb$ then $a\not=0$ and $b\not=0$,

(C2) If $aCb$ and $a\leq a'$ and $b\leq b'$ then $a'Cb'$,

(C3')  If $aC(b+c)$ then $aCb$ or $aCc$,  (C3'') If $(a+b)Cc$ then
$aCc$ or $bCc$,

(C4)    If $aCb$ then $bCa$,

(C5)   If $a.b\not=0$ then $aCb$.

 \noindent We write $\overline{C}$ for the complement of $C$. If $C$ is a contact relation in $B$, then the algebra $A=(B,C)$ is called a contact algebra.

  If we do not assume axioms (C4) and (C5), then $C$ is called a \textbf{precontact}
  relation in $B$ and the pair $(B,C)$ is called a precontact algebra.

  If $A=(B,C)$ is a precontact (contact) algebra then we will write also $A=(B_{A}, C_{A})$, where $B_{A}=(B,0,1,\leq, +,.,*)$  and $C_{A}=C$.

\end{definition}

In this paper we will consider also Boolean algebras with several
precontact and contact relations satisfying some interacting
axioms. Examples will be the dynamic contact algebras to be
introduced later on.

 Let us mention that if we assume  (C4) only one of the axioms (C3') and (C3'') is needed.
Note also that (C5) is equivalent (on the base of the precontact axioms) to the following more simple axiom

(C5') If $a\not=0$ then $aCa$.

From (C5') and (C1) it follows that $a\not= 0$ iff $aCa$.

 In the present context we treat the Boolean part of the contact algebra as its \emph{mereological component}
 and the contact relation - as its \emph{mereotopological component}.
 In our treating of mereology we consider the zero element $0$ as a \emph{non-existing region} and
 this can be used to define the ontological predicate of existence $E(a)$: `$a$ ontologically exists', in the following way:

  $E(a)$ iff  $a\not=0$.

  \noindent For simplicity, instead of ''ontologically exists'' we will say simply
    ''exists'' and from the context it will be clear that this is not the existential quantifier.

The definitions of mereological relations ''part-of'' and
''overlap''  are the following:

$\bullet$   $a$ is part of $b$ iff $a\leq b$, i.e. part-of is just
the Boolean ordering,

$\bullet$   $a$ overlaps $b$ (in symbols $a\textmd{O}b$)  iff
there exists a  region $c\not =0$ such that $c\leq a$ and $c\leq
b$ iff $a.b\not=0$.

Note that by the definition of overlap the axiom (C5) can be
presented in this way: $a\textmd{O}b$ implies $aCb$.

\begin{remark} It is easy to see that the relation $\textmd{O}$ of overlap satisfies all axioms of contact relation
and by axiom (C5) it can be considered as the smallest contact in $B$. Non-degenerate   Boolean algebras have also
another contact $C_{max}$ definable by ''$a\not= 0$ and $b\not= 0$''. It follows by  axiom (C1)  that this is the
 largest contact in $B$.
\end{remark}

By means of the contact relation we may reproduce the definitions
of  some  mereotopological relations considered by Whitehead:

$\bullet$   \emph{external contact:} $aC^{E}b
\leftrightarrow_{def} aCb$ and $a.b=0$, the common points of $a$ and $b$ are on their boundaries.

$\bullet$   \emph{non-tangential inclusion}  $a\ll b$ $\leftrightarrow_{def}$ $a \overline{C}b^{*}$, called also deep inclusion - $a$ is included in $b$ not touching the boundary of $b$.

$\bullet$   \emph{tangential inclusion:}
$a\leq^{T}b$ $\leftrightarrow_{def}$  $a\leq b$ and $a\not\ll b$, $a$ is included in $b$ and touches the boundary of $b$.

{\bf Intuitive examples:} A cup on a table is in an external
contact with the table.  If a nail is driven into the table then it is
tangentially included into the table. If the nail  is deeply
embedded into  the table so that his head is  not seen, then the
nail is non-tangentially included in the table.

 Contact relation has the following interesting property, stated in the next lemma.

 \begin{lemma}\label{interesting-property-for-C} (\rm \cite{Vak2010}, Lemma 1.1. (vi)) For any $a,b,p,q\in B$:   if $pCq$ and $a\overline{C}b$ then either $(p.a^{*})C(q.a^{*})$ or $(p.b^{*})C(q.b^{*})$.
 \end{lemma}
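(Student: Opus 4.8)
The plan is to reduce the statement to a finite case analysis driven by a single Boolean identity. First I would observe that the hypothesis $a\overline{C}b$ gives, via the contrapositive of (C5), that $a.b=0$; taking complements and applying De Morgan then yields $a^{*}+b^{*}=1$. This is the algebraic engine of the proof: it lets me write $p=p.(a^{*}+b^{*})=p.a^{*}+p.b^{*}$ and likewise $q=q.a^{*}+q.b^{*}$, so that the two target expressions $(p.a^{*},q.a^{*})$ and $(p.b^{*},q.b^{*})$ exhaust $p$ and $q$. Feeding these decompositions into $pCq$ and applying additivity — (C3'') on the left argument and (C3') on the right — forces at least one of the four ``cells'' $(p.a^{*})C(q.a^{*})$, $(p.a^{*})C(q.b^{*})$, $(p.b^{*})C(q.a^{*})$, $(p.b^{*})C(q.b^{*})$ to hold. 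The first and last are precisely the two alternatives in the conclusion, so the entire problem collapses to showing that each of the two \emph{mixed} cells still entails one of them.

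Next I would treat the mixed cell $(p.a^{*})C(q.b^{*})$. Using $q.b^{*}=q.b^{*}.a^{*}+q.b^{*}.a$ and (C3'), either $(p.a^{*})C(q.b^{*}.a^{*})$, in which case monotonicity (C2) with $q.b^{*}.a^{*}\leq q.a^{*}$ immediately gives $(p.a^{*})C(q.a^{*})$; or $(p.a^{*})C(q.b^{*}.a)$. In this second subcase I decompose $p.a^{*}=p.a^{*}.b+p.a^{*}.b^{*}$ and apply (C3''). The piece $(p.a^{*}.b)C(q.b^{*}.a)$ is impossible: since $p.a^{*}.b\leq b$ and $q.b^{*}.a\leq a$, monotonicity (C2) would give $bCa$, hence $aCb$ by (C4), contradicting $a\overline{C}b$. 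The surviving piece $(p.a^{*}.b^{*})C(q.b^{*}.a)$ then yields $(p.b^{*})C(q.b^{*})$ by (C2), using $p.a^{*}.b^{*}\leq p.b^{*}$ and $q.b^{*}.a\leq q.b^{*}$. Thus this mixed cell always lands in one of the two desired alternatives.

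Finally, I would dispatch the remaining mixed cell $(p.b^{*})C(q.a^{*})$ by symmetry: applying (C4) turns it into $(q.a^{*})C(p.b^{*})$, which is the previous mixed cell with $p$ and $q$ interchanged, so the same argument returns one of the two conclusions (again up to (C4)). I expect the main obstacle to be exactly the sub-subcase analysis of the mixed cells — specifically, recognizing that the only genuinely new ingredient is the elimination of the ``$\leq b$ in contact with $\leq a$'' configuration, which is where the hypothesis $a\overline{C}b$ must be combined with monotonicity. Everything else is routine bookkeeping with the additivity axioms (C3'), (C3'') and the monotonicity axiom (C2).
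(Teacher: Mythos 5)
The paper does not prove this lemma; it only cites it from \cite{Vak2010} (Lemma 1.1(vi)), so there is no in-text argument to compare against. Your proof is correct as written: the key identity $a\overline{C}b\Rightarrow a.b=0\Rightarrow a^{*}+b^{*}=1$ is right, the four-cell decomposition via (C3') and (C3'') is valid, the diagonal cells are exactly the two alternatives, and your elimination of the mixed cell $(p.a^{*})C(q.b^{*})$ — splitting off the impossible piece $(p.a^{*}.b)C(q.b^{*}.a)$, which by (C2) and (C4) would force $aCb$ — is sound, with the other mixed cell handled by symmetry through (C4). This is the natural direct verification from the contact axioms and is a complete, self-contained proof of the stated lemma.
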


Precontact algebras were considered under the name of proximity algebras  in
\cite{DuVak2007}. We will be interested later on contact and
precontact algebras satisfying  the following additional axiom:

(CE) If $a\overline{C}b$ then $(\exists c)(a\overline{C}c$ and
$(c^{*}\overline{C}b)$.

This axiom is called sometimes \emph{Efremovich axiom}, because it
is used in the definition of \emph{Efremovich proximity spaces}
\cite{Proximity}. Let us note that the largest contact $C_{max}$ satisfies the Efremowich axiom.

%%%%%%%%%%%%%%%%%%%%%%%%%%%%%%%%%%%%%%%%%%%%%%%%%%%%%%%%%%%%%%%%%%%%%%%%%%%%%%%%%%%%%%%%%%%%%%%%%%%%%%%%%%%%%%%%55
\subsection{Examples of contact and   precontact algebras}\label{Section Examples of contact and   precontact algebras}
%%%%%%%%%%%%%%%%%%%%%%%%%%%%%%%%%%%%%%%%%%%%%%%%%%%%%%%%%%%%%%%%%%%%%%%%%%%%%%%%%%%%%%%%%%

\noindent {\bf Topological example of contact algebra.} The
intended  example of contact algebra is a topological one and can
be defined in the following way. Let $X$ be a topological space
and $Cl$ and $Int$ be the operations of closure and interior of a
subset of $X$. A set $a\subseteq X$ is called \emph{regular
closed} if $a=Cl(Int(a))$. The set $RC(X)$ of regular closed
subsets of $X$ is a Boolean algebra with respect to the following
operations and constants: $0=\emptyset$, $1=X$, $a+b=a\cup b$,
$a.b=Cl(Int(a\cap b))$, $a^{*}=Cl(X\setminus a)=Cl(-a)$. The
algebra $RC(X)$ becomes a contact algebra with respect to the the
following contact relation $C_{X}$ : $aC_{X}b$ iff $a\cap
b\not=\emptyset$, i.e. if $a$ and $b$ have a common point. The
contact algebra $RC(X)$ and any contact subalgebra of $RC(X)$ is considered
as a standard topological contact algebra. In the next section we
will see that each contact algebra is isomorphic to a standard
contact algebra. Let us note that defining regions as regular
closed sets is a good choice, because all known good geometrical
regions in Euclidean geometry are regular closed sets of points:
balls, cubes, pyramids, etc.

\smallskip

\noindent{\bf Relational examples of precontact and contact
algebras.}     Let $X$ be a nonemp-ty set, whose elements are
considered as points and $R$ be a reflexive and symmetric relation
in $X$. Pairs $(X, R)$ with reflexive and symmetric $R$ are called
by Galton \emph{adjacency spaces} (see \cite{DuVak2007}).

 One can construct a contact algebra  from an adjacency space as follows: take a class
 $B$ of subsets of $X$ which form a Boolean algebra  under the set-theoretical operations
  of union $a+b= a\cup b$, intersection $a.b=a\cap b$ and complement $a^{*}=X\setminus a$ and define
  contact $C_{R}$ between two members of $B$ as follows: $aC_{R}b$ iff there exist
  $x\in a$ and $y\in b$ such that $xRy$. It can easily be verified that all axioms of contact are satisfied.

 Let us note that there are more general adjacency spaces in which neither reflexivity
  nor symmetry for the relation $R$ are assumed (see \cite{DuVak2007}).  We reserve
   the name ''adjacency  space'' for such more general spaces and for the special case where
    $R$ is a reflexive and symmetric relation we will say  ''adjacency spaces in the sense of Galton''.
    If we repeat the above construction then the axioms (C1), (C2), (C3') and (C3'')
     will be true but  in general the axioms (C4) and (C5) will not be satisfied
      and in this way we obtain examples of precontact algebras which are not contact
      algebras. The relational models of contact and precontact algebras are called also \emph{discrete models.}

      The following lemma will be of later use:

 \begin{lemma}\label{relational-characterization} {\bf Characterization of reflexivity,
 symmetry  and }
 \newline {\bf transitivity.} {\rm \cite{DuVak2007}} Let $(X,R)$ be an adjacency space and
 $(B(X),C_{R})$ be the precontact algebra over all subsets of $X$. Then the following conditions hold:

 (i) $R$ is a symmetric  relation in $X$ iff $(B(X),C_{R})$ satisfies the axiom
 (C4) If $aC_{R}b$ then $bC_{R}a$,

 (ii) $R$ is reflexive relation in $X$  iff $(B(X),C_{R})$ satisfies the axiom
(C5) If $a.b\not=\emptyset$ then $aCb$,

 (iii) $R$ is a transitive relation in $X$ iff $(B(X),C_{R})$ satisfies the axiom

(CE) If $a\overline{C}b$ then $(\exists
c)(a\overline{C}c$ and $c^{*}\overline{C}b)$.

 \end{lemma}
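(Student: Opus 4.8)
The plan is to prove each of the three equivalences by exploiting that $B(X)$ is the \emph{full} power set algebra, so that every singleton $\{x\}$ is available as a region; this makes the ``only if'' (algebra-to-relation) directions immediate by testing the axioms on singletons, while the ``if'' directions are routine unfoldings of the definition of $C_R$. For the bookkeeping it is convenient to introduce, for $a,b\subseteq X$, the image set $R[a]=\{y\in X:(\exists x\in a)\,xRy\}$ and the preimage set $R^{-1}[b]=\{x\in X:(\exists y\in b)\,xRy\}$. Unwinding $aC_Rb\Leftrightarrow(\exists x\in a)(\exists y\in b)\,xRy$ yields the three identities $aC_Rb\Leftrightarrow b\cap R[a]\neq\emptyset$, $a\,\overline{C}\,c\Leftrightarrow c\cap R[a]=\emptyset$, and $c^{*}\,\overline{C}\,b\Leftrightarrow R^{-1}[b]\subseteq c$, which I will use throughout.

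For (i), if $R$ is symmetric and $aC_Rb$ is witnessed by $xRy$ with $x\in a$, $y\in b$, then $yRx$ witnesses $bC_Ra$. Conversely, assuming (C4), fix $x,y$ with $xRy$; then $\{x\}C_R\{y\}$, so (C4) gives $\{y\}C_R\{x\}$, which can only be witnessed by $yRx$. For (ii), if $R$ is reflexive and $a\cap b\neq\emptyset$, any $x\in a\cap b$ gives $xRx$, hence $aC_Rb$; conversely, applying (C5) to $a=b=\{x\}$ (where $a\cap b=\{x\}\neq\emptyset$) forces $\{x\}C_R\{x\}$, i.e.\ $xRx$.

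The substance of the lemma is (iii). Using the identities above, the hypothesis $a\,\overline{C}\,b$ reads $b\cap R[a]=\emptyset$, while the existence of a witness $c$ with $a\,\overline{C}\,c$ and $c^{*}\,\overline{C}\,b$ amounts to finding $c$ with $R^{-1}[b]\subseteq c\subseteq (R[a])^{*}$; such a $c$ exists if and only if $R^{-1}[b]\cap R[a]=\emptyset$ (and then $c=R^{-1}[b]$ works). So (CE) is equivalent to the implication: $b\cap R[a]=\emptyset$ implies $R^{-1}[b]\cap R[a]=\emptyset$. If $R$ is transitive, I argue at the level of a common element: any $w\in R^{-1}[b]\cap R[a]$ gives $x\in a$ with $xRw$ and $y\in b$ with $wRy$, whence $xRy$ by transitivity, so $y\in b\cap R[a]$, contradicting the hypothesis. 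For the converse I again test on singletons: given $xRy$ and $yRz$, suppose toward a contradiction $\neg(xRz)$; then $\{x\}\,\overline{C}\,\{z\}$, so (CE) supplies $c$ with $\{x\}\,\overline{C}\,c$ and $c^{*}\,\overline{C}\,\{z\}$. Since $xRy$, the first condition forces $y\notin c$, i.e.\ $y\in c^{*}$; but then $yRz$ witnesses $c^{*}C_R\{z\}$, contradicting the second condition. Hence $xRz$, and $R$ is transitive.

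The only real obstacle is part (iii), and specifically the correct translation of the two negated-contact conditions in (CE) into the containment $R^{-1}[b]\subseteq c\subseteq (R[a])^{*}$, together with the observation that the existence of such an intermediate $c$ collapses to the single disjointness condition $R[a]\cap R^{-1}[b]=\emptyset$. Once this is in place both directions are short, with the converse hinging on the singleton choice $a=\{x\}$, $b=\{z\}$ and the shared point $y$ acting as the transitivity pivot.
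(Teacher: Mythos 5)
Your proposal is correct, and it follows essentially the route the paper indicates: the paper's own (sketched) argument rewrites $aC_Rb$ via the modal image operator $\langle R\rangle$ and reduces (iii) to ``pure set-theoretical transformations,'' which is exactly what your identities $aC_Rb\Leftrightarrow b\cap R[a]\neq\emptyset$ and $c^{*}\overline{C}b\Leftrightarrow R^{-1}[b]\subseteq c$ carry out, with the singleton tests supplying the converse directions. Your explicit reduction of (CE) to the disjointness $R[a]\cap R^{-1}[b]=\emptyset$ with witness $c=R^{-1}[b]$ is a clean and complete filling-in of the details the paper leaves to the cited reference.
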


 In the proof of the above lemma the following equivalent definition of the precontact relation $aC_{R}b$ will be helpful.
 For a subset $a\subseteq X$ define $\langle R\rangle a=_{def}\{x\in X: (\exists y\in a)(xRy)\}$.
  Then obviously we have: $aC_{R}b$ iff $a\cap\langle R\rangle b\not=\varnothing$.
   The operation $\langle R\rangle a$ comes from the relational semantics of modal
   logic and represents the operation of possibility (for more information for
   this connection see \cite{BTV}).  The following property of the operation $\langle R\rangle a$ can be proved:
    $R$ is transitive relation on $X$ iff for all $a\subseteq X$:
       $\langle R\rangle \langle R\rangle a\subseteq \langle R\rangle a$.
       Then by  pure set-theoretical transformations one can show that the Efremovich axiom (CE)
        is equivalent to this property, which proves (iii).

 %%%%%%%%%%%%%%%%%%%%%%%%%%%%%%%%%%%%%%%%%%%%%%%%%%%%%%%%%%%%%%
\subsection{Algebras with several precontact relations}\label{Section Algebras with several precontact relations}
%%%%%%%%%%%%%%%%%%%%%%%%%%%%%%%%%%%%%%%%%%%%%%%%%%%%%%%%%%%%%%%%%55
In this section we will introduce Boolean algebras with two
precontact relations satisfying two special interacting axioms
which will be used in the definition of dynamic contact algebra.
First we will present
 their relational examples.

 Let $(W,R,S)$ be a relational system with two relations. We
consider the following two first-order conditions for $R$ and $S$:

\smallskip

($R\circ S\subseteq S$) If $xRy$ and $ySz$, then $xSz$ (The
composition of $R$ with $S$ is included in $S$).

\smallskip

($S\circ R\subseteq S$)  If $xSy$ and $yRz$, then $xSz$ (The
composition of $S$ with $R$ is included in $S$).

The system $(W,R,S)$ defines in an obvious way set-theoretical Boolean algebra with two precontact relations $C_{R}$ and $C_{S}$.

Consider   the following two conditions for the  precontact
relations $C_{R}$ and $C_{S}$ which are  similar to the Efremowich
axiom (CE):

\smallskip

($C_{R}C_{S}$) If $a\overline{C}_{S}b$, then there exists
$c\subseteq W$ such that $a\overline{C}_{R}c$ and
$c^{*}\overline{C}_{S}b$, and

\smallskip

($C_{S}C_{R}$)  If $a\overline{C}_{S}b$, then there exists
$c\subseteq W$ such that $a\overline{C}_{S}c$ and
$c^{*}\overline{C}_{R}b$.

We call the conditions ($C_{R}C_{S}$) and ($C_{S}C_{R}$)
\textbf{compositional axioms} for $C_{R}$ and $C_{S}$.

\begin{lemma} \label{composition axioms lemma}

(i) The condition ($C_{R}C_{S}$) is fulfilled between precontact
relations $C_{R}$ and $C_{S}$ iff the condition ($R\circ S\subseteq S$) is
satisfied,

(ii)  The condition ($C_{S}C_{R}$) is fulfilled between
precontacts relations  $C_{R}$ and $C_{S}$ iff the condition ($S\circ R\subseteq S$)
 is satisfied.

\end{lemma}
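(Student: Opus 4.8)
The plan is to repeat the modal-operator translation that was already used to treat the Efremovich axiom in Lemma~\ref{relational-characterization}(iii). Recall that with $\langle R\rangle a=\{x:(\exists y\in a)(xRy)\}$ we have $aC_{R}b$ iff $a\cap\langle R\rangle b\neq\varnothing$, and consequently $a\overline{C}_{R}b$ iff $\langle R\rangle b\subseteq a^{*}$. The first step is to rewrite each clause occurring in the compositional axioms as an inclusion of sets. For $(C_{R}C_{S})$ the two conjuncts $a\overline{C}_{R}c$ and $c^{*}\overline{C}_{S}b$ become $\langle R\rangle c\subseteq a^{*}$ and $\langle S\rangle b\subseteq c$ respectively, while the hypothesis $a\overline{C}_{S}b$ becomes $\langle S\rangle b\subseteq a^{*}$; the clauses of $(C_{S}C_{R})$ translate symmetrically, with the roles of $R$ and $S$ interchanged in the two conjuncts.

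Second, I would isolate the purely relational fact that does the real work, namely the two operator identities: $(R\circ S\subseteq S)$ holds iff $\langle R\rangle\langle S\rangle a\subseteq\langle S\rangle a$ for every $a\subseteq W$, and $(S\circ R\subseteq S)$ holds iff $\langle S\rangle\langle R\rangle a\subseteq\langle S\rangle a$ for every $a$. This is the exact analogue of the characterization of transitivity by $\langle R\rangle\langle R\rangle a\subseteq\langle R\rangle a$ quoted after Lemma~\ref{relational-characterization}, and is proved in the same manner: the forward direction unfolds the composite modality $\langle R\rangle\langle S\rangle a=\{x:(\exists y)(\exists z)(xRy\wedge ySz\wedge z\in a)\}$ and applies $R\circ S\subseteq S$ pointwise, while the converse is obtained by testing the inclusion on singletons $a=\{z\}$.

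Third, with these reformulations in hand, each equivalence reduces to a short set-theoretic argument in which the witness $c$ is produced explicitly. For the implication from the relational condition to $(C_{R}C_{S})$ I would take $c=\langle S\rangle b$: then $\langle S\rangle b\subseteq c$ is automatic, and $\langle R\rangle c=\langle R\rangle\langle S\rangle b\subseteq\langle S\rangle b\subseteq a^{*}$ by the operator identity together with the translated hypothesis. For the converse I would instantiate the axiom at $a=(\langle S\rangle b)^{*}$, which makes $a\overline{C}_{S}b$ hold, obtain a witness $c$ with $\langle S\rangle b\subseteq c$ and $\langle R\rangle c\subseteq a^{*}=\langle S\rangle b$, and conclude $\langle R\rangle\langle S\rangle b\subseteq\langle R\rangle c\subseteq\langle S\rangle b$ from the monotonicity of $\langle R\rangle$. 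Part (ii) is then the verbatim mirror image, using $c=\langle R\rangle b$ and the monotonicity of $\langle S\rangle$ in place of that of $\langle R\rangle$.

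The routine verifications (monotonicity of the diamond operators and the unfolding of composite modalities) are mechanical, so the only genuinely delicate point is the bookkeeping. One must keep straight that in $(C_{R}C_{S})$ it is $\langle R\rangle$ that is applied to $c$ and $\langle S\rangle$ to $b$, which is what matches the composite $\langle R\rangle\langle S\rangle$ and hence $R\circ S$, whereas in $(C_{S}C_{R})$ the two operators are exchanged and one lands on $S\circ R$. Aligning the complement on $c$ versus $c^{*}$ with the correct operator, and choosing the witness so that the trivial inclusion and the substantive inclusion fall on the correct conjuncts, is the place where an error would most easily creep in.
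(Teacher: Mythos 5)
Your proof is correct and follows exactly the route the paper indicates: the paper's own argument is only a two-line sketch asserting the chain $(R\circ S\subseteq S)$ iff $\langle R\rangle\langle S\rangle a\subseteq\langle S\rangle a$ for all $a$ iff $(C_{R}C_{S})$, by analogy with the transitivity/Efremovich case of Lemma 2.3(iii). You supply the same chain together with the missing details (the translation $a\overline{C}_{R}b\Leftrightarrow\langle R\rangle b\subseteq a^{*}$, the explicit witnesses $c=\langle S\rangle b$ and $a=(\langle S\rangle b)^{*}$, and the singleton test), all of which check out.
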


The proof  is similar to the proof of Lemma
\ref{relational-characterization} (iii). In the proof of (i) use
the following equivalences: ($R\circ S\subseteq S$) iff for all
$a\subseteq X$ $\langle R\rangle \langle S\rangle a\subseteq
\langle S\rangle a$ iff ($C_{R}C_{S}$) and similarly for (ii) by
exchanging the places of $R$ and $S$.

%%%%%%%%%%%%%%%%%%%%%%%%%%%%%%%%%%%%%%%%%%%%%%%%%%%%%%%%%%%%%%%
\subsection{Discrete (relational) representation of  contact and
precontact algebras.}\label{Section representation of
precontact-algebras}

 One way to obtain a representation theory of precontact algebras
with relational representation of precontact is to consider
ultrafilters as the set of abstract points of a given precontact
algebra $A=(B,C)$ (as in the Stone representation theory of
Boolean algebras) and to define the relation $R$ in  the set of
ultrafilters $Ult(A)$  of $A$ as follows. For $U,V\in Ult(A)$:

\smallskip

$URV \leftrightarrow_{def} (\forall a,b\in B)(a\in U$ and $b\in V
\Rightarrow aCb)$.

\smallskip

 For $a\in B$ define also the Stone embedding: $s(a)=\{U\in Ult(A): a\in
 U\}$.

 \begin{definition}\label{Canonical relation for precontact} The relational system $(Ult(A),R)$ with just defined
$R$ is called a \emph{canonical adjacency space} over $A$ and $R$
is called the \emph{canonical adjacency relation} on $Ult(A)$.
\end{definition}

Note that the definition of the canonical relation $R$ is
meaningful
  for arbitrary filters. In order to prove  some facts for the canonical
   relation some constructions of filters and ideals will be needed and some  technical lemmas have to be introduced.

   First we remind the well known Separation Lemma for filters and
   ideals in Boolean algebra and the Extension Lemma for proper
   filters.

\begin{lemma}\label{separation lemma for filters and ideals}

(i)  {\bf Separation Lemma}. If $F$ is a filter and $I$ is an
ideal in a Boolean algebra such that $F\cap I=\varnothing$, then
there exists an ultrafilter $U$ such that $F\subseteq U$ and
$U\cap I=\varnothing$.

(ii) {\bf Extension Lemma}. Every proper filter can be extended
into an ultrafilter.
\end{lemma}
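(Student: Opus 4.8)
The plan is to prove (i) by a Zorn's Lemma argument and then obtain (ii) as an immediate corollary. For part (i), I would consider the family $\mathcal{F}$ of all filters $G$ with $F\subseteq G$ and $G\cap I=\varnothing$, partially ordered by inclusion. This family is nonempty since $F\in\mathcal{F}$. Given a chain in $\mathcal{F}$, its union is again a filter (the union of an ascending chain of filters is upward closed and closed under meets) and it remains disjoint from $I$, because any element of the union already lies in some member of the chain. Hence every chain has an upper bound in $\mathcal{F}$, and Zorn's Lemma yields a maximal element $U\in\mathcal{F}$.

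The key step is to show that this maximal $U$ is in fact an ultrafilter, i.e. that for every $a\in B$ either $a\in U$ or $a^{*}\in U$. Suppose, toward a contradiction, that $a\notin U$ and $a^{*}\notin U$ for some $a$. Then the filter $U_{a}$ generated by $U\cup\{a\}$, which consists of all $x$ with $u.a\leq x$ for some $u\in U$, strictly contains $U$; by maximality of $U$ it must meet $I$. Since $I$ is downward closed, this gives some $u\in U$ with $u.a\in I$. Symmetrically, from $a^{*}\notin U$ we obtain some $v\in U$ with $v.a^{*}\in I$.

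Now set $w=u.v$. Then $w\in U$, and since $I$ is a downward-closed ideal we have $w.a\leq u.a\in I$ and $w.a^{*}\leq v.a^{*}\in I$, so both $w.a$ and $w.a^{*}$ lie in $I$. As $I$ is closed under finite joins, $w.a+w.a^{*}=w.(a+a^{*})=w\in I$, contradicting $U\cap I=\varnothing$. Therefore $U$ is an ultrafilter with $F\subseteq U$ and $U\cap I=\varnothing$, which proves (i).

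Finally, part (ii) follows immediately: given a proper filter $F$ we have $0\notin F$, so taking $I=\{0\}$ (the smallest ideal) gives $F\cap I=\varnothing$, and (i) produces an ultrafilter $U\supseteq F$. The only delicate point in the whole argument is the verification that the maximal filter is an ultrafilter, which is exactly where the Boolean identity $a+a^{*}=1$ together with the ideal's closure under joins is used; everything else is the routine bookkeeping of the Zorn's Lemma setup.
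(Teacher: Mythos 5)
Your proof is correct, and it is the standard Zorn's Lemma argument for the Boolean prime filter (separation) theorem. The paper itself offers no proof to compare against: Lemma~2.5 is explicitly introduced there as a reminder of a well-known fact from the theory of Boolean algebras (with \cite{Sikorski} serving as the background reference), so your write-up supplies exactly the argument the author is taking for granted. The one step you leave implicit is that the maximal $U$ is a \emph{proper} filter, which is what makes it an ultrafilter rather than all of $B$; this is immediate because $0\in I$ and $U\cap I=\varnothing$ force $0\notin U$, but it is worth a sentence. Everything else, including the key computation $w.a+w.a^{*}=w\in I$ that derives the contradiction, is exactly right, and the reduction of (ii) to (i) via $I=\{0\}$ is the standard one.
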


{\bf The sum of two filters:} If $F$ and $G$ are filters, then
$F\oplus G=_{def}\{a.b: a\in F, b\in B\}$ is the smallest filter
containing both $F$ and $G$. $0\in F\oplus G$ iff there exists
$a\in F$ and $a^{*}\in G$.

\begin{lemma} \label{canonical relation} {\bf Technical lema for the canonical relation.}
 Let $A=(B,C)$ be a
precontact  algebra, $F$ and $G$ be
filters in $A$
 and $FRG$  be the canonical relation between them corresponding to $C$. Define the
following sets:

 $I^{C}_{1}(F)=\{b: (\exists a\in F)(a\overline{C}b)\}$,
$I^{C}_{2}(G)=\{a:(\exists b\in G)(a\overline{C}b)\}$,

 $F^{C}_{1}(F)=\{b: (\exists a\in F)(a\overline{C}b^{*})\}$,
$F^{C}_{2}(G)=\{a:(\exists b\in G)(a^{*}\overline{C}b)\}$.

 Then
the following equivalencies are true:

(i) $FRG$ iff $I^{C}_{1}(F)\cap G=\varnothing$, and $I^{C}_{1}(F)$
is an ideal.

(ii) $FRG$ iff $F\cap I^{C}_{2}(G)=\varnothing$, and
$I^{C}_{2}(G)$ is an ideal.

(i') If $G$ is an ultrafilter then $FRG$ iff
$F^{C}_{1}(F)\subseteq G$, and $F^{C}_{1}(F)$ is a filter.

(ii') If $F$ is an ultrafilter, then $FRG$ iff
$F^{C}_{2}(G)\subseteq F$, and $F^{C}_{2}(G)$ is a filter.

\end{lemma}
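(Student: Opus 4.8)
The plan is to treat the four items as two symmetric pairs, establishing (i) and (i') first and then reading off (ii) and (ii') by the evident left--right symmetry of the definitions (interchanging the roles of $F$ and $G$ and of the axioms (C3') and (C3'')). Throughout I would be careful to invoke only the precontact axioms (C1), (C2), (C3'), (C3''), so that the lemma holds for precontact and not merely contact algebras.

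For (i), I would first prove the stated equivalence by simply unwinding the definition of the canonical relation: $FRG$ fails exactly when there are $a\in F$ and $b\in G$ with $a\overline{C}b$, and such a $b$ is precisely an element of $I^{C}_{1}(F)\cap G$, so $FRG$ holds iff $I^{C}_{1}(F)\cap G=\varnothing$. This part needs nothing beyond the definitions. Next I would verify that $I^{C}_{1}(F)$ is an ideal, using only the precontact axioms. Membership of $0$ follows from (C1): taking $a=1\in F$ we have $1\overline{C}0$, so $0\in I^{C}_{1}(F)$. Downward closure follows from monotonicity (C2): if $a\overline{C}b$ witnesses $b\in I^{C}_{1}(F)$ and $b'\leq b$, then $a\overline{C}b'$, since $aCb'$ would give $aCb$ by (C2). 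Closure under $+$ is the one place where the distributivity axiom (C3') is used: given witnesses $a_{1}\overline{C}b_{1}$ and $a_{2}\overline{C}b_{2}$ with $a_{1},a_{2}\in F$, I put $a=a_{1}.a_{2}\in F$; by (C2) we get $a\overline{C}b_{1}$ and $a\overline{C}b_{2}$, whence $a\overline{C}(b_{1}+b_{2})$ by the contrapositive of (C3'), so $b_{1}+b_{2}\in I^{C}_{1}(F)$.

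For (i'), the key observation is that $F^{C}_{1}(F)$ is exactly the complement-dual of the ideal $I^{C}_{1}(F)$: by definition $b\in F^{C}_{1}(F)$ iff $b^{*}\in I^{C}_{1}(F)$. Since the map $b\mapsto b^{*}$ carries ideals to filters (via the De Morgan laws together with the ideal properties established in (i)), it follows at once that $F^{C}_{1}(F)$ is a filter. It remains to convert the disjointness condition of (i) into the inclusion condition, and this is the only point where the hypothesis that $G$ is an ultrafilter is used. Assuming $G$ is an ultrafilter, so that $b\notin G$ iff $b^{*}\in G$, the inclusion $F^{C}_{1}(F)\subseteq G$ is equivalent, by contraposition and the substitution $c=b^{*}$, to the statement that no element of $G$ lies in $I^{C}_{1}(F)$, i.e. to $I^{C}_{1}(F)\cap G=\varnothing$. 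Combining this with (i) yields $FRG$ iff $F^{C}_{1}(F)\subseteq G$.

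Finally, (ii) and (ii') are obtained by the same arguments with $F$ and $G$ interchanged: in (ii) one uses (C3'') in place of (C3') to show that $I^{C}_{2}(G)$ is closed under $+$, and in (ii') one applies the ultrafilter property of $F$ rather than that of $G$. The proof is almost entirely routine; the only genuinely delicate points are bookkeeping ones --- ensuring that the correct one of (C3'), (C3'') is used on each side, that no appeal to (C4) or (C5) slips in, and that the ultrafilter hypothesis is invoked precisely at the disjointness-to-inclusion conversion. I do not expect any of these to constitute a serious obstacle.
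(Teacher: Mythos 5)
Your proof is correct and is exactly the "direct verification of the corresponding definitions" that the paper itself gives for this lemma, only written out in full: unwinding the definition of $R$ for (i)/(ii), checking the ideal axioms via (C1), (C2) and the appropriate one of (C3')/(C3''), and passing to (i')/(ii') through the complement-duality $b\in F^{C}_{1}(F)\Leftrightarrow b^{*}\in I^{C}_{1}(F)$ together with the ultrafilter property. Your bookkeeping about which of (C3'), (C3'') is needed on each side and where the ultrafilter hypothesis enters is accurate, so there is nothing to add.
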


\textbf{Proof.} The proof follows  by a direct verification of the
corresponding definitions. $\square$

 \begin{lemma} \label{R-extension-lemma} {\rm \cite{DuVak2007}} {\bf R-extension Lemma.}
  Let $U_{0}$ and $V_{0}$ be filters in a precontact algebra $(B,C)$ and let $U_{0}RV_{0}$.
  Then there exist ultrafilters $U$ and $V$ such that $U_{0}\subseteq U$, $V_{0}\subseteq V$ and $URV$.
 \end{lemma}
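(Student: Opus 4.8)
The plan is to build $U$ and $V$ by two successive applications of the Separation Lemma (Lemma~\ref{separation lemma for filters and ideals}(i)), using the ideal-based characterizations of the canonical relation recorded in Lemma~\ref{canonical relation}. The guiding observation is that the clause $URV$ is, by definition, the assertion that no $a\in U$ and $b\in V$ satisfy $a\overline{C}b$; parts (i) and (ii) of Lemma~\ref{canonical relation} repackage this as a disjointness statement between a filter on one side and a canonically attached ideal on the other. This is exactly the hypothesis needed to invoke the Separation Lemma, so each extension step reduces to separating a filter from an ideal.

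First I would fix the left filter and extend the right one. Since $U_{0}RV_{0}$, part (i) of Lemma~\ref{canonical relation} tells us that $I^{C}_{1}(U_{0})$ is an ideal and that $I^{C}_{1}(U_{0})\cap V_{0}=\varnothing$. Thus $V_{0}$ is a filter disjoint from the ideal $I^{C}_{1}(U_{0})$, and the Separation Lemma yields an ultrafilter $V$ with $V_{0}\subseteq V$ and $V\cap I^{C}_{1}(U_{0})=\varnothing$. Reading part (i) of Lemma~\ref{canonical relation} in the reverse direction (now with $G=V$) gives $U_{0}RV$. Note that $V$ is already the final right-hand ultrafilter; it will not be touched again.

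Next I would extend the left filter against this fixed $V$. From $U_{0}RV$, part (ii) of Lemma~\ref{canonical relation} gives that $I^{C}_{2}(V)$ is an ideal and $U_{0}\cap I^{C}_{2}(V)=\varnothing$. Applying the Separation Lemma once more produces an ultrafilter $U$ with $U_{0}\subseteq U$ and $U\cap I^{C}_{2}(V)=\varnothing$, whence part (ii) of Lemma~\ref{canonical relation} delivers $URV$. Then $U_{0}\subseteq U$, $V_{0}\subseteq V$, and $URV$, as required.

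The one point that requires care---and the reason the two steps must be carried out in this order rather than simultaneously---is that the canonical relation is not symmetric in how it constrains the two filters: extending the right filter is governed by disjointness from $I^{C}_{1}$ of the left filter, while extending the left filter is governed by disjointness from $I^{C}_{2}$ of the right filter. By first freezing $V$ as an ultrafilter and only afterwards enlarging $U_{0}$, I ensure that the disjointness $U\cap I^{C}_{2}(V)=\varnothing$ secured in the second step is not disturbed by any later modification of $V$. The ideal property of $I^{C}_{1}(U_{0})$ and $I^{C}_{2}(V)$, supplied by Lemma~\ref{canonical relation}, is what makes the Separation Lemma applicable at each stage. $\square$
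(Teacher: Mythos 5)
Your proof is correct and is essentially the paper's own argument: the paper likewise extends one filter to an ultrafilter via the Separation Lemma applied to the ideal $I^{C}_{1}(U_{0})$, recovers the canonical relation from the resulting disjointness, and then repeats the procedure on the other side (using $I^{C}_{2}$). Your write-up is in fact a bit more careful than the paper's about the order of the two steps and which ideal governs which extension.
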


\begin{proof} By Lemma \ref{canonical relation} $U_{0}RV_{0}$ iff
$I^{C}_{1}(U_{0})\cap V_{0}=\varnothing$. Then by the Sepation Lemma for
filters and ideals \ref{separation lemma for filters and ideals}
there exists an  ultrafilter $V$ such that $V_{0}\subseteq V$ and
$I^{C}_{1}(U_{0})\cap V=\varnothing$. From $I^{C}_{1}(U_{0})\cap V=\varnothing$ again by
Lemma \ref{canonical relation} we obtain $U_{0}RV$. So we have
extended $U_{0}$ into the ultrafilter $U$. Similarly repeating
this procedure for $V_{0}$ we can extend it into an ultrafilter
$V$.\end{proof}

\begin{lemma}\label{canonical-lemma-1}{\rm \cite{DuVak2007}} {\bf
Canonical Lemma 1.}

(i) $aCb$ iff there exist ultrafilters $U,V$ such that $URV$,
$a\in U$ and $b\in V$.

 (ii) $aCb$ iff $s(a)C_{R}s(b)$.

\end{lemma}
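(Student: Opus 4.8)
The plan is to establish (i) first and then obtain (ii) by simply unwinding the definitions of the Stone embedding $s$ and of the relational contact $C_{R}$. Within (i), the direction from right to left is immediate: if $U,V$ are ultrafilters with $URV$, $a\in U$ and $b\in V$, then the very definition of the canonical relation $R$ (Definition \ref{Canonical relation for precontact}), applied to the pair $a\in U$, $b\in V$, yields $aCb$. So the entire content of the lemma sits in the left-to-right direction of (i).

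For that direction I would start from $aCb$ and build the two ultrafilters out of principal filters. By axiom (C1) both $a\neq 0$ and $b\neq 0$, so the principal filters $U_{0}=\{x: a\leq x\}$ and $V_{0}=\{x: b\leq x\}$ are proper. The key observation is that $U_{0}RV_{0}$ holds: given any $p\in U_{0}$ and $q\in V_{0}$ we have $a\leq p$ and $b\leq q$, hence $pCq$ by monotonicity (axiom (C2)) together with $aCb$; this is exactly the condition defining $U_{0}RV_{0}$. Now I would invoke the R-extension Lemma (Lemma \ref{R-extension-lemma}) to extend $U_{0}$ and $V_{0}$ to ultrafilters $U\supseteq U_{0}$ and $V\supseteq V_{0}$ with $URV$. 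Since $a\in U_{0}\subseteq U$ and $b\in V_{0}\subseteq V$, these are the required ultrafilters.

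Part (ii) then follows mechanically. By the definition of the relational contact $C_{R}$ on the canonical adjacency space $(Ult(A),R)$, we have $s(a)C_{R}s(b)$ iff there exist points $U\in s(a)$ and $V\in s(b)$ with $URV$. But $U\in s(a)$ means $a\in U$ and $V\in s(b)$ means $b\in V$, so $s(a)C_{R}s(b)$ holds iff there are ultrafilters $U,V$ with $a\in U$, $b\in V$ and $URV$, which by (i) is equivalent to $aCb$.

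The only genuine step is the left-to-right direction of (i), and there the main obstacle is packaged inside the R-extension Lemma, which in turn rests on the Separation Lemma for filters and ideals (Lemma \ref{separation lemma for filters and ideals}) and on the ideal-theoretic characterization of $R$ in Lemma \ref{canonical relation}. Once that machinery is available, the remaining reasoning is just monotonicity of $C$ plus bookkeeping with the Stone map, so I do not expect any further difficulty.
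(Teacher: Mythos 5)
Your proof is correct and follows essentially the same route as the paper: form the principal filters $[a)$ and $[b)$, observe that $aCb$ together with monotonicity gives $[a)R[b)$, apply the R-extension Lemma to obtain the ultrafilters, and derive (ii) from (i) by unwinding the definitions of $s$ and $C_{R}$. The only difference is that you spell out the monotonicity step and the trivial converse, which the paper leaves implicit.
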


\begin{proof} For (i) define first the filters generated by $a$ and $b$:
$[a)=\{c:a\leq c\}$ and $[b)=\{c:b\leq c\}$. Second, $aCb$ implies
$[a)R[b)$ and then apply the $R$-extension Lemma
\ref{R-extension-lemma}. Condition (ii) follows from
(i).\end{proof}

\begin{lemma}\label{canonical-lemma-2} {\rm \cite{DuVak2007}} {\bf Canonical Lemma 2.}
Let $A=(B,C)$ be a precontact algebra. Then:

(i) $R$ is a symmetric relation in $Ult(A)$ iff  $C$ satisfies
the axiom (C4).

(ii) $R$ is a reflexive relation in $Ult(A)$ iff $C$ satisfies
the axiom (C5).

(iii) $R$ is transitive relation in $Ult(A)$ iff $C$ satisfies the
Efremovich axiom (CE) $a\overline{C}b \Rightarrow (\exists
c)(a\overline{C}c$ and $c^{*}\overline{C}b$.
\end{lemma}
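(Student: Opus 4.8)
The plan is to prove each of the three biconditionals by its two implications separately, exploiting the fact that in every case one direction is an essentially direct computation while the other is supplied by the witnessing ultrafilters of Canonical Lemma \ref{canonical-lemma-1}(i). The single device used throughout is that an ultrafilter is a \emph{prime} filter: for each $c$ either $c\in V$ or $c^{*}\in V$, and finite products of members of $V$ stay in $V$ and are nonzero. I would unfold $URV$ always as its definition $(\forall a,b)(a\in U\ \&\ b\in V\Rightarrow aCb)$.

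For the implications from the algebraic axiom to the relational property I would argue straight from this definition. For (i), assuming (C4): if $URV$ and $a\in V$, $b\in U$, then $b\in U,a\in V$ give $bCa$, and (C4) gives $aCb$, so $VRU$. For (ii), assuming (C5): given an ultrafilter $U$ and $a,b\in U$ we have $a.b\in U$, hence $a.b\not=0$, hence $aCb$, so $URU$. The converses use Canonical Lemma \ref{canonical-lemma-1}(i): if $R$ is symmetric and $aCb$, choose $U,V$ with $URV$, $a\in U$, $b\in V$, so $VRU$ and then $bCa$; if $R$ is reflexive and $a.b\not=0$, extend $[a.b)$ to an ultrafilter $U$ by the Extension Lemma \ref{separation lemma for filters and ideals}(ii), and $a,b\in U$ with $URU$ yields $aCb$. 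The implication (CE)$\Rightarrow$ transitivity is again direct and uses primeness of the middle ultrafilter: from $URV$, $VRW$, $a\in U$, $b\in W$, suppose $a\overline{C}b$; (CE) supplies $c$ with $a\overline{C}c$ and $c^{*}\overline{C}b$, and since $c\in V$ or $c^{*}\in V$ we obtain $aCc$ (from $a\in U,c\in V$) or $c^{*}Cb$ (from $c^{*}\in V,b\in W$), a contradiction in either case; hence $aCb$ and $URW$.

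I expect transitivity$\Rightarrow$(CE) to be the main obstacle, since it forces us to \emph{manufacture} the witness $c\in B$ rather than merely test membership in a given ultrafilter. I would argue by contraposition: assume $a\overline{C}b$ while (CE) fails, i.e. for every $c$ either $aCc$ or $c^{*}Cb$, equivalently there is no $c$ with $a\overline{C}c$ and $c^{*}\overline{C}b$. Working with the generated filters $[a)$ and $[b)$ and using (C2), the ideals of Lemma \ref{canonical relation} specialise to $I^{C}_{1}([a))=\{d:a\overline{C}d\}$ and $I^{C}_{2}([b))=\{d:d\overline{C}b\}$. Let $J$ be the ideal they generate. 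Then $1\in J$ would force $i_{1}+i_{2}=1$ with $i_{1}\in I^{C}_{1}([a))$, $i_{2}\in I^{C}_{2}([b))$; putting $c=i_{1}$ gives $c^{*}\le i_{2}$, so $c^{*}\in I^{C}_{2}([b))$, i.e. $a\overline{C}c$ and $c^{*}\overline{C}b$, which is precisely the configuration excluded by the failure of (CE). Hence $J$ is a proper ideal.

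From here the remaining steps are routine. The Separation Lemma \ref{separation lemma for filters and ideals}(i), applied to the filter $\{1\}$ and the proper ideal $J$, yields an ultrafilter $V$ with $V\cap J=\varnothing$, so $V$ avoids both generating ideals $I^{C}_{1}([a))$ and $I^{C}_{2}([b))$. By Lemma \ref{canonical relation}(i),(ii) this reads $[a)RV$ and $VR[b)$; the $R$-extension Lemma \ref{R-extension-lemma} then extends $[a)$ to an ultrafilter $U\ni a$ with $URV$ and $[b)$ to an ultrafilter $W\ni b$ with $VRW$. Transitivity of $R$ gives $URW$, and since $a\in U$, $b\in W$ this forces $aCb$, contradicting $a\overline{C}b$. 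The one point deserving care is the properness of $J$: that is exactly where the hypothesis ``(CE) fails'' is consumed, and everything else reduces to the already-established separation, extension, and canonical-relation lemmas.
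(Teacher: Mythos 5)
Your proof is correct and follows essentially the same route as the paper: for the hard direction of (iii) you manufacture the middle ultrafilter $V$ by showing that the failure of (CE) forces a certain ideal to be proper, which is exactly the paper's argument in dual form (the paper takes the sum of the filters $F^{C}_{1}([a))$ and $F^{C}_{2}([b))$ and uses parts (i$'$),(ii$'$) of the technical lemma, while you take the ideal generated by $I^{C}_{1}([a))$ and $I^{C}_{2}([b))$ and use parts (i),(ii) together with the Separation Lemma). Your treatments of (i), (ii), and the easy direction of (iii) are the standard arguments and are correct; the paper omits (i) and (ii) entirely, so your proposal is in fact slightly more complete.
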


\begin{proof} We will demonstrate only the proof of (iii).

\textbf{Proof of $(\Longrightarrow)$}. Suppose that $R$ is a
transitive relation. We will prove (CE). Suppose $a\overline{C}b$
and in order to obtain a contradiction suppose that $(\exists
c)(a\overline{C}c$ and $c^{*}\overline{C}b)$ is not true. We will
show that there are ultrafilters $U, V$ and $W$ such that $URV$,
$VRW$, but $U\overline{R}W$  which contradicts the assumption on
transitivity of $R$.

Let $[a)=_{def}\{c: a\leq c\}$ and $[b)=_{def}\{b: b\leq c\}$ and
define (see Lemma \ref{canonical
relation}):$\Gamma=F^{C}_{1}([a))\oplus F^{C}_{2}([b))$. $\Gamma$
is a proper  filter containing $F^{C}_{1}([a))$ and
$F^{C}_{2}([b))$. If we assume that $0\in \Gamma$, then there
is a $c$ such $c^{*}\in F^{C}_{1}([a))$ and $c\in F^{C}_{2}([b))$.
This implies that $a\overline{C}c$ and $c^{*}\overline{C}b$
contrary to the assumption that there is no such  $c$. So
$\Gamma$ is a propper filter and  can be extended into an ultrafilter $V$ such that
$F^{C}_{1}([a))\subseteq V$ and $F^{C}_{2}([b))\subseteq V$. By
Lemma \ref{canonical relation}) (i') and (ii') we obtain $[a)RV$
and $VR[b)$. By Lemma \ref{R-extension-lemma} extend $[a)$ and
$[b)$ to  ultrafilters $U$ and $W$ such that $URV$ and $VRW$,
$a\in U$ and $b\in W$. But by assumption we have $a\overline{C}b$
which shows that $U\overline{R}W$ - the desired contradiction.

\textbf{Proof of $(\Longleftarrow)$}. Suppose that (CE) holds and
for the sake of contradiction that $R$ is not transitive. Then
there  exist ultrafilters $U, V$ and $W$ such that $URV$, $VRW$, but
$U\overline{R}W$. So, there  exist $a\in U$ and $b\in W$ such that
$a\overline{C}b$. By (CE) there exists $c$ such that
$a\overline{C}c$ and $c^{*}\overline{C}b$. We have two cases for
$c$:

\textbf{Case 1}: $c\in V$. But $a\in U$ and $URV$, so $aCc$ - a
contradiction with $a\overline{C}c$.

\textbf{Case 2}: $c\not\in V$, so $c^{*}\in V$. But $b\in W$ and
$VRW$ imply $c^{*}Cb$ - a contradiction with $c^{*}\overline{C}b$.
\end{proof}

The following lemma will be used later on. It is the canonical
analog of Lemma \ref{composition axioms lemma} concerning algebras
with several precontact relations.

\begin{lemma}\label{canonical-lemma-3} {\bf Canonical Lemma 3.}
Let $A=(B,C_{1},C_{2})$ be a Boolean algebra with two precontact
relations $C_{1}$ and $C_{2}$ and let $R_{1}$ and $R_{2}$ be their
canonical relations in the canonical structure  $(Ult(A),
R_{1},R_{2})$. Then the following conditions are true:

(i) $A$ satisfies the condition

$(C_{1},C_{2})$
$a\overline{C}_{1}b\Rightarrow (\exists c)(a\overline{C}_{1}c$ and
$c^{*}\overline{C}_{2}b)$ iff

$(Ult(A), R_{1},R_{2})$ satisfies  the condition

$(R_{1}\circ R_{2}\subseteq R_{1})$ $UR_{1}V$ and
$VR_{2}W \Rightarrow UR_{1}W$.

(ii) $A$ satisfies the condition

$(C_{2},C_{1})$
$a\overline{C}_{1}b\Rightarrow (\exists c)(a\overline{C}_{2}c$ and
$c^{*}\overline{C}_{21}b)$ iff

$(Ult(A), R_{1},R_{2})$ satisfies
the condition

$(R_{2}\circ R_{1}\subseteq R_{1})$ $UR_{2}V$ and
$VR_{1}W \Rightarrow UR_{1}W$.

\end{lemma}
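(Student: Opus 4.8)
The plan is to prove (i) by imitating the argument for Lemma~\ref{canonical-lemma-2}(iii), the only new feature being that the single-relation machinery of the Technical Lemma~\ref{canonical relation} and of the $R$-extension Lemma~\ref{R-extension-lemma} must be applied \emph{separately} to the two precontacts: to $C_{1}$ whenever I deal with $R_{1}$, and to $C_{2}$ whenever I deal with $R_{2}$. The interaction between the two relations then enters only at the final composition step. I would write out (i) in full and indicate the symmetric modifications for (ii) at the end.

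For the direction $(C_{1},C_{2})\Rightarrow(R_{1}\circ R_{2}\subseteq R_{1})$ I would argue by contraposition, exactly as in the $(\Longleftarrow)$ part of Lemma~\ref{canonical-lemma-2}(iii). Assume $UR_{1}V$ and $VR_{2}W$ but, for contradiction, $U\overline{R}_{1}W$. By the definition of the canonical relation there are $a\in U$ and $b\in W$ with $a\overline{C}_{1}b$. Applying the axiom $(C_{1},C_{2})$ produces a witness $c$ with $a\overline{C}_{1}c$ and $c^{*}\overline{C}_{2}b$, and I split on whether $c\in V$. If $c\in V$, then $a\in U$ together with $UR_{1}V$ forces $aC_{1}c$, contradicting $a\overline{C}_{1}c$; if $c\notin V$, then $c^{*}\in V$ (as $V$ is an ultrafilter), and $b\in W$ together with $VR_{2}W$ forces $c^{*}C_{2}b$, contradicting $c^{*}\overline{C}_{2}b$. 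Either way we reach a contradiction, so $UR_{1}W$.

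For the converse $(R_{1}\circ R_{2}\subseteq R_{1})\Rightarrow(C_{1},C_{2})$ I would mirror the $(\Longrightarrow)$ part of Lemma~\ref{canonical-lemma-2}(iii). Assume $a\overline{C}_{1}b$ and, for contradiction, that no $c$ satisfies $a\overline{C}_{1}c$ and $c^{*}\overline{C}_{2}b$. Form the \emph{mixed} filter $\Gamma=F^{C_{1}}_{1}([a))\oplus F^{C_{2}}_{2}([b))$, where $F^{C_{1}}_{1}$ is taken from Lemma~\ref{canonical relation}(i') applied to $C_{1}$ and $F^{C_{2}}_{2}$ from Lemma~\ref{canonical relation}(ii') applied to $C_{2}$, with $[a)=\{c:a\leq c\}$ and $[b)=\{c:b\leq c\}$. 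The crucial step is checking that $\Gamma$ is proper: if $0\in\Gamma$ there is a $c$ with $c^{*}\in F^{C_{1}}_{1}([a))$ and $c\in F^{C_{2}}_{2}([b))$, and unwinding the two definitions (using the monotonicity axiom (C2) of each precontact to pass from the generators of $[a)$ and $[b)$ down to $a$ and $b$) yields exactly $a\overline{C}_{1}c$ and $c^{*}\overline{C}_{2}b$, contradicting the assumption. Hence $\Gamma$ is proper and extends to an ultrafilter $V$ with $F^{C_{1}}_{1}([a))\subseteq V$ and $F^{C_{2}}_{2}([b))\subseteq V$; by Lemma~\ref{canonical relation}(i') for $C_{1}$ and (ii') for $C_{2}$ this gives $[a)R_{1}V$ and $VR_{2}[b)$. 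Using Lemma~\ref{R-extension-lemma} (equivalently the Separation Lemma together with Lemma~\ref{canonical relation}) I would extend $[a)$ to an ultrafilter $U\ni a$ with $UR_{1}V$ and $[b)$ to an ultrafilter $W\ni b$ with $VR_{2}W$ ($V$ already being an ultrafilter, it is left unchanged). Finally the hypothesis $R_{1}\circ R_{2}\subseteq R_{1}$ gives $UR_{1}W$, so from $a\in U$ and $b\in W$ we get $aC_{1}b$, contradicting $a\overline{C}_{1}b$.

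Part (ii) is proved by the same method; the only changes are that the mixed filter becomes $F^{C_{2}}_{1}([a))\oplus F^{C_{1}}_{2}([b))$ and the composition step uses $R_{2}\circ R_{1}\subseteq R_{1}$, the outer/target relation $R_{1}$ (resp.\ $C_{1}$) staying fixed. The step I expect to be the main obstacle is the properness of the mixed filter $\Gamma$: this is the only place where the two precontacts genuinely interact, and it is exactly there that the precise form of the compositional axiom must be matched to the ``mixed'' construction $F^{C_{1}}_{1}([a))\oplus F^{C_{2}}_{2}([b))$. Once that bookkeeping is carried out correctly, the remainder is a routine combination of the separation and extension lemmas already established for a single precontact relation.
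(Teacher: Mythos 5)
Your proposal is correct and follows exactly the route the paper intends: the paper's own proof of this lemma is just the remark that it is ``similar to the proof of condition (iii) of Lemma \ref{canonical-lemma-2}'', and your write-up is precisely that adaptation, with the mixed filter $F^{C_{1}}_{1}([a))\oplus F^{C_{2}}_{2}([b))$ and the separate invocations of Lemma \ref{canonical relation} (i')/(ii') and of the $R$-extension lemma for each precontact carried out correctly.
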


\begin{proof} The proof is similar to the proof of condition (iii) of
\ref{canonical-lemma-2}.\end{proof}

\begin{theorem}\label{discrete-representation}{\bf Relational representation theorem
for  precontact and contact algebras} {\rm \cite{DuVak2007}}. Let
$A=(B,C)$ be a precontact algebra, $(Ult(A),R)$ be the canonical
adjacency space of $A$ and $s$ be the stone embedding. Then:

(i) $s$ is an embedding of $(B,C)$ into the precontact algebra
over the canonical adjacency space  $(Ult(A),R)$.

(ii) If $(B,C)$ is a contact algebra then the precontact algebra
over the canonical adjacency space over $(B,C)$ is a contact
algebra.

\end{theorem}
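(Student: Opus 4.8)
The plan is to split the argument along the two ingredients of a precontact algebra --- the Boolean skeleton and the contact relation --- and to prove (i) and (ii) separately, assembling the canonical lemmas already established above rather than reproving anything from scratch.

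For part (i), I would first recall that $s(a)=\{U\in Ult(A): a\in U\}$ is nothing but the ordinary Stone map, so by the classical Stone representation theorem it is a Boolean-algebra embedding of $B$ into the powerset algebra $B(Ult(A))$: it is a Boolean homomorphism because ultrafilters respect meets, joins and complements, and it is injective because, whenever $a\neq b$, one of $a.b^{*}$ or $a^{*}.b$ is nonzero and the Separation/Extension Lemma (Lemma \ref{separation lemma for filters and ideals}) produces an ultrafilter separating $a$ from $b$, so $s(a)\neq s(b)$. It then remains to verify that $s$ respects the relational structure, i.e. that $aCb$ if and only if $s(a)\,C_{R}\,s(b)$. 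This is exactly Canonical Lemma 1(ii) (Lemma \ref{canonical-lemma-1}), which rests on Canonical Lemma 1(i): $aCb$ iff there exist ultrafilters $U\ni a$, $V\ni b$ with $URV$. Unwinding the set-theoretic definition of the relational contact $C_{R}$ over $(Ult(A),R)$ --- namely $X\,C_{R}\,Y$ iff some $U\in X$, $V\in Y$ satisfy $URV$ --- this equivalence says precisely that $s(a)\,C_{R}\,s(b)$ iff $aCb$. Hence $s$ is simultaneously an injective Boolean homomorphism and a map preserving and reflecting contact, which is what an embedding of precontact algebras means.

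For part (ii), I would invoke the characterizations of the canonical relation. If $(B,C)$ is a contact algebra, then by definition $C$ additionally satisfies (C4) (symmetry) and (C5) ($a.b\neq 0\Rightarrow aCb$). By Canonical Lemma 2 (Lemma \ref{canonical-lemma-2}), axiom (C4) forces $R$ to be symmetric and axiom (C5) forces $R$ to be reflexive, so $(Ult(A),R)$ is an adjacency space in the sense of Galton. Now I apply the relational characterization (Lemma \ref{relational-characterization}): over a reflexive and symmetric relation the induced precontact $C_{R}$ itself satisfies (C4) and (C5), hence is a genuine contact relation. Therefore the precontact algebra over $(Ult(A),R)$ is in fact a contact algebra, as claimed.

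The genuine mathematical weight of this theorem has already been discharged in the preceding lemmas --- above all in the $R$-extension Lemma (Lemma \ref{R-extension-lemma}) and the filter/ideal constructions behind Canonical Lemma 1(i), where one must manufacture ultrafilters $U,V$ standing in the relation $R$ out of a given contact $aCb$. Consequently the theorem itself is essentially a bookkeeping assembly, and I expect no serious obstacle beyond carefully matching the set-theoretic definition of $C_{R}$ over $(Ult(A),R)$ to the ultrafilter criterion supplied by the canonical lemmas.
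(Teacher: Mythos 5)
Your proposal is correct and follows essentially the same route as the paper: the Stone embedding handles the Boolean part, Canonical Lemma 1 gives $aCb \Leftrightarrow s(a)C_{R}s(b)$ for part (i), and Canonical Lemma 2 together with the relational characterization of (C4) and (C5) gives part (ii). The paper's proof is just a one-line citation of these same ingredients, so your more explicit assembly is simply a fleshed-out version of the intended argument.
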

\begin{proof}  The proof follows from Lemma \ref{canonical-lemma-1} and
Lemma
  \ref{canonical-lemma-2} and the fact that $s$ is an isomorphic
  embedding of the Boolean algebra $B$ into the algebra of all subsets
  of $Ult(A)$.\end{proof}

 The above  representation theorem for the case of contact
algebras is not the intended one because the contact is not of
Whiteheadian type, namely sharing a common point. In the next
section we will describe another representation of contact
algebras using topology, which presents an  Whiteheadian  type
contact between regions. As we shall see, the reason is that
ultrafilters as abstract points are not enough to model the
Whiteheadean contact and we need to introduce another kind of
abstract points.

%%%%%%%%%%%%%%%%%%%%%%%%%%%%%%%%%%%%%%%%%%%%%%%%%%%%%%%%%%%%%%%%%%%%%%%%%%%%%%%%%%%%%%%%%%%%%%%%%%%%%%%%%%%%%%%%%%%%%%%%%%%%5
\subsection{Topological representation of contact algebras. Clans.}\label{Section topological representation of CA clans} First
 we will introduce another kind of abstract points in contact algebras called clans.
\begin{definition} \label{definition of clan}{\bf Definition of clan.} {\rm
\cite{DiVak2006}}
Let $A=(B, C)$ be a contact algebra. A subset $\Gamma\subseteq B$
is called a\textbf{ clan} in $(B,C)$ if it satisfies   the following conditions:

(i) $1\in \Gamma$ and $0\not\in \Gamma$,

(ii) It $a\in \Gamma$ and $a\leq b$ then $b\in \Gamma$,

(iii) If $a+b\in \Gamma$ then $a\in \Gamma$ or $b\in \Gamma$

(iv) If $a,b\in \Gamma$ then $aCb$.

$\Gamma$ is a \textbf{maximal clan} if it is a maximal set under the
set inclusion. We denote by $Ult(\Gamma)$ the set of all
ultrafilters contained in $\Gamma$ and by $Clans(A)$ - the set of all
clans of $A$.

Subsets of $B$ satisfying (i), (ii) and (iii) are called
\textbf{grills}. So clans are grills satisfying (iv).
\end{definition}

The above definition  is an algebraic abstraction from an
analogous notion in the  proximity theory (see, for instance,
\cite{Thron}, from where we adopt the name   \emph{clan}).

Let us note that ultrafilters are clans, but there are other clans
and they can be obtained by the following construction.

Let $\sum$ be a nonempty set of ultrafilters of $(B,C)$ such that
if $U,V\in \sum$, then $URV$, where $R$ is the canonical adjacency
relation of $C$ on the set of ultrafilters of $(B,C)$. Such sets of
ultrafilters are called \emph{$R$-cliques}. An $R$-clique is
maximal, if it is a maximal set under the set-inclusion. By the axiom
of choice every $R$-clique is contained in a maximal $R$-clique.
Let $\Gamma$ be the union of all ultrafilters from $\sum$. Then it
can be verified that $\Gamma$ is a clan. Moreover, every clan can
be obtained by this construction from an $R$-clique and there is an obvious
correspondence between maximal cliques and maximal clans. All
these facts about clans  are contained in the following technical
lemma:

 \begin{lemma}\label{clan-lemma1}{\rm \cite{DiVak2006}} \textbf{Clan Lemma.}
(i) Every ultrafilter is a clan.

(ii) The complement of a clan is an ideal.

 (iii) Every clan is contained in a maximal clan (by the Zorn
 Lemma),

 (iv) Let $\sum$ be an    $R$-clique and $\Gamma(\sum)=\bigcup_{\Gamma\in \sum} \Gamma$. Then  $\Gamma(\sum)$ is a clan.

 (v) If $U,V\in Ult(\Gamma)$ then $URV$, so $Ult(\Gamma)$ is an $R$-clique,

 (vi)   If $\Gamma$ is a clan and $a\in \Gamma$ then there is an ultrafilter
 $U\in Ult(\Gamma)$ such that $a\in U$,

 (vii) Let $\Gamma$ be a clan and $\sum$ be the $R$-clique $Ult(\Gamma)$.
  Then $\Gamma=\Gamma(\sum)$, so every clan can be defined by an $R$-clique as in (iv),

 (viii) If $\sum$ is a maximal $R$-clique then $\Gamma(\sum)$ is a maximal clan,

 (ix) If $\Gamma$ is a maximal clan then $Ult(\Gamma)$ is a maximal $R$-clique,

 (x) For all ultrafilters $U,V$: $URV$ iff there exists a (maximal) clan $\Gamma$ such that $U,V\in Ult(\Gamma)$,

 (xi) For all $a,b\in B$: $aCb$ iff there exists a (maximal) clan $\Gamma$ such that $a,b\in \Gamma$,

 (xii) For all $a,b\in B$: $a \not \leq b$ iff there exists clan
 (ultrafilter)
  $\Gamma$ such that $a\in \Gamma$ and $b\not\in \Gamma$.

 \end{lemma}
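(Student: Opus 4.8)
The plan is to prove the twelve parts in dependency order, treating the elementary closure properties first, then the clan/$R$-clique dictionary, and finally harvesting the contact- and separation-characterizations as corollaries. Parts (i)--(iii) are direct verifications against Definition \ref{definition of clan}. For (i), an ultrafilter satisfies conditions (i)--(iii) because it is a proper prime filter, and it satisfies clan-condition (iv) since $a,b\in U$ forces $a.b\in U$, hence $a.b\neq 0$, so $aCb$ by axiom (C5). For (ii), upward closure of $\Gamma$ gives downward closure of its complement and the grill condition (iii) gives closure under joins, so $B\setminus\Gamma$ is an ideal. For (iii) I would run Zorn's Lemma on the poset of clans containing $\Gamma$, checking that a union of a chain of clans again satisfies (i)--(iv); the only nonroutine point, condition (iv), holds because any two elements of the union already lie together in one member of the chain.

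The core of the lemma is the correspondence between clans and $R$-cliques. For (iv), if $\Sigma$ is an $R$-clique and $\Gamma(\Sigma)=\bigcup_{U\in\Sigma}U$, then conditions (i)--(iii) follow because each $U\in\Sigma$ is a proper prime filter, while condition (iv) is exactly the defining property of $R$: given $a\in U$, $b\in V$ with $U,V\in\Sigma$ we have $URV$, hence $aCb$. Part (v) is the reverse reading: if $U,V\in Ult(\Gamma)$ then for every $a\in U$, $b\in V$ we have $a,b\in\Gamma$, so $aCb$ by clan-condition (iv), which is precisely $URV$. The one genuinely nontrivial step is (vi): every $a\in\Gamma$ lies in some ultrafilter contained in $\Gamma$. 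Here I would use (ii) to view $I=B\setminus\Gamma$ as an ideal, note that the principal filter $[a)=\{c:a\leq c\}$ is contained in $\Gamma$ by upward closure and hence disjoint from $I$, and then apply the Separation Lemma \ref{separation lemma for filters and ideals}(i) to obtain an ultrafilter $U$ with $[a)\subseteq U$ and $U\cap I=\varnothing$; the latter says exactly $U\subseteq\Gamma$, so $U\in Ult(\Gamma)$ and $a\in U$. This also shows $Ult(\Gamma)\neq\varnothing$ (take $a=1$), completing (v), and it makes (vii) immediate, since $\bigcup Ult(\Gamma)\subseteq\Gamma$ always holds and (vi) supplies the reverse inclusion. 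I expect (vi) to be the main obstacle, as everything downstream is bookkeeping built on it.

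The maximality statements (viii) and (ix) then follow from the fact that $\Sigma\mapsto\Gamma(\Sigma)$ and $\Gamma\mapsto Ult(\Gamma)$ are inclusion-preserving and, by (vii) together with (iv)--(v), mutually inverse on clans and on $R$-cliques. To convert a strict enlargement of one side into a strict enlargement of the other I would use that $C$ is a contact relation, so by Canonical Lemma \ref{canonical-lemma-2} the canonical $R$ is reflexive (from (C5)) and symmetric (from (C4)): any ultrafilter $U$ sitting inside a clan $\Gamma'\supsetneq\Gamma(\Sigma)$ is $R$-related to all members of $\Sigma$ by (v), so $\Sigma\cup\{U\}$ is again an $R$-clique, contradicting maximality of $\Sigma$ and yielding (viii); the argument for (ix) is the symmetric one, pushing an ultrafilter outside $Ult(\Gamma)$ through $\Gamma(\Sigma')$.

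Finally (x)--(xii) are corollaries. Part (x) combines (v) with the observation that, by reflexivity and symmetry of $R$, a single pair $URV$ already forms the $R$-clique $\{U,V\}$ (optionally extended to a maximal one by the axiom of choice), whose union is a (maximal) clan containing both. Part (xi) feeds the witnessing ultrafilters produced by Canonical Lemma \ref{canonical-lemma-1}(i) into (x): $aCb$ yields $U,V$ with $URV$, $a\in U$, $b\in V$, whence a clan $\Gamma$ with $a,b\in\Gamma$, and the converse is just clan-condition (iv). Part (xii) is the usual ultrafilter-separation fact, obtained by extending the proper filter generated by $a.b^{*}\neq 0$ to an ultrafilter via the Extension Lemma \ref{separation lemma for filters and ideals}(ii), with the converse following from upward closure of clans.
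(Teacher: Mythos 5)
Your proposal is correct and, on the two parts the paper actually proves in detail --- (vi) via the Separation Lemma applied to $[a)$ and the ideal $B\setminus\Gamma$, and (xi) via Canonical Lemma \ref{canonical-lemma-1} together with the clique $\{U,V\}$ and part (iv) --- it coincides with the paper's argument. The remaining parts, which the paper leaves to the reader, are filled in by routine verifications in the same spirit, and your ordering (establishing (vi) first, noting it gives $Ult(\Gamma)\neq\varnothing$, then deriving the clan/$R$-clique correspondence and the maximality and characterization statements from it) is sound.
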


\begin{proof} We invite the reader to prove the lemma by himself or to
consult \cite{DiVak2006}. As an example we will give proofs only
of some parts of the lemma in order to connect it with the
discrete representation of contact algebras.

(vi) Let $\Gamma$ be a clan and $a\in \Gamma$. Then obviously
$[a)\subseteq \Gamma$ and consequently $[a)\cap
\overline{\Gamma}=\varnothing$. But $[a)$ is a filter,
$\overline{\Gamma}$ is an ideal (by (ii)) and  by the Separation
Theorem for filters and ideals there exists an ultrafilter $U$
such that $[a)\subseteq  U$ and $U\cap
\overline{\Gamma}=\varnothing$. This implies that $a\in U$ and
$U\subseteq \Gamma$.

(ix) $(\Rightarrow)$. Let $aCb$. Then by Lemma
\ref{canonical-lemma-1} there exist ultrafilters $U, V$ such that
$URV$, $a\in U$ and $b\in V$. Since $R$ is a reflexive and
symmetric relation, then $\sum=\{U,V\}$ is a clique and by (iv)
$\Gamma=U\cup V$ is a clan such that $a,b\in \Gamma$.

(ix) $(\Leftarrow)$. This direction  follows by the definition of
clan.\end{proof}

\begin{lemma}\label{clan-lema2} {\rm \cite{DiVak2006}} Let $\Gamma$ be a clan in a
contact algebra $A=(B,C)$. Then  the following holds for any $a\in
B$:

$a^{*}\in \Gamma$ iff $(\forall b\in B)(a+b=1 \Rightarrow b\in
\Gamma)$.

\end{lemma}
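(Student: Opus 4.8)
The plan is to reduce both directions to a single Boolean identity governing the hypothesis $a+b=1$. The key observation is that in any Boolean algebra $a+b=1$ holds if and only if $a^{*}\leq b$: taking complements gives $a^{*}\cdot b^{*}=0$, which is equivalent to $a^{*}\leq b$. With this in hand, the condition $(\forall b)(a+b=1\Rightarrow b\in\Gamma)$ says precisely that every element lying above $a^{*}$ belongs to $\Gamma$, so the whole statement is really about the upward closure of $\Gamma$ at the element $a^{*}$. I therefore expect to use only the monotonicity clause (ii) of the definition of clan; the contact-specific clause (iv) should play no role.

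For the implication from left to right, I would assume $a^{*}\in\Gamma$ and fix an arbitrary $b$ with $a+b=1$. By the identity above this gives $a^{*}\leq b$, and then clause (ii) of Definition~\ref{definition of clan} (if $x\in\Gamma$ and $x\leq y$, then $y\in\Gamma$), applied with $x=a^{*}$ and $y=b$, yields $b\in\Gamma$, as desired.

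For the converse I would simply instantiate the hypothesis $(\forall b)(a+b=1\Rightarrow b\in\Gamma)$ at the particular element $b=a^{*}$. Since $a+a^{*}=1$ always holds, the premise is met, and the hypothesis delivers $a^{*}\in\Gamma$ directly.

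I do not anticipate a genuine obstacle here: the entire content is the translation of $a+b=1$ into $a^{*}\leq b$, together with the remark that $a^{*}$ is its own witness for the universally quantified premise. The only point deserving a moment's care is checking that the quantified condition is exactly upward closure from $a^{*}$ and nothing stronger, but the equivalence $a+b=1\iff a^{*}\leq b$ settles this at once.
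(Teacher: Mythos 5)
Your proof is correct, and since the paper's own proof is simply ``by a direct verification,'' your argument is exactly the intended one, spelled out: the equivalence $a+b=1 \iff a^{*}\leq b$ reduces the forward direction to clause (ii) of Definition~\ref{definition of clan}, and the converse follows by instantiating $b=a^{*}$. Nothing further is needed.
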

\begin{proof} By a direct verification.\end{proof}

The topological representation theory of contact algebras is based
on the following construction taken from \cite{DiVak2006}. Let
$A=(B,C)$ be a contact algebra and let $X=Clans(A)$ and for $a\in
B$, define $g(a)=_{def}\{\Gamma\in Clans(B): a\in \Gamma\}$. We
introduce a topology in $X$ taking the set $\mathbf{B}=\{g(a):a\in
B\}$ as the base of closed sets in $X$. The obtained topological
space $X$ is called the canonical topological space of $(B,C)$.

\begin{lemma}\label{properties of g}{\rm \cite{DiVak2006}}

(i) $g(0)=\varnothing$, $g(1)=X$,

(ii) $g(a+b)=g(a)\cup g(b)$,

(iii) $a\leq b$ iff $g(a)\subseteq g(b)$.

(iv)  $a=1$ iff $g(a)=X$.

(v)  $g(a^{*})=Cl_{X}(X \smallsetminus g(a))=Cl_{X}-g(a)$

(vi) $g(a)$ is a regular closed subset of $X$.

\end{lemma}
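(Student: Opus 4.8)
The plan is to dispatch parts (i)--(iv) directly from the clan axioms and the Clan Lemma, then to invest the real effort in (v), from which (vi) follows formally.

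For (i), every clan contains $1$ and omits $0$ by clause (i) of Definition \ref{definition of clan}, so $g(1)=X$ and $g(0)=\varnothing$. For (ii), the inclusion $g(a)\cup g(b)\subseteq g(a+b)$ is the upward-closure clause (ii) of the clan definition (since $a,b\leq a+b$), while $g(a+b)\subseteq g(a)\cup g(b)$ is exactly the grill clause (iii). For (iii), $a\leq b$ gives $g(a)\subseteq g(b)$ again by upward closure; conversely, if $a\not\leq b$ then Clan Lemma \ref{clan-lemma1}(xii) supplies a clan $\Gamma$ with $a\in\Gamma$ and $b\notin\Gamma$, i.e.\ $\Gamma\in g(a)\setminus g(b)$, so $g(a)\not\subseteq g(b)$. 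Part (iv) is then immediate: $g(a)=X=g(1)$ forces $1\leq a$ by (iii), hence $a=1$, and the converse is (i).

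For (v) I would first record the description of the topology. Because $g(a+b)=g(a)\cup g(b)$ by (ii) while $g(0)=\varnothing$ and $g(1)=X$ by (i), the family $\mathbf{B}=\{g(b):b\in B\}$ is closed under finite unions and contains $\varnothing,X$; hence it is a genuine base of closed sets, and the closed sets are precisely the arbitrary intersections of members of $\mathbf{B}$. A short verification then yields, for any $S\subseteq X$,
\[
Cl_{X}(S)=\bigcap\{g(b):S\subseteq g(b)\},
\]
since the right-hand side is visibly a closed set containing $S$ (so it contains $Cl_{X}(S)$), while any closed set containing $S$ is an intersection of base sets, each of which then contains $S$ and hence appears in the right-hand intersection. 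I would apply this with $S=X\setminus g(a)$. The key translation is that $X\setminus g(a)\subseteq g(b)$ means $g(a)\cup g(b)=X$, i.e.\ $g(a+b)=X$, which by (iv) is $a+b=1$, i.e.\ $a^{*}\leq b$, i.e.\ (by (iii)) $g(a^{*})\subseteq g(b)$. Thus the base sets containing $X\setminus g(a)$ are exactly those containing $g(a^{*})$, and their intersection is $g(a^{*})$ itself (take $b=a^{*}$); hence $Cl_{X}(X\setminus g(a))=g(a^{*})$.

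One point deserves care above --- this is the main obstacle --- namely whether the inclusion $X\setminus g(a)\subseteq g(a^{*})$ really holds, i.e.\ whether $a\notin\Gamma$ implies $a^{*}\in\Gamma$ for a clan $\Gamma$. This is precisely the grill clause (iii) applied to $a+a^{*}=1\in\Gamma$, giving $a\in\Gamma$ or $a^{*}\in\Gamma$; note that clans need not be ultrafilters, so this weaker primeness is all that is available, and it is exactly what the argument needs (it guarantees $X\setminus g(a)\subseteq g(a^{*})$ even though $g(a^{*})$ may strictly contain $X\setminus g(a)$). Finally, for (vi) I would apply (v) twice: from $g(a^{*})=Cl_{X}(X\setminus g(a))$ and the identity $Int_{X}(\cdot)=X\setminus Cl_{X}(X\setminus(\cdot))$ we get $X\setminus g(a^{*})=Int_{X}(g(a))$, and then (v) applied to $a^{*}$ gives $g(a)=g(a^{**})=Cl_{X}(X\setminus g(a^{*}))=Cl_{X}(Int_{X}(g(a)))$. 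Since each $g(a)$ is already closed (it is a base element), this computation shows exactly the regularity $g(a)=Cl_{X}(Int_{X}(g(a)))$, completing (vi).
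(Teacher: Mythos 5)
Your proof is correct and follows essentially the same route as the paper's: parts (i)--(iv) from the clan/grill axioms and Clan Lemma \ref{clan-lemma1}(xii), part (v) by identifying $Cl_{X}(X\smallsetminus g(a))$ with the intersection of the basic closed sets $g(b)$ containing $X\smallsetminus g(a)$ and translating that condition into $a+b=1$, and part (vi) by applying (v) twice. The only cosmetic difference is that where the paper invokes Lemma \ref{clan-lema2} for the equivalence $a^{*}\in\Gamma \Leftrightarrow (\forall b)(a+b=1\Rightarrow b\in\Gamma)$, you re-derive its content inline (including the needed inclusion $X\smallsetminus g(a)\subseteq g(a^{*})$ via the grill clause), which is a sound substitute.
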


\begin{proof} (i) and (ii) follow directly from the definition of clan,
(iii) follows from Lemma \ref{canonical-lemma-1} (xii) and (iv)
follows from (iii). (v) follows from the following sequence of
equivalencies:

for any clan $\Gamma$: $\Gamma \in g(a^{*})$ iff $a^{*}\in \Gamma$
iff (by Lemma \ref{clan-lema2}) $(\forall b\in B)(a+b=1
\Rightarrow b\in \Gamma)$ iff (by (ii) and (iv)) $(\forall b\in
B)(g(a)\cup g(b)=X\Rightarrow  \Gamma\in g(b))$ iff $(\forall b\in
B)(X\smallsetminus g(a)\subseteq g(b)\Rightarrow \Gamma\in g(b))$
iff $Cl_{X}(X\smallsetminus g(a))=Cl_{X}-g(a)$.

For (vi) By (v)
$g((a^{*})^{*})=Cl_{X}-Cl_{X}-g(a)=Cl_{X}(Int_{X}(a))$.\end{proof}

\begin{theorem}\label{topological-representation-of-contact-algebras}
 {\bf Topological representation theorem for contact algebras}     {\rm \cite{DiVak2006} (see also \cite{Vak2007}).}
  (i) The mapping $g$   is an embedding from $(B,C)$ into the canonical contact algebra $RC(X)$ of $(B,C)$.

  (ii) The canonical space of $(B,C)$ is T0, compact and
  semiregular.
\end{theorem}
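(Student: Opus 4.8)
The plan is to treat parts (i) and (ii) separately, deriving almost everything from the properties of $g$ collected in Lemma \ref{properties of g} and the Clan Lemma \ref{clan-lemma1}, with the only substantial work being compactness.

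For (i), I would first note that $g$ lands in $RC(X)$ by Lemma \ref{properties of g}(vi), so it makes sense to compare it with the canonical contact algebra $RC(X)$. Injectivity and order-reflection are immediate from Lemma \ref{properties of g}(iii): $a\le b$ iff $g(a)\subseteq g(b)$, hence $a=b$ iff $g(a)=g(b)$. That $g$ is a Boolean homomorphism follows by assembling the pieces of Lemma \ref{properties of g}: $g(0)=\varnothing$, $g(1)=X$, $g(a+b)=g(a)\cup g(b)$ and $g(a^*)=Cl_X(-g(a))=g(a)^*$ reproduce the constants, join and complement of $RC(X)$, and meet is then automatically preserved because $a.b=(a^*+b^*)^*$ while $g$ commutes with $+$ and $*$, so one only needs the Boolean identity $(g(a)^*\cup g(b)^*)^*=g(a).g(b)$ holding in $RC(X)$. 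Preservation of contact is exactly Lemma \ref{clan-lemma1}(xi): $g(a)\cap g(b)\neq\varnothing$ says there is a clan containing both $a$ and $b$, which holds iff $aCb$; since the contact of $RC(X)$ is nonempty intersection, this reads $g(a)\,C_X\,g(b)$ iff $aCb$, completing (i).

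For the topological properties in (ii), $T_0$ is immediate: distinct clans $\Gamma_1\neq\Gamma_2$ differ in some element, say $a\in\Gamma_1\smallsetminus\Gamma_2$, and then the open set $X\smallsetminus g(a)$ contains $\Gamma_2$ but not $\Gamma_1$. Semiregularity I would obtain by a short trick: the complements $\{X\smallsetminus g(b):b\in B\}$ form a base of open sets, and since $b\mapsto b^*$ is a bijection of $B$ we may rewrite each such set as $X\smallsetminus g((b^*)^*)=Int_X(g(b^*))$ using Lemma \ref{properties of g}(v). But $g(b^*)$ is regular closed by Lemma \ref{properties of g}(vi), so its interior is regular open; hence every member of this open base is already regular open, the regular open sets therefore form a base, and this is exactly semiregularity.

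The main obstacle is compactness, which I would prove in its closed-set (finite intersection) form: since $\{g(a):a\in B\}$ is a base of closed sets closed under finite unions ($g(a)\cup g(b)=g(a+b)$), it suffices to show that any subfamily $\{g(a_i):i\in I\}$ with the finite intersection property has $\bigcap_i g(a_i)\neq\varnothing$. That property says precisely that the set $D=\{a_i:i\in I\}$ has the feature that each of its finite subsets is contained in some clan; I would then apply Zorn's Lemma to enlarge $D$ to a set $\Gamma\supseteq D$ maximal with this feature (preserved under unions of chains since only finite subsets are tested). The crux is to verify that such a maximal $\Gamma$ is itself a clan: upward closure, the presence of $1$, the absence of $0$, and condition (iv) are straightforward from maximality together with the fact that any two elements of $\Gamma$ lie in a common clan; the grill property (iii) is the delicate point where maximality is used essentially — if, with $a+b\in\Gamma$, neither $a$ nor $b$ could be added, one collects the finite witnesses obstructing each, places them together with $a+b$ inside a single clan, and derives a contradiction from the grill property of that clan. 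Once $\Gamma$ is known to be a clan, $\Gamma\in\bigcap_i g(a_i)$, which yields compactness and finishes (ii).
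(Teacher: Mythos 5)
Your proof of part (i) is correct and is essentially the paper's own argument: the Boolean embedding is read off from Lemma \ref{properties of g} and preservation of contact is exactly Lemma \ref{clan-lemma1} (xi); the only cosmetic difference is that you spell out why preservation of $+$, $*$ and the constants suffices for the meet, which the paper leaves implicit.

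For part (ii) the comparison is one-sided: the paper explicitly proves only (i) and defers the topological properties to the cited reference, whereas you supply full arguments, and they are sound. The $T_0$ argument (separate two distinct clans by the open complement of some $g(a)$ containing one but not the other) and the semiregularity argument (the open base $\{X\smallsetminus g(b)\}$ consists of regular open sets, since each $g(b)$ is regular closed by Lemma \ref{properties of g} (vi), or via your rewriting $X\smallsetminus g(b)=Int_X(g(b^{*}))$ using part (v)) are both correct. The substantial piece is compactness, and your argument works: reducing to families of basic closed sets with the finite intersection property is the standard (Alexander-type) reduction for a closed base, the property ``every finite subset lies in a common clan'' is of finite character so Zorn applies, and your verification that a maximal such set is a clan is right --- in particular the delicate grill clause, where you combine the two finite obstructions with $a+b$ inside a single clan and use that clan's grill property to force a contradiction. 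So the net assessment is: correct throughout, matching the paper on (i), and going beyond the paper's text on (ii) by actually proving what the paper only cites.
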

Note that a topological space is semiregular if it has a base of
regular-closed sets.

\begin{proof} We will give a proof only of (i). By Lemma \ref{properties
of g} we see that $g$  isomorphically embeds $B$ into $RC(X)$
where $X=Clans(A)$ and the topology is determined by the closed
basis $\{g(a): a\in B\}$. It remains to show that $g$ preserves
contact:

 $aCb$ iff (by
Lemma \ref{clan-lemma1} (ix)) there exists a clan $\Gamma$ such
that $a\in \Gamma$ and $b\in \Gamma $ iff there exists a clan
$\Gamma$ such that $\Gamma \in g(a)$ and $\Gamma \in g(b)$ iff
$g(a)\cap g(b)\not= \emptyset$, i.e. $g(a)$ and $g(b)$ have a
common point.\end{proof}

Let us note that in the above representation theorem  two kinds of
abstract points have been used: ultrafilters and clans which are
not ultrafilters (ultrafilters as clans are used in the Clan Lemma
(xii)). Note that in the relational representation (Theorem
\ref{discrete-representation}) contact is chracterized by the
adjacency relations between ultrafilters. It is possible that two
regions are in a relational contact and not  share an ultrafilter.
By adding more points (namely clans) this situation is excluded
because we can find a clan-like point in both regions. We may
consider ultrafilter points as simple \emph{atoms}. Since clans
are unions of adjacent ultrafilters, this suggests to consider
clans as \emph{molecules} composed by atoms. It is interesting to
know how these two kinds of points are distributed in the set
$g(a)$ of points associated with a given region $a$. For instance
it can be proved that  the set $BP(a)=g(a)\smallsetminus Int(g(a)$
of boundary points of $g(a)$ do not contain any ultrafilter point.
In some sense the above facts  throw a new light   on the ancient
atomistic view of space.

\begin{remark}\label{Remark grills are Cmax-clans} Let us note that the clans corresponding to the largest contact $C_{max}$ (which can be named $C_{max}$-clans ) are just the gills and that there is only one maximal grill - just the union of all ultrafilters. Analogously the clans and maximal clans corresponding to the smallest contact, the overlap relation $O$ in a Boolean algebra ( O-clans ) are ultrafilters (see
 Example 3.1 in \cite{DiVak2006}).
\end{remark}

%%%%%%%%%%%%%%%%%%%%%%%%%%%%%%%%%%%%%%%%%%%%%%%%%%%%%%%%%%%%%%%%%%%%%%%%%%%%%%%%%%%%%%%
\subsection{Factor contact algebras determined by sets of clans.}\label{Section factor-algebras-by sets-of-clans}
%%%%%%%%%%%%%%%%%%%%%%%%%%%%%%%%%%%%%%%%%%%%%%%%%%%%%%%%%%%%%%%%%%%%%%%%%%%%%%%%%%%%%%%%%%%%%5
  The following is a construction  of a contact algebra from a given contact algebra $A$ and given set of clans of $A$.
   The construction is taken from \cite{Vak2010} and the reader is
   invited to consult the paper for the details.

Let $\Delta$ be an ideal in  a Boolean algebra $B$. It is known
from the theory of Boolean algebras  that the relation
$a\equiv_{\Delta}b$ iff $a.b^{*}+a^{*}.b\in \Delta$ is a
congruence relation in $B$ and the factor algebra
$B/\equiv_{\Delta}$ under this congruence (called also factor
algebra under $\Delta$ and denoted by $B/\Delta$) is a Boolean
algebra. Denote the congruence class determined by an element $a$
of $B$ by $|a|_{\Delta}$ (or simply by $|a|$). Boolean operations
in $B/\Delta$ are defined as follows: $|a|+|b|=|a+b|$,
$|a|.|b|=|a.b|$, $|a|^{*}=|a^{*}|$, $0=|0|$, $1=|1|$. Recall that
Boolean ordering in $B/\Delta$ is defined by $|a|\leq|b|$ iff
$a.b^{*}\in \Delta$ (see \cite{Sikorski} for details).

Let $A$ be a contact algebra and $\alpha\subseteq Clans(A)$,
$\alpha\not=\varnothing$. Now we will define a construction of a
contact algebra $B_{\alpha}$ corresponding to $\alpha$. Define
$I(\alpha)=\{a\in B: \alpha\cap g(a)=\varnothing\}$. It is easy to
see that $I(\alpha)$ is a proper ideal in $B$, i.e. $1\not\in
I(\alpha)$. The congruence defined by $I(\alpha)$ is denoted by
$\equiv_{\alpha}$. So we have $a\equiv_{\alpha}b$ iff
$a^{*}.b+a.b^{*}\in I(\alpha)$ iff $a^{*}.b\in I(\alpha)$ and
$a.b^{*}\in I(\alpha)$. Now define $B_{\alpha}$ to be the Boolean
algebra $B/I(\alpha)$. We define a contact relation $C_{\alpha}$
in $B_{\alpha}$ as follows: $|a|_{\alpha}C_{\alpha}|b|_{\alpha}$
iff $\alpha\cap g(a)\cap g(b)\not=\varnothing$, where
$g(a)=\{\Gamma\in Clans(B):a\in \Gamma\}$ (see the topological
representation theorem of contact algebras).

\begin{lemma}\label{clan-congruence-lemma}
 $(B_{\alpha}, C_{\alpha})$ is a contact algebra.

\end{lemma}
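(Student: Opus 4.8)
The plan is to work throughout with the auxiliary map $h(a)=_{def}\alpha\cap g(a)$, which collects the clans in $\alpha$ that contain $a$. First I would record the elementary facts that follow at once from Lemma \ref{properties of g} and the definition of $I(\alpha)$: $h(0)=\varnothing$, $h(1)=\alpha$, $h(a+b)=h(a)\cup h(b)$, and $h(a.b)\subseteq h(a)\cap h(b)$ (the last because $a.b\leq a,b$ together with monotonicity of $g$). Moreover $|a|_{\alpha}=0$ in $B_{\alpha}$ iff $a\in I(\alpha)$ iff $h(a)=\varnothing$, and the definition of $C_{\alpha}$ reads $|a|_{\alpha}C_{\alpha}|b|_{\alpha}$ iff $h(a)\cap h(b)\neq\varnothing$ (note $\alpha\cap g(a)\cap g(b)=h(a)\cap h(b)$, since intersecting with $\alpha$ twice is the same as once).

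The crux of the argument, and the step I expect to cause the most trouble, is well-definedness of $C_{\alpha}$ on congruence classes. The natural temptation is to use $g(a.b)=g(a)\cap g(b)$, but this fails: clans are grills, not filters (Definition \ref{definition of clan} gives only the join condition (iii), not closure under meets), so in general $g(a.b)$ is strictly smaller than $g(a)\cap g(b)$. I would instead prove the following monotonicity lemma: if $a.b^{*}\in I(\alpha)$ --- equivalently $|a|_{\alpha}\leq|b|_{\alpha}$ --- then $h(a)\subseteq h(b)$. The proof decomposes $a=a.b+a.b^{*}$; for $\Gamma\in h(a)$ we have $a\in\Gamma$ and $\Gamma\in\alpha$, so by the grill condition (iii) either $a.b\in\Gamma$ or $a.b^{*}\in\Gamma$. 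The second option would put $\Gamma\in\alpha\cap g(a.b^{*})=h(a.b^{*})=\varnothing$, a contradiction; hence $a.b\in\Gamma$, and upward closure (ii) gives $b\in\Gamma$, i.e. $\Gamma\in h(b)$. Applying this in both directions to $a\equiv_{\alpha}b$ yields $h(a)=h(b)$, so whether $h(a)\cap h(c)\neq\varnothing$ depends only on $|a|_{\alpha}$; thus $C_{\alpha}$ is independent of the chosen representatives.

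With well-definedness and monotonicity in hand, the contact axioms fall out routinely. For (C1), $h(a)\cap h(b)\neq\varnothing$ forces $h(a),h(b)\neq\varnothing$, hence $|a|_{\alpha},|b|_{\alpha}\neq 0$. For (C2), monotonicity gives $h(a)\subseteq h(a')$ and $h(b)\subseteq h(b')$ whenever $|a|_{\alpha}\leq|a'|_{\alpha}$ and $|b|_{\alpha}\leq|b'|_{\alpha}$, so a nonempty intersection is preserved. For (C3') and (C3'') I use $h(b+c)=h(b)\cup h(c)$ together with $|b|_{\alpha}+|c|_{\alpha}=|b+c|_{\alpha}$ and distribute the intersection over the union. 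Axiom (C4) is immediate from the symmetry of $h(a)\cap h(b)$. Finally (C5): if $|a|_{\alpha}.|b|_{\alpha}=|a.b|_{\alpha}\neq 0$ then $h(a.b)\neq\varnothing$, and since $h(a.b)\subseteq h(a)\cap h(b)$ this intersection is nonempty, giving $|a|_{\alpha}C_{\alpha}|b|_{\alpha}$. Since $\alpha\neq\varnothing$ and $h(1)=\alpha$, we have $1\notin I(\alpha)$, so $B_{\alpha}$ is non-degenerate, completing the verification that $(B_{\alpha},C_{\alpha})$ is a contact algebra.
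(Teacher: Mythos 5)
Your proof is correct and complete. The paper itself states Lemma \ref{clan-congruence-lemma} without proof, deferring to \cite{Vak2010}, so there is no in-text argument to compare against; but your reconstruction supplies exactly what is needed. You correctly identify the one genuinely delicate point: the relation $C_{\alpha}$ is defined via representatives, and since clans are only grills (closed under the join condition (iii) of Definition \ref{definition of clan}, not under meets), one cannot shortcut via $g(a.b)=g(a)\cap g(b)$. Your monotonicity lemma --- splitting $a=a.b+a.b^{*}$ and using the grill property to rule out $a.b^{*}\in\Gamma$ when $a.b^{*}\in I(\alpha)$ --- is the right mechanism, and it does double duty for axiom (C2). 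The remaining axioms then follow from $h(a+b)=h(a)\cup h(b)$, $h(a.b)\subseteq h(a)\cap h(b)$, and the symmetry of intersection, as you say; the non-degeneracy check via $h(1)=\alpha\neq\varnothing$ matches the paper's remark that $I(\alpha)$ is a proper ideal. No gaps.
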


Let us note that in the Boolean algebra $B_{\alpha}$ the following
conditions are true:

$|a|_{\alpha}\not=|0|_{\alpha}$ iff $a\not\in I(\alpha)$ iff there
exists a clan $\Gamma\in \alpha$ such that $a\in \Gamma$.

%%%%%%%%%%%%%%%%%%%%%%%%%%%%%%%%%%%%%%%%%%%%%%%%%%%%%%%%%%%%%%%%%%%%%%%%%%%%%%%%%%%%%%%%%%%%%%%%%%%%%%%%%%%%%%%%%%%%%%%%%%%%%%%%%%%%%%%

\subsection{Contact algebras satisfying the Efremovich axiom (CE).
\newline Clusters.}\label{Section CA Efremovich}
%%%%%%%%%%%%%%%%%%%%%%%%%%%%%%%%%%%%%%%%%%%%%%%%%%%%%%%%%%%%%%%%%%%%%%%%%%%%%

 We will show in this section that in contact algebras satisfying the Efremovich axiom (CE)
  we can introduce a new kind of abstract points called clusters. Our definition is an algebraic
  abstraction of the analogous notion  used in the compactification theory of proximity spaces
  (see for instance \cite{Proximity}).  Clusters will be used later on to define time points in dynamic contact algebras.

\begin{definition}\label{def-clusters} {\bf Clusters.} {\rm \cite{DiVak2006}} Let $(B,C)$ be a contact algebra.
A subset $\Gamma\subseteq B$ is called a \textbf{cluster} in $(B,C)$ if it is a clan satisfying the following condition:

(Cluster) If $a\not\in \Gamma$ then there exists $b\in \Gamma$
such that $a\overline{C}b$.

The set of clusters of $A=(B,C)$ is denoted by Clusters($A$).

\end{definition}

\begin{lemma}\label{clusters-are-maximal-clans} Let $A=(B,C)$
be a contact algebra satisfying the Efremovich axiom (CE). Then:

(i) $\Gamma$ is a cluster in $(B,C)$ iff $\Gamma$ is a maximal
clan in $(B,C)$.

(ii) Every clan is contained in a unique cluster.
\end{lemma}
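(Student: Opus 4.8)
The plan is to translate everything into the canonical ultrafilter picture and exploit that, under (CE), the canonical adjacency relation $R$ on $Ult(A)$ becomes an \emph{equivalence relation}. Indeed, since $C$ is a genuine contact relation it satisfies (C4) and (C5), so by Canonical Lemma 2 \ref{canonical-lemma-2}(i),(ii) the relation $R$ is symmetric and reflexive; and (CE) gives transitivity by \ref{canonical-lemma-2}(iii). Hence $R$ is an equivalence relation, and a maximal $R$-clique is exactly an $R$-equivalence class. I will combine this with the Clan Lemma \ref{clan-lemma1}, in particular that every clan is the union of the clique $Ult(\Gamma)$ of ultrafilters it contains, that unions of $R$-cliques are clans, and that maximal clans correspond to maximal $R$-cliques.

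For the easy direction of (i), that a cluster is a maximal clan, no use of (CE) is needed. If $\Gamma$ is a cluster and $\Delta\supseteq\Gamma$ is any clan, I take $a\in\Delta$; were $a\notin\Gamma$, the condition (Cluster) of Definition \ref{def-clusters} would supply $b\in\Gamma\subseteq\Delta$ with $a\overline{C}b$, contradicting clan axiom (iv) applied to $a,b\in\Delta$. Hence $\Delta=\Gamma$ and $\Gamma$ is maximal.

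The hard direction, and the main obstacle, is that a maximal clan $\Gamma$ satisfies (Cluster); this is where (CE) is essential. I argue by contraposition: suppose $a$ is in contact with \emph{every} element of $\Gamma$ and show $a\in\Gamma$. Since $1\in\Gamma$ we get $aC1$, so $a\neq0$ and $[a)$ is a proper filter. Fixing any $U_0\in Ult(\Gamma)$, the hypothesis gives $aCy$ for all $y\in U_0\subseteq\Gamma$, which by monotonicity (C2) says precisely $[a)RU_0$; the $R$-extension Lemma \ref{R-extension-lemma} then yields an ultrafilter $W\supseteq[a)$ with $WRU_0$, so $a\in W$. Here transitivity of $R$ does the real work: for any $V\in Ult(\Gamma)$ we have $U_0RV$, whence $WRV$, so $\{W\}\cup Ult(\Gamma)$ is again an $R$-clique. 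By Clan Lemma \ref{clan-lemma1}(iv),(vii) its union $\Delta=W\cup\Gamma$ is a clan containing both $\Gamma$ and $a$, and maximality of $\Gamma$ forces $\Delta=\Gamma$, i.e. $a\in\Gamma$, as required.

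Finally, for (ii), existence is immediate: any clan extends to a maximal clan by Clan Lemma \ref{clan-lemma1}(iii) (Zorn), and by part (i) this maximal clan is a cluster. For uniqueness I would pass again to cliques: if $\sigma_1,\sigma_2$ are clusters containing a clan $\Gamma$, then $Ult(\sigma_1),Ult(\sigma_2)$ are maximal $R$-cliques both containing the clique $Ult(\Gamma)$; picking $U\in Ult(\Gamma)$ and using symmetry and transitivity shows $Ult(\sigma_1)\cup Ult(\sigma_2)$ is itself an $R$-clique, so the maximality of each forces $Ult(\sigma_1)=Ult(\sigma_2)$, and then $\sigma_1=\sigma_2$ by Clan Lemma \ref{clan-lemma1}(vii). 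The only subtlety to keep an eye on is that $Ult(\Gamma)\neq\varnothing$, which holds since $1\in\Gamma$ yields an ultrafilter in $Ult(\Gamma)$ by \ref{clan-lemma1}(vi).
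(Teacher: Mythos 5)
Your proof is correct and follows essentially the same route as the paper's (very sketchy) proof: both reduce the statement to the fact that, under (CE), the canonical relation $R$ on ultrafilters is an equivalence relation, and then exploit the correspondence between (maximal) clans and (maximal) $R$-cliques from the Clan Lemma. You merely supply the details the paper leaves to the reader, in particular the contraposition argument via the $R$-extension Lemma for the direction ``maximal clan implies cluster''.
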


\begin{proof} Let us note that the above lemma is a lattice-theoretic
version of a result of Leader about clusters in proximity spaces
mentioned in \cite{Thron}. One can prove this lemma having in mind
the following facts.
 First, it follows from  Lemma \ref{canonical-lemma-2} that if $C$ is a
 contact relation satisfying the Efremovich axiom (CE), then the canonical relation
 for $C$ is an equivalence relation. Second, the maximal $R$-cliques of an equivalence
  relation are the equivalence classes of $R$. And third, clusters in the presence of (CE)
  are unions of such $R$-equivalence classes (by \ref{clan-lemma1}
  ).\end{proof}

\begin{lemma}\label{cluster-lemma} Let $(B,C)$ be a contact algebra satisfying
the Efremovich axiom (CE). Then for any $a,b\in B$: $aCb$ iff there is a cluster $\Gamma$ containing $a$ and $b$.

\end{lemma}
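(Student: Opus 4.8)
The plan is to prove Lemma \ref{cluster-lemma}, which states that in a contact algebra satisfying the Efremovich axiom (CE), $aCb$ holds if and only if there is a cluster $\Gamma$ containing both $a$ and $b$. This is the cluster-analog of Lemma \ref{clan-lemma1} (xi), where contact was characterized by clans rather than clusters, so the guiding idea is to upgrade that clan-characterization to a cluster-characterization using the special structure that (CE) provides.

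For the direction ($\Leftarrow$), suppose there is a cluster $\Gamma$ with $a, b \in \Gamma$. Since a cluster is by definition a clan (Definition \ref{def-clusters}), and clans satisfy condition (iv) saying that any two of their members are in contact, we immediately get $aCb$. This direction is routine and requires no appeal to (CE) at all.

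The interesting direction is ($\Rightarrow$). Suppose $aCb$. By Lemma \ref{clan-lemma1} (xi) there is already a clan $\Gamma_0$ with $a, b \in \Gamma_0$. I would then invoke Lemma \ref{clusters-are-maximal-clans} (ii), which asserts precisely that in the presence of (CE) every clan is contained in a unique cluster; let $\Gamma$ be the cluster containing $\Gamma_0$. Then $a, b \in \Gamma_0 \subseteq \Gamma$, so $\Gamma$ is the desired cluster. Alternatively, and equivalently, one can run the argument through Lemma \ref{clan-lemma1} (x)--(xi) together with Lemma \ref{canonical-lemma-2} (iii): under (CE) the canonical relation $R$ on $Ult(A)$ is an equivalence relation, so the $R$-clique witnessing $aCb$ extends to a maximal $R$-clique, namely an $R$-equivalence class, whose union is by Lemma \ref{clusters-are-maximal-clans} (i) a cluster containing $a$ and $b$.

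\textbf{The main obstacle} is not in this lemma itself but is already discharged by the preceding results: the entire force of the Efremovich axiom enters through Lemma \ref{clusters-are-maximal-clans}, where (CE) guarantees via Lemma \ref{canonical-lemma-2} (iii) that $R$ is transitive and hence that maximal clans coincide with clusters and that extension to a cluster is possible (and unique). Given those lemmas, the proof here is a short two-line invocation in each direction; the only care needed is to cite the correct parts of Lemma \ref{clan-lemma1} and Lemma \ref{clusters-are-maximal-clans} and to note explicitly that clusters inherit the clan property (iv) for the easy direction.
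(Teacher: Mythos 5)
Your proof is correct and follows essentially the same route as the paper: both directions reduce to Lemma \ref{clan-lemma1} (xi) together with Lemma \ref{clusters-are-maximal-clans}, the only cosmetic difference being that the paper takes a \emph{maximal} clan from the Clan Lemma and applies part (i) (maximal clans are clusters), whereas you take an arbitrary clan and apply part (ii) (every clan extends to a cluster). Your explicit treatment of the easy ($\Leftarrow$) direction via the clan property (iv) is also exactly what the paper leaves implicit.
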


\begin{proof} $aCb$ iff (by Lemma \ref{clan-lemma1}) there exists a maximal
clan $\Gamma$ containing $a$ and $b$. By Lemma \ref{clusters-are-maximal-clans} $\Gamma$ is a cluster.\end{proof}

Note that we can not prove a representation theorem for contact
algebras satisfying the Efremovich axiom as subalgebras of regular
closed sets using only clusters as abstract points,  because  we
can not distinguish in general  different regions by means of
clusters. Ultrafilters can distinguish different regions, but in
general they are not clusters.

The following lemma states how we can distinguish clusters.

\begin{lemma}\label{identity-of-clusters} Let $A=(B,C)$ be a contact algebra satisfying
the Efremovich axiom and let $\Gamma,\Delta$ be clusters. Then the following conditions are equivalent:

 (i) $\Gamma\not=\Delta$,

 (ii)  there exist $a\in \Gamma$ and $b\in \Delta$ such that $a\overline{C}b$,

  (iii) there exists $c\in B$ such that $c\not\in \Gamma$ and $c^{*}\not\in \Delta$.
\end{lemma}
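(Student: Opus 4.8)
The plan is to establish all three equivalences by proving the cycle of implications $(i)\Rightarrow(ii)\Rightarrow(iii)\Rightarrow(i)$. Two of these links are ``soft'' and use only the clan/grill structure together with the defining condition of a cluster; the single step that genuinely invokes the Efremovich axiom (CE) is $(ii)\Rightarrow(iii)$, and I expect that to be the only step with any content.

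For $(i)\Rightarrow(ii)$, assume $\Gamma\not=\Delta$. Since two distinct sets cannot each be contained in the other, without loss of generality there is an $a\in\Gamma$ with $a\not\in\Delta$ (if instead $\Delta\not\subseteq\Gamma$, I swap the roles of $\Gamma$ and $\Delta$ and restore symmetry at the end via axiom (C4)). Now I apply the cluster condition of Definition \ref{def-clusters} to $\Delta$ and the element $a\not\in\Delta$, obtaining some $b\in\Delta$ with $a\overline{C}b$; this $a\in\Gamma$ and $b\in\Delta$ witness (ii). One could alternatively invoke Lemma \ref{clusters-are-maximal-clans}, that clusters are maximal clans, to see directly that $\Gamma\not\subseteq\Delta$, but this is not needed.

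For $(iii)\Rightarrow(i)$, assume there is $c$ with $c\not\in\Gamma$ and $c^{*}\not\in\Delta$, and suppose for contradiction that $\Gamma=\Delta$. Then $c\not\in\Gamma$ and $c^{*}\not\in\Gamma$. But a cluster is in particular a grill, so from $c+c^{*}=1\in\Gamma$ together with condition (iii) of Definition \ref{definition of clan} I get $c\in\Gamma$ or $c^{*}\in\Gamma$, a contradiction. Hence $\Gamma\not=\Delta$.

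The key step is $(ii)\Rightarrow(iii)$. Given $a\in\Gamma$ and $b\in\Delta$ with $a\overline{C}b$, I apply the Efremovich axiom (CE) to the non-contact $a\overline{C}b$ to produce a witness $c$ with $a\overline{C}c$ and $c^{*}\overline{C}b$, and I claim this same $c$ satisfies (iii). Indeed, if $c\in\Gamma$ then $a,c\in\Gamma$, and since a cluster is a clan, clan condition (iv) of Definition \ref{definition of clan} forces $aCc$, contradicting $a\overline{C}c$; hence $c\not\in\Gamma$. Symmetrically, if $c^{*}\in\Delta$ then $c^{*},b\in\Delta$ yield $c^{*}Cb$, contradicting $c^{*}\overline{C}b$; hence $c^{*}\not\in\Delta$. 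The only subtlety, and the main (mild) obstacle, is recognizing that the element handed to us by (CE) is already exactly the witness demanded by (iii); once this is seen, the verification reduces to a direct application of clan condition (iv) inside $\Gamma$ and inside $\Delta$, which closes the cycle and proves the equivalence.
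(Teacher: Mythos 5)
Your proof is correct and follows essentially the same route as the paper's: the cycle $(i)\Rightarrow(ii)\Rightarrow(iii)\Rightarrow(i)$, with the cluster condition giving $(i)\Rightarrow(ii)$, the Efremovich axiom supplying the witness $c$ for $(ii)\Rightarrow(iii)$ (ruled out of $\Gamma$ and $\Delta$ by the clan property), and the grill property applied to $c+c^{*}=1$ for $(iii)\Rightarrow(i)$. The only cosmetic difference is that in $(i)\Rightarrow(ii)$ you use a WLOG-plus-symmetry argument where the paper invokes maximality of clusters to pick the element on a specific side; both are fine.
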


\begin{proof}$(i)\Rightarrow(ii)$ Suppose $\Gamma\not=\Delta$,
 then, since they are maximal clans, there exists $a\in \Delta$
  and $a\not\in \Gamma$. Consequently, there exists $b\in \Gamma$ such that $a\overline{C}b$, so (ii) is fulfilled.

$(ii)\Rightarrow(iii)$ Suppose that there exist $a\in \Gamma$ and
$b\in \Delta$ such that $a\overline{C}b$. From $a\overline{C}b$ we
obtain by the Efremovich axiom that there exists $c$ such that
$a\overline{C}c$ and $c^{*}\overline{C}b$. Conditions $a\in
\Gamma$ and $a\overline{C}c$ imply $c\not\in \Gamma$. Similarly
$b\in \Delta$ and $c^{*}\overline{C}b$ imply $c^{*}\not\in
\Delta$.

$(iii)\Rightarrow(i)$ Suppose that there exists $c\in B$ such that
$c\not\in \Gamma$ and $c^{*}\not\in \Delta$ and for the sake of
contradiction that $\Gamma=\Delta$. Since $c+c^{*}=1$ then ether
$c\in \Gamma$ or $c^{*}\in \Delta$ - a contradiction.
\end{proof}
\begin{remark}\label{Remark Cmax-cluster} We have  mentioned in Remak \ref{Remark grills are Cmax-clans} that $C_{max}$-clans are grills and that there is only one maximal $C_{max}$-clan just the union of all ultrafilters. Because  $C_{max}$ satisfies the Efremowich axiom, then there is only one $C_{max}$-cluster - the maximal grill.

\end{remark}

%%%%%%%%%%%%%%%%%%%%%%%%%%%%%%%%%%%%%%%%%%%%%%%%%%%%%%%%%%%%%%%%%%%%%%%%%%%%%%%%%%%
\section{A dynamic model of space and time based on \newline
 snapshot
construction}\label{Section A dynamic model of space and time
based on snapshot construction}
%%%%%%%%%%%%%%%%%%%%%%%%%%%%%%%%%%%%%%%%%%%%%%%%%%%%%%%%%%%%%%%%%%%%%%%%%%%%%%%
In this section, following mainly \cite{Vak2014,Plamen} we will
give a specific point-based spacetime structure called dynamic
model of space and time (DMST) built by a special construction
mentioned in Section 1 and called \textbf{snapshot construction}
Because the notion of time structure is one of the base
ingredients of the construction we start with this notion.

%%%%%%%%%%%%%%%%%%%%%%%%%%%%%%%%%%%%%%%%%%%%%%%%%%%%%%%%%%%%%%%%%%%%%%
\subsection{ Time structures}\label{Section Time structures}
%%%%%%%%%%%%%%%%%%%%%%%%%%%%%%%%%%%%%%%%%%%%%%%%%%%%%%%%%%%%%%%%%%%%55

Time structures of the forma $\underline{T}=(T, \prec)$ were
introduced in Section 1.1 as relational systems used as a semantic
basis of temporal logic. Let us remind that $T$ is a non-empty
set whose elements are called `time points' (moments, Whitehead's
epochs). The binary relation $\prec$ is called `before-after'
relation (or `time order') with the standard intuitive meaning of
$i\prec j$: the moment $i$ is before the moment $j$, or
equivalently, $j$ is after $i$. We also suppose that $T$ is
supplied with the standard notion of equality denoted as usual by
$=$.  We do not presuppose in advance any fixed set of conditions
for the relation $\prec$. One  possible list of first-order
conditions for $\prec$ which are typical for some systems of
temporal logic, are the following. We describe them with their
specific names and notations which will be used in this paper.
\begin{quote}

\noindent  $\bullet$     \textbf{(RS)} \emph{Right seriality}
$(\forall m)(\exists n)(m\prec n)$,

 \noindent $\bullet$     \textbf{(LS)}
 \emph{Left seriality}
   $(\forall m)(\exists n)(n\prec
 m)$,

\noindent  $\bullet$     \textbf{(Up Dir)} \emph{Updirectedness}
$(\forall i,j)(\exists k)(i\prec k$ and $j\prec  k)$,

\noindent  $\bullet$     \textbf{(Down Dir)}
\emph{Downdirectedness} $(\forall i,j)(\exists k)(k\prec i$ and
$k\prec j)$,

\noindent $\bullet$ \textbf{(Circ)} \emph{Circularity } $(\forall
i,j)(i\prec j \rightarrow (\exists k)(j\prec k$ and $k\prec i))$

 \noindent  $\bullet$     \textbf{(Dens)}
\emph{Density} $i\prec j\rightarrow (\exists k)(i\prec k$  and
$k\prec j)$,

\noindent   $\bullet$     \textbf{(Ref)} \emph{Reflexivity}
$(\forall m)(m\prec m)$,

\noindent   $\bullet$     \textbf{(Irr)} \emph{Irreflexivity}
$(\forall m)($ not $m\prec m)$,

\noindent       $\bullet$     \textbf{(Lin)} \emph{Linearity}
$(\forall m,n)(m\prec n$ or $n\prec m)$,

\noindent $\bullet$ \textbf{(Tri)} \emph{Trichotomy}  $(\forall
m,n)(m=n$ or $m\prec n$ or $n\prec m)$,

\noindent   $\bullet$     \textbf{(Tr)} \emph{Transitivity}
$(\forall i j k)(i\prec j$ and $j\prec k \rightarrow i\prec k)$.

\end{quote}
We call the set of formulas  (RS), (LS), (Up Dir), (Down Dir),
(Circ), (Dens), (Ref), (Irr), (Lin), (Tri), (Tr) \emph{time
conditions}.  If the relation $\prec$ satisfies the condition
(Irr) it  will be called ''strict''. If $\prec$ satisfies (Ref)
the reading of $i\prec j$ should be more precise: ''$i$ is equal
or before $j$''.

Note that the above listed conditions for time ordering are not
independent. Taking some meaningful subsets of them we obtain
various  notions of time order. Of course this list is not
absolute and is open for extensions but in this paper we will
consider only these 11 conditions.

%%%%%%%%%%%%%%%%%%%%%%%%%%%%%%%%%%%%%%%%%%%%%%%%%%%%%%%%%%%%%%%%%%%%%%%%%%%%%%%%
\subsection{The snapshot construction and the dynamic model of space and
time}\label{Subsection The snapshot construction}
%%%%%%%%%%%%%%%%%%%%%%%%%%%%%%%%%%%%%%%%%%%%%%%%%%%%%%%%%%%%%%%%%%%%%%%%%%%%%%%%

The snapshot construction is a specific method of constructing a
dynamic model of space. It is a formalization of the following
intuitive idea. Suppose we are observing an area of changing
regions, called `dynamic regions'  and we wont to describe this
area. In our everyday life such a description can be realized by a
video camera making a video. In this way the \emph{camera can be
interpreted as a fixed observer}. The description is realized by
making a snapshot of the observed area for each moment of the
camera's time. Namely the series of these snapshots can be
considered as a realization of the description of the area of changing or moving regions and each snapshot
can be considered as a static spatial description of the area for
the corresponding time moment. This procedure can be formalized and generalized as
follows. First we start with certain time structure
$\underline{T}=(T, \prec)$, described in the previous section.
The formalization of the action `making
snapshots' is the following. To each moment $i\in T$ we associate
a contact algebra $A_{i}=(B_{i}, 0_{i}, 1_{i}, \leq_{i}, +_{i},
._{i}, *_{i}, C_{i})=(B_{i}, C_{i})$, called `coordinate contact
algebra'.  We assume that the algebra $(B_{i},C_{i})$ realizes the
static description of the dynamic regions at the moment $i\in T$
and can be considered as the corresponding `snapshot' of the area at
the moment $i\in T$. In this way each dynamic region $a$ is
represented by a series $\langle a_{i}\rangle_{i\in T}$ such that
for each $i\in T$, $a_{i}\in B_{i}$. The series $\langle
a_{i}\rangle_{i\in T}$ is considered also as a life history of
$a$. We identify $a$ with the series $\langle a_{i}\rangle_{i\in
T}$ and will write $a=\langle a_{i}\rangle_{i\in T}$. The set of
all dynamic regions is denoted by $\mathbf{B}$. We consider
$\mathbf{B}$ as a Boolean algebra with Boolean operations defined
coordinate-wise. For instance:

 $a+b=\langle a_{i}+_{i}b_{i}\rangle_{i\in T}$, $0=\langle
 0_{i}\rangle_{i\in T}$, $1=\langle  1_{i}\rangle_{i\in T}$, etc.

Let us define the Cartesian product ( direct product) $\mathbb{B}$
of the coordinate Boolean algebras $B_{i}$, $i\in T$, namely
$\mathbb{B}=\prod_{i\in T} B_{i}$. Obviously $\mathbf{B}$ is a
subalgebra of $\mathbb{B}$. Now we  introduce the following
important definition

\begin{definition} By a \textbf{dynamic model of space and time} (DMST) we
understand the system  $\mathcal{M}=<(T, \prec),
\{(B_{i},C_{i}):i\in T\}, \mathbf{B}, \mathbb{B}\rangle$. We say
that $\mathcal{M}$ is a \textbf{full model} if
$\mathbf{B}=\mathbb{B}$, and  that $\mathbb{M}$ is a \textbf{rich
model} if $\mathbf{B}$ contains all regions $a=\langle
a_{i}\rangle_{i\in T}$ such that for all $i\in T$ either
$a_{i}=0_{i}$, or $a_{i}=1_{i}$. (obviously every full model is a
rich model).

\end{definition}
Dynamic model of space and time will be called sometimes `snapshot model' or `cinematographic model'.

Let us note that DMST is a very expressive  model with the main
component the Boolean algebra $\mathbf{B}$ of dynamic regions
which can be supplied with additional structure by various ways
using the other components of the model. Before doing this let us
make some observations and  introduce some terminology.

Let $a=\langle a_{i}\rangle_{i\in T}$ and $b=\langle
b_{i}\rangle_{i\in T}$ be two dynamic regions. Then $a\leq b$ (in
the Boolean algebra $\mathbf{B}$ or in $\mathbb{B}$) iff $(\forall
i\in T)(a_{i}\leq_{i}b_{i})$. If $a_{i}\not=0_{i}$ for some $i\in
T$ we say that $a$ exists at the moment $i$. It is possible for
some dynamic region $a\not=0$ to have many successive (with
respect to $\prec$) moments of time in which it is alternatively
existing and non-existing (for example viruses in biology). Also it is quite
possible for two different regions $a$ and $b$ that there exists a
moment of time $i$ (possibly not only one) such that
$a_{i}=b_{i}$. Example: before the World War II we have one
Germany, after that for some time - two Germanies, West Germany
and East Germany, now again one Germany, and what will be in the
future we do not  know. Note that in DMST coordinate contact
algebras are presented as point-free spatial systems, but they can
equivalently be presented by their point-based representative
copies according to the representation theory of contact algebras.
So, in DMST we do not have one space, but for each $i\in T$ a
concrete local space $X_{i}$ with his own set of points. Of course
all such observations put some ontological questions about the
meaning of `existence', `equality' and other abstract metaphysical
concepts which we will not discuss in this  paper.

\begin{remark}\label{Remarksnapshotconstruction} Let us note that the analogy of `snapshot construction' with making a video have to be considered more carefully and not literally, because video is based on visual observation. Normally what we (or camera) see is considered as existing at the moment of observation. But this is true only for objects which are not far from the observer. For instance seeing a star on the sky does not mean that this star is existing at the moment of  observation - it is quite possible that this star had ceased to exist a billion years before and this fact is based on the finite velocity of light. So, if we use a video (or some optic devices) for obtaining information for dynamically changing area of regions, for some of them which are far from the observer we need additional information for their status of existing and spatial configuration at the moment of observing.  For instance, if I observe the Sun from which the light travels to the Earth several minutes I can conclude that it exists at the moment of observation, just because it is not possible for it to stop existing for such  a short time. Having in mind the above,  the phrase `snapshot at the moment $t$ of the area of dynamic regions' has to be considered  just as attaching to  $t$ the contact algebra $(B_{t}, C_{t})$ considered as the real (actual)  static description of  spatial configurations of regions of the area at the moment $t$ no matter how we can obtain this information. The analogy with video film is considered only as a way to illustrate the snapshot construction.
\end{remark}

\subsection{Standard dynamic contact algebras}\label{Section Standard DCA}

Let  $\mathcal{M}=<(T, \prec), \{(B_{i},C_{i}):i\in T\},
\mathbf{B}, \mathbb{B}\rangle$ be a given DMST. As we mentioned in
the previous section, the Boolean algebra $\mathbf{B}$ of dynamic
regions  can be supplied with some additional relational structure
in different ways. In this section we will give the first step
introducing three spatio-temporal relations in $\mathbf{B}$.

\smallskip

\noindent$\bullet$  \textbf{Space contact}   $aC^{s}b$ iff
$(\exists m\in T)(a_{m}C_{m}b_{m})$.

\smallskip

 Intuitively space contact between $a$ and $b$ means that there is a time
 point $i\in T$ in which $a$ and $b$ are in a contact  $C_{i}$ in the corresponding
 coordinate contact algebra $(B_{i}, C_{i})$.

\smallskip

\noindent$\bullet$  \textbf{Time contact} $aC^{t}b$ iff $(\exists
m\in T)(a_{m}\not=0_{m}$ and $b_{m}\not=0_{m})$.

\smallskip

Intuitively time contact between $a$ and $b$ means that there
exists a time point in which $a$ and $b$ exist simultaneously.
Note that $a_{m}\not=0_{m}$ and $b_{m}\not=0_{m}$ means just that
$a$ and $b$ exist at the time point $m$. This relation  can be
considered  also as a kind of \textbf{simultaneity relation} or
 \textbf{contemporaneity relation} studied  in Whitehead's works and special relativity.

\smallskip

\noindent$\bullet$ \textbf{Local precedence} or simply
\textbf{Precedence} $a\mathcal{B}b$ iff $(\exists m,n\in T)(m\prec
n$ and $a_{m}\not=0_{m}$ and $b_{n}\not=0_{n})$.

\smallskip

Intuitively $a$ is in a local precedence relation with $b$ (in
words \emph{$a$ precedes $b$}) means that there is  a time point
in which $a$  exists which is before a time point in which $b$
 exists, which motivates the name of $\mathcal{B}$ as a (local)
precedence relation. Note the following similarity between the
relations $C^{t}$ and $\mathcal{B}$: if in the definition of
$\mathcal{B}$ we replace the relation $\prec$ with $=$, then we
obtain just the definition of $C^{t}$.

\begin{lemma}\label{standardDCAaxioms} Let  $\mathcal{M}=<(T, \prec), \{(B_{i},C_{i}):i\in T\},
\mathbf{B}, \mathbb{B}\rangle$ be a a rich DMST. Then the
relations $C^{s}$, $C^{t}$ and $\mathcal{B}$ satisfy the following abstract
conditions:
%\begin{quote}

  (i) $C^{s}$ is a contact relation,

(ii) $C^{t}$ is a contact relation satisfying the following
additional conditions:

$(C^{s}\subseteq C^{t})$    $aC^{s}b\rightarrow aC^{t}b$.

$(C^{t}E)$  $a\overline{C}^{t}b \rightarrow (\exists c\in
\mathbf{B})(a\overline{C}^{t}c$ and $c^{*}\overline{C}^{t}b)$ -
the Efremovich axiom for $C^{t}$.

(iii) $\mathcal{B}$ is a precontact relation satisfying the
following additional conditions (see for these conditions Section
2.3):

$(C^{t}\mathcal{B})$ $a\overline{\mathcal{B}}b\Rightarrow (\exists
c\in \mathbf{B})(a\overline{C}c$ and
$c^{*}\overline{\mathcal{B}}b)$,

$(\mathcal{B}C^{t})$  $a\overline{\mathcal{B}}b\Rightarrow
(\exists c\in \mathbf{B})(a\overline{\mathcal{B}}c$ and
$c^{*}\overline{C}b)$,
%\end{quote}
\end{lemma}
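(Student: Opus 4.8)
The plan is to verify each of the stated abstract conditions directly from the definitions of the three relations in the rich DMST $\mathcal{M}=\langle(T,\prec),\{(B_i,C_i):i\in T\},\mathbf{B},\mathbb{B}\rangle$, reducing every assertion to the coordinate contact algebras $(B_i,C_i)$ and the time order $\prec$. Throughout I will use that Boolean operations in $\mathbf{B}$ are computed coordinatewise, so $(a+b)_i=a_i+_i b_i$, $(a^*)_i=a_i^{*_i}$, $0_i=0$ and $1_i=1$ in each coordinate, and that richness guarantees the availability of the particular witness regions constructed below.

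First I would dispatch (i) and (ii). For (i), the axioms (C1)--(C5) for $C^s$ follow coordinatewise from the fact that each $C_i$ is a contact relation: e.g. for (C3$'$), $aC^s(b+c)$ gives some $m$ with $a_mC_m(b_m+_m c_m)$, and (C3$'$) for $C_m$ yields $a_mC_mb_m$ or $a_mC_mc_m$, hence $aC^sb$ or $aC^sc$; the other axioms are analogous, with (C5) using $a.b\neq 0$ iff some coordinate $a_m._mb_m\neq 0_m$. For (ii), that $C^t$ is a contact relation is checked the same way, treating $a_m\neq 0_m$ as the defining condition (note $C^t$ is exactly the space contact built from the maximal contact $C_{max}$ in each coordinate, so it inherits all contact axioms). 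The inclusion $(C^s\subseteq C^t)$ is immediate since $a_mC_mb_m$ forces $a_m\neq 0_m$ and $b_m\neq 0_m$ by (C1) in $B_m$. For the Efremovich axiom $(C^tE)$, given $a\overline{C}^tb$ I would define the witness $c$ coordinatewise by $c_m=1_m$ whenever $a_m\neq 0_m$ and $c_m=0_m$ otherwise; richness ensures $c\in\mathbf{B}$, and then $a\overline{C}^tc$ and $c^*\overline{C}^tb$ both hold because at every moment either $a$ is absent (killing the first contact) or $c^*$ is absent (killing the second).

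The main work, and the expected obstacle, is part (iii): the two compositional axioms $(C^t\mathcal{B})$ and $(\mathcal{B}C^t)$ relating the precontact $\mathcal{B}$ to the contact $C^t$. By Lemma~\ref{composition axioms lemma} these are exactly the relational conditions $(R\circ S\subseteq S)$ and $(S\circ R\subseteq S)$ at the level of the underlying adjacency-type relations, so conceptually I expect them to reflect the interaction of $=$ (the relation behind $C^t$) with $\prec$ (the relation behind $\mathcal{B}$): namely that $=$ composed with $\prec$ and $\prec$ composed with $=$ are both contained in $\prec$, which is trivially true set-theoretically. Concretely, to prove $(C^t\mathcal{B})$ I would assume $a\overline{\mathcal{B}}b$, meaning there is no pair $m\prec n$ with $a_m\neq 0_m$ and $b_n\neq 0_n$, and construct the separating region $c$ by setting $c_m=1_m$ exactly at those moments $m$ for which $a$ exists (i.e. $a_m\neq 0_m$), and $c_m=0_m$ elsewhere; richness gives $c\in\mathbf{B}$. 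Then $a\overline{C}^tc$ holds because wherever $a$ exists, so does $c$—wait, that is the wrong direction, so I would instead take $c$ to be $1_m$ precisely where $a$ does \emph{not} exist, making $a\overline{C}^tc$ immediate, and then check $c^*\overline{\mathcal{B}}b$: since $c^*_n\neq 0_n$ exactly where $a_n\neq 0_n$, any witnessing pair $m'\prec n$ for $c^*\mathcal{B}b$ would give $a_{m'}\neq 0_{m'}$ and $b_n\neq 0_n$, contradicting $a\overline{\mathcal{B}}b$.

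The symmetric axiom $(\mathcal{B}C^t)$ is handled dually: assuming $a\overline{\mathcal{B}}b$, I would define $c$ by $c_n=1_n$ precisely at the moments $n$ where $b$ exists and $c_n=0_n$ elsewhere, so that $c^*$ vanishes wherever $b$ is present, giving $c^*\overline{C}^tb$ at once; then $a\overline{\mathcal{B}}c$ follows because a witnessing pair $m\prec n$ for $a\mathcal{B}c$ would force $a_m\neq 0_m$ and $c_n\neq 0_n$, i.e. $b_n\neq 0_n$, again contradicting $a\overline{\mathcal{B}}b$. The only delicate point I anticipate is getting the $0$/$1$ pattern of the separating region on the correct side of each negated relation and confirming the chosen $c$ lies in $\mathbf{B}$—both of which are exactly what the richness hypothesis is there to secure—so I expect no genuine difficulty beyond bookkeeping, and the verifications of $(C^t\mathcal{B})$ and $(\mathcal{B}C^t)$ are essentially the semantic shadows of Lemma~\ref{composition axioms lemma} applied with $R$ the relation $=$ and $S$ the relation $\prec$.
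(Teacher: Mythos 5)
Your overall strategy is the same as the paper's: verify the contact and precontact axioms coordinatewise, and use richness to build $\{0,1\}$-valued witness regions for the three existential axioms $(C^{t}E)$, $(C^{t}\mathcal{B})$ and $(\mathcal{B}C^{t})$. The paper writes out only one such witness (the region equal to $1_{k}$ exactly where $a_{k}=0_{k}$) and declares the other cases similar; your constructions for $(C^{t}\mathcal{B})$ and $(\mathcal{B}C^{t})$ --- the region supported off the support of $a$, respectively on the support of $b$ --- are correct and match that template, including your mid-stream correction of the direction in the $(C^{t}\mathcal{B})$ case.

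However, your witness for the Efremovich axiom $(C^{t}E)$ is inverted, and that step fails as written. You set $c_{m}=1_{m}$ whenever $a_{m}\not=0_{m}$ and $c_{m}=0_{m}$ otherwise; then at any moment $m$ with $a_{m}\not=0_{m}$ one also has $c_{m}=1_{m}\not=0_{m}$, so $aC^{t}c$ holds whenever $a\not=0$ and the required conclusion $a\overline{C}^{t}c$ is false. The region you need is the complement of the one you wrote: $c_{m}=0_{m}$ where $a_{m}\not=0_{m}$ and $c_{m}=1_{m}$ elsewhere (equivalently, $1_{m}$ exactly on the support of $b$, using that the supports of $a$ and $b$ are disjoint). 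Moreover, the one-line justification you give (``at every moment either $a$ is absent or $c^{*}$ is absent'') does not establish the two non-contacts even for the corrected witness: with $c_{m}=0_{m}$ on the support of $a$ one gets $c^{*}_{m}\not=0_{m}$ exactly where $a_{m}\not=0_{m}$, so $c^{*}\overline{C}^{t}b$ is literally equivalent to the hypothesis $a\overline{C}^{t}b$ and must be derived from it, not from a pointwise dichotomy. Everything else --- the coordinatewise verification of (C1)--(C5) for $C^{s}$, $C^{t}$ and $\mathcal{B}$, and of $(C^{s}\subseteq C^{t})$ --- is fine and matches what the paper leaves to the reader.
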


\begin{proof}
Let us note that the requirement that the model $\mathcal{M}$ is
rich is needed only in the verifications of the conditions
$(C^{t}E)$, $(C^{t}\mathcal{B})$ and $(\mathcal{B}C^{t})$ which
required constructions of new regions. As an example we shall
verify only the condition $(\mathcal{B}C^{t})$. The proof for the
other conditions is similar.

Suppose $a\overline{\mathcal{B}}b$ and define $c$ coordinate-wise:

  $$c_{k}=\begin{cases}0_{k},
&\mbox{if } a_{k}\not=0_{k}\\1_{k}, &\mbox{if }
a_{k}=0_{k}.\end{cases}$$

 Since the model is rich then $c$ certainly belongs to $\mathbf{B}$. The verification of the conclusion $a\overline{\mathcal{B}}c$ and
$c^{*}\overline{C}^{t}b$ is straightforward.

\end{proof}

\begin{definition}\label{standardDCAdefinitio} \textbf{Standard Dynamic Contact
Algebra.} Let  Let  $\mathcal{M}=<(T, \prec), \{(B_{i},C_{i}):i\in
T\}, \mathbf{B}, \mathbb{B}\rangle$ be a be a  DMST and let us
suppose that the algebra $\mathbf{B}$ of dynamic regions enriched
with the relations $C^{s}$, $C^{t}$ and $\mathcal{B}$ satisfies
the conclusions of Lemma \ref{standardDCAaxioms}. Then the system
$(\mathbf{B}, C^{s}, C^{t},\mathcal{B})$ is called
\textbf{standard dynamic contact algebra} (standard DCA) over
DMST.
\end{definition}

 Let us note that Lemma \ref{standardDCAaxioms} ensures that
standard DCAs exist. We call them `standard', because they  are
concrete and will be  considered as standard models of abstract
DCA (to be introduced and study later on). Shortly speaking the
definition of abstract DCA is to rephrase the present definition
in an abstract way. Let us remind that the aim to start with
concrete point-based model for spacetime is to use it as a source
of motivated axioms.

%%%%%%%%%%%%%%%%%%%%%%%%%%%%%%%%%%%%%%%%%%%%%%%%%%%%%%%%%%%%%%%%%%%%%%%%%%%%%%%%%%%
\subsection{A characterization of the abstract properties of time \newline
structures with some  time axioms}\label{Section correlation}
%%%%%%%%%%%%%%%%%%%%%%%%%%%%%%%%%%%%%%%%%%%%%%%%%%%%%%%%%%%%%%%%%%%%%%%%

We do not presuppose in the formal definition of  DMST that the
time structure $(T, \prec)$  satisfies some abstract properties of
the precedence relation. In this section we shall see that all
abstract properties of the precedence relation mentioned in
Section 3.1 are in an exact correlation with some special
conditions of time contact $C^{t}$ and precedence relation
$\mathcal{B}$ called \textbf{time axioms}. The correlation is
given in the next table:

\begin{quote}

\noindent    \textbf{(RS)} \emph{Right seriality} $(\forall
m)(\exists n)(m\prec n)$ $\Longleftrightarrow$

 \textbf{(rs)}
$a\not=0 \rightarrow a\mathcal{B}1 $,

 \noindent   \textbf{(LS)}
 \emph{Left seriality}
   $(\forall m)(\exists n)(n\prec
 m)$ $\Longleftrightarrow$

 \textbf{(ls)}
  $a\not=0 \rightarrow
1\mathcal{B}a$,

\noindent  \textbf{(Up Dir)} \emph{Updirectedness} $(\forall
i,j)(\exists k)(i\prec k$ and $j\prec  k) \Longleftrightarrow$

\textbf{(up dir)} $a\not=0$ and $b\not=0 \rightarrow
a\mathcal{B}p$ or $b\mathcal{B}p^{*}$,

\noindent  \textbf{(Down Dir)}  \emph{Downdirectedness} $(\forall
i,j)(\exists k)(k\prec i$ and $k\prec j)$ $\Longleftrightarrow$

\textbf{(down dir)} $a\not=0$ and $b\not=0 \rightarrow
p\mathcal{B}a$ or $p^{*}\mathcal{B}b$,

\noindent \textbf{(Circ)} $i\prec j \rightarrow (\exists k)(k\prec
i$ and $j\prec k)$ $\Longleftrightarrow$

\textbf{(cirk)} $a\mathcal{B}b\rightarrow b\mathcal{B} p$ or
$p^{*}\mathcal{B}a$

 \noindent \textbf{(Dens)}
\emph{Density} $i\prec j\rightarrow (\exists k)(i\prec k \land
k\prec j)$ $\Longleftrightarrow$

\textbf{(dens)} $a\mathcal{B}b \rightarrow a\mathcal{B}p$ or
$p^{*}\mathcal{B}b$,

\noindent   \textbf{(Ref)} \emph{Reflexivity} $(\forall m)(m\prec
m)$
 $\Longleftrightarrow$

\textbf{(ref)} $aC^{t}b \rightarrow a\mathcal{B}b$,

\noindent \textbf{(Irr)} \emph{Irreflexivity} $(\forall m)($
$m\not\prec m)$  $\Longleftrightarrow$

\textbf{(irr)}  $a\mathcal{B}b\rightarrow (\exists c,d)( aC^{t}c$
and $bC^{t}d$ and $c\overline{C}^{t}d)$,

\noindent  \textbf{(Lin)} \emph{Linearity} $(\forall m,n)(m\prec n
\lor n\prec m)$  $\Longleftrightarrow$

\textbf{(lin)} $a\not=0$  and $b\not=0\rightarrow a\mathcal{B}b$ or $b\mathcal{B}a$,

\noindent \textbf{(Tri)} \emph{Trichotomy}  $(\forall m,n)(m=n$ or
$m\prec n$ or $n\prec m)$   $\Longleftrightarrow$

\textbf{(tri)}  $(a\not=0$ and $b\not=0$ $\rightarrow$ $aC^{t}b$
or $(a\mathcal{B}b$ or $b\mathcal{B}a)$,

\noindent   \textbf{(Tr)} \emph{Transitivity} $i\prec j$ and
$j\prec k \rightarrow i\prec k$
 $\Longleftrightarrow$

\textbf{(tr)}  $a\overline{\mathcal{B}}b \rightarrow (\exists
c)(a\overline{\mathcal{B}}c$ and $c^{*}\overline{\mathcal{B}}b)$.
\end{quote}

\begin{lemma}\label{correspondence} {\bf Correspondence Lemma 1.} Let  $\mathcal{M}=\langle\langle(T, \prec), \{(B_{i},C_{i}):i\in T\},
\mathbf{B}, \mathbb{B}\rangle$ be a rich DMST and let $\mathbf{B}$
be enriched with the relations $C^{t}$ and $\mathcal{B}$.
 Then all the correspondences in the above table are true in the
 following sense: the left site of a given equivalence  is true in $(T,\prec)$
 iff the right site is true in $\mathbf{B}$.

\end{lemma}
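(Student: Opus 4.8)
The plan is to reduce every relation on $\mathbf{B}$ to a statement about the \emph{existence set} $\mathrm{Ex}(a)=\{i\in T: a_{i}\neq 0_{i}\}$ of a dynamic region $a$. Unwinding the definitions from Section 3.3 gives the three basic translations: $a\neq 0$ iff $\mathrm{Ex}(a)\neq\varnothing$; $aC^{t}b$ iff $\mathrm{Ex}(a)\cap\mathrm{Ex}(b)\neq\varnothing$; and $a\mathcal{B}b$ iff there are $m\in\mathrm{Ex}(a)$ and $n\in\mathrm{Ex}(b)$ with $m\prec n$. I would also record $\mathrm{Ex}(a+b)=\mathrm{Ex}(a)\cup\mathrm{Ex}(b)$ and, crucially, the one asymmetric fact about complement: $T\smallsetminus\mathrm{Ex}(a)\subseteq\mathrm{Ex}(a^{*})$, with equality precisely when every coordinate $a_{i}$ is $0_{i}$ or $1_{i}$. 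Such regions are exactly the ones supplied by richness, so for each $S\subseteq T$ the characteristic region $\chi_{S}$ (with $(\chi_{S})_{i}=1_{i}$ for $i\in S$, $0_{i}$ otherwise) lies in $\mathbf{B}$ and satisfies $\mathrm{Ex}(\chi_{S})=S$ and $\chi_{S}^{*}=\chi_{T\smallsetminus S}$; write $e_{m}$ for $\chi_{\{m\}}$ and note $1=\chi_{T}$ exists at every moment.

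Each equivalence then splits into two routine directions following a fixed template. For the forward implication (a time condition forces its algebraic counterpart) I would take arbitrary regions satisfying the algebraic hypothesis, extract witnessing moments from their existence sets, feed these into the first-order time condition to obtain a new moment $k$, and reassemble the algebraic conclusion. Where the conclusion mentions a complement $p^{*}$ (the cases (up dir), (down dir), (cirk), (dens)) I would split on whether $p_{k}=0_{k}$: if $p_{k}\neq 0_{k}$ then $k\in\mathrm{Ex}(p)$, and otherwise $k\in\mathrm{Ex}(p^{*})$ because $T\smallsetminus\mathrm{Ex}(p)\subseteq\mathrm{Ex}(p^{*})$; one of the two disjuncts is then witnessed by $k$. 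The cases (rs), (ls), (ref), (lin), (tri) are the same but simpler, since no complement occurs.

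For the backward implication (the algebraic condition forces the time condition) I would instantiate the algebraic axiom at characteristic regions, so that all complements become exact set complements. For the seriality, linearity and trichotomy family it suffices to take $a=e_{m}$, $b=e_{n}$ and read the conclusion off the one-point existence sets $\{m\},\{n\}$. For the directed, circularity and density cases I would argue by contradiction: assuming the desired common moment $k$ does not exist means two explicitly described sets $P,Q\subseteq T$ are disjoint (e.g. for (up dir), the successor sets $P=\{k: i\prec k\}$ and $Q=\{k: j\prec k\}$ must fail to meet), and then any $S$ with $Q\subseteq S\subseteq T\smallsetminus P$ --- which exists exactly because $P\cap Q=\varnothing$ --- yields via $p=\chi_{S}$ a falsification of both disjuncts of the algebraic axiom, contradicting it. For (irr) I would use $a=b=e_{m}$ with witnesses $c=e_{m}$, $d=e_{n}$; for (tr) I would observe that the restriction of $(\mathbf{B},\mathcal{B})$ to characteristic regions is isomorphic (via $a\mapsto\mathrm{Ex}(a)$) to the precontact algebra $(B(T),C_{\prec})$ of the adjacency space $(T,\prec)$, so (tr) is literally the Efremovich axiom (CE) for $C_{\prec}$ and transitivity of $\prec$ follows from Lemma \ref{relational-characterization}(iii).

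The main obstacle, and the only place where anything beyond bookkeeping happens, is the asymmetry $T\smallsetminus\mathrm{Ex}(a)\subsetneq\mathrm{Ex}(a^{*})$ in general: the translation $a\mapsto\mathrm{Ex}(a)$ is \emph{not} a Boolean homomorphism, so one cannot simply transport the adjacency-space characterisations of Section 2 wholesale. The resolution is precisely the richness hypothesis (already flagged in the proof of Lemma \ref{standardDCAaxioms}): the inclusion always points in the harmless direction for the forward implications, while for the backward implications one only ever needs to instantiate at characteristic regions, on which complement is exact. This is exactly why richness is assumed in the statement of the lemma.
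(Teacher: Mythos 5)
Your proposal is correct and follows essentially the same route as the paper's proof, which demonstrates the cases \textbf{(Irr)} and \textbf{(Circ)}: both arguments extract witnessing moments from the definitions of $C^{t}$ and $\mathcal{B}$, use richness to build coordinate-wise characteristic regions (your $\chi_{S}$, $e_{m}$) as instantiating witnesses in the backward direction, and handle the complemented disjuncts by the case split $p_{k}=0_{k}$ versus $p_{k}\not=0_{k}$ via the inclusion $T\smallsetminus\mathrm{Ex}(p)\subseteq\mathrm{Ex}(p^{*})$. Your existence-set translation and the uniform $Q\subseteq S\subseteq T\smallsetminus P$ recipe are just a systematic packaging of exactly these steps, so nothing essential differs.
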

\begin{proof} We will show the proof for two cases: \textbf{(Irr)}
and \textbf{(Circ)}.

\textbf{Case 1}:
\textbf{(Irr)}$\Longleftrightarrow$\textbf{(irr)}.

\textbf{(Irr)}$\Longrightarrow$\textbf{(irr)}. Suppose
\textbf{Irr}. This condition is equivalent also to the following
one: $m\prec n\rightarrow m\not=n$. To prove \textbf{(irr)}
suppose $a\mathcal{B}b$. Then there exist $i, j$ such that
$a_{i}\not=0_{i}$, $b_{j}\not=0_{j}$ and $i\prec j$ which implies
$i\not=j$. Define the regions $c$ and $d$ coordinate-wise as
follows:

\centerline {  $c_{k}=\begin{cases}1_{k}, &\mbox{if } k=i\\0_{k},
&\mbox{if } k\not=i.\end{cases}$,
 $d_{k}=\begin{cases}1_{k},
&\mbox{if } k=j\\0_{k}, &\mbox{if } k\not=j.\end{cases}$}

From here we obtain $c_{i}=1_{i}\not=0_{i}$ and
$d_{j}=1_{j}\not=0_{j}$. Since $a_{i}\not=0_{i}$ we get $aC^{t}c$.
Since $b_{j}\not=0_{j}$ we get $bC^{t}d$. In order to show that
$c\overline{C}d$ suppose the contrary: $cC^{t}d$.  This implies
that there is $k\in T$ such that $c_{k}\not=0_{k}$ and
$d_{k}\not=0_{k}$. By the definitions of $c$ and $d$ we get that
$c_{k}=1_{k}$ (and hence $k=i$) and $d_{k}=1_{k}$ (and hence k=j)
and consequently - $i=j$ - a contradiction. Thus
$c\overline{C}^{t}d$ which has to be proved.

\textbf{(irr)}$\Longrightarrow$\textbf{(Irr)}. Suppose
\textbf{(irr)} and that \textbf{(Irr)} is not true. Then there
exists $i$ such that $i\prec i$. Define $a$ coordinate-wise as
follows:

\centerline{  $a_{k}=\begin{cases}1_{k}, &\mbox{if } k=i\\0_{k},
&\mbox{if } k\not=i.\end{cases}$}

From here we get that $a_{i}=1_{i}\not=0_{i}$ and since $i\prec i$
we obtain $a\mathcal{B}a$. By \textbf{(irr)} There are $c$ and $d$
such that $aC^{t}c$, $aC^{t}d$ and $c\overline{C}d$. From the
definition of $a$ we have that $a_{k}\not=0_{k}$ only for $k=i$.
From this and $aC^{t}c$ we get that $c_{i}\not=0_{i}$ and from
$aC^{t}d$ that $d_{i}\not=0_{i}$. Consequently $cC^{t}d$ a
contradiction with $c\overline{C}d$, which ends the proof.

\textbf{Case
2:}
\textbf{(Circ)}$\Longleftrightarrow$\textbf{(circ)}.

\textbf{(Circ)}$\Longrightarrow$\textbf{(circ)}. Suppose that
\textbf{(Circ)} is true. To prove \textbf{(circ)} suppose
$a\mathcal{B}b$. Then there are $i,j\in T$ such that
$a_{i}\not=0_{i}$, $b_{j}\not=0_{j}$ and $i\prec j$. By
\textbf{Circ} there is a $k\in T$ such that $j\prec k$ and $k\prec
i$.  Let $p$ be arbitrary dynamic region. There are two cases:
\textbf{Case a:} $p_{k}\not=0_{k}$ which implies $p\mathcal{B}a$.

\textbf{Case b:} $p_{k}=0_{k}$. Then $p_{k}^{*}=1_{k}\not=0_{k}$
which implies $b\mathcal{B}p^{*}$.

\textbf{(circ)}$\Longrightarrow$\textbf{(Circ)}. Suppose
\textbf{(circ)} holds. In order to prove \textbf{(Circ)} suppose
$i\prec j$. Define $a,b$ and $p$ as follows:

\centerline{ $a_{m}=\begin{cases}1_{m}, &\mbox{if } m=i\\0_{m},
&\mbox{if } m\not=i.\end{cases}$,     $b_{n}=\begin{cases}1_{n},
&\mbox{if } n=j\\0_{n}, &\mbox{if } n\not=j.\end{cases}$,
$p_{k}=\begin{cases}1_{k}, &\mbox{if } k\prec i\\0_{k}, &\mbox{if
} k\not\prec i.\end{cases}$.}

By the definitions of $a$ and $b$ we obtain that
$a_{i}=1_{i}\not=0_{i}$ and $b_{j}=1_{j}\not=0_{j}$. Since $i\prec
j$ we get $a\mathcal{B}b$. By \textbf{(Circ)} we obtain
$b\mathcal{B}p$ or $p^{*}\mathcal{B}a$. Consider the two cases
separately.

 \textbf{Case I:}  $b\mathcal{B}p$. This implies that there exist $m,k\in
 T$ such that $n\prec k$, $b_{n}\not=0_{m}$ (hence $b_{n}=1_{n}$
 and $n=j$) and $p_{k}\not=0_{k}$ (and hence $p_{k}=1_{k}$ and
 $k\prec i$). From here we get $j\prec k$ and $k\prec i$ -just
 what have to be proved.

 \textbf{Case II:} $p^{*}\mathcal{B}a $. This implies that there
 exist $k, m\in T$ such that $k\prec m$, $p^{*}_{k}\not=0_{k}$
 (and hence $p^{*}_{k}=1_{k}$, $p_{k}=0_{k}$ and $k\not \prec i$)
and $a_{m}\not=0_{m}$ (and hence $a_{m}=1_{m}$ and $m=i$). From
here we get $k\prec i$ which contradicts $k\not \prec i$. So this
case is impossible and the previous case implied what is
needed.\end{proof}

\begin{definition}\label{list of time axioms}
 The formulas \textbf{(rs)}, \textbf{(ls)}, \textbf{(up dir)},
 \textbf{(down dir)}, \textbf{(circ)},
 \newline\textbf{(dens)}, \textbf{(ref)}, \textbf{(irr)},
 \textbf{(lin)}, \textbf{(tri)}, \textbf{(tr)}, included in the above table are called \textbf{`time axioms'}
  and will be considered as additional axioms for abstract DCAs.
\end{definition}

  The above lemma is very important because it states that
the abstract properties of the time structure of a given rich
model of space are determined by the time axioms which contain
only variables for dynamic regions and time points are not
mention. This correlation suggests to consider (abstract) DCAs
satisfying some of the time axioms.

%%%%%%%%%%%%%%%%%%%%%%%%%%%%%%%%%%%%%%%%%%%%%%%%%%%%%%%%%%%%%%%%%%%%%%%%
\subsection{Time representatives and NOW}\label{Section Time
representatives}
%%%%%%%%%%%%%%%%%%%%%%%%%%%%%%%%%%%%%%%%%%%%%%%%%%%%%%%%%%%%%%%%%%%%%%%%%
 In this section, following \cite{Vak2014}  we present another enrichment of the expressive power of standard DCA by
new constructs called \emph{time representatives}, \emph{universal
time representatives} and \textbf{NOW}. Since this material will
not be used later on in this paper, the presentation is sketchy
and without proofs.

First about the intuitions behind these notions. Consider the
phrases: ''the epoch of Leonardo'', ''the epoch of Renaissance'',
''the geological age of the dinosaurs'', ''the time of the First
World War'', etc.
 All these phrases indicate a concrete unit of time named
 by something which happened  or existed at that time and not in some other moment (epoch) of time.
  These examples suggest to introduce in DMST a special set
  of dynamic regions called \emph{time representatives},
   which are regions existing at a unique time point. The formal definition  is the following:

\begin{definition} \label{time-representative-def} A region $c$ in
a DMST
 is called a \textbf{time representative} if there exists a time point
 $i\in T$ such that $c_{i}\not=0_{i}$ and for all $j\not=i$, $c_{j}=0_{j}$.
 We say also that $c$ is a representative of the time point $i$ and indicate
  this by writing $c=c(i)$. In the case when $c_{i}=1_{i}$, $c$ is called
   \textbf{universal time representative}. We denote by $TR$  the set of
  universal time representatives and by $UTR$ the set of universal time representatives.
\end{definition}

Time representatives and universal time representatives always
exist in rich models. Let $i\in T$, then the following region
$c=c(i)$ is the universal time representative corresponding to the
time point $i$:

 $$c_{k}=\begin{cases}1_{k}, &\mbox{if } k=i\\0_{k}, &\mbox{if }
k\not=i.\end{cases}$$.

 If for a given $i\in T$ there exists $a$ such that
 $a_{i}\not=0_{i}$ and $a_{i}\not=1_{i}$ then $c.a$ is time
 representative of $i$ which is not universal time representative.

 The existence of universal  time representatives for each $i\in
 T$ suggests to consider enriched  time structures $(T, \prec, \textbf{now})$,  where \textbf{now} is a fixed
 element of $T$ corresponding to the present epoch. We denote by
 \textbf{NOW} the universal time representative of \textbf{now}.
 Let us note that the extension of the language of standard DCA
  with time representatives and \textbf{NOW} enriches considerably
 its expressive power  and makes possible to consider Past, Present
 and Future. Examples:

 \begin{quote}
 $\bullet$ $a$ exists now - $aC^{t}\textbf{NOW}$,

$\bullet$ $a$ will  exist in the future -
$\textbf{NOW}\mathcal{B}a$,

$\bullet$ $a$ will always  exist in the future  - $(\forall c\in
 TR)(\textbf{NOW}\mathcal{B}c\rightarrow aC^{t}c)$,

$\bullet$ $a$ was existing in the past -
$a\mathcal{B}\textbf{NOW}$,

$\bullet$ $a$ is in a contact with $b$ now -
$a.\textbf{NOW}C^{s}b$,

$\bullet$ $a$ will be in a contact with $b$ - $(\exists c\in
UTR)(\textbf{NOW}\mathcal{B}c$ and $a.cC^{s}b$),

$\bullet$ $a$ and $b$ are always in a contact - $(\forall c\in
UTR)(a.cC^{s}b)$.
\end{quote}

%%%%%%%%%%%%%%%%%%%%%%%%%%%%%%%%%%%%%%%%%%%%%%%%%%%%%%%%%%%%%%%%
\section{ Dynamic contact algebra (DCA)}\label{Section abstractDCA}
%%%%%%%%%%%%%%%%%%%%%%%%%%%%%%%%%%%%%%%%%%%%%%%%%%%%%%%%%%%%%%%%%%%%%%%%%

 We adopt in this paper
the following definition of abstract dynamic contact algebra.

\begin{definition}\label{Abstract DCA Definition} The algebraic
system $A=(B_{A}, C^{s}_{A}, C^{t}_{A},\mathcal{B}_{A})$ is called
dynamic contact algebra (DCA) provided the following conditions
are stisfied:
\begin{quote}
(BA) $B_{A}=(B_{A}, \leq, 0, 1, +, ., *)$ is a nondegenerate
Boolean algebra.

$(CC^{s})$  $C^{s}_{A}$ is a contact relation in $B_{A}$, called
\textbf{space contact},

$(CC^{t})$  $C^{t}_{A}$ is a contact relation in $B_{A}$, called
\textbf{time contact} and satisfying the following two axioms:

\quad\quad $(C^{s}\subseteq C^{t})$ $aC^{s}_{A}b\Rightarrow
aC^{t}_{A}b$.

\quad\quad  $(C^{t}E)$ $a\overline{C}^{t}_{A}b\Rightarrow (\exists
c)(a\overline{C}^{t}_{A}c$ and $c^{*}\overline{C}^{t}_{A})$, the
Efremovich axiom for $C^{t}_{A}$.

$(PreC\mathcal{B})$ $\mathcal{B}_{A}$ is a precontact relation in
$B_{A}$, called \textbf{local precedence} and satisfying the
following two axioms:

\quad\quad $(C^{t}\mathcal{B})$
$a\overline{\mathcal{B}}_{A}B\Rightarrow(\exists
c)(a\overline{C}^{t}_{A}c$ and
$c^{*}\overline{\mathcal{B}}_{A}b)$.

\quad\quad $(\mathcal{B}C^{t})$
$a\overline{\mathcal{B}}_{A}B\Rightarrow(\exists
c)(a\overline{\mathcal{B}}_{A}c$ and
$c^{*}\overline{C}_{A}^{t}b)$.
\end{quote}

We considerer also DCA satisfying additionally some of the time
axioms \textbf{(rs)}, \textbf{(ls)}, \textbf{(up dir)},
 \textbf{(down dir)}, \textbf{(circ)}, \textbf{(dens)}, \textbf{(ref)},
 \textbf{(lin)}, \textbf{(tri)}, \textbf{(tr)} (see Definition
 \ref{list of time axioms}).{ \rm (Note that here the axiom
 \textbf{(irr)} is excluded for reasons which will be explained
 later, see Remark \ref{why not irr})}.

 Since DCAs are algebraic systems we adopt the standard algebraic
 notions
 of isomorphism
 between two DCAs $A_{1}$ and $A_{2}$   and isomorphic embedding
 of $A_{1}$ into $A_{2}$. If $A_{1}$ and $A_{2}$ are isomorphic
 we will denote this by $A_{1}\cong A_{2}$.
\end{definition}

  Note that the name `dynamic contact algebra' is
 used in the  papers \cite{Vak2010,Vak2012,Vak2014,Plamen} as an integral name for point-free
theories of space and time with different definitions in different
papers. This is just for economy of names. The definition used in
\cite{Vak2014} incorporates also time representatives
but for the purposes of this paper we decided to adopt more simple
definition which is based only on the relations $C^{s}$, $C^{t}$
and $\mathcal{B}$. It is similar to the definition of DCA from
\cite{Vak2012}, but the present definition is based on a more
strong axioms, so it has a different theory. Note also that the
just introduced DCA has models - these are the standard DCAs from
Definition \ref{standardDCAdefinitio} and they will be considered
as standard models of the present definition of DCA. Our first aim
is to show that DCAs are representable by means of models.

\begin{lemma}\label{DCA as a generalization of CA} DCA is a
generalization of CA.

\end{lemma}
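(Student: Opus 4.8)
The plan is to realize every contact algebra as a DCA whose space contact reproduces the original contact, so that the class of CAs sits inside the class of DCAs as the ``time-degenerate'' case. Concretely, given a contact algebra $(B,C)$ I would set $C^{s}:=C$, take $C^{t}:=C_{max}$ (the largest contact, $aC_{max}b$ iff $a\not=0$ and $b\not=0$), and let $\mathcal{B}:=\varnothing$ be the empty relation. The assignment $C\mapsto (B,C,C_{max},\varnothing)$ is injective, since $C$ is recovered as the space-contact component $C^{s}$; this is exactly the sense in which DCA should ``generalize'' CA, and it is what I would prove.

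The verification that $(B,C,C_{max},\varnothing)$ satisfies Definition \ref{Abstract DCA Definition} is then routine. Axiom $(CC^{s})$ holds because $C^{s}=C$ is a contact by hypothesis. For $(CC^{t})$, $C_{max}$ is a contact relation (being the largest contact, by the Remark following Definition \ref{contact-relation}) and it satisfies the Efremovich axiom $(C^{t}E)$, as already recorded in the text; and $(C^{s}\subseteq C^{t})$ is automatic, since $aC^{s}b$ forces $a\not=0$ and $b\not=0$ by (C1), whence $aC_{max}b$. For $(PreC\mathcal{B})$, the empty relation is trivially a precontact, (C1), (C2), (C3'), (C3'') all holding vacuously. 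The compositional axioms collapse once $\overline{\mathcal{B}}$ is identically true: for $(C^{t}\mathcal{B})$ I would take $c=0$, so that $a\overline{C}^{t}0$ holds (because $0C_{max}b$ is always false) while $c^{*}\overline{\mathcal{B}}b$ holds vacuously; for $(\mathcal{B}C^{t})$ I would take $c=1$, so that $c^{*}=0$ gives $c^{*}\overline{C}^{t}b$ while $a\overline{\mathcal{B}}c$ holds vacuously.

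I would also record the conceptual reading through the snapshot construction: $(B,C,C_{max},\varnothing)$ is precisely the standard DCA attached to the one-point time structure $T=\{i\}$ with empty before-after relation $\prec=\varnothing$ and coordinate algebra $(B_{i},C_{i})=(B,C)$. Over a single moment $C^{s}$ collapses to $C_{i}=C$, $C^{t}$ collapses to $C_{max}$, and $\mathcal{B}$ is empty because $\prec$ is empty; since this one-point model is full, hence rich, Lemma \ref{standardDCAaxioms} already guarantees it is a standard DCA, so the embedding is the genuine ``frozen time'' instance of the model rather than an ad hoc device (a reflexive one-point frame $\prec=\{(i,i)\}$ yields the equally valid variant $(B,C,C_{max},C_{max})$, for which $(C^{t}\mathcal{B})$ and $(\mathcal{B}C^{t})$ become the Efremovich axiom for $C_{max}$). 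The only point that requires care — and hence the sole, mild obstacle — is the choice of $C^{t}$: the naive attempt $C^{t}:=C^{s}=C$ fails, because an arbitrary contact need not satisfy $(C^{t}E)$; replacing it by $C_{max}$ repairs this at no cost, being simultaneously the largest contact (so that $C^{s}\subseteq C^{t}$) and Efremovich, after which the triviality of $\mathcal{B}$ makes the compositional axioms automatic.
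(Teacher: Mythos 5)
Your construction is correct and follows essentially the same route as the paper: the paper also sets $C^{s}:=C$ and $C^{t}:=C_{max}$, and your identification of the one genuine subtlety --- that $C^{t}:=C$ would fail the Efremovich axiom $(C^{t}E)$, forcing the passage to $C_{max}$ --- is exactly the point the paper's one-line proof leaves implicit. The only divergence is the choice of the precedence relation: the paper takes $\mathcal{B}:=C^{t}=C_{max}$ (your ``reflexive one-point frame'' variant), while you take $\mathcal{B}:=\varnothing$ (the irreflexive one-point frame). Both choices verify all the axioms of Definition \ref{Abstract DCA Definition} --- your vacuous-truth arguments for the precontact axioms and the witnesses $c=0$ and $c=1$ for $(C^{t}\mathcal{B})$ and $(\mathcal{B}C^{t})$ are sound --- and both give an injective assignment recovering $C$ as $C^{s}$. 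The paper's choice is the one that is reused later: the ``trivial DCA'' of Definition \ref{Definition trivial DCA} and the whole of Section \ref{Section CA as DCA} are built on $\mathcal{B}=C^{t}$, so that the unique cluster $t_{0}$ satisfies $t_{0}\prec t_{0}$ and the dual space is a trivial DMS; with $\mathcal{B}=\varnothing$ the canonical $\prec$ on clusters is empty, which is equally legitimate for this lemma but does not match the subsequent development.
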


\begin{proof} Let $A=(B_{A},C_{A})$ be a contact algebra. Set
$C^{s}_{A}=C_{A}$, $aC^{t}_{A}b$ iff $a\not=0$ and $b\not=0$ (the
maximal contact of A) and $\mathcal{B}_{A}=C^{t}_{A}$. Then it is
easy to see that $A$ with thus defined relations is a DCA.
\end{proof}
\begin{remark}\label{Remark DCA as ageneralization of CA} One note to the Lemma \ref{DCA as a generalization of CA}. If we interpret contact algebras as dynamic contact algebras as in Lemma \ref{DCA as a generalization of CA} the obtained reinterpretation of contact algebra has topological models which are different from the standard topological models of contact algebras (see section \ref{Section Canonical DMS for DCA}). So the stated equivalence in the Lemma \ref{DCA as a generalization of CA} is only about the corresponding algebraic structures.

It is true if we consider CA with an additional contact - the definable maximal contact $(C_{max})_{A}$ with $a(C_{max})_{A}b\Leftrightarrow_{def} a\not=0$ and $b\not=0$. Such extended contact algebras have topological models which are different from the standard topological models of contact algebras (see section \ref{Section Canonical DMS for DCA}.

\end{remark}
%%%%%%%%%%%%%%%%%%%%%%%%%%%%%%%%%%%%%%%%%%%%%%%%%%%%%%%%%%%%%%%%%%%%%%%%%
\subsection{Facts about ultrafilters, clans and clusters in
DCA}\label{Section Facts about ultrafilters, clans and clusters in
DCA}

Let $A=(B_{A}, C^{s}_{A}, C^{t}_{A},\mathcal{B}_{A})$ be a DCA. We
denote by $Ult(A)$ the set of ultrafilters of $A$ and by
$R^{s}_{A}$, $R^{t}_{A}$ and $\prec_{A}$ we denote correspondingly
the canonical relations of  $C^{s}_{A}$, $C^{t}_{A}$ and $\mathcal{B}_{A}$ (for the
definition of canonical relation see Definition \ref{Canonical
relation for precontact}).  Since $C^{s}_{A}$ and $C^{t}_{A}$ are
contact relations, then $R^{s}_{A}$ and $R^{t}_{A}$ are reflexive
and symmetric relations (Lemma \ref{canonical-lemma-2}). Since
$C^{t}_{A}$ satisfies the Efremovich axiom $(C^{t}E)$, the
relation $R^{t}_{A}$ is transitive (Lemma
\ref{canonical-lemma-2}), which implies the following statement:

\quad\quad\quad The relation $R^{t}_{A}$ is an equivalence
relation. \hfill (1)

By the axioms $(C^{t}\mathcal{B})$ and $(\mathcal{B}C^{t})$ the
relation $\prec_{A}$ satisfies the following conditions (see Lemma
\ref{canonical-lemma-3}) for arbitrary $U,V,W\in Ult(A)$:

\quad\quad\quad $(R^{t}\circ\prec\subseteq\prec)$ $UR^{t}_{A}V$
and $V\prec W\Rightarrow U\prec W$,\hfill (2)

\quad\quad\quad $(\prec\circ R^{t}\subseteq \prec)$ $U\prec V$ and
$VR^{t}_{A}W\Rightarrow U\prec W$.\hfill (3)

Conditions (2) and (3) imply the following more general condition

\quad\quad\quad $UR^{t}_{A}U_{0}$ and $U_{0}\prec_{A}V_{0}$ and
$V_{0}R^{t}_{A}V\Rightarrow U\prec_{A} V$.\hfill (4)

 The axiom $(C^{s}\subseteq C^{t})$
implies that the relation $R^{s}_{A}$ is included in the relation
$R^{t}_{A}$, namely the following condition is satisfied for
arbitrary $U,V\in Ult(A)$:

\quad\quad\quad $UR^{s}V\Rightarrow UR^{t}_{A}V$.\hfill (5)

The clans determined by the contact $C^{s}_{A}$ are called s-clans
and their set is denoted by s-Clans(A). The clans determined by
$C^{t}_{A}$ are called t-clans and their set is denoted by
t-Clans(A). By axiom $(C^{s}\subseteq C^{t})$ every s-clan is a
t-clan. Note that every ultrafilter is both an s-clan and a
t-clan. So we have the inclusions:

\quad\quad\quad Ult(A)$\subseteq$s-Clans(A)$\subseteq$t-clans(A).

If $\Gamma$ is a t-clan we denote by $Ult(\Gamma)$ the set of
ultrafilters included in $\Gamma$.  \hfill (6)

By axiom $(C^{t}E)$ maximal t-clans are clusters and by Lemma
\ref{cluster-lemma} they are unions of the equivalence classes of
ultrafilters determined by the equivalence relation $R^{t}_{A}$.
The set of clusters is denoted by Clust(A). Note that (see Lemma
\ref{clusters-are-maximal-clans})

\quad\quad\quad Every t-clan (s-clan) is contained in a unique
cluster. \hfill (7)

So there is a function $\gamma_{A}$:t-$Clans(A)\rightarrow
Clusters(A)$ with the following properties;

\quad\quad\quad $(\gamma 1)$ If $\Gamma\in$ t-$Clans(A)$, then
$\gamma_{A}(\Gamma)\in Clust(A)$,

\quad\quad\quad $(\gamma 2)$ If $\Gamma\in Clust(A)$, then
$\gamma_{A}(\Gamma)=\Gamma$.   \hfill (8)

Now we extend the relation $\prec$ to hold between t-clans (and
hence between clusters) by the same definition used for
ultrafilters: for $\Gamma,\Delta\in t-Clans(A)$

\quad\quad\quad  $\Gamma\prec_{A}
\Delta\Leftrightarrow_{def}(\forall a,b\in B_{A})(a\in \Gamma$ and
$b\in \Delta\Rightarrow a\mathcal{B}_{A}b)$.                      \hfill (9)

\begin{lemma}\label{extension of prec} The following conditions are
equivalent for any $\Gamma,\Delta\in$ t-$Clans(A)$:

(i) $\Gamma\prec_{A} \Delta$,

(ii) For all $U\in Ult(\Gamma)$ and $V\in Ult(\Delta)$:
$U\prec_{A} V$,

(iii) There exist $U_{0}\in ULT(\Gamma)$ and $V_{0}\in
Ult(\Delta)$ such that: $U_{0}\prec_{A} V_{0}$.

\end{lemma}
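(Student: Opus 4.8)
The plan is to establish the cycle of implications $(i)\Rightarrow(ii)\Rightarrow(iii)\Rightarrow(i)$, where the first two steps are routine and the third carries all the weight. The whole argument rests on decomposing a t-clan into the ultrafilters it contains and then invoking the interaction condition (4) between the equivalence relation $R^t_A$ and the precedence relation $\prec_A$, which was derived earlier from the compositional axioms $(C^t\mathcal{B})$ and $(\mathcal{B}C^t)$.

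For $(i)\Rightarrow(ii)$ I would assume $\Gamma\prec_A\Delta$ and take arbitrary $U\in Ult(\Gamma)$, $V\in Ult(\Delta)$. Since $U\subseteq\Gamma$ and $V\subseteq\Delta$, every $a\in U$ lies in $\Gamma$ and every $b\in V$ lies in $\Delta$, so the defining condition (9) of $\Gamma\prec_A\Delta$ immediately yields $a\mathcal{B}_A b$; that is $U\prec_A V$. For $(ii)\Rightarrow(iii)$ the only issue is nonemptiness: a t-clan contains $1$ and hence is nonempty, so by the Clan Lemma \ref{clan-lemma1} the sets $Ult(\Gamma)$ and $Ult(\Delta)$ are nonempty, and picking any $U_0\in Ult(\Gamma)$, $V_0\in Ult(\Delta)$ and applying $(ii)$ gives $U_0\prec_A V_0$.

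The main obstacle is $(iii)\Rightarrow(i)$. Here I would assume $U_0\prec_A V_0$ for some $U_0\in Ult(\Gamma)$, $V_0\in Ult(\Delta)$, fix arbitrary $a\in\Gamma$ and $b\in\Delta$, and aim to show $a\mathcal{B}_A b$. By part (vi) of the Clan Lemma \ref{clan-lemma1} there are ultrafilters $U\in Ult(\Gamma)$ with $a\in U$ and $V\in Ult(\Delta)$ with $b\in V$. Because $Ult(\Gamma)$ and $Ult(\Delta)$ are $R^t_A$-cliques (part (v) of the Clan Lemma applied to the contact $C^t_A$, for which t-clans are the clans), we have $U R^t_A U_0$ and $V_0 R^t_A V$. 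Now condition (4), namely $U R^t_A U_0$ and $U_0\prec_A V_0$ and $V_0 R^t_A V \Rightarrow U\prec_A V$, yields $U\prec_A V$; since $a\in U$ and $b\in V$, the definition of $U\prec_A V$ delivers $a\mathcal{B}_A b$, which completes the step.

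The only delicate point I would double-check is that the cliques are formed with respect to the correct canonical relation: t-clans are clans of $C^t_A$, so the Clan Lemma produces $R^t_A$-cliques, which is exactly what condition (4) consumes. Symmetry of $R^t_A$, valid since $C^t_A$ satisfies (C4) (Lemma \ref{canonical-lemma-2}), lets me use the $R^t_A$-relatedness in whichever direction is convenient. Notably, no construction of new regions and no separation argument is required: all the content is funneled through the pre-established condition (4), so this proof is short once the right ultrafilters are selected.
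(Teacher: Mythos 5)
Your proposal is correct and follows essentially the same route as the paper's own proof: the implications (i)$\Rightarrow$(ii) and (ii)$\Rightarrow$(iii) are handled exactly as in the text, and for (iii)$\Rightarrow$(i) you, like the author, pick ultrafilters $U\ni a$, $V\ni b$ inside $\Gamma$, $\Delta$ via the Clan Lemma, use that $Ult(\Gamma)$ and $Ult(\Delta)$ are $R^{t}_{A}$-cliques to get $UR^{t}_{A}U_{0}$ and $V_{0}R^{t}_{A}V$, and conclude $U\prec_{A}V$ from condition (4). Your explicit checks of nonemptiness of $Ult(\Gamma)$, $Ult(\Delta)$ and of which canonical relation the cliques live over are welcome refinements but not a different argument.
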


\begin{proof}(i)$\Rightarrow$(ii). Suppose (i) holds and to prove
 (ii) suppose $a\in U\in Ult(\Gamma)$ and $b\in V\in
 Ult(\Delta)$. Then $a\in \Gamma$ and $b\in \Delta$ and by (i)
 and (9) we get $a\mathcal{B}b$ which proves (ii).

(ii)$\Rightarrow$(iii) is obvious.

(iii)$\Rightarrow$(i). Suppose (iii): $U_{0}\prec_{A}V_{0}$ for
some $U_{0}\in Ult(\Gamma)$ and $V_{0}\in Ult(\Delta)$. In order
to show (i) suppose $a\in \Gamma$ and $b\in \Delta$ and proceed to
show that $a\mathcal{B}_{A}b$. Since $a\in \Gamma$, then  there
exist an ultrafilter $U$ such that $a\in U\in Clans(\Gamma)$ and
an ultrafilter $V$ such that $b\in V\in Clans(\Delta)$ (see Lemma
\ref{clan-lemma1}). Then $UR^{t}_{A}U_{0}$ and $V_{0}R^{t}_{A}V$.
Since $U_{0}\prec_{A} V_{0}$, then by (4) we get $U\prec_{A}V$.
But $a\in U$, $b\in V$ and $U\prec_{A}V$ imply
$a\mathcal{B}_{A}b$.
\end{proof}

\begin{lemma}\label{prec-extension lemma} For all t-clans
$\Gamma,\Delta$ if $\Gamma\prec_{A}\Delta$, then there exists  a
cluster $\Gamma'$ and a cluster $\Delta'$ such that
$\Gamma\subseteq \Gamma'$ and  $\Delta\subseteq \Delta'$ and
$\Gamma'\prec_{A}\Delta'$.
\end{lemma}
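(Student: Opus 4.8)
The plan is to take for $\Gamma'$ and $\Delta'$ the unique clusters that contain $\Gamma$ and $\Delta$, and then to show that the precedence relation lifts from these smaller t-clans to the clusters above them. Concretely, I would set $\Gamma' = \gamma_{A}(\Gamma)$ and $\Delta' = \gamma_{A}(\Delta)$, where $\gamma_{A}$ is the function from t-clans to clusters supplied by (7)--(8) (every t-clan is contained in a unique cluster, by Lemma \ref{clusters-are-maximal-clans}). This immediately gives $\Gamma \subseteq \Gamma'$ and $\Delta \subseteq \Delta'$ with $\Gamma',\Delta'$ clusters, so that the only thing left to verify is $\Gamma' \prec_{A} \Delta'$.

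For the precedence step the key tool is Lemma \ref{extension of prec}, which characterizes $\prec_{A}$ between t-clans purely in terms of the ultrafilters they contain. First, from $\Gamma \prec_{A} \Delta$ and the implication (i)$\Rightarrow$(iii) of that lemma, I obtain witnesses $U_{0} \in Ult(\Gamma)$ and $V_{0} \in Ult(\Delta)$ with $U_{0} \prec_{A} V_{0}$. Since $\Gamma \subseteq \Gamma'$ forces $Ult(\Gamma) \subseteq Ult(\Gamma')$ (any ultrafilter contained in $\Gamma$ is contained in $\Gamma'$), and likewise $Ult(\Delta) \subseteq Ult(\Delta')$, the same pair satisfies $U_{0} \in Ult(\Gamma')$, $V_{0} \in Ult(\Delta')$ and $U_{0} \prec_{A} V_{0}$. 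Applying the converse implication (iii)$\Rightarrow$(i) of Lemma \ref{extension of prec} to the t-clans $\Gamma',\Delta'$ then yields $\Gamma' \prec_{A} \Delta'$, completing the argument.

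There is essentially no serious obstacle here: the statement is a direct corollary of the uniqueness of the cluster above each t-clan together with the ``one witnessing pair suffices'' form of Lemma \ref{extension of prec}, which is precisely what makes $\prec_{A}$ monotone under enlarging a t-clan to its cluster. If one preferred to avoid invoking Lemma \ref{extension of prec} twice, the same conclusion follows directly from condition (4): any $U \in Ult(\Gamma')$ is $R^{t}_{A}$-equivalent to $U_{0}$ (clusters are unions of $R^{t}_{A}$-classes, by Lemma \ref{clusters-are-maximal-clans} together with Lemma \ref{clan-lemma1}), and symmetrically any $V \in Ult(\Delta')$ is $R^{t}_{A}$-equivalent to $V_{0}$, so that $U R^{t}_{A} U_{0} \prec_{A} V_{0} R^{t}_{A} V$ gives $U \prec_{A} V$ by (4), whence (9) delivers $\Gamma' \prec_{A} \Delta'$. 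The only minor point to keep in mind is that $Ult(\Gamma)$ and $Ult(\Delta)$ are nonempty (guaranteed by Lemma \ref{clan-lemma1}), so that the witnessing ultrafilters genuinely exist.
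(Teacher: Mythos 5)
Your proof is correct and follows essentially the same route as the paper: the paper's (one-line) argument likewise extends each t-clan to its unique cluster and invokes Lemma \ref{extension of prec} to transfer $\prec_{A}$ to the extensions. Your version merely makes explicit the witnessing-ultrafilter step (i)$\Rightarrow$(iii)$\Rightarrow$(i) and the nonemptiness of $Ult(\Gamma)$, which is a useful but not substantively different elaboration.
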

\begin{proof} The proof follows from the fact that every t-clan
can be extended into unique cluster and the relation $\prec_{A}$
between extensions is preserved by the properties of this relation
stated in Lemma \ref{extension of prec}.

\end{proof}

The next three definitions will be used later on. For  $a\in
B_{A}$ set:

 $g_{A}(a)=_{def}\{\Gamma\in t$-$Clans(A): a\in
\Gamma \}$,                                         \hfill (10)

 $g^{s}_{A}(a)=_{def}\{\Gamma \in s$-$Clans(A):a\in \Gamma\}$=$g_{A}(a)\cap
s$-$Clans(A)$,                                        \hfill (11)

$g^{clust}_{A}(a)=_{def}\{\Gamma \in Clusters(A):a\in
\Gamma\}=g_{A}(a)\cap Clusters(A)$. \hfill (12)

\begin{lemma}\label{Clan characterization of basic relations}
The following equivalencies are true for arbitrary $a,b\in B_{A}$:

(i) $aC^{t}_{A}b$ iff there exists a t-clan (cluster) $\Gamma$
containing $a$ and $b$ iff $g_{A}(a)\cap g_{A}(b)\not=\varnothing$
( $(g_{A}^{clust}(a)\cap g_{A}^{clust}(b)\not=\varnothing$)  (see
(10) and (12)).

 (ii) $aC^{s}_{A}b$ iff there exists an
s-clan $\Gamma$ containing $a$ and $b$ iff $g_{A}^{s}(a)\cap
g_{A}^{s}(b)\not=\varnothing$ (see (11)),

(iii) $a\mathcal{B}_{A}b$ iff there exist t-clans (clusters)
$\Gamma,\Delta$ such that $\Gamma\prec\Delta$, $a\in \Gamma$ and
$b\in \Delta$ iff there exist t-clans (clusters) $\Gamma,\Delta$
such that $\Gamma\prec\Delta$ and $g_{A}(a)\not=\varnothing$,
$g_{A}(b)\not=\varnothing$ ($g^{clust}_{A}(a)\not=\varnothing$,
$g^{clust}_{A}(b)\not=\varnothing$) (see (10) and (12)).

\end{lemma}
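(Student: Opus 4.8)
The plan is to read off all three equivalences from the clan/cluster machinery already developed for single contact and precontact relations, applying it separately to $C^{t}_{A}$, $C^{s}_{A}$ and $\mathcal{B}_{A}$ and then gluing the pieces together with the interaction facts recorded in (1)--(9).

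First I would dispatch (i) and (ii), which concern the two genuine contact relations. Since $C^{t}_{A}$ is a contact relation, part (xi) of the Clan Lemma (\ref{clan-lemma1}) applied to $C^{t}_{A}$ gives directly that $aC^{t}_{A}b$ iff some t-clan $\Gamma$ contains both $a$ and $b$; unfolding the definition (10) of $g_{A}$ turns ``$a,b\in\Gamma$'' into ``$\Gamma\in g_{A}(a)\cap g_{A}(b)$'', which is the last reformulation. For the cluster variant I would invoke Lemma \ref{cluster-lemma}: because $C^{t}_{A}$ satisfies the Efremovich axiom $(C^{t}E)$, its contacts are witnessed by clusters, giving $aC^{t}_{A}b$ iff $g^{clust}_{A}(a)\cap g^{clust}_{A}(b)\neq\varnothing$ via (12). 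Part (ii) is identical with $C^{s}_{A}$ in place of $C^{t}_{A}$ and s-clans in place of t-clans, again by \ref{clan-lemma1}(xi) together with the definition (11) of $g^{s}_{A}$; no Efremovich-type hypothesis is needed here, since only the plain clan characterization is used.

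For (iii) the two directions are handled separately. The backward direction is immediate from the defining clause (9) of $\prec_{A}$ on t-clans: if $\Gamma\prec_{A}\Delta$ with $a\in\Gamma$ and $b\in\Delta$, then instantiating the universally quantified condition in (9) at $a$ and $b$ yields $a\mathcal{B}_{A}b$. For the forward direction I would apply Canonical Lemma 1 (\ref{canonical-lemma-1}) to the precontact relation $\mathcal{B}_{A}$, whose canonical relation on ultrafilters is $\prec_{A}$: from $a\mathcal{B}_{A}b$ it produces ultrafilters $U,V$ with $U\prec_{A}V$, $a\in U$ and $b\in V$. Since every ultrafilter is a t-clan (\ref{clan-lemma1}(i)) and the definition (9) restricts on ultrafilters to exactly the canonical relation $\prec_{A}$, the pair $(\Gamma,\Delta)=(U,V)$ witnesses the t-clan version.

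Finally, to upgrade (iii) to clusters I would feed the t-clan witnesses $U\prec_{A}V$ into Lemma \ref{prec-extension lemma}, obtaining clusters $\Gamma'\supseteq U$ and $\Delta'\supseteq V$ with $\Gamma'\prec_{A}\Delta'$; then $a\in U\subseteq\Gamma'$ and $b\in V\subseteq\Delta'$, and the $g^{clust}_{A}$-reformulation follows by (12). The whole argument is essentially bookkeeping over earlier results, so I do not expect a serious obstacle; the one place demanding care is confirming that $\prec_{A}$ as extended to t-clans and clusters by (9) is genuinely compatible with the canonical relation on representative ultrafilters. This is exactly the content of Lemma \ref{extension of prec}, which reduces $\Gamma\prec_{A}\Delta$ to the existence (equivalently, the universality) of a pair of $\prec_{A}$-related ultrafilters inside $\Gamma$ and $\Delta$, and I would cite it to keep the gluing step honest.
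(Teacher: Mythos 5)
Your proposal is correct and follows essentially the same route as the paper: (i) and (ii) are read off from the Clan Lemma (and Lemma \ref{cluster-lemma} for the cluster variant) together with the definitions of $g_{A}$, $g^{s}_{A}$, $g^{clust}_{A}$, while (iii) is obtained by producing $\prec_{A}$-related ultrafilters via Canonical Lemma 1 and then passing to the containing clusters using Lemma \ref{extension of prec} (your appeal to Lemma \ref{prec-extension lemma} is just a packaged form of the paper's two-step extension). Your explicit remark that the compatibility of the extended $\prec_{A}$ with the canonical relation on ultrafilters is the one point requiring care is exactly the role Lemma \ref{extension of prec} plays in the paper's argument.
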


\begin{proof} (i) and (ii) follow from Lemma \ref{clan-lemma1} and definitions (10), (11)
and (12). For (iii) suppose $a\mathcal{B}_{A}b$ . Then by Lemma
\ref{canonical-lemma-1} there are ultrafilters $U,V$ such that
$U\prec_{A}V$. Then there are clusters $\Gamma,\Delta$ such that
$U\subseteq \Gamma$ and $V\subseteq \Delta$, so $a\in \Gamma$ and
$b\in \Delta$. By Lemma \ref{extension of prec} we obtain that
$\Gamma\prec_{A}\Delta$. The converse implication follows from the
definition of $\prec$.
\end{proof}

The next lemma is a more detailed reformulation of Lemma \ref{Clan
characterization of basic relations} which will be used in Section
4.3.

\begin{lemma}\label{clan-cluster characterizations of basic
relations} (i) $aC^{s}_{A}b$ iff there exists a cluster $\Gamma$
and an s-clan $\Delta$ containing $a$ and $b$ such that
$\Delta\subseteq \Gamma$.

(ii) $aC^{t}_{A}b$ iff there exist a cluster $\Gamma$ and s-clans
$\Delta,\Theta$ such that $a\in \Delta$, $b\in \Theta$ and
$\Delta,\Theta \subseteq \Gamma$.

(iii) $a\mathcal{B}_{A}b$ iff there exist clusters $\Gamma,
\Delta$, such that $\Gamma\prec \Delta$ and there exist s-clans
$\Theta\subseteq\Gamma$ and $\Lambda\subseteq\Delta$, $a\in
\Theta$ and $b\in \Lambda$.

(iv) $a\not\leq b$ iff $a.b^{*}\not=0$ iff there exists a cluster
$\Gamma$ and an s-clan $\Delta\subseteq\Gamma$ such that
$a.b^{*}\in\Delta$.

\end{lemma}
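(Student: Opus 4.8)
The plan is to obtain each of the four clauses by refining the clan-level characterizations already recorded in Lemma~\ref{Clan characterization of basic relations}, using two structural inputs. The first is fact~(7): under the Efremovich axiom $(C^{t}E)$ every s-clan and every t-clan is contained in a \emph{unique} cluster (Lemma~\ref{clusters-are-maximal-clans}), so a cluster is always available as an ambient point. The second is Clan Lemma~\ref{clan-lemma1}(vi): whenever $a$ belongs to a clan $\Gamma$ there is an ultrafilter $U\in Ult(\Gamma)$ with $a\in U$; combined with the inclusions $Ult(A)\subseteq s$-$Clans(A)\subseteq t$-$Clans(A)$ from~(6), this lets an ultrafilter play the role of the required s-clan. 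Throughout, the converse (right-to-left) direction of each clause is immediate, since an element lying in a clan lies in every larger clan and $0$ lies in no clan.

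For~(i) I would start from Lemma~\ref{Clan characterization of basic relations}(ii): $aC^{s}_{A}b$ iff some s-clan $\Delta$ contains both $a$ and $b$. Promoting $\Delta$ to its unique containing cluster $\Gamma$ by~(7) supplies the cluster demanded in the statement, while $\Delta$ itself is the witnessing s-clan; here the cluster is essentially a free passenger. For~(ii) and~(iii) the actual work is to manufacture s-clans out of a common t-clan. For~(ii), Lemma~\ref{Clan characterization of basic relations}(i) (cluster version) gives a cluster $\Gamma$ with $a,b\in\Gamma$; applying Clan Lemma~\ref{clan-lemma1}(vi) twice inside $\Gamma$ yields ultrafilters $\Delta\ni a$ and $\Theta\ni b$ with $\Delta,\Theta\subseteq\Gamma$, and these are s-clans by~(6). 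For~(iii), Lemma~\ref{Clan characterization of basic relations}(iii) furnishes clusters $\Gamma\prec\Delta$ with $a\in\Gamma$ and $b\in\Delta$; pulling an ultrafilter s-clan $\Theta\subseteq\Gamma$ through $a$ and $\Lambda\subseteq\Delta$ through $b$ completes the picture, the relation $\Gamma\prec\Delta$ being exactly what Lemmas~\ref{extension of prec} and~\ref{prec-extension lemma} preserve.

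Clause~(iv) is handled separately. The equivalence $a\not\leq b$ iff $a.b^{*}\neq 0$ is the usual Boolean identity. For $a.b^{*}\neq 0$ I would extend the proper filter $[a.b^{*})$ to an ultrafilter $U$ via the Extension Lemma~\ref{separation lemma for filters and ideals}(ii); then $U$ is an s-clan with $a.b^{*}\in U$ lying in its unique cluster $\Gamma$, and conversely $a.b^{*}\in\Delta$ for any clan $\Delta$ forces $a.b^{*}\neq 0$ since $0\notin\Delta$. The only genuine obstacle is in~(ii) and~(iii): unlike the $C^{s}$ case of~(i), a $C^{t}$-contact need not be witnessed by a \emph{single} common s-clan, so one must split the shared cluster into two possibly distinct s-clans. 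Recognizing that Clan Lemma~\ref{clan-lemma1}(vi) realizes each of $a$ and $b$ in an ultrafilter \emph{inside} the cluster, and that ultrafilters are s-clans, is the point on which the whole argument turns.
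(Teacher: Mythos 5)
Your proof is correct and follows essentially the same route as the paper, whose own justification is a one-line appeal to Lemma~\ref{Clan characterization of basic relations} together with the fact that every s-clan and t-clan lies in a cluster. You merely make explicit the detail the paper leaves implicit, namely that Clan Lemma~\ref{clan-lemma1}(vi) produces ultrafilters (hence s-clans) inside the cluster to serve as the witnesses in (ii) and (iii).
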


\begin{proof} The proof follows from Lemma \ref{Clan characterization of basic
relations} and the fact that every s-clan and t-clan is contained
in a cluster.
\end{proof}

 \quad The system (s-Clans(A),t-Clans(a), Clusters(A),
$\gamma_{A}, \prec_{A}$) is called the \emph{clan structure} of
$A$.

Since any contact algebra is a DCA (Lemma \ref{DCA as a
generalization of CA}) it is interesting to know which are
s-clans, t-clans and clusters of $A$. Obviously s-clans are just
the clans of $A$ with (respect to $C$), t-clans are just the
grills of $A$ (they are unions of ultrafilters). There is only one
maximal grill in $A$ - the union of all ultrafilters and this is
the unique cluster in $A$ (with respect to $C^{t}_{A}$). The
relation $\prec$ is just the universal relation in the set of all
grills.

%%%%%%%%%%%%%%%%%%%%%%%%%%%%%%%%%%%%%%%%%%%%%%%%%%%%%%%%%%%%%%%%%%%%
\subsection{Extracting the time structure of DCA}\label{Section Extracting the time structure of DCA}
%%%%%%%%%%%%%%%%%%%%%%%%%%%%%%%%%%%%%%%%%%%%%%%%%%%%%%%%%%%%%%%%

Let $A=(B_{A}, C^{s}_{A}, C^{t}_{A},\mathcal{B}_{A})$ be a DCA.
The first step to represent $A$ in some DMSP by the snapshot
construction is to extract the time structure of $A$. This means
to define the time points of $A$ and the corresponding
`before-after' relation. From Lemma \ref{Clan characterization of
basic relations} we see that the relations $C^{t}_{A}$ and
$\mathcal{B}_{A}$ which have a temporal nature can be
characterized by means of clusters. This suggests the time points
of $A$ to be identified with the clusters of $A$ and the
before-after relation to be identified with the relation $\prec$
defined by (9) and restricted to the set of clusters. So we have
the following

\begin{definition}\label{canonical time structure} {\bf Canonical
time structure.}
The system

$T_{A}=(Clusters(A), \prec_{A})$ where
$\prec_{A}$ is restricted to Clusters(A) is considered as the
canonical time structure of $A$.

\end{definition}

It is interesting to see if there is a correspondence between time
properties of $T_{A}$ and the corresponding time axioms like in
Lemma \ref{correspondence}. This is possible for all time
conditions except \textbf{(Irr) }.  First we will present
ultrafilter characterization  of time axioms by means of
conditions on the set Ult(A) expressible by the canonical
relations $R^{t}_{A}$ and $\prec_{A}$, considered as a relation
between ultrafilters (so these conditions will be for the
structure $(Ult(A), \prec_{A}, R^{t}_{A})$). The corresponding
table is the following. Note that the names of ultrafilter
conditions are the same for the names for the corresponding time
conditions from Section 3.1. enclosed by curly brackets. $U,V,W$
below  are considered as variables ranging on ultrafilters.

\begin{quote}

\noindent    \textbf{$\langle$ RS $\rangle$}  $(\forall U)(\exists V)(U\prec_{A} V)$
$\Longleftrightarrow$

 \textbf{(rs)}
$a\not=0 \rightarrow a\mathcal{B}1 $,

 \noindent   \textbf{$\langle$ LS $\rangle$}
   $(\forall U)(\exists V)(V\prec_{A}U)$ $\Longleftrightarrow$

 \textbf{(ls)}
  $a\not=0 \rightarrow
1\mathcal{B}a$,

\noindent \textbf{ $\langle$ Up Dir $\rangle$}  $(\forall U,V)(\exists W)(U\prec W$
and $V\prec  W)$ $\Longleftrightarrow$

\textbf{(up dir)} $a\not=0\land b\not=0 \Rightarrow a\mathcal{B}p$
or $ b\mathcal{B}p^{*}$,

\noindent  \textbf{$\langle$ Down Dir $\rangle$}  $(\forall U,V)(\exists W)(W\prec
U$ and $W\prec V)$ $\Longleftrightarrow$

\textbf{(down dir)} $a\not=0\land b\not=0 \Rightarrow
p\mathcal{B}a$ or $p^{*}\mathcal{B}b$,

\noindent\textbf{ $\langle$ Circ $\rangle$} $U\prec_{A} V \rightarrow (\exists
W)(W\prec_{A} U$ and $V\prec W)$ $\Longleftrightarrow$

\textbf{(cirk)} $a\mathcal{B}b\Rightarrow b\mathcal{B} p$ or
$p^{*}\mathcal{B}a$

\noindent \textbf{$\langle$ Dens $\rangle$}
 $U\prec_{A} V\rightarrow (\exists W)(U\prec W$ and $ W\prec V)$
$\Longleftrightarrow$

\textbf{(dens)} $a\mathcal{B}b \Rightarrow a\mathcal{B}p$ or
$p^{*}\mathcal{B}b$,

\noindent  \textbf{ $\langle$ Ref $\rangle$} $(\forall U)(U\prec_{A} U)$
 $\Longleftrightarrow$

\textbf{(ref)} $aC^{t}b \Rightarrow a\mathcal{B}b$,

\noindent  \textbf{$\langle$ Lin $\rangle$} $(\forall U,V)(U\prec V $ or $V\prec U)$
$\Longleftrightarrow$

\textbf{(lin)} $a\not=0$ and $b\not=0\Rightarrow a\mathcal{B}b$ or
$ b\mathcal{B}a$,

\noindent \textbf{$\langle$ Tri $\rangle$}  $(\forall U,V)(UR^{t}_{A}V$ or
$U\prec_{A} V$ or $V\prec_{A} U)$   $\Longleftrightarrow$

\textbf{(tri)}  $(a\not=0$ and $b\not=0$ $\Rightarrow$
$aC^{t}_{A}b$ or $a\mathcal{B}_{A}b$ or $b\mathcal{B}_{A}a$,

\noindent   \textbf{$\langle$ Tr $\rangle$}  $U\prec_{A}V j$ and $V\prec_{A} W
\Rightarrow U\prec_{A}W$
 $\Longleftrightarrow$

\textbf{(tr)}  $a\overline{\mathcal{B}}b \Rightarrow (\exists
c)(a\overline{\mathcal{B}}c$ and $c^{*}\overline{\mathcal{B}}b)$.
\end{quote}

The table with clusters can be obtained from the above one
replacing ultrafilter variables $U,V,W$ with cluster variables
$\Gamma,\Delta,\Theta$ and $R^{t}_{A}$ (which occurs only in the condition
\textbf{$\langle$ Tr $\rangle)$} with equality $=$.

\begin{lemma} \label{ultrafilter and cluster correspondece for
time axioms} {\bf Correspondence Lemma 2.} The following
equivalencies are true for each raw of the above table:

(i) The left-side condition is true in the structure
$(Ult(A),\prec_{A}, R^{t}_{A})$.

(ii) The left-side condition in its cluster interpretation is true
in the canonical time structure $(Clusters(A),\prec_{A})$.

(iii) The right-side condition is true in DCA $A$.

\end{lemma}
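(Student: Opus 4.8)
The plan is to prove the three-way equivalence by the cycle (iii)$\Rightarrow$(i)$\Rightarrow$(ii)$\Rightarrow$(iii), which splits naturally into a region/ultrafilter equivalence (iii)$\Leftrightarrow$(i) and an ultrafilter/cluster equivalence (i)$\Leftrightarrow$(ii). The first of these is the abstract analogue of Correspondence Lemma 1 (\ref{correspondence}): there the concrete time points of a rich DMST were matched against the region axioms, and here the canonical points $(Ult(A),\prec_A,R^t_A)$ take over that role. All of (iii)$\Leftrightarrow$(i) rests on the canonical-relation toolkit already assembled: Lemma \ref{canonical relation} (the filters $F^{\mathcal B}_1,F^{\mathcal B}_2$ and ideals $I^{\mathcal B}_1,I^{\mathcal B}_2$), the Separation and Extension Lemmas \ref{separation lemma for filters and ideals}, the $R$-extension Lemma \ref{R-extension-lemma}, Canonical Lemma 1 (\ref{canonical-lemma-1}) characterizing $aC^t_A b$ and $a\mathcal B_A b$ by related ultrafilters, and the interplay conditions (1)--(5). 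Several rows are then immediate: \textbf{(tr)}$\Leftrightarrow\langle$ Tr $\rangle$ is precisely Canonical Lemma 2 (\ref{canonical-lemma-2})(iii) applied to the precontact $\mathcal B_A$ (transitivity of $\prec_A$), handled there verbatim, and \textbf{(ls)}, \textbf{(down dir)} are mirror images of \textbf{(rs)}, \textbf{(up dir)} obtained by exchanging $F^{\mathcal B}_1,F^{\mathcal B}_2$.

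For (i)$\Leftrightarrow$(ii) I would pass to the quotient. Since $C^t_A$ satisfies $(C^tE)$, the relation $R^t_A$ is an equivalence relation (condition (1)); its maximal cliques are its classes, and by Lemma \ref{clusters-are-maximal-clans} together with the discussion around (6)--(8) each cluster $\Gamma$ is the union of exactly one $R^t_A$-class, namely $Ult(\Gamma)$. Hence $U\mapsto\gamma_A(U)$ (the unique cluster containing $U$) is a surjection $Ult(A)\twoheadrightarrow Clusters(A)$ enjoying the two compatibilities $U R^t_A V \Leftrightarrow \gamma_A(U)=\gamma_A(V)$ and, by Lemma \ref{extension of prec}, $U\prec_A V \Leftrightarrow \gamma_A(U)\prec_A\gamma_A(V)$. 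Thus $(Clusters(A),\prec_A)$ is exactly the quotient of $(Ult(A),\prec_A,R^t_A)$ in which $R^t_A$ collapses to equality. Every left-hand entry of the table is a first-order sentence of shape $\forall\dots\exists\dots$ or $\forall(\dots\rightarrow\exists\dots)$ built from $\prec_A$ and $R^t_A$, so surjectivity plus these two compatibilities let each one transfer between the structure and its quotient, with the only occurrence of $R^t_A$ (present in $\langle$ Tri $\rangle$) turning into $=$ --- which is exactly the translation recipe stated just before the lemma. I would either run this transfer once and for all uniformly, or, to be safe, spell it out on $\langle$ Tri $\rangle$, the row that actually contains $R^t_A$, using Lemma \ref{extension of prec} to lift witnessing clusters to ultrafilters and back.

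For (iii)$\Leftrightarrow$(i) I would exhibit a few representative rows and defer the rest to the same patterns. The pure-seriality rows go by single-filter properness: for \textbf{(rs)}$\Leftrightarrow\langle$ RS $\rangle$, existence of a $\prec_A$-successor of $U$ is by Lemma \ref{canonical relation}(i$'$) equivalent to properness of the filter $F^{\mathcal B}_1(U)$, and $0\in F^{\mathcal B}_1(U)$ iff some $a\in U$ has $a\overline{\mathcal B}1$; since $0\notin U$, \textbf{(rs)} yields $a\mathcal B 1$ for every $a\in U$, so $F^{\mathcal B}_1(U)$ is proper and extends to the required $V$, while conversely a successor of any ultrafilter containing $a$ forces $a\mathcal B 1$. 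The directedness, circularity and density rows use the sum of two filters $F\oplus G$ with the criterion ``$0\in F\oplus G$ iff $(\exists p)(p\in F,\ p^*\in G)$'': for \textbf{(up dir)} one forms $F^{\mathcal B}_1(U)\oplus F^{\mathcal B}_1(V)$, observes that $0$ lies in it exactly when some $p$ gives $a\overline{\mathcal B}p^*$ and $b\overline{\mathcal B}p$ for suitable $a\in U,\,b\in V$, and \textbf{(up dir)} instantiated at $p^*$ rules this out, so the sum extends to a common upper bound $W$. The two mixed rows are the delicate ones: \textbf{(ref)}$\Leftrightarrow\langle$ Ref $\rangle$ combines $C^t_A$ and $\mathcal B_A$ and needs the interplay $(R^t\circ\prec\subseteq\prec)$ (condition (2) with the free slot set to $V$) together with $aOb\Rightarrow aC^t_A b$ (axiom (C5) for $C^t_A$); and \textbf{(tri)}$\Leftrightarrow\langle$ Tri $\rangle$ mixes $R^t_A$ with $\prec_A$, where the hard direction takes, for ultrafilters $U,V$ failing all three alternatives, the meets of the three pairs of witnesses inside $U$ and inside $V$ to produce a single pair $a\in U,\,b\in V$ with $a\overline{C}^t_A b$, $a\overline{\mathcal B}b$ and $b\overline{\mathcal B}a$, contradicting \textbf{(tri)}.

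The step I expect to be the main obstacle is exactly this family of parametrized, disjunctive and mixed rows --- \textbf{(up dir)}, \textbf{(down dir)}, \textbf{(circ)}, \textbf{(dens)}, \textbf{(ref)}, \textbf{(tri)} --- where the free region parameter $p$ (or the need to merge several ultrafilter witnesses by meets) must be matched exactly to a properness computation for a sum of the canonical filters or ideals of Lemma \ref{canonical relation}. Selecting the correct filter/ideal and the right instantiation of $p$ for each individual row, and checking that the Efremovich-type interplay conditions (2)--(5) are invoked in the proper composition order, is where the bookkeeping is most error-prone; by contrast the seriality and transitivity rows are either routine or already contained in Lemmas \ref{canonical-lemma-2} and \ref{canonical-lemma-3}.
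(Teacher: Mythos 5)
Your proposal is correct, and its backbone --- splitting the three-way equivalence into a region/ultrafilter part (iii)$\Leftrightarrow$(i) handled by the canonical filter machinery and an ultrafilter/cluster part (i)$\Leftrightarrow$(ii) handled via Lemma \ref{extension of prec} --- is exactly the strategy the paper recommends in its closing remark for the cases it does not write out. Where you genuinely diverge is in how you dispatch (i)$\Leftrightarrow$(ii): the paper runs the cycle (i)$\Rightarrow$(ii)$\Rightarrow$(iii)$\Rightarrow$(i) separately for each illustrated row (\textbf{(ref)}, \textbf{(tri)}, \textbf{(tr)}), re-deriving the cluster step each time, whereas you observe once and for all that $\gamma_A$ is a surjection satisfying $UR^t_AV\Leftrightarrow\gamma_A(U)=\gamma_A(V)$ and $U\prec_AV\Leftrightarrow\gamma_A(U)\prec_A\gamma_A(V)$, so that $(Clusters(A),\prec_A)$ is the quotient of $(Ult(A),\prec_A,R^t_A)$ by the congruence $R^t_A$ and every table entry (none of which mentions raw equality of ultrafilters) transfers uniformly, with $R^t_A$ turning into $=$. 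This buys a single clean argument covering all eleven rows at once instead of eleven small verifications; the cost is only that you must check the two compatibilities carefully, which you do via Lemma \ref{clusters-are-maximal-clans} and Lemma \ref{extension of prec}. Your treatment of the (iii)$\Leftrightarrow$(i) rows matches the paper's where they overlap (the meet-of-witnesses trick for \textbf{(tri)}, Canonical Lemma \ref{canonical-lemma-2}(iii) for \textbf{(tr)}, the interplay condition $(R^t\circ\prec\subseteq\prec)$ for \textbf{(ref)}), and your filter-sum computations for the parametrized rows are sound and supply detail the paper omits.
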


\begin{proof}
We illustrate the proof checking three examples. Let us start with
the easiest case - \textbf{(ref)}. We will prove the following
implications:

(i) $(\forall U\in Ult(A))(U\prec_{A}U)$ $\Longrightarrow$ (ii)
$(\forall \Gamma\in Clusters(A))(\Gamma \prec_{A}\Gamma)$
$\Longrightarrow$
 \newline(iii)
$(\forall a,b\in B_{A})(aC^{t}_{A}b\Rightarrow a\mathcal{B}_{A}b)$
$\Longrightarrow$ (i).

(i)$\Longrightarrow$(ii). Suppose (i) and to prove (ii) suppose
that $\Gamma\in Clusters(A)$ and that an ultrafilter
$U_{0}\subseteq \Gamma$. By (i) $U_{0}\prec_{A}U_{0}$ and by Lemma
\ref{extension of prec} we get that $\Gamma\prec_{A}\Gamma$.

(ii)$\Longrightarrow$(iii). Suppose (ii) and in order to show
(iii) suppose $aC^{t}_{A}b$ and proceed to show
$a\mathcal{B}_{A}b$. Condition $aC^{t}_{A}b$ implies that there is
a cluster $\Gamma$ containing $a$ and $b$. By (ii) we have
$\Gamma\prec_{A}\Gamma$. But $a\in \Gamma$ and $b\in \Gamma$
implies (by the definition of $\prec$) that $a\mathcal{B}_{A}b$.

(iii)$\Longrightarrow$(i). Suppose (iii) and in order to prove (i)
suppose that $U\in Ult(B)$ and $a,b\in U$. Then $a.b\not=0$ which
implies $aC^{t}_{A}b$  ($C^{t}_{A}$ is a contact relation) and
hence by (iii) we get that $a\mathcal{B}_{A}b$. By the definition
of the canonical relation
 $\prec_{A}$ for ultrafilters, this shows that $U\prec_{A}U$.

The next example is \textbf{(tri)}. We will prove the following
implications:

(i) $UR^{t}_{A}V$ or $U\prec_{A}V$ or $V\prec_{A}U$
$\Longrightarrow$ (ii) $\Gamma=\Delta$ or $ \Gamma\prec_{A}\Delta$
or $\Delta\prec_{A}\Gamma$ $\Longrightarrow$ \newline(iii)
$(aC^{t}c$ and $bC^{t}d$ and $c\overline{C}^{t}d)$ $\Rightarrow$
$(a\mathcal{B}b$ or $b\mathcal{B}a)$ $\Rightarrow$ (i).

(i)$\Rightarrow$ (ii). Suppose (i) and let $\Gamma,\Delta\in
Clusters(A)$. To show (ii) suppose that $\Gamma,\Delta\in
Clusters(A)$. If $\Gamma=\Delta$, then (ii) is OK. Suppose
$\Gamma\not=\Delta$. Then by Lemma \ref{identity-of-clusters}
there exist $a\not\in \Gamma$ and $b\not\in \Delta$ such that
$a\overline{C}^{t}_{A}b$. Consequently there are ultrafilters
$U,V$ such that $a\in U \in Ult(\Gamma)$ and $b\in V\in
Ult(\Delta)$. Since $a\overline{C}^{t}_{A}b$, then
$U\overline{C}^{t}_{A}V$. This implies by (i) that $U\prec_{A} V$
or $V\prec_{A}U$. Since $U\subseteq\Gamma$ and $V\subseteq\Delta$,
then by Lemma \ref{extension of prec} we get
$\Gamma\prec_{A}\Delta$ or $\Delta\prec_{A}\Gamma$.

(ii)$\Rightarrow$ (iii). Suppose (ii) and in order to show (iii)
suppose $a\not=0$ and $b\not=0$. Then there are $\Gamma,\Delta\in
Clusters(A)$ such that $a\in \Gamma$ and $b\in \Delta$. By (ii)
there are three cases:

\textbf{Case I}: $\Gamma=\Delta$. Then $aC^{t}_{A}b$.

\textbf{Case II}: $\Gamma\prec_{A}\Delta$. Then
$a\mathcal{B}_{A}b$.

\textbf{Case III}: $\Delta\prec_{A}\Gamma=$. Then
$b\mathcal{B}_{A}a$.

(iii)$\Rightarrow$ (i). Suppose (iii) and for the sake of
contradiction assume that (i) is not true. Then there are
ultrafilters $U,V$ such that $U\overline{R}^{t}_{A}V$,
$U\overline{\mathcal{B}}_{A}V$ and $V\overline{\mathcal{B}}_{A}U$.
Then there are $a_{1},b_{1}$ such that $a_{1}\in U$, $b_{1}\in V$
and $a_{1}\overline{C}^{t}_{A}b_{1}$, there are $a_{2},b_{2}$ such
that $a_{2}\in U$, $b_{2}\in V$ and
$a_{2}\overline{\mathcal{B}}_{A}b_{2}$, and there are
$a_{3},b_{3}$ such that $a_{3}\in U$,  $b_{3}\in V$ and
$b_{3}\overline{\mathcal{B}}_{A}ba_{3}$.
 Let $a=a_{1}.a_{2}.a_{3}$
and $b=b_{1}.b_{2}.b_{3}$. Since $U,V$ are ultrafilters then $a\in
U$ and $b\in V$, so $a\not=0$ and $b\not=0$. It can be shown also
that $a\overline{C}^{t}_{A}b$, $a\overline{\mathcal{B}}_{A}b$ and
$b\overline{\mathcal{B}}_{A}a$ which contradicts (iii).

Let us consider as a last example \textbf{(tr)}. By Lemma
\ref{canonical-lemma-2} we already know that (i) $\Leftrightarrow$
(iii). It remains to show (i) $\Leftrightarrow$ (ii).

(i) $\Longrightarrow$ (ii). Suppose (i) and in order to prove (ii)
suppose that $\Gamma\prec_{A}\Delta$ and $\Delta \prec_{A}\Theta$.
Suppose for the contrary that $\Gamma\not\prec_{A} \Theta$. Then
by Lemma \ref{extension of prec} there are ultrafilters $U\in
Ult(\Gamma)$ and $W\in Ult(\Theta)$ such that $U\not\prec_{A}W$.
Then by (i) $U\not\prec_{A} V$ or $V\not\prec_{A} W$ for any $V\in
Ult(B)_{A}$. Take some $V\in Ult(\Delta)$.

\textbf{Case I:} $U\not\prec_{A} V$. Then $U\in Ulta(\Gamma)$,
$V\in Ult(\Delta)$ and $\Gamma\prec_{A}\Delta$ implies
$U\prec_{A}V$ - a contradiction.

\textbf{Case II:} $V\not\prec_{A} W$. Then $V\in Ult(\Delta)$,
$W\in Ult(\Theta)$ and $\Delta \prec_{A}\Theta$ implies $V\prec W$
- a contradiction.

(ii) $\Longrightarrow$ (i). Suppose (ii) and in order to show (i)
suppose $U\prec_{A}V$ and $V\prec_{A}W$, $U,V,W\in Ult(B)_{A}$.
Then there are clusters $\Gamma,\Delta,\Theta$ such that
$U\subseteq \Gamma$, $V\subseteq \Delta$ and $W\subseteq\Theta$.
By Lemma \ref{extension of prec} we get $\Gamma\prec_{A}\Delta$
and $\Delta\prec_{A}\Theta$. By (ii) this implies
$\Gamma\prec_{A}\Theta$. But $U\subseteq \Gamma$ and $W\subseteq
\Theta$ which implies $U\prec_{A}W$.

One remark for the proofs of the remaining cases of this lemma is
to show first the implication (i)$\Longrightarrow$ (iii) which follow
the style of the proof of Lemma \ref{canonical-lemma-2} and Lemma
\ref{canonical-lemma-3}. Then the proof of (i) $\Longrightarrow$
(ii) is more easy by application of Lemma \ref{extension of prec}.
\end{proof}

\begin{remark}\label{why not irr}  Let us explain why we excluded the axiom
\textbf{(irr)} from the list of time axioms and the Correspondence Lemma.  The reason is that
 we can not prove the equivalence \textbf{ $\langle$ Irr $\rangle$}
 $\Longleftrightarrow$\textbf{(irr)}. One can easily proof the implication
\textbf{$\langle$} Irr $\rangle$ $\Longrightarrow$ \textbf{(irr)}, but we do not know if
the converse has a proof (we believe not) or if there is a
stronger first-order sentence like \textbf{(irr)} for which the
equivalence holds.  This equivalence is true in rich standard DCA and the reason is the possibility to define special regions due to richness. The language of the abstract version of DCA can not express a property similar to richness but  in a DCA enriched with time
representatives discussed in Section \ref{Section Time representatives} the treatment of this case is possible because the
language is more expressive (see \cite{Vak2014}).

\end{remark}

Since any contact algebra  $A$ is a DCA which is the canonical
time structure of $A$? The set $T$ of time points is the singleton
set $\{\Gamma\}$ where $\Gamma$ is the maximal grill in $A$ (the
union of all ultrafilters) and $\prec$ is just the equality. So
the time of $A$ has only one moment and the clock of $A$ is not
ticking - the time is `stopped' or \textbf{degenerated}. That is
why contact algebras can be considered as \emph{static} (no time
is hidden in them) and the RBTS based on contact algebras - as a
\emph{static mereotopology}.

%%%%%%%%%%%%%%%%%%%%%%%%%%%%%%%%%%%%%%%%%%%%%%%%%%%%%%%%%%%%%%%%%%%%%%%%%%%%%%%%%%
\subsection{Extracting canonical coordinate contact algebras and the \newline canonical standard
DCA}\label{Section Extracting canonical coordinate contact
algebras}
%%%%%%%%%%%%%%%%%%%%%%%%%%%%%%%%%%%%%%%%%%%%%%%%%%%%%%%%%%%%%%%%%%%
 Let $A=(B_{A}, C^{s}_{A}, C^{t}_{A},\mathcal{B}_{A})$ be a
DCA and let $T_{A}=(Clusters(A), \prec_{A})$ be the canonical time
structure of $A$. The next step in the snapshot construction is
for each $\Gamma\in Clusters(A)$ to define in a canonical way the
coordinate contact algebra $A_{\Gamma}=(B_{\Gamma}, C_{\Gamma})$.

Because $\Gamma$ is a cluster, consider the set

$\widehat{\Gamma}=\{\Delta\in s$-Clans(A):
$\Delta\subseteq\Gamma\}$.

 We will consider the
construction of factor contact algebra determined by sets of clans
described in Section 2.6. So we adopt the following definition.

\begin{definition}\label{canonical coordinate algebra} {\bf
Canonical coordinate contact algebra.} We define
$(B_{\widehat{\Gamma}}, C_{\widehat{\Gamma}})$ denoted for
simplicity by $B_{\Gamma}=(B_{\Gamma},C_{\Gamma})$ to be the
contact algebra defined by the factor construction from Sections
2.6
 applied to the contact algebra $(B_{A},C^{s}_{A})$ and the set of s-clans
 $\widehat{\Gamma}$. The algebra $(B_{\Gamma},C_{\Gamma})$ is
 called the \textbf{canonical coordinate contact algebra} corresponding to
 the time point $\Gamma$.
\end{definition}

Remind that the elements of $B_{\Gamma}$ are now  of the form
$|a|_{\Gamma}$ defined by the congruence
$\equiv_{\widehat{\Gamma}}$ (see Section 2.6) and
$|a|_{\Gamma}C_{\Gamma}|b|_{\Gamma}$ iff $\widehat{\Gamma}\cap
g(a)\cap g(b)\not=\varnothing$, where

$g(a)=\{\Gamma\in s$-Clans(A)$:a\in \Gamma\}$.

\begin{definition}\label{canonical standard DCA} {\bf Canonical standard DCA.}
Having the canonical time structure \newline $T_{B}=(Clusters(A),
\prec_{A})$
 and the set of canonical contact algebras $A_{\Gamma}=(B_{\Gamma}, C_{\Gamma})$, $\Gamma\in
 Clusters(A)$ we  define  by the snapshot construction  described in  Sections 3.2 and 3.3
  the full \textbf{canonical standard DCA}
 $\mathbf{A}^{can}=(\mathbb{B}, C^{s},C^{t}, \mathcal{B})$,
 where $\mathbb{B}=\prod_{\Gamma\in Clusters(A)}B_{\Gamma}$
is the Cartesian product of the coordinate Boolean algebras.

We define an embedding function $h$ from $A$ into
$\mathbf{A}^{can}$ coordinatewise as follows: for $a\in B_{A}$
and for each $\Gamma\in Clusters(A)$,
$h_{\Gamma}(a)=|a|_{\Gamma}$.

\end{definition}

The next lemma is important because it shows that the time axioms
are preserved by the construction of the full canonical standard
DCA.

\begin{lemma}\label{preserving time axioms} Let $A$ be a DCA and
$\mathbf{A}^{can}$ be the  full canonical standard dynamic
contact algebra associated to $A$. Then for each time axiom
$\alpha$ from the list of \emph{time axioms} \textbf{(rs)},
\textbf{(ls)}, \textbf{(up dir)}, \textbf{(down dir)},
\textbf{(circ)}, \textbf{(dens)}, \textbf{(ref)}, \textbf{(lin)},
\textbf{(tri)}, \textbf{(tr)} the following equivalence is true:
$\alpha$ holds in $A$ iff $\alpha$ holds in $\mathbf{A}^{can}$.
\end{lemma}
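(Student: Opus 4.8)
The plan is to obtain the result by composing the two Correspondence Lemmas through the canonical time structure, which is the common link between $A$ and $\mathbf{A}^{can}$. The crucial observation is that, by Definition \ref{canonical standard DCA}, the full canonical standard DCA $\mathbf{A}^{can}$ is built by the snapshot construction precisely over the canonical time structure $T_A=(Clusters(A),\prec_A)$ of $A$ (Definition \ref{canonical time structure}). Hence the time structure of the DMST underlying $\mathbf{A}^{can}$ is literally $(Clusters(A),\prec_A)$, with the same before-after relation $\prec_A$ restricted to clusters, and the relations $C^s,C^t,\mathcal{B}$ of $\mathbf{A}^{can}$ are exactly the snapshot-defined ones of Section 3.3.

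First I would note that $\mathbf{A}^{can}$ is a \emph{full} model and therefore a \emph{rich} one, so Correspondence Lemma 1 (Lemma \ref{correspondence}) applies to its region algebra $\mathbb{B}=\mathbf{B}$. That lemma states that for each time axiom $\alpha$, the algebraic axiom $\alpha$ holds in the region algebra of a rich DMST iff the corresponding (uppercase) time condition of Section 3.1 holds in that DMST's time structure. Applied to $\mathbf{A}^{can}$ this yields: $\alpha$ holds in $\mathbf{A}^{can}$ iff the uppercase time condition $\widehat{\alpha}$ holds in $(Clusters(A),\prec_A)$.

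Next I would invoke Correspondence Lemma 2 (Lemma \ref{ultrafilter and cluster correspondece for time axioms}), specifically the equivalence of its clauses (ii) and (iii). Clause (iii) is exactly ``$\alpha$ holds in $A$'', while clause (ii) says that the cluster interpretation of the left-hand condition in the table of Lemma \ref{ultrafilter and cluster correspondece for time axioms} holds in the canonical time structure $(Clusters(A),\prec_A)$. The essential point is that this cluster interpretation --- obtained by letting the variables range over clusters and replacing $R^t_A$ by equality --- is, row by row, identical to the uppercase time condition $\widehat{\alpha}$ evaluated on $(Clusters(A),\prec_A)$: for instance $\langle\mathrm{Tri}\rangle$ becomes $\Gamma=\Delta$ or $\Gamma\prec_A\Delta$ or $\Delta\prec_A\Gamma$, which is precisely Trichotomy, and $\langle\mathrm{Tr}\rangle$ becomes transitivity of $\prec_A$. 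Thus clause (ii) reads: $\widehat{\alpha}$ holds in $(Clusters(A),\prec_A)$.

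Combining the two biconditionals, both of which share the middle term ``$\widehat{\alpha}$ holds in $(Clusters(A),\prec_A)$'', gives $\alpha$ holds in $A$ iff $\alpha$ holds in $\mathbf{A}^{can}$, uniformly for every $\alpha$ in the list. The only genuinely delicate step, and the one I would verify case by case, is the identification above: that the cluster-variable reading of each left-hand condition in Lemma \ref{ultrafilter and cluster correspondece for time axioms} coincides with the time condition of Section 3.1 that Lemma \ref{correspondence} pairs with the same algebraic axiom $\alpha$. Since the two tables are set up with matching names ((rs) paired with both (RS) and $\langle\mathrm{RS}\rangle$, and so on), this reduces to checking consistency of the naming across the two tables, and the excluded case (irr) is correctly left out in accordance with Remark \ref{why not irr}. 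Once this matching is confirmed for all rows, no further computation is needed, as everything reduces to the two lemmas already established.
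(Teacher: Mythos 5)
Your proposal is correct and follows essentially the same route as the paper: the paper's proof likewise chains Correspondence Lemma 2 ($\alpha$ in $A$ iff $\widehat{\alpha}$ in $(Clusters(A),\prec_A)$) with Correspondence Lemma 1 ($\widehat{\alpha}$ in the canonical time structure iff $\alpha$ in the full, hence rich, standard DCA $\mathbf{A}^{can}$). Your added care in matching the cluster-interpretation table with the time conditions of Section 3.1 is a sound elaboration of what the paper leaves implicit.
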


\begin{proof}  By Lemma \ref{ultrafilter and
cluster correspondece for time axioms} $\alpha$ is true in $A$ iff
the corresponding condition $\widehat{\alpha}$ is true in the canonical time structure
$T_{A}=(Clusters(A),\prec_{A})$ iff (by Lemma
\ref{correspondence}) $\alpha$ is true in the full standard DCA
$\mathbf{A}^{can}$.
\end{proof}

\begin{lemma}\label{embedding} {\bf Embedding Lemma.}
Let $A$ be a DCA and $h$ be the mapping defined in Definition
\ref{canonical standard DCA}. Then:

(i) $h$ preserves Boolean operations.

(ii) $aC^{s}_{A}b$ in $A$ iff  there exists $\Gamma\in
Clusters(a)$ such that $|a|_{\Gamma}$ $C_{\Gamma}|b|_{\Gamma}$ iff\newline
$h(a)C^{s}_{\mathbf{A}^{can}}h(b)$ in $\mathbf{A}^{can}$.

(iii) $aC^{t}_{A}b$ in $A$ iff there exists $\Gamma\in
Clusters(A)$ such that $|a|_{\Gamma}\not=|0|_{\Gamma}$ and
$|b|_{\Gamma}\not=|0|_{\Gamma}$ iff
$h(a)C^{t}_{\mathbf{A}^{can}}h(b)$ in $\mathbf{A}^{can}$.

(iv) $a\mathcal{B}_{A}b$  in $A$ iff there exist $\Gamma,\Delta\in
Clusters(A)$ such that $\Gamma\prec\Delta$ and
$|a|_{\Gamma}\not=|0|_{\Gamma}$ and
$|b|_{\Delta}\not=|0|_{\Delta}$ iff $h(a)\mathbf{B}(A)_{\mathbf{A}^{can}}h(b)$ in $\mathbf{A}^{can}$.

(v) $a\not\leq b$ in $A$ iff there exist $\Gamma\in Clusters(A)$
such that $|a|_{\Gamma}\not\leq_{\Gamma}|b|_{\Gamma}$ iff
$h(a)\not\leq h(b)$ in $\mathbf{A}^{can}$.

 (vi)  $a=b$ iff $h(a)=h(b)$, i.e. $h$ is an embedding.

\end{lemma}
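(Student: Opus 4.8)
The plan is to prove all six clauses by unwinding the definition of $h$ coordinatewise and then matching the resulting existential statements over clusters against the clan--cluster characterizations already established in Lemma~\ref{clan-cluster characterizations of basic relations}. The point is that $h$ is assembled from the quotient maps $h_\Gamma(a)=|a|_\Gamma$ of the factor construction of Section~2.6, and each of $C^s$, $C^t$, $\mathcal{B}$ on $\mathbf{A}^{can}$ is, by the definition of the standard DCA over the snapshot model (Section~3.3), an existential assertion about the coordinates indexed by $\Gamma\in Clusters(A)$. So the right-hand ``iff'' in each of (ii)--(v) will be purely definitional, and the real content is the left-hand ``iff'', which is supplied by the corresponding part of Lemma~\ref{clan-cluster characterizations of basic relations}.

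First I would dispose of (i): each $h_\Gamma$ is the canonical quotient homomorphism of the factor contact algebra $(B_\Gamma,C_\Gamma)$ associated with the set of s-clans $\widehat{\Gamma}$ (Lemma~\ref{clan-congruence-lemma} and the construction preceding it), hence preserves $+,.,{}^*,0,1$; since $\mathbb{B}=\prod_{\Gamma}B_\Gamma$ carries the product structure and $h$ is defined componentwise, $h$ preserves all Boolean operations. The one auxiliary fact I record now, and use repeatedly, is the non-degeneracy criterion noted after Lemma~\ref{clan-congruence-lemma}: $|a|_\Gamma\neq|0|_\Gamma$ iff $a\notin I(\widehat{\Gamma})$ iff there is an s-clan $\Delta\subseteq\Gamma$ with $a\in\Delta$. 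Together with the definition $|a|_\Gamma C_\Gamma|b|_\Gamma$ iff $\widehat{\Gamma}\cap g(a)\cap g(b)\neq\varnothing$ (i.e.\ some s-clan $\Delta\subseteq\Gamma$ contains both $a$ and $b$), this translates every coordinatewise condition into a statement about s-clans sitting inside the cluster $\Gamma$.

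With these translations in hand, (ii)--(iv) are immediate. For (ii): $h(a)C^s h(b)$ means, by definition of $C^s$ on $\mathbf{A}^{can}$, that $|a|_\Gamma C_\Gamma|b|_\Gamma$ for some $\Gamma$, i.e.\ that some s-clan $\Delta\subseteq\Gamma$ contains $a$ and $b$; this is exactly $aC^s_A b$ by Lemma~\ref{clan-cluster characterizations of basic relations}(i). For (iii): $h(a)C^t h(b)$ means $|a|_\Gamma\neq|0|_\Gamma$ and $|b|_\Gamma\neq|0|_\Gamma$ for some $\Gamma$, i.e.\ there are s-clans $\Delta,\Theta\subseteq\Gamma$ with $a\in\Delta$ and $b\in\Theta$, which is $aC^t_A b$ by Lemma~\ref{clan-cluster characterizations of basic relations}(ii). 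For (iv): $h(a)\mathcal{B}h(b)$ means there are clusters $\Gamma\prec_A\Delta$ with $|a|_\Gamma\neq|0|_\Gamma$ and $|b|_\Delta\neq|0|_\Delta$, i.e.\ there are s-clans $\Theta\subseteq\Gamma$ and $\Lambda\subseteq\Delta$ with $a\in\Theta$ and $b\in\Lambda$, which is $a\mathcal{B}_A b$ by Lemma~\ref{clan-cluster characterizations of basic relations}(iii).

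Finally, for (v) I would write $a\not\leq b$ as $a.b^*\neq 0$ and apply Lemma~\ref{clan-cluster characterizations of basic relations}(iv): $a.b^*\neq0$ iff some s-clan $\Delta\subseteq\Gamma$ contains $a.b^*$, which by the non-degeneracy criterion and (i) says $|a.b^*|_\Gamma=|a|_\Gamma.|b|_\Gamma^*\neq|0|_\Gamma$, i.e.\ $|a|_\Gamma\not\leq_\Gamma|b|_\Gamma$; since the order on $\mathbb{B}$ is coordinatewise, this is exactly $h(a)\not\leq h(b)$. Clause (vi) then follows formally: $a=b$ iff $a\leq b$ and $b\leq a$, so by (v) $a=b$ iff $h(a)\leq h(b)$ and $h(b)\leq h(a)$ iff $h(a)=h(b)$; combined with (i)--(iv) this makes $h$ an isomorphic embedding. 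I do not expect a genuine obstacle here, since Lemma~\ref{clan-cluster characterizations of basic relations} has already done the conceptual work of tying the abstract relations to s-clans-inside-clusters; the only point requiring care is the two-layer bookkeeping (s-clans for the spatial/contact data versus clusters for the coordinates), and in particular using the correct factor-algebra definitions of $C_\Gamma$ and of $|a|_\Gamma\neq|0|_\Gamma$ when passing between coordinates and clans.
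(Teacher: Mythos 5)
Your proposal is correct and follows essentially the same route as the paper: reduce everything coordinatewise via the factor construction (the criterion $|a|_\Gamma\neq|0|_\Gamma$ iff some s-clan $\Delta\subseteq\Gamma$ contains $a$, and $|a|_\Gamma C_\Gamma|b|_\Gamma$ iff $\widehat{\Gamma}\cap g(a)\cap g(b)\neq\varnothing$) and then invoke Lemma~\ref{clan-cluster characterizations of basic relations} clause by clause. In fact you cite the correct clauses (i)--(iv) of that lemma for $C^s$, $C^t$, $\mathcal{B}$ and $\not\leq$ respectively, whereas the paper's printed proof of (ii) and (iii) quotes the wrong characterizations (apparently a copy-paste slip), so your version is the cleaner one.
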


\begin{proof} (i)  The statement is obvious, because the elements of the
 coordinate algebras are equivalence classes determined by a congruence
  relations in $A$ and that Boolean operations in
   $\mathbf{A}^{can}$ are defined coordinatewise.

(ii) $aC^{s}_{A}b$ in $A$ iff (by Lemma \ref{clan-cluster
characterizations of basic relations} )there exist a cluster
$\Gamma$ and s-clans $\Delta,\Theta$ such that $a\in \Delta$,
$b\in \Theta$ and $\Delta,\Theta \subseteq \Gamma$ iff
 (by the definition of  $\widehat{\Gamma}$  and $g$, see (11), (12))
there exists  $\Gamma\in Clusters(A)$ such that
$\widehat{\Gamma}\cap g(a)\cap g(b)\not=\varnothing$ iff (by the
factorization construction) there exist $\Gamma\in Clusters(A)$
such that $|a|_{\Gamma} C_{\Gamma} |b|_{\Gamma}$ iff
$h(a)C^{s}_{\mathbf{A}^{can}}h(b)$ in $\mathbf{A}^{can}$.

(iii) $aC^{t}_{A}b$ in $A$ iff (by Lemma \ref{clan-cluster
characterizations of basic relations} ) there exist clusters
$\Gamma, \Delta$, such that $\Gamma\prec \Delta$ and there exist
s-clans $\Theta\subseteq\Gamma$ and $\Lambda\subseteq\Delta$,
$a\in \Theta$ and $b\in \Lambda$ iff  there exist $\Gamma\in
Clusters(A)$ such that $\widehat{\Gamma}\cap g(a)\not=\varnothing$
and $\widehat{\Gamma}\cap g(b)\not=\varnothing$ iff (by the
factorization construction) there exist $\Gamma\in Clusters(A)$
$|a|_{\Gamma}\not= |0|_{\Gamma}$ and $|b|_{\Gamma}\not=
|0|_{\Gamma}$ iff $h(a)C^{t}_{\mathbf{A}^{can}}h(b)$ in
$\mathbf{A}^{can}$.

(iv) $a\mathcal{B}_{A}b$ in $A$ iff (by Lemma \ref{clan-cluster
characterizations of basic relations}) there exist clusters
$\Gamma, \Delta$, such that $\Gamma\prec \Delta$ and there exist
s-clans $\Theta\subseteq\Gamma$ and $\Lambda\subseteq\Delta$,
$a\in \Theta$ and $b\in \Lambda$ iff  there exist $\Gamma, \Delta
\in Clusters(A)$ such that $\Gamma\prec_{A}\Delta$,
$\widehat{\Gamma}\cap g(a)\not=\varnothing$ and
$\widehat{\Delta}\cap g(b)\not=\varnothing$ iff (by the
factorization construction) there exist clusters $\Gamma, \Delta$,
such that $\Gamma\prec \Delta$, $|a|_{\Gamma}\not= |0|_{\Gamma}$
and $|b|_{\Delta}\not= |0|_{\Delta}$ iff
$h(a)\mathcal{B}_{\mathbf{A}^{can}}h(b)$ in
$\mathbf{A}^{can}$.

(v) $a\not\leq b$ in $A$ iff $a.b^{*}\not=0$ iff there exists a
cluster $\Gamma$ and an s-clan $\Delta\subseteq\Gamma$ such that
$a.b^{*}\in\Delta$ iff there exists $\Gamma\in Clans(A)$ such that
$\widehat{\Gamma}\cap g(a.b^{*})\not=\varnothing$ iff (by the
factorization construction)
$|a|_{\Gamma}\not\leq_{\Gamma}|b|_{\Gamma}$ iff $h(a)\not\leq
h(b)$ in $\mathbf{A}^{can}$.

(vi) $a=b$ iff $h(a)=h(b)$ - by (v) and the fact that $a=b$ iff
$a\leq b$ and $b\leq a$.\end{proof}

%%%%%%%%%%%%%%%%%%%%%%%%%%%%%%%%%%%%%%%%%%%%%%%%%%%%%%%%%%
\subsection{ Representation Theorem for DCAs by means of snapshot\newline models}\label{Section  Representation Theorem for DCA}
%%%%%%%%%%%%%%%%%%%%%%%%%%%%%%%%%%%%%%%%%%%%%%%%%%%%%%%%%%%%%%%%%%%%

\begin{theorem}\label{representation theorem for DCA}{\bf Representation
Theorem for DCA by means of snapshot models.} Let $A$ be a DCA.
Then there exists a full standard DCA $\mathbb{B}$ and an
isomorphic embedding $h$ of $A$ into $\mathbb{B}$. Moreover, $A$
satisfies some of the time axioms iff the same axioms are
satisfied in $\mathbb{B}$.
\end{theorem}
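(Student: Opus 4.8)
The plan is to take the object $\mathbb{B}$ promised by the theorem to be the full canonical standard DCA $\mathbf{A}^{can}=(\mathbb{B}, C^{s},C^{t},\mathcal{B})$ built in Definition \ref{canonical standard DCA}, and to take $h$ to be the coordinatewise map $h_{\Gamma}(a)=|a|_{\Gamma}$ defined there. The bulk of the work has already been packaged into the preparatory lemmas, so the theorem will come out essentially as a corollary; what remains is to check that the pieces assemble correctly.

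First I would confirm that $\mathbb{B}$ is genuinely a full standard DCA in the sense of Definition \ref{standardDCAdefinitio}. The canonical time structure $T_{A}=(Clusters(A),\prec_{A})$ of Definition \ref{canonical time structure} supplies the required time structure $(T,\prec)$; for each cluster $\Gamma$ the canonical coordinate algebra $(B_{\Gamma},C_{\Gamma})$ of Definition \ref{canonical coordinate algebra} is a contact algebra by Lemma \ref{clan-congruence-lemma}; and feeding these into the snapshot construction with $\mathbf{B}=\mathbb{B}=\prod_{\Gamma}B_{\Gamma}$ yields a full, hence rich, DMST. By Lemma \ref{standardDCAaxioms} the induced relations $C^{s},C^{t},\mathcal{B}$ on $\mathbb{B}$ then satisfy all the DCA axioms, so $\mathbb{B}$ is indeed a standard DCA, as required.

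Next I would invoke the Embedding Lemma (Lemma \ref{embedding}) to conclude that $h$ is an isomorphic embedding of $A$ into $\mathbb{B}$. Part (i) gives that $h$ preserves the Boolean operations; parts (ii)--(iv) give, for each of the three relations, the chain of equivalences showing that $aC^{s}_{A}b$ (respectively $C^{t}_{A}$, $\mathcal{B}_{A}$) holds in $A$ iff the corresponding image relation holds in $\mathbb{B}$, so $h$ both preserves and reflects $C^{s},C^{t},\mathcal{B}$; and parts (v)--(vi) give injectivity. For the final clause I would apply Lemma \ref{preserving time axioms}, which states exactly that for every axiom $\alpha$ in the list \textbf{(rs)},\dots,\textbf{(tr)}, $\alpha$ holds in $A$ iff $\alpha$ holds in $\mathbf{A}^{can}=\mathbb{B}$; this yields the ``moreover'' part verbatim.

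The genuine difficulty does not sit at the level of this theorem but in the lemmas it consumes. The crucial point --- where I expect the real obstacle to be --- is the \emph{reflecting} direction of the Embedding Lemma, namely that the three spatio-temporal relations can be recovered locally from the factor algebras $(B_{\Gamma},C_{\Gamma})$. This rests on the clan/cluster characterizations of Lemma \ref{clan-cluster characterizations of basic relations} (that $C^{s}$ is witnessed by an s-clan inside one cluster, $C^{t}$ by two s-clans inside a single cluster, and $\mathcal{B}$ by two s-clans lying inside $\prec$-related clusters) and on checking that the factorization $|a|_{\Gamma}\leftrightarrow \widehat{\Gamma}\cap g(a)$ faithfully records these incidences. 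What makes this work is that the Efremovich and compositional axioms $(C^{t}E)$, $(C^{t}\mathcal{B})$, $(\mathcal{B}C^{t})$ force clusters to behave as genuine time points carrying a well-defined $\prec$; once those characterizations are in hand, the representation theorem itself is immediate.
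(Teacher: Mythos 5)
Your proposal matches the paper's own proof exactly: the paper also obtains the theorem as a direct corollary of the Embedding Lemma (Lemma \ref{embedding}) and Lemma \ref{preserving time axioms} by taking $\mathbb{B}=\mathbf{A}^{can}$. Your additional remarks correctly locate the real work in the preparatory lemmas, in particular the clan/cluster characterizations underlying the reflecting direction of the embedding.
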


\begin{proof}  The proof is a direct corollary of Lemma
\ref{embedding} and Lemma \ref{preserving time axioms} by taking
$\mathbb{B}=\mathbf{A}^{can}$.
\end{proof}

This Theorem shows that the meaning of the (point-based) standard
DCA built by the snapshot construction is coded by the axioms of
the abstract DCA which is point-free. Note, however, that this
representation theorem is of embedding type, like  the
representation theorem for Boolean algebras as algebras of sets:
every Boolean algebra can be isomorphically embedded into the
Boolean algebra of subsets of some universe. The theorem does not
guarantee one-one correspondence between set models and algebras
via some isomorphism. The same situation is with DCAs and standard
(point-based) DCAs. But adding topology we may characterize more
deeply point models and  like in the Stone topological
representation theorem for Boolean algebras to establish a one-one
correspondence between algebras and topological models. That is
why we introduce and develop in the next Section  topological
models for DCAs.

%%%%%%%%%%%%%%%%%%%%%%%%%%%%%%%%%%%%%%%%%%%%%%%%%%%%%%%%%%%%%%%%%%%

\section{Topological models for dynamic contact
algebras}\label{Section Topological models for dynamic contact
algebras}
%%%%%%%%%%%%%%%%%%%%%%%%%%%%%%%%%%%%%%%%%%%%%%%%%%%%%%%%%%%%%%%
\subsection{What kind of topological models for DCA we need?}
What kind of topological models for DCA we need? We need
topological spaces $X$ such that their algebra $RC(X)$ of regular
closed subsets to model the algebra of regions. Note that regions
in this algebra are related  by three different
relations - space contact $C^{s}$, time contact $C^{t}$ and
precedence $\mathcal{B}$, the first two acting as contact
relations and the third - as precontact relation. This means that
the realization of the contact $aC^{s}$b should be $a$ and $b$ to
have a common point and for $aC^{t}b$ also $a$ and $b$ to have a
common point and these common points should be of different kind -
points characterized space contact - space points, and points
characterized time contact - time points. So regions should
contain at least two kinds of points - space and time points and
$aC^{s}b$ should hold if they share a space point, and $aC^{t}b$
should hold if $a$ and $b$ share time point. According to  the
third relation $\mathcal{B}$,    it should act as a precontact by
means of some binary relation between time points. Also, in order
to characterize $C^{t}$ as a simultaneity relation we need a
special subclass of `bigger' time points to be interpretted as
`moments of time' and the other time points  to be considered as
parts of the bigger time points, such that simultaneous time
points to form different disjoint classes. So space should have
different classes of points similar to the clan structure of DCA.
The topology in this space, as  in the
representation theory for contact algebras, should be generated by
a subalgebra of the Boolean algebra of regular closed subsets of
the space taken as a closed base for the topology. And finally, in
order to prove topological representation theorem for DCA, we
should be able to extract in a canonical way the same type of
topological space  from the structure of DCA. Obviously the
abstract points of such a topology should be the different kinds
of clans in DCA and their interrelations. So, this is the
intuition which we will  put in the definition of the special
topological spaces introduced in Section \ref{Section Dynamic
Mereotopological Spaces (DMS)} called Dynamic Mereotopological
Spaces (DMS). Since DCA is a generalizations of contact algebra,
we follow some terminology and ideas from the representation and
duality theory for contact algebras given recently by Goldblatt
and Grice in \cite{G2016}. Since we will represent a given DCA $A$
as a subalgebra of the regular closed subsets $RC(S)$ of certain
DMS $S$, we need some `lifting' conditions guaranteeing that $A$
satisfies some abstract conditions (for instance the time axioms
and some others) iff $RC(X)$ satisfies the same axioms. This will
be subject of the next section.

%%%%%%%%%%%%%%%%%%%%%%%%%%%%%%%%%%%%%%%%%%%%%%%%%%%%%%%
\subsection{Lifting conditions}\label{Section Lifting conditions}
%%%%%%%%%%%%%%%%%%%%%%%%%%%%%%%%%%%%%%%%%%%%%%%%%%%%%%%%

Let $A_{i}=(B_{A_{i}},C^{s}_{A_{i}}, C^{t}_{A_{i}},
\mathcal{B}_{A_{i}}) $, $i=1,2$ be two algebras with  a signature
of DCA such that $C^{s}_{A_{i}}$ and  $C^{t}_{A_{i}}$ be contact
relations  and $\mathcal{B}_{A_{i}}$ be a precontact relation. We
assume also that $A_{1}$ is a subalgebra of $A_{2}$. This means
that $B_{A_{1}}$ is a Boolean subalgebra of $B_{A_{2}}$ and that
the relations from the list $C^{s}_{A_{1}}, C^{t}_{A_{1}}, \mathcal{B}_{A_{1}}$
are restrictions of the corresponding relations from the list $C^{s}_{A_{2}}, C^{t}_{A_{2}},
\mathcal{B}_{A_{2}}$ to $B_{A_{1}}$. We need some abstract
`lifting' conditions guarantying that $A_{1}$ satisfies the
remaining axioms of DCA and possibly some time axioms from the
list   \emph{time axioms} \textbf{(rs)}, \textbf{(ls)},
\textbf{(up dir)}, \textbf{(down dir)}, \textbf{(circ)},
\textbf{(dens)}, \textbf{(ref)}, \textbf{(lin)}, \textbf{(tri)},
\textbf{(tr)} iff $A_{2}$ satisfies the same axioms. The
conditions are given in the next definition and are similar to analogical
conditions considered in \cite{Vak2007}(pages 283-4 ) only for
contact algebras. For convenience the elements from the set
$B_{A_{i}}$ are denoted correspondingly  by $a_{i}, b_{i},
c_{i},...$ etc.

\begin{definition}\label{lifting conditions} {\bf Lifting
conditions.} Having in mind the above notations we say that the
Boolean subalgebra $A_{1}$  is said to be a Boolean
\textbf{dense subalgebra} of $A_{2}$ if

(Dense) $(\forall a_{2})(a_{2}\not=0\Rightarrow (\exists a_{1})(
a_{1}\not=0$  and $a_{1}\leq a_{2})$,

and to be a \textbf{co-dense subalgebra} of $A_{2}$ if

(Co-dense) $(\forall a_{2})(a_{2}\not=1\Rightarrow (\exists
a_{1})(a_{1}\not=1$  and $a_{2}\leq a_{1})$.

It is easy to see that (Dense) is equivalent to (Co-dense).

Let $C$ be any of the relations $C^{s}_{A_{2}}, C^{t}_{A_{2}},
\mathcal{B}_{A_{2}}$ and  its restriction to $B_{A_{1}}$ to be
denoted also by $C$. We say that $A_{1}$ is a $C$-separable
subalgebra of $A_{2}$ if the following condition is satisfied:

(C-separation) $(\forall a_{2},b_{2}))(a_{2}\overline{C}b_{2}
\Rightarrow (\exists a_{1},b_{1})(a_{1}\overline{C}b_{1}$ and
$a_{2}\leq a_{1}$ and $b_{2}\leq b_{1})$.

Conditions (Dense), (Co-dense) and (C-separable) for all $C$ from the set\newline
$\{C^{s}_{A_{2}}, C^{t}_{A_{2}}, \mathcal{B}_{A_{2}}\}$ are called
lifting conditions. If all lifting conditions are satisfied then
$A_{1}$ is said to be a \textbf{stable subalgebra}  of $A_{2}$.

If $g$ is an isomorphic embedding of $A_{1}$ into $A_{2}$, then
$g$ is said to be a \textbf{dense}  (\textbf{co-dense}) embedding
provided that $g(A_{1})$ is a dense (co-dense) subalgebra of
$A_{2}$. We say that $g$ is a C-separable embedding if $g(A_{1})$
is a C-separable subalgebra of $A_{2}$. If all lifting conditions
are satisfied, then $g$ is called a \textbf{stable embedding} of
$A_{1}$ into $A_{2}$.

\end{definition}

\begin{lemma}\label{lifting lemma} {\bf Lifting Lemma.}
 Let $A_{i}=(B_{A_{i}},C^{s}_{A_{i}}, C^{t}_{A_{i}},
\mathcal{B}_{A_{i}}) $, $i=1,2$ be two algebras with  a signature
of DCA such that $C^{s}_{A_{i}}$ and  $C^{t}_{A_{i}}$ be contact
relations  and $\mathcal{B}_{A_{i}}$ be a precontact relation and
let $A_{1}$ be a stable subalgebra of $A_{2}$. Let \textbf{Ax} be
any of the following list of axioms of DCA : $(C^{s}\subseteq
C^{t})$, $(C^{t}E)$, $(C^{t}\mathcal{B})$, $(\mathcal{B}C^{t})$,
or any from the list of time axioms. Then \textbf{Ax} is true in
$A_{1}$ iff \textbf{Ax} is true in $A_{2}$.
\end{lemma}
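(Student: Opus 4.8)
The plan is to prove, axiom by axiom, the two implications ``\textbf{Ax} holds in $A_2$ $\Rightarrow$ \textbf{Ax} holds in $A_1$'' and its converse, using only three tools: the monotonicity axiom (C2) for each of $C^{s}_{A_2}, C^{t}_{A_2}, \mathcal{B}_{A_2}$ (equivalently, that a negated relation $\overline{C}$ survives when both arguments shrink), the condition (Dense), and the conditions ($C$-separation). It is convenient to sort the axioms by the shape of their conclusion: group (a), the ``positive'' axioms $(C^{s}\subseteq C^{t})$, \textbf{(ref)}, \textbf{(lin)}, \textbf{(tri)}, \textbf{(rs)}, \textbf{(ls)}, whose conclusion is a positive combination of relations among the quantified regions and the constants $0,1$; group (b), the ``parametric'' axioms \textbf{(up dir)}, \textbf{(down dir)}, \textbf{(circ)}, \textbf{(dens)}, each carrying an extra universally quantified region $p$ inside a disjunction; and group (c), the ``existential'' axioms $(C^{t}E)$, $(C^{t}\mathcal{B})$, $(\mathcal{B}C^{t})$, \textbf{(tr)}, whose conclusion asserts the existence of a region $c$.

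For the direction $A_2\Rightarrow A_1$ the groups (a) and (b) are immediate: a hypothesis built from $A_1$-elements and constants holds verbatim in the larger algebra $A_2$; applying \textbf{Ax} in $A_2$ yields a conclusion all of whose ingredients ($a,b,p,p^{*},0,1$ and the restricted relations) already lie in $A_1$, so it restricts back. Group (c) is the first genuinely nontrivial point, since the witness $c_2$ produced by \textbf{Ax} in $A_2$ need not lie in $A_1$. Here one applies ($C$-separation) once, to the first conjunct $a\overline{C}c_2$, obtaining an $A_1$-element $v\ge c_2$ with $a\overline{C}v$; then $c_1:=v$ is an $A_1$-witness, because $a\overline{C}v$ holds by monotonicity, while the second conjunct $c_2^{*}\overline{C'}b$ passes to $v^{*}\overline{C'}b$ via $v^{*}\le c_2^{*}$ and (C2). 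The same computation serves the mixed axioms $(C^{t}\mathcal{B}),(\mathcal{B}C^{t})$ provided the (asymmetric) $\mathcal{B}$ is kept in its correct argument slot.

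For the direction $A_1\Rightarrow A_2$ the uniform device is to replace the $A_2$-data by $A_1$-data lying above or below it as dictated by monotonicity, apply \textbf{Ax} in $A_1$, and transport the conclusion back. A hypothesis $x\neq 0$ is replaced, via (Dense), by a nonzero $x_1\in A_1$ with $x_1\le x$, so that \textbf{Ax} applies to $x_1$ and its conclusion lifts back up by (C2) (as in \textbf{(rs)}, \textbf{(ls)}). A hypothesis that is a negated relation $x\overline{C}y$, or a positive contact $xC^{t}y$ that must survive upward, is replaced via ($C$-separation) by $A_1$-elements above $x,y$ carrying the same (negated) relation. When several such constraints occur at once (in \textbf{(lin)}, \textbf{(tri)}), one separates each and takes the Boolean meet of the resulting upper bounds: it lies in $A_1$, still dominates the originals (hence is nonzero where needed), and inherits every $\overline{C}$ by (C2), yielding a configuration that contradicts \textbf{Ax} in $A_1$. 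For group (c) one separates $a\overline{C}b$ once, applies \textbf{Ax} in $A_1$, and pushes the witness $c_1$ back up through $a_2\le a_1$ and $b_2\le b_1$.

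The step I expect to be the main obstacle is the parametric group (b) in the direction $A_1\Rightarrow A_2$, where the auxiliary region $p$ is universally quantified. For \textbf{(up dir)}, say, after separating the two negated conjuncts $a_2\overline{\mathcal{B}}p_2$ and $b_2\overline{\mathcal{B}}p_2^{*}$ into $A_1$-pairs $(\hat a,\hat p)$ and $(\hat b,\hat q)$, the trick is to instantiate the $A_1$-version of \textbf{Ax} at the \emph{constructed} element $p_1=\hat p$: one disjunct, $\hat a\mathcal{B}\hat p$, contradicts $\hat a\overline{\mathcal{B}}\hat p$ outright, and the other, $\hat b\mathcal{B}\hat p^{*}$, is refuted by the complement bookkeeping $p_2\le\hat p\Rightarrow\hat p^{*}\le p_2^{*}\le\hat q$ combined with $\hat b\overline{\mathcal{B}}\hat q$ and (C2). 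Choosing the pairing of the two separations and the value of $p_1$ correctly — and checking that the asymmetry of $\mathcal{B}$ does not spoil the analogous arguments for \textbf{(down dir)}, \textbf{(circ)}, \textbf{(dens)} — is the delicate bookkeeping; once it is in place, every remaining case follows the same template.
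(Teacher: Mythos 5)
Your proposal is correct and uses exactly the paper's toolkit — (Dense), the ($C$-separation) conditions, and monotonicity (C2), with universal axioms restricting trivially to the subalgebra and the separation/meet tricks handling the upward direction and the existential witnesses. The only difference is one of completeness: the paper works out just three representative cases ($(C^{s}\subseteq C^{t})$, $(C^{t}E)$, \textbf{(lin)}) and leaves the rest "similar," whereas your grouping by syntactic shape and your explicit treatment of the parametric axioms (instantiating the $A_{1}$-instance at the separated $\hat p$ and using $\hat p^{*}\le p_{2}^{*}\le\hat q$) actually supplies the details the paper omits.
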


\begin{proof} Let us start with the case when \textbf{Ax} is the
 axiom $(C^{s}\subseteq
C^{t})$ $aC^{s}b\Rightarrow aC^{t}b$. Suppose first that
$(C^{s}\subseteq C^{t})$ is true in $A_{1}$ and for the sake of
contradiction that it is not true in $A_{2}$. Then for some
$a_{2},b_{2}$ we have: $a_{2}C^{s}b_{2}$ and
$a_{2}\overline{C}^{t}b_{2}$. Then by the condition
($C^{t}$-separation) we obtain: there exist $a_{1},b_{1}$, such
that $a_{2}\leq a_{1}$, $b_{2}\leq b_{1}$ and
$a_{1}\overline{C}^{t}b_{1}$. From here and $a_{2}C^{s}b_{2}$ we
get $a_{1}C^{s}b_{1}$ which by $a_{1}\overline{C}^{t}b_{1}$ shows
that the axiom $(C^{s}\subseteq C^{t})$ is not true in $A_{1}$ - a
contradiction. Suppose now that the axiom is true in $A_{2}$.
Since it is an universal formula, then it is trivially true in
$A_{1}$.

Consider now that \textbf{Ax} is the axiom $(C^{t}E)$
$a\overline{C}^{t}b\Rightarrow(\exists c)(a\overline{C}^{t}c$ and
$c^{*}\overline{C}^{t}b)$. Suppose first that $(C^{t}E)$ is true
in $A_{1}$. In order to show that it is true in $A_{2}$ suppose
$a_{2}\overline{C}^{t}b_{2}$. Then by the condition
($C^{t}$-separation) there exist $a_{1},b_{1}$ such that
$a_{1}\overline{C}^{t}b_{1}$, $a_{2}\leq a_{1}$ and $b_{2}\leq
b_{1}$. By the assumption that $(C^{t}E)$ is true in $A_{1}$,
$a_{1}\overline{C}^{t}b_{1}$ implies that $(\exists
c_{1})(a_{1}\overline{C}^{t}c_{1}$ and
$c_{1}^{*}\overline{C}^{t}b_{1}$). From here we obtain
$a_{2}\overline{C}^{t}c_{1}$ and $c_{1}^{*}\overline{C}^{t}b_{2}$.
Obviously $c_{1}$ and $c_{1}^{*}$ are in $B_{A_{2}}$ which shows
that $(C^{t}E)$ is true in $A_{2}$.

Suppose now that $(C^{t}E)$ is true in $A_{2}$ and in order to
prove it in $A_{1}$ suppose $a_{1}\overline{C}^{t}b_{1}$. Since
$a_{1},b_{1}$ are also in $B_{A_{2}}$, then by the assumption
there is $c_{2}$ such that $a_{1}\overline{C}^{t}c_{2}$ and
$c_{2}^{*}\overline{C}^{t}b_{1}$. Then by the condition
($C^{t}$-separation) applied to $a_{1}\overline{C}^{t}c_{2}$ there
exist $a_{1}',c_{1}'$  such that $a_{1}\leq a_{1}'$, $c_{2}\leq
 c_{2}\leq c_{1}'$ and $a_{1}'\overline{C}^{t}c_{1}'$. Analogously
 from $c_{2}^{*}\overline{C}^{t}b_{1}$ we infer that there exist $c_{1}'',b_{1}'$
 such that $b_{1}\leq b_{1}'$, $c_{2}^{*}\leq c_{1}''$, $b_{1}\leq
 b_{1}'$ and $c_{1}''\overline{C}^{t}b_{1}'$. Manipulating with
 inequalities and monotonicity conditions for $C^{t}$ we finally
 obtain $a_{1}\overline{C}^{t}c_{1}'$ and
 $c_{1}'^{*}\overline{C}^{t}b_{1}$ which shows that $(C^{t}E)$
 holds in $A_{1}$.

 In a similar way one can treat the case for the axioms
 $(C^{t}\mathcal{B})$ and $(\mathcal{B}C^{t})$.

 As an example we will treat one case for time axioms just to show
 that the tings go in a similar way. We consider the axiom
 \textbf{(lin)} $a\not=0$ and $b\not=0 \Rightarrow a\mathcal{B}b$
 or $b\mathcal{B}a$. Suppose first that \textbf{(lin)} is true in
 $A_{1}$ and in order to show that it is true in $A_{2}$ suppose
 $a_{2}\not=0$ and $b_{2}\not=0$. Then by the condition (dence)
 there exists $a_{1}\not=0$ such that $a_{1}\leq a_{2}$ and there
 exists $b_{1}\not=0$ such that $b_{1}\leq b_{2}$. By the
 assumption $a_{1}\not=0$ and $b_{1}\not=0$ imply
 $a_{1}\mathcal{B}b_{1}$ or $b_{1}\mathcal{B}a_{1}$. By
 monotonicity conditions for $\mathcal{B}$ we get
 $a_{2}\mathcal{B}b_{2}$ or $b_{2}\mathcal{B}a_{2}$ which finishes
 the proof for this direction. For the converse direction suppose
 that \textbf{(lin)} is true in $A_{2}$. Since \textbf{(lin)} is
 an universal sentence it trivially holds in the subalgebra
 $A_{1}$.\end{proof}

%%%%%%%%%%%%%%%%%%%%%%%%%%%%%%%%%%%%%%%%%%%%%%%%%%%%%%%%%%%%
\subsection{Dynamic Mereotopological Spaces (DMS)}\label{Section Dynamic Mereotopological Spaces (DMS)}
%%%%%%%%%%%%%%%%%%%%%%%%%%%%%%%%%%%%%%%%%%%%%%%%%%%%%%%%%%%%%%%%%%%

\begin{definition}\label{dynamic mereotopological space} {\bf
Dynamic Mereotopological Space.}  A system $S=(X^{t}_{S},
X^{s}_{S}, \newline T_{S}, \prec_{S}, M_{S})$ is
 called Dynamic Mereotopological Space (DMS, DM-space) if the next  axioms  are satisfied.

%\smallskip

\textbf{The axioms of DMS:}

%\smallskip

$\bullet$ (S1) $X^{t}_{S}$ is a nonempty topological space, the elements of
$X^{t}_{S}$ are called \textbf{partial time points of $S$}.

%\smallskip

$\bullet$ (S2) $M_{S}$ is a subalgebra of the algebra
$RC(X^{t}_{S})$ of regular closed sets of $X^{t}_{S}$ and $M_{S}$
is a closed base of the topology of $X^{t}_{S}$.
%\smallskip

$\bullet$ (S3) The sets $X^{t}_{S}$, $X^{s}_{S}$ and $T_{S}$ are non-empty sets satisfying the following inclusions:

 $X^{s}_{S}\subseteq X^{t}_{S}$, $T_{S}\subseteq
X^{t}_{S}$.

The elements of $X^{s}_{S}$ are called \textbf{space
points of $S$}, hence every space point is a partial time point. The elemnts of $T_{S}$ are called \textbf{time points of $S$}.

%\smallskip

$\bullet$ (S4) For $a\in RC(X^{t}_{S})$:  if $a\not=\varnothing$,
then $a\cap X^{s}_{S}\not=\varnothing$ and

$\bullet$ (S5)   $\prec_{S}$  is a binary relation in $X^{t}_{S}$
called \textbf{before-after relation}. The subsystem $(T_{S},
\prec_{S})$ is called the \textbf{time structure of $S$}.

\textbf{Definitions}: For $a,b\in RC(X^{t}_S)$ define:
\begin{quote}

$aC^{t}_{S}b$ iff $a\cap b\not=\varnothing$, \textbf{time
contact},

$aC^{s}_{S}b$ iff $a\cap b\cap X^{s}_{S}\not=\varnothing$,
\textbf{space contact},

$a\mathcal{B}_{S}b$ iff there exist $x,y\in X_{S}^{t}$ such that
$x\prec_{S}y$, $x\in a$ and $y\in b$, \textbf{precedence},

$RC(S)=_{def}(RC(X^{t}_{S}), C^{t}_{S}, C^{s}_{S},
\mathcal{B}_{S})$, \textbf{regular-sets algebra} of $S$,

 For $x\in X^{t}_{S}$ set $\rho_{S}(x)=_{def}\{a\in M_{S}: x\in
a\}$.

$S^{+}=_{def}(M_{S}, C^{t}_{S}, C^{s}_{S}, \mathcal{B}_{S})$ with
the above defined relations restricted to $M_{S}$.
\end{quote}
{\rm It can easily be seen that $C^{s}_{S}$ and $C^{t}_{S}$ are
contact relations in $RC(X^{t}_{S})$
and that $\mathcal{B}$ is a precontact relation (for $C^{s}_{S}$ use axiom (S4)).}

$\bullet$ (S6) The system $S^{+}$ is a DCA. $S^{+}$ is called the
\textbf{canonical DCA of $S$ } or the \textbf{dual of $S$}.

$\bullet$ (S7) For $x,y\in X^{t}_{S}$,  $x\prec_{S} y$ iff
$(\forall a,b\in M_{S})(x\in a, y\in b\Rightarrow
a\mathcal{B}_{S}b)$.

$\bullet$ (S8) If $x\in T_{S}$ then $\rho_{S}(x)$ is a cluster in
$S^{+}$,

We say that $S$ is a $T0$ space if $X^{t}_{S}$ is a $T0$ space.

 Let $\widehat{Ax}$ be a subset of the time conditions from the list (RS), (LS), (Up Dir), (Down Dir),
(Circ), (Dens), (Ref), (Lin), (Tri), (Tr).  We say that $S$ satisfies the axioms from the list $\widehat{Ax}$ if the time structure $(T_{S}, \prec_{S})$ satisfies these conditions.
\end{definition}

Intuitively DMS is abstracted  from  the clan-structure of DCA by
introducing in it a topology.

\begin{lemma}\label{pointclan} Let $S=(X^{t}_{S},
X^{s}_{S}, T_{S}, \prec_{S}, M_{S})$ be a DMS. Then:

(i) If $x\in X^{t}_{S}$, then $\rho_{S}(x)$ is a t-clan in
$S^{+}$.

(ii) If $x\in X^{s}_{S}$, then $\rho_{S}(x)$ is an s-clan in
$S^{+}$.

 (iii) If $x\in T_{S}$, then $\rho_{S}(x)$ is a cluster in
$S^{+}$.

(iv) Let $\prec_{S^{+}}$ be the canonical relation of $\mathcal{B}$ between t-clans of $S^{+}$ (see (9) for the definition). Then Axiom (S7) of DMS is equivalent to the following statement:
for all $x,y\in X^{t}_{S}$, $x\prec_{S}y$ iff
$\rho_{S}(x)\prec_{S^{+}}\rho_{S}(y)$.

(v)   $S$ is T0 space    iff $(\forall x, y\in
X^{t}_{S})(\rho_{S}(x)=\rho_{S}({y}) \Rightarrow x=y)$. (or,
equivalently, $S$ is T0 iff  $\rho_{S}$ is an injective mapping
from $X^{t}_{S}$ into the t-clans of $S^{+}$).
\end{lemma}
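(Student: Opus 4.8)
The plan is to prove (i)--(v) by unwinding the definitions of clan, cluster and of the relation $\prec_{S^+}$, resting throughout on the single elementary observation that, for $a\in M_S$, one has $a\in\rho_S(x)$ if and only if $x\in a$, and that the Boolean operations of $M_S$ (being a subalgebra of $RC(X^t_S)$) are the set-theoretic ones, with $0=\varnothing$, $1=X^t_S$, $a+b=a\cup b$ and $a\le b$ meaning $a\subseteq b$. Only item (v) is genuinely topological, and I flag it as the main (though still routine) obstacle.

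For (i) I would check the four conditions of Definition \ref{definition of clan} for $\rho_S(x)$ in $S^+=(M_S,C^t_S,C^s_S,\mathcal{B}_S)$: since $x\in X^t_S=1$ and $x\notin\varnothing=0$ we get condition (i); $x\in a\subseteq b$ gives (ii); $x\in a\cup b$ forces $x\in a$ or $x\in b$, which is (iii); and $a,b\in\rho_S(x)$ yields $x\in a\cap b\neq\varnothing$, i.e. $aC^t_Sb$, which is (iv). Part (ii) then uses that conditions (i)--(iii) of the clan definition are relation-free (they are the grill axioms already verified in (i)); only the contact clause differs, and for $x\in X^s_S$ with $a,b\in\rho_S(x)$ we have $x\in a\cap b\cap X^s_S$, hence $aC^s_Sb$, so $\rho_S(x)$ is an s-clan. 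Part (iii) is immediate: it is precisely axiom (S8).

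Part (iv) is a pure translation of definitions. By (i), $\rho_S(x)$ and $\rho_S(y)$ are t-clans, so the relation $\prec_{S^+}$ of (9) applies to them, and by that definition $\rho_S(x)\prec_{S^+}\rho_S(y)$ means $(\forall a,b\in M_S)(a\in\rho_S(x),\,b\in\rho_S(y)\Rightarrow a\mathcal{B}_Sb)$. Rewriting $a\in\rho_S(x)$ as $x\in a$ and $b\in\rho_S(y)$ as $y\in b$, this becomes $(\forall a,b\in M_S)(x\in a,\,y\in b\Rightarrow a\mathcal{B}_Sb)$, which is exactly the right-hand side of axiom (S7). Hence (S7) is equivalent to the assertion that $x\prec_S y\Leftrightarrow\rho_S(x)\prec_{S^+}\rho_S(y)$ for all $x,y\in X^t_S$.

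The hardest step is (v), where I would invoke the $T_0$ characterization afforded by the closed base $M_S$ (axiom (S2)). First, $\rho_S(x)=\rho_S(y)$ says exactly that $x$ and $y$ lie in the same members of $M_S$. Since every closed subset of $X^t_S$ is an intersection of members of $M_S$, this is equivalent to $x$ and $y$ lying in the same closed sets, and hence (by complementation) in the same open sets, i.e. to $x$ and $y$ being topologically indistinguishable. As $X^t_S$ is $T_0$ precisely when topologically indistinguishable points coincide, we conclude that $S$ is $T_0$ if and only if $\rho_S(x)=\rho_S(y)\Rightarrow x=y$, that is, iff $\rho_S$ is injective. The one point requiring care is the passage from ``same basic closed sets'' to ``same closed sets,'' which is exactly where the closed-base hypothesis (S2) is essential.
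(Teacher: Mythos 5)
Your proposal is correct and follows essentially the same route as the paper, which disposes of (i)--(iv) as direct verifications of the definitions of t-clan/s-clan, axiom (S8), and the definition (9) of $\prec_{S^{+}}$, and of (v) via the $T0$ property together with the fact that $M_{S}$ is a closed base. You have simply written out in full the routine checks the paper leaves to the reader, including the one genuinely load-bearing point in (v), namely the passage from agreement on basic closed sets to agreement on all closed (hence all open) sets.
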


\begin{proof} For (i) and (ii) - by an easy verification of the
corresponding definitions. For (iii) this is just the axiom (S8) for DMS.
  (iv) is trivial on the base of the definition of the relation $\prec_{M^{+}}$. (v) is easy if we take in consideration the definition $T0$ property, the definition  of $\rho_{s}$ and the fact that $M_{S}$ is a closed base of the topology of $X^{t}_{S}$.
\end{proof}

\begin{definition} \label{T0 and mereokompactness}
(1) A t-clan (s-clan, t-cluster) $\Gamma$ of $S^{+}$ is called a
\textbf{point t-clan} (s-clan, t-cluster) if there is a point
$x\in X^{t}_{S}$  ($x\in X^{s}_{S}$, $x\in T_{S}$) such that
$\Gamma=\rho_{S}(x)$.

 (2) $S$ is a \textbf{DM-compact} (dynamic mereoompact) space
if every t-clan, s-clan and t-cluster of $S^{+}$ is respectively a
point t-clan, s-clan and a t-cluster.

\end{definition}

The following Lemma is obvious.
\begin{lemma}\label{DM-compactness is equivalent to surjectivity
of ro} Let $S$ be a DMS. Then  the following two conditions are
equivalent:

(i) $S$ is DM-compact,

(ii) $\rho_{S}$ is a surjective mapping from $X^{t}_{S}$ onto the
set of all t-clans of $S^{+}$. More over $\rho_{S}$ maps $X^{s}_{S}$ onto
the set of all s-clans of $S^{+}$ and it maps $T_{S}$ onto the set
of all clusters of $S^{+}$.

\end{lemma}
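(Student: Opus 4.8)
The plan is to prove this equivalence by directly unpacking the definition of DM-compactness (Definition \ref{T0 and mereokompactness}) and pairing it with Lemma \ref{pointclan}. The key preliminary observation is that Lemma \ref{pointclan} already supplies the ``into'' half of the correspondence for free: parts (i), (ii), (iii) of that lemma guarantee that $\rho_{S}$ restricts to maps $X^{t}_{S}\to \{\text{t-clans of }S^{+}\}$, $X^{s}_{S}\to \{\text{s-clans of }S^{+}\}$ and $T_{S}\to \{\text{clusters of }S^{+}\}$. Thus the three images $\rho_{S}(X^{t}_{S})$, $\rho_{S}(X^{s}_{S})$, $\rho_{S}(T_{S})$ are always contained in the corresponding target classes, and condition (ii) is precisely the assertion that these three containments are in fact equalities. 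So the whole statement reduces to showing that ``every clan is a point clan'' (the content of DM-compactness) is the same as ``these containments are surjective''.

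For the direction (i)$\Rightarrow$(ii) I would assume $S$ is DM-compact and argue class by class. Take an arbitrary t-clan $\Gamma$ of $S^{+}$; by definition of DM-compactness it is a point t-clan, so by Definition \ref{T0 and mereokompactness}(1) there is some $x\in X^{t}_{S}$ with $\Gamma=\rho_{S}(x)$, hence $\Gamma\in\rho_{S}(X^{t}_{S})$. Together with Lemma \ref{pointclan}(i) this gives $\rho_{S}(X^{t}_{S})=\{\text{t-clans of }S^{+}\}$. Repeating the identical argument with the s-clan clause of DM-compactness (matching space points to s-clans via Lemma \ref{pointclan}(ii)) and with the cluster clause (matching time points to clusters via Lemma \ref{pointclan}(iii)) yields the remaining two surjectivity statements of (ii).

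For (ii)$\Rightarrow$(i) I would simply reverse this reading. Let $\Gamma$ be any t-clan; by surjectivity of $\rho_{S}$ onto the t-clans there is $x\in X^{t}_{S}$ with $\Gamma=\rho_{S}(x)$, so $\Gamma$ is a point t-clan in the sense of Definition \ref{T0 and mereokompactness}(1). The analogous instances of surjectivity show that every s-clan is a point s-clan and every cluster is a point cluster, which is exactly the definition of DM-compactness.

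There is no genuine obstacle here --- the lemma is a reformulation of the defining clauses of DM-compactness in the language of surjectivity, which is why the paper calls it obvious. The only point demanding care is bookkeeping: one must keep the three domains paired correctly with the three clan types (partial time points $\leftrightarrow$ t-clans, space points $\leftrightarrow$ s-clans, time points $\leftrightarrow$ clusters) and invoke the matching part of Lemma \ref{pointclan} so that each ``into'' inclusion is already in hand before upgrading it to ``onto''.
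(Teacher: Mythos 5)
Your proposal is correct and is exactly the definitional unwinding the paper has in mind: the paper offers no proof, declaring the lemma obvious, and your pairing of Lemma \ref{pointclan} (the ``into'' inclusions) with Definition \ref{T0 and mereokompactness} (the ``onto'' upgrades) is precisely why. Nothing is missing.
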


\begin{corollary} \label{T0 and DM-compactness together}  Let $S$ be a T0 and DM-compact DMS. Then
 $\rho_{S}$ is a one-one mapping from $X^{t}_{S}$ onto the set of
all t-clans of $S^{+}$ which preserves the sets $X^{s}_{S}$ and $T_{S}$.
\end{corollary}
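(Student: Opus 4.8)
The plan is to read the statement off directly from the two immediately preceding results, Lemma \ref{pointclan} and Lemma \ref{DM-compactness is equivalent to surjectivity of ro}, since the corollary is essentially their conjunction under the two standing hypotheses. First I would establish injectivity: by assumption $S$ is T0, so by Lemma \ref{pointclan}(v) the map $\rho_{S}$ is an injective mapping from $X^{t}_{S}$ into the set of t-clans of $S^{+}$. Next I would establish surjectivity onto the t-clans: by assumption $S$ is DM-compact, so by Lemma \ref{DM-compactness is equivalent to surjectivity of ro} the map $\rho_{S}$ is a surjective mapping from $X^{t}_{S}$ onto the set of all t-clans of $S^{+}$. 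Combining these two facts yields at once that $\rho_{S}$ is a one-one correspondence between $X^{t}_{S}$ and the set of all t-clans of $S^{+}$, which is the first assertion of the corollary.

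For the claim that $\rho_{S}$ \emph{preserves} the sets $X^{s}_{S}$ and $T_{S}$, I would invoke the ``moreover'' part of Lemma \ref{DM-compactness is equivalent to surjectivity of ro}, namely that $\rho_{S}$ maps $X^{s}_{S}$ onto the set of all s-clans and maps $T_{S}$ onto the set of all clusters of $S^{+}$. Together with the global injectivity already obtained, these two surjectivities restrict to genuine bijections $X^{s}_{S}\to\{\text{s-clans}\}$ and $T_{S}\to\{\text{clusters}\}$. The forward containments $\rho_{S}(X^{s}_{S})\subseteq\{\text{s-clans}\}$ and $\rho_{S}(T_{S})\subseteq\{\text{clusters}\}$ are, incidentally, also available directly from Lemma \ref{pointclan}(ii),(iii).

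The only point requiring a genuine (though minor) argument beyond quoting the lemmas is that preservation holds in \emph{both} directions, i.e. that a partial time point $x$ with $\rho_{S}(x)$ an s-clan must itself lie in $X^{s}_{S}$, and analogously for clusters and $T_{S}$. I would argue this by combining surjectivity on the subset with global injectivity: if $\rho_{S}(x)$ is an s-clan, then since $\rho_{S}$ already maps $X^{s}_{S}$ onto the s-clans there is some $x'\in X^{s}_{S}$ with $\rho_{S}(x')=\rho_{S}(x)$, and injectivity forces $x=x'\in X^{s}_{S}$; the identical argument gives $\rho_{S}^{-1}(\{\text{clusters}\})=T_{S}$. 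This upgrades the forward containments to full preservation and completes the proof. I expect no real obstacle here: the corollary is a straightforward bookkeeping consequence of the two cited lemmas, the whole content being the observation that T0 supplies injectivity while DM-compactness supplies the three stratified surjectivities.
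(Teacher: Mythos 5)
Your proposal is correct and follows exactly the route the paper takes: its proof is a one-line citation of Lemma \ref{pointclan} and Lemma \ref{DM-compactness is equivalent to surjectivity of ro}, which are precisely the two lemmas you combine. Your extra remark about upgrading the forward containments to full preservation via injectivity is a sound elaboration of a detail the paper leaves implicit.
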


\begin{proof} By Lemma \ref{pointclan}  and Lemma \ref{DM-compactness is equivalent to surjectivity of ro}
\end{proof}

%%%%%%%%%%%%%%%%%%%%%%%%%%%%%%%%%%%%%%%%%%%%%%%%%%%%%%%
\begin{remark}\label{DMS is a generalization} The notions of DM-space and DM-compactness can be considered as dynamic versions    of the notions of mereotopological  space and
mereocompactness introduced by Goldblatt and Grice in \cite{G2016}. Their definitions are the following. A mereotopological space is a pair $S=(X_{S},M_{S})$ where $X$ is a topological space and $M_{S}$ is a subalgebra of the Boolean algebra $RC(X_{S})$ of regular closed sets of $X_{S}$ considered as closed base of the topology of $X$. Let $S^{+}$ be the contact algebra $(M_{S}, C_{S})$ where $C_{S}$ is the standard topological contact  between regular closed sets.
$S$ is mereocompact if every clan of the contact algebra $S^{+}$ is a point clan in the sense of Definition \ref{T0 and mereokompactness} (in fact the definition of mereocompactness in \cite{G2016} is slightly different but equivalent to the given here). So, if $S=(X^{t}_{S},
X^{s}_{S}, T_{S}, \prec_{S}, M_{S})$ is a DM-space then the pair $(X^{t}, M_{S})$ is a mereotopological space and if $S$ is DM-compact then $(X^{t}, M_{S})$ is mereocompact. Mereotopological spaces have been introduced by Goldblatt and Grice in order to develop a topological duality theory for contact algebras. Similarly, we introduce  the notion of DM-space to be used in the topological representation theory and duality theory  of DCAs.

 Because our exposition  is quite similar to that of Goldblatt and Grice and in some sense is an adaptation of their method to the case of DCAs,  we recommend the paper \cite{G2016} to the reader of the present text. For convenience we even use similar and compatible notations with \cite{G2016}.

\end{remark}
%%%%%%%%%%%%%%%%%%%%%%%%%%%%%%%%%%%%%%%%%%%%%%%%%%%%%%%%%%5
\begin{lemma} \label{DM compactnes imply compactness} Let $S$ be a  DM-compact space.
Then  the topological space $X^{t}_{S}$ is compact.

\end{lemma}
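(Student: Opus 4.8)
The plan is to exploit two facts about the DM-compact space $S$: that $M_S$ is a closed base for $X^t_S$ (axiom (S2)), and that every t-clan of $S^+$ is a point t-clan. Recall that a space whose closed sets admit a base $\mathcal{B}$ is compact iff every subfamily of $\mathcal{B}$ with the finite intersection property (FIP) has nonempty intersection; this is the closed-set form of Alexander's subbase theorem, and it also follows directly from the fact that every closed set is an intersection of members of $M_S$. So I would first reduce the goal to the following statement: every $\mathcal{F}\subseteq M_S$ such that $a_1\cap\cdots\cap a_n\neq\varnothing$ for all finite $a_1,\dots,a_n\in\mathcal{F}$ satisfies $\bigcap\mathcal{F}\neq\varnothing$.

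The heart of the argument is to extend such an FIP family $\mathcal{F}$ to a t-clan of $S^+$. I would use Zorn's Lemma to enlarge $\mathcal{F}$ to a family $\Gamma\subseteq M_S$ maximal with respect to having set-theoretic FIP, and then claim that $\Gamma$ is a t-clan, i.e. a grill in $M_S$ satisfying the contact condition for $C^t_S$. The clan axioms follow from maximality together with the way the operations are realized on $RC(X^t_S)$, where $1=X^t_S$, $0=\varnothing$, $a+b=a\cup b$, and $aC^t_S b$ means $a\cap b\neq\varnothing$. Concretely: $X^t_S\in\Gamma$ and $\varnothing\notin\Gamma$ by maximality and the FIP; upward closure holds because replacing an element of an FIP family by a larger set preserves FIP; the splitting condition (if $a\cup b\in\Gamma$ then $a\in\Gamma$ or $b\in\Gamma$) follows by the usual argument, since if both failed, maximality would supply finite subfamilies of $\Gamma$ whose intersections with $a$ and with $b$ are empty, and intersecting these together with $a\cup b\in\Gamma$ would contradict the FIP; finally the contact clause $a,b\in\Gamma\Rightarrow aC^t_S b$ is immediate, because $\{a,b\}$ is a finite subfamily, so $a\cap b\neq\varnothing$.

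Having produced a t-clan $\Gamma\supseteq\mathcal{F}$ of $S^+$, I invoke DM-compactness: $\Gamma$ is a point t-clan, so $\Gamma=\rho_S(x)$ for some $x\in X^t_S$. Since $\rho_S(x)=\{a\in M_S:x\in a\}$, each $a\in\mathcal{F}\subseteq\Gamma$ yields $x\in a$, whence $x\in\bigcap\mathcal{F}$ and the intersection is nonempty. This establishes the reduced statement, and therefore the compactness of $X^t_S$.

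I expect the main obstacle to be verifying that the maximal FIP family $\Gamma$ is genuinely a t-clan, especially the grill/splitting condition, where one must keep the set-theoretic intersection used in the FIP distinct from the Boolean meet of $RC(X^t_S)$ (which differ for regular closed sets), and make sure the realization $aC^t_S b\Leftrightarrow a\cap b\neq\varnothing$ matches clan axiom (iv). Everything else is routine point-set topology together with the defining properties of $\rho_S$ and of DM-compactness.
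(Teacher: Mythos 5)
Your proof is correct and follows essentially the same route as the paper: reduce compactness to the finite intersection property for the closed base $M_S$, extend the FIP family to a t-clan of $S^+$, and use DM-compactness to realize that t-clan as $\rho_S(x)$ for some point $x$. The only (inessential) difference is that you build the t-clan as a Zorn-maximal FIP subfamily of $M_S$ and verify the clan axioms directly, whereas the paper takes the trace on $M_S$ of a set-theoretic ultrafilter on $X^t_S$ extending the family; your version actually supplies the verification the paper leaves as "easy to see."
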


\begin{proof} According to Remark \ref{DMS is a generalization} DM-compactness of $S$ implies that  the pair\newline $(X^{t}_{S}, M_{S})$ is a mereocompact space and then the statement follows from Theorem 4.2.(3) of \cite{G2016}. We present below the proof illustrating our definition of DM-compactness.

In order to prove the compactness of $X^{t}_{S}$,
it suffices to prove the following. Let $I\subseteq M_{S}$ be a nonempty set
 and let $A=\bigcap \{a\in M_{S}:a\in I\}$. If for every
finite $I_{0}\subseteq I$ the set $\bigcap\{a\in M_{S}:a\in
I_{0}\}\not=\varnothing$, then $A\not=\varnothing$. The fact that
$\bigcap\{a\in M_{S}:a\in I_{0}\}\not=\varnothing$ for every
finite subset $I_{0}$ of $I$ guarantees the existence of an
ultrafilter $U$ in the subset of all subsets of $X^{t}_{S}$ such
that $\{a\in M_{S}:a\in I\}\subseteq U$. Let $\Gamma=\{a\in M_{S}:
a\in U\}$. Then it is easy to see that $\Gamma$ is a t-clan. Then
by DM-compactness there exists $x\in X^{t}_{S}$ such that
$\Gamma=\rho_{S}(x)$. Hence for every $a\in I$ we have the
following:

 $a\in I$ $\Longrightarrow$ $a\in U$ $\Longrightarrow$ $a\in
\Gamma$ $\Longrightarrow$ $a\in \rho_{S}(x)$ $\Longrightarrow$
$x\in a$ $\Longrightarrow$ $x\in A$ $\Longrightarrow$
$A\not=\varnothing$ \end{proof}

\begin{lemma}  \label{X-s is a dense subspace} Let $S=(X^{t}_{S},
X^{s}_{S}, T_{S}, \gamma_{S}, \prec_{S}, M_{S})$ be a DM-compact
DMS. Then the set $X^{s}_{S}$ of space points of $S$ with a subset
topology is a $T0$ dense subset of $X^{t}_{S}$.
\end{lemma}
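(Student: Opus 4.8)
The plan is to prove the two assertions separately, since they rest on different hypotheses: density of $X^{s}_{S}$ in $X^{t}_{S}$ is where DM-compactness does the real work, while the $T0$ separation of the subspace is a formal consequence of $\rho_{S}$ being injective on space points. For density I would first convert the topological statement into an algebraic one using axiom (S2). Because $M_{S}$ is a closed base of $X^{t}_{S}$, the closure of any set $A$ equals $\bigcap\{a\in M_{S}:A\subseteq a\}$, so $X^{s}_{S}$ is dense precisely when the only $a\in M_{S}$ with $X^{s}_{S}\subseteq a$ is $a=1$. Thus it suffices to show: if $a\in M_{S}$ and $a\neq 1$, then some space point lies outside $a$.

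To establish this, note that $a\neq 1$ means $1\not\leq a$ in the contact algebra $(M_{S},C^{s}_{S})$, so Lemma \ref{clan-lemma1}(xii) (applied to the pair $1,a$, recalling that $1$ belongs to every clan) yields an s-clan $\Gamma$ with $a\notin\Gamma$. By DM-compactness, Lemma \ref{DM-compactness is equivalent to surjectivity of ro} tells us that $\rho_{S}$ maps $X^{s}_{S}$ \emph{onto} the set of all s-clans of $S^{+}$, so there is a space point $q\in X^{s}_{S}$ with $\rho_{S}(q)=\Gamma$. Since $a\notin\rho_{S}(q)$ we get $q\notin a$, which contradicts $X^{s}_{S}\subseteq a$. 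Hence $a=1$, and $X^{s}_{S}$ is dense. For the $T0$ claim, the subspace topology on $X^{s}_{S}$ carries the closed base $\{a\cap X^{s}_{S}:a\in M_{S}\}$, so two space points are topologically indistinguishable in the subspace exactly when $\rho_{S}(x)=\rho_{S}(y)$; therefore $X^{s}_{S}$ is $T0$ iff $\rho_{S}$ restricted to $X^{s}_{S}$ is injective. This injectivity is inherited from injectivity of $\rho_{S}$ on all of $X^{t}_{S}$, which by Lemma \ref{pointclan}(v) is equivalent to $S$ being a $T0$ space, so I would carry out this half under the $T0$ hypothesis on $S$ that is standing in this part of the development (compare the preceding Corollary \ref{T0 and DM-compactness together}); the density half uses only DM-compactness.

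The main obstacle, and the reason density is \emph{not} immediate from axiom (S4) alone, is a gap between closed and open sets. Axiom (S4) only guarantees a space point inside every nonempty regular closed set, i.e.\ a space point in the closure of a region, whereas density demands a space point inside every nonempty open set. In fact (S4) by itself is consistent with all space points sitting on boundaries $a\cap a^{*}$, so it cannot force density on its own. The essential move is to upgrade the information to open sets, and DM-compactness supplies exactly this: it realizes an s-clan (indeed an ultrafilter) that genuinely \emph{avoids} $a$, rather than merely meeting the closure of its complement, as an actual space point $q$ lying in the open set $X^{t}_{S}\setminus a$.
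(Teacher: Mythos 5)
Your density argument is correct and is essentially the paper's: both reduce the claim to showing that no proper $a\in M_{S}$ can contain $X^{s}_{S}$, and both finish by producing an s-clan that avoids $a$ and invoking DM-compactness (Lemma \ref{DM-compactness is equivalent to surjectivity of ro}) to realize that s-clan as a space point lying outside $a$. The only cosmetic difference is the route to that s-clan: the paper starts from a point $x\notin Cl(X^{s}_{S})$, passes to the t-clan $\rho_{S}(x)$ and chooses an ultrafilter inside it, whereas you obtain the s-clan directly from Lemma \ref{clan-lemma1}(xii) applied to $1\not\leq a$; the two routes are interchangeable. Your side remark that axiom (S4) alone cannot yield density --- it only places space points in closures and is consistent with all of them sitting on boundaries --- is accurate and correctly identifies why DM-compactness is the essential hypothesis.

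Where you add something is the $T0$ clause. The paper's proof establishes only $Cl(X^{s}_{S})=X^{t}_{S}$ and is silent on the $T0$ assertion, even though the lemma's hypotheses do not include $S$ being $T0$; as you note, nothing in the DMS axioms prevents two distinct space points from having the same $\rho_{S}$-image, so the subspace need not be $T0$ without some such assumption. Your reduction of the subspace $T0$ property to injectivity of $\rho_{S}$ on $X^{s}_{S}$ (via the closed base $\{a\cap X^{s}_{S}:a\in M_{S}\}$ and Lemma \ref{pointclan}(v)) is correct, and carrying out that half under the standing $T0$ hypothesis, as in Corollary \ref{T0 and DM-compactness together}, is the honest reading of the statement; it repairs an omission in the paper's own proof.
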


\begin{proof} Let $Cl$ denote   the closure operation of $X^{t}_{S}$. We have to show
that $ClX^{s}_{S}=X^{t}_{S}$. Suppose that this is not true, i.e.
there exists $x\in X^{t}_{S}$ such that $x\not\in ClX^{s}_{S}$.
Since $M_{S}$ is a closed base of the topology of $X^{t}_{S}$ then
there exits $a\in M_{S}$ such that $X^{s}\subseteq a$ and
$x\not\in a$. Then $a\not\in \rho_{S}(x)$, which is a t-clan in
$S^{+}$. Then for all ultrafilters $U\subseteq\rho_{S}(x)$ we have
that $a\not\in U$, and let $U$ be such one. But $U$ is an s-clan,
so by DM-compactness there is a point $y\in X^{s}_{S}$ such that
$U=\rho_{S}(y)$. Because $U\subseteq \rho_{S}(x)$ we obtain
$\rho_{S}(y)\subseteq\rho_{S}(x)$. From here we obtain that
$a\not\in\rho_{S}(y)$ and consequently $y\not\in a$. But $y\in
X^{s}\subseteq a$, so $y\in a$ - a contradiction.\end{proof}

\begin{lemma}\label{Lemma isomorphism for dense subspaces} (\rm \cite{Comfort}, page 271) Let $X$ be a dense subspace of a topological space $Y$ and let $RC(X)$ and $RC(Y)$ be the corresponding Boolean algebras of regular closed sets of $X$ and $Y$. Let for $a\in RC(X)$, $h(a)=Cl_{Y}(a)$. Then  $h: RC(X) \rightarrow RC(Y)$ is an isomorphism from $RC(X)$ onto $RC(Y)$. For $b\in RC(Y)$ converse mapping $h^{-1}$ acts as follows:
$h^{-1}(b)=b\cap X$.
\end{lemma}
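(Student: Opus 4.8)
The plan is to show that the two maps $h$ and $h^{-1}$ are well-defined into the respective regular-closed algebras, are mutually inverse, and form an order isomorphism, from which the Boolean isomorphism follows. First I would record three elementary facts about a dense subspace $X\subseteq Y$. (a) The subspace closure formula: for $A\subseteq X$, $Cl_{X}(A)=Cl_{Y}(A)\cap X$. (b) A density fact: for every open $W\subseteq Y$, $Cl_{Y}(W\cap X)=Cl_{Y}(W)$; indeed any open neighbourhood of a point of $Cl_{Y}(W)$ meets the open set $W$, hence meets $W\cap X$ because $X$ is dense. (c) The closure of an open set is regular closed: for open $W$ one has $Cl(Int(Cl(W)))=Cl(W)$, a computation valid in any topological space. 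These are the only topological inputs.

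Next I would verify well-definedness and the inverse relation. For $a\in RC(X)$ write $a=Cl_{X}(Int_{X}(a))$ and put $V=Int_{X}(a)=W\cap X$ with $W$ open in $Y$; then $Cl_{Y}(a)=Cl_{Y}(Cl_{X}(V))=Cl_{Y}(V)$ (since $V\subseteq Cl_{X}(V)\subseteq Cl_{Y}(V)$), and $Cl_{Y}(V)=Cl_{Y}(W)$ by (b), which lies in $RC(Y)$ by (c); so $h$ maps into $RC(Y)$. Since a regular closed set is closed in $X$, fact (a) gives $Cl_{Y}(a)\cap X=Cl_{X}(a)=a$, i.e. $h^{-1}(h(a))=a$. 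Conversely, for $b\in RC(Y)$ with $U=Int_{Y}(b)$, facts (a) and (b) yield $b\cap X=Cl_{Y}(U)\cap X=Cl_{Y}(U\cap X)\cap X=Cl_{X}(U\cap X)$, the closure in $X$ of an open set, hence $b\cap X\in RC(X)$ by (c); moreover $Cl_{Y}(b\cap X)=Cl_{Y}(U\cap X)=Cl_{Y}(U)=b$, i.e. $h(h^{-1}(b))=b$. Thus $h\colon RC(X)\to RC(Y)$ is a bijection whose inverse is $b\mapsto b\cap X$.

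Finally I would upgrade the bijection to a Boolean isomorphism by checking it is an order isomorphism. Monotonicity of $Cl_{Y}$ gives $a\subseteq a'\Rightarrow h(a)\subseteq h(a')$; conversely, intersecting $Cl_{Y}(a)\subseteq Cl_{Y}(a')$ with $X$ and using $Cl_{Y}(a)\cap X=a$ recovers $a\subseteq a'$, so both $h$ and $h^{-1}$ are monotone. An order isomorphism between Boolean algebras preserves finite meets and joins, the bounds $0,1$, and therefore complements, each being order-theoretically determined; this route sidesteps a direct computation with the regular-closed meet $Cl(Int(a\cap b))$ and the complement $Cl(X\setminus a)$. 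As a consistency check, $h$ preserves joins directly since $Cl_{Y}(a\cup b)=Cl_{Y}(a)\cup Cl_{Y}(b)$, and $h(X)=Cl_{Y}(X)=Y$ by density confirms $h(1)=1$. The only genuinely delicate points are the well-definedness steps — that $Cl_{Y}(a)$ and $b\cap X$ land back inside the regular-closed algebras — and these are precisely where density (through (b)) and the open-closure identity (c) are indispensable; everything else is routine.
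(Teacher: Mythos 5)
Your proof is correct and complete. The paper itself gives no proof of this lemma --- it is cited to Comfort and Negrepontis, page 271 --- so there is no internal argument to compare against; your route via the three topological facts (the subspace closure formula, the density identity $Cl_{Y}(W\cap X)=Cl_{Y}(W)$ for open $W$, and regular-closedness of closures of open sets), the explicit verification that $h$ and $b\mapsto b\cap X$ are mutually inverse, and the upgrade from order isomorphism to Boolean isomorphism is a standard and sound way to establish the result. The only tacit point worth a word is that the Boolean order on $RC(\cdot)$ coincides with set inclusion (since $a+b=a\cup b$), which is what makes the order-isomorphism step equivalent to a Boolean isomorphism.
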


\begin{corollary}\label{Corollary RC(X^{s}) is isomorphic to  RC(X^{t})}
 The Boolean algebra $RC(X^{s}_{S})$ of regular
closed subsets of $X^{s}_{S}$ is isomorphic to the Boolean algebra
$RC(X^{t}_{S})$.
\end{corollary}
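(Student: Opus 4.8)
The plan is to obtain this corollary as an immediate composition of the two preceding lemmas, since all the genuine topological work has already been carried out. First I would invoke Lemma \ref{X-s is a dense subspace}, which guarantees (under the standing assumption that $S$ is a DM-compact DMS) that $X^{s}_{S}$, equipped with the subspace topology inherited from $X^{t}_{S}$, is a dense subset of $X^{t}_{S}$. This density is precisely the hypothesis required to apply the general topological fact about regular closed sets of dense subspaces.

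Next I would apply Lemma \ref{Lemma isomorphism for dense subspaces} with the instantiation $X := X^{s}_{S}$ and $Y := X^{t}_{S}$. Since $X^{s}_{S}$ is dense in $X^{t}_{S}$, that lemma yields directly that the map $h : RC(X^{s}_{S}) \to RC(X^{t}_{S})$ defined by $h(a) = Cl_{X^{t}_{S}}(a)$ is a Boolean isomorphism from $RC(X^{s}_{S})$ onto $RC(X^{t}_{S})$, with inverse given by $h^{-1}(b) = b \cap X^{s}_{S}$. This establishes the claimed isomorphism and completes the proof.

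There is essentially no obstacle here: the corollary is a formal consequence of the density statement combined with the cited result of Comfort. The only point requiring a moment of care is to confirm that the density hypothesis of Lemma \ref{Lemma isomorphism for dense subspaces} is genuinely met, which is exactly what Lemma \ref{X-s is a dense subspace} supplies; and to record that the resulting isomorphism is the closure map $a \mapsto Cl_{X^{t}_{S}}(a)$, the natural identification of a region on the space-point space $X^{s}_{S}$ with its closure in the larger partial-time-point space $X^{t}_{S}$.
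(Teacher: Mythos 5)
Your proof is correct and follows exactly the paper's own argument: the paper likewise derives the corollary by combining Lemma \ref{X-s is a dense subspace} (density of $X^{s}_{S}$ in $X^{t}_{S}$) with Lemma \ref{Lemma isomorphism for dense subspaces} (the Comfort result on regular closed sets of dense subspaces). Your version merely spells out the instantiation and the explicit form of the isomorphism, which the paper leaves implicit.
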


\begin{proof} The lemma is a corollary of Lemma \ref{X-s is a dense
subspace} and Lemma \ref{Lemma isomorphism for dense subspaces}.
\end{proof}

 In the next section we study some other consequences of
DM-compactness.
%%%%%%%%%%%%%%%%%%%%%%%%%%%%%%%%%%%%%%%%%%%%%%%%%%%%%%%%%%%%%%%%%%%
\subsection{Canonical filters in DM-compact spaces}\label{Section Canonical
filters}
%%%%%%%%%%%%%%%%%%%%%%%%%%%%%%%%%%%%%%%%%%%%%%%%%%%%%%%%%
We assume in this section that $S$ is a DM-compact space.
 The aim of the section  is to introduce a
technical notion - \emph{canonical filter}, generalizing a similar
notion from \cite{Vak2007}. By means of canonical filters and the assumption of DM-compactness of a given  $S$ we will establish that the algebra $S^{+}$ is a stable subalgebra of $RC(S)$ in the sense of Definition \ref{lifting conditions} which fact implies several important consequences.

\begin{definition}\label{canonical filter def} Let $A\in
RC(X^{t}_{S})$. Then the set $F_{A}=_{def}\{a\in M_{S}: A\subseteq
a\}$ is called canonical filter of $S^{+}$.

\end{definition}

\begin{lemma}\label{canonical filter lema}
Let $A,B\in RC(X^{t}_{S})$. Then:

(i) $F_{A}$ is a filter in $S^{+}$.

(ii) $\forall x\in X^{t}_{S}$: $x\in A$ iff  $F_{A}\subseteq
\rho_{S}(x)$.

(iii) $A\not=X^{t}_{S}$ iff there exists $a\in M_{S}$ such that
$A\subseteq a$ and $a\not=X^{t}_{S}$.

 Let $R^{t},R^{s},\prec$ be the canonical relations between
filters corresponding to the relations $C^{t}_{S}, C^{s}_{S},
\mathcal{B}_{S}$ from the DCA algebra $S^{+}$.

 (iv) The following conditions are equivalent:

 \qquad (1.1) $AC^{t}_{S}B$. (1.2) $F_{A}R^{t}F_{B}$. (1.3) $A\cap B\cap T_{S}\not=\varnothing$.

  (v) The following conditions are equivalent:

\qquad   (2.1) $AC^{s}_{S}B$.  (2.2) $F_{A}R^{s}F_{B}$.

 (vi)  The following conditions are equivalent

\qquad (3.1)  $A\mathcal{B}_{S}B$.  (3.2) $F_{A}\prec F_{B}$. (3.3) There exist $x\in A\cap T_{S}$ and  $y\in B\cap T_{S}$ such that $x\prec_{S} y$.

\end{lemma}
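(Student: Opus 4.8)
The plan is to dispatch the three ``soft'' items (i)--(iii) first and then handle the three equivalences (iv)--(vi) by one common scheme. For (i), since $\leq$ in $M_S$ is just set inclusion, $F_A$ is visibly upward closed and contains $1=X^t_S$; the only delicate point is closure under the Boolean meet $a.b=Cl(Int(a\cap b))$, and here I would use that $A$ is \emph{regular} closed. From $A\subseteq a$ and $A\subseteq b$ we get $Int(A)\subseteq Int(a\cap b)$, whence $A=Cl(Int(A))\subseteq Cl(Int(a\cap b))=a.b$, so $a.b\in F_A$. For (ii) and (iii) the governing fact is that $M_S$ is a closed base and $A$ is closed, so $A=\bigcap F_A$. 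Then $x\notin A$ holds iff some $a\in F_A$ omits $x$, i.e.\ iff $a\notin\rho_S(x)$, which is exactly (ii); and (iii) follows because $A\neq X^t_S=\bigcap\{X^t_S\}$ forces at least one member of $F_A$ to be proper.

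For the three easy directions of (iv)--(vi) I would always start from a geometric witness and push it through (ii). For (1.1)$\Rightarrow$(1.2): a common point $z\in A\cap B$ gives $F_A,F_B\subseteq\rho_S(z)$ by (ii), so every $a\in F_A$, $b\in F_B$ share $z$, i.e.\ $aC^t_S b$, hence $F_A R^t F_B$; the witness-to-point implications (1.3)$\Rightarrow$(1.1), (2.1)$\Rightarrow$(2.2) and (3.1)$\Rightarrow$(3.2),(3.3)$\Rightarrow$(3.1) are handled the same way, using that points of $T_S$ lie in $X^t_S$ and that $z\in X^s_S$ is carried along in the $C^s$ case.

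The substantive directions are the passages from a relation between the canonical filters to an actual witness, and these form a single pattern built on the \emph{R}-extension Lemma~\ref{R-extension-lemma} together with DM-compactness. For (1.2)$\Rightarrow$(1.3): from $F_A R^t F_B$, Lemma~\ref{R-extension-lemma} yields ultrafilters $U\supseteq F_A$, $V\supseteq F_B$ with $U R^t V$; since $R^t$ is an equivalence, $\{U,V\}$ is an $R^t$-clique, so $U\cup V$ is a t-clan, which sits inside a unique cluster $\Gamma$ (Lemma~\ref{clusters-are-maximal-clans}), and by DM-compactness $\Gamma=\rho_S(t)$ for some $t\in T_S$. Then $F_A,F_B\subseteq\rho_S(t)$, so (ii) gives $t\in A\cap B\cap T_S$. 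For (2.2)$\Rightarrow$(2.1) the argument is identical but with $R^s$: $\{U,V\}$ is an $R^s$-clique, $U\cup V$ is an s-clan (Clan Lemma~\ref{clan-lemma1}), DM-compactness realizes it as $\rho_S(z)$ with $z\in X^s_S$, and (ii) places $z$ in $A\cap B\cap X^s_S$. For (3.2)$\Rightarrow$(3.3): from $F_A\prec F_B$ get ultrafilters $U\prec V$ by Lemma~\ref{R-extension-lemma}, extend these t-clans to clusters $\Gamma\supseteq U$, $\Delta\supseteq V$ with $\Gamma\prec\Delta$ by Lemma~\ref{prec-extension lemma}, write $\Gamma=\rho_S(x)$, $\Delta=\rho_S(y)$ via DM-compactness, recover $x\prec_S y$ from Lemma~\ref{pointclan}(iv) (which is Axiom (S7)), and conclude $x\in A\cap T_S$, $y\in B\cap T_S$ from (ii).

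The main obstacle is precisely this conversion of an abstract canonical-relation statement about $F_A,F_B$ into a concrete point or $\prec_S$-related pair: the filters $F_A,F_B$ carry no points of their own, so one must manufacture ultrafilters, then clans/clusters, then genuine points of $S$ via DM-compactness, and only at the end reuse $A=\bigcap F_A$ through (ii) to land the witnesses inside $A$ and $B$. A cleaner but less uniform alternative for the $C^t$ and $C^s$ cases would bypass the clan machinery by invoking compactness of $X^t_S$ (Lemma~\ref{DM compactnes imply compactness}): the family $F_A\cup F_B$ consists of closed sets with the finite intersection property (because $F_A,F_B$ are filters closed under Boolean meet and $a.b\subseteq a\cap b$ holds set-theoretically), so $\bigcap(F_A\cup F_B)=A\cap B\neq\varnothing$; I would still need the cluster/s-clan route to locate the witness specifically in $T_S$ or $X^s_S$, so I prefer the uniform extension-lemma scheme above.
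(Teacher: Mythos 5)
Your proof is correct and follows essentially the same route as the paper's: the hard directions of (iv)--(vi) are obtained exactly as in the text, by the $R$-extension Lemma, forming the t-clan/s-clan $U\cup V$ (resp.\ extending $U\prec V$ to clusters), realizing it as $\rho_S$ of a point via DM-compactness, and landing the witness in $A\cap B$ through part (ii). Your treatment of (i) is in fact more explicit than the paper's ``direct checking'' --- the use of $A=Cl(Int(A))$ to get closure under the meet $a.b=Cl(Int(a\cap b))$ is precisely the one nontrivial point there --- and your $A=\bigcap F_A$ formulation of (ii) is an equivalent rephrasing of the paper's contraposition argument via the closed base $M_S$.
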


\begin{proof} (i) The proof   is by a direct checking
the corresponding definitions.

(ii) The implication from left to right is by straightforward checking. For the converse direction we will reason by contraposition. Suppose $x\not\in A$. Now we will apply the fact that $M_{S}$ is a closed base of the topology of $X$. Because $A$ is a regular closed set then $A$ is a closed set  and then there exists $a\in M_{S}$ such that $A\subseteq a$ and $x\not\in a$. Then $a\in F_{A}$ and $a\not\in \rho_{S}(x)$, so $F_{A}\not\subseteq \rho_{S}(x)$.

(iii) can be derived by direct application of (ii).

(iv) (1.1)$\Rightarrow$(1.2) Suppose $AC^{t}_{S}B$. Then there is a point
$x\in X^{t}_{S}$ such that $x\in A$ and $x\in B$. By (ii) this
implies

(1) $F_{A}\subseteq \rho_{S}(x)$ and

(2) $F_{B}\subseteq\rho_{S}(x)$.

In order to show $F_{A}\prec F_{B}$ suppose $a\in F_{A}$ and $b\in  F_{B}$ and proceed to show $F_{A}R^{t}F_{B}$. Then by (1) and (2) we get $a\in \rho_{S}(x)$ and hence $x\in a$, and $b\in \rho_{S}(x)$ and hence $x\in b$, which shows $a\cap b\not=\varnothing$. So, $aC^{t}_{S^{+}}b$ which proves that $F_{A}R^{t}F_{B}$.

(1.2)$\Rightarrow$(1.3) Suppose $F_{A}R^{t}F^{b}$. By Lemma
\ref{R-extension-lemma} there exist ultrafilters $U,V$ such that
$F_{A}\subseteq U$, $F_{B}\subseteq V$ and $UR^{t}V$. Let
$\Gamma=U\cup V$. Obviously $F_{A}\subseteq \Gamma$ and
$F_{B}\subseteq \Gamma$.  By Lemma \ref{clan-lemma1} $\Gamma$ as a
union of $R^{t}$-related ultrafilters  is a t-clan in $S^{+}$ and then it can be extended into a cluster $\Delta$.
By DM-compactness there is $x\in T_{S}$ such that
$\Delta=\rho_{s}(x)$. Hence $F_{A}\subseteq \rho_{s}(x)$ and
$F_{B}\subseteq \rho_{s}(x)$. By (ii) $x\in A$ and $x\in B$ hence
$A\cap B \cap T_{S}\not=\varnothing$.

(1.3)$\Rightarrow$(1.1) Suppose $A\cap B \cap T_{S}\not=\varnothing$. Then $A\cap B \not=\varnothing$, so $AC^{t}_{S}B$.

(v) the proof is similar to (iv)- it is used that if $\Gamma$ is
an s-clan in $S^{+}$ then by the DM-compactness there is point
$x\in X^{s}_{S}$ such that $\Gamma=\rho_{S}(x)$.

(vi) (3.1)$(\Rightarrow)$ (3.2) Suppose $A\mathcal{B}_{S}B$. Then there exist $x\in A\cap X^{t}_{S}$ and $y\in B\cap X^{t}_{S}$ such that $x\prec_{S}y$. Then by (ii) we obtain $F_{A}\subseteq \rho_{S}(x)$, $F_{B}\subseteq \rho_{S}(y)$ and by Lemma \ref{pointclan} we have $\rho_{S}(x)\prec \rho_{S}(y)$ and $\rho_{S}(x)$ and  $\rho_{S}(y)$ are t-clans. Then by the definition of $\prec$ in the set of t-clans we get $F_{A}\prec F_{B}$.

(3.2)$(\Rightarrow)$ (3.3) Suppose $F_{A}\prec F_{B}$. Then by Lemma \ref{R-extension-lemma} there are ultrafilters $U,V$ such that $F_{A}\subseteq U$, $F_{B}\subseteq V$ and $U\prec V$. Ultrafilters are t-clans and we can extend them into clusters preserving the relation $\prec$, namely: there exist  clusters $\Gamma,\Delta$ such that $U\subseteq\Gamma$, $V\subseteq\Delta$ and $\Gamma\prec \Delta$. By DM-compactness there are $x',y'\in T_{S}$ such that $\Gamma=\rho_{S}(x')$ and $\Delta=\rho_{S}(y')$, so $\rho_{S}(x')\prec\rho_{S}(y')$ and hence $x'\prec_{S}y'$.  We can obtain also  $F_{A}\subseteq \rho_{S}(x')$ and hence $x'\in A$, and $F_{B}\subseteq \rho_{S}(y')$ and hence $y'\in B$. All this says: $\exists x'\in A\cap T_{S}$, $\exists y'\in B\cap T_{S}$ such that $x'\prec_{S}y'$.

(3.3)$(\Rightarrow)$ (3.1). This implication is obvious because $T_{S}\subseteq X^{t}_{S}$.
\end{proof}

Note that conditions (i), (ii) and (iii) of the above lemma does not depend on the assumption of DM-compactness.

\begin{lemma}\label{stable extensions}The following conditions are true for $S$:

 (i) The algebra
  $S^{+}$ is a
stable Boolean sub-algebra of $RC(S)$.

(ii) $RC(S)$ is a DCA.

\end{lemma}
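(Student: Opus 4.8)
The plan is to establish (i) by checking the three lifting conditions of Definition~\ref{lifting conditions} for the subalgebra $A_1=S^+$ of $A_2=RC(S)$, and then to obtain (ii) directly from the Lifting Lemma~\ref{lifting lemma}. First I would note that $S^+$ genuinely is a Boolean subalgebra of $RC(S)$ (by axiom (S2)) and that its three relations $C^t_{S^+},C^s_{S^+},\mathcal{B}_{S^+}$ are the restrictions of $C^t_S,C^s_S,\mathcal{B}_S$ to $M_S$, so the framework of Definition~\ref{lifting conditions} applies; it then remains to verify (Dense) and ($C$-separation) for each $C\in\{C^t_S,C^s_S,\mathcal{B}_S\}$.

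For (Dense), given $A\in RC(X^t_S)$ with $A\neq\varnothing$ I would use $A=Cl(Int(A))$ to conclude $Int(A)\neq\varnothing$. Since $M_S$ is a closed base of $X^t_S$, the complements $\{-a:a\in M_S\}$ form an open base, so there is $a\in M_S$ with $\varnothing\neq -a\subseteq Int(A)$. Then $a^{*}=Cl(-a)$ lies in $M_S$ (a subalgebra), satisfies $\varnothing\neq a^{*}\subseteq Cl(Int(A))=A$, and is the desired nonzero element of $M_S$ below $A$. As (Dense) and (Co-dense) are equivalent, this settles the density requirement.

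For the separation conditions the key tool is the Canonical Filter Lemma~\ref{canonical filter lema}. Fix a relation $C$ among $C^t_S,C^s_S,\mathcal{B}_S$, with canonical filter relation $R$ equal to $R^t$, $R^s$, $\prec$ respectively, and suppose $A\,\overline{C}\,B$ in $RC(S)$. By the equivalences (iv), (v), (vi) of Lemma~\ref{canonical filter lema} (which is where the standing DM-compactness hypothesis of this section is used), $A\,\overline{C}\,B$ is equivalent to $F_A\,\overline{R}\,F_B$ for the canonical filters $F_A,F_B$. Unfolding the definition of the canonical relation (Definition~\ref{Canonical relation for precontact}), $F_A\,\overline{R}\,F_B$ means there exist $a\in F_A$ and $b\in F_B$ with $a\,\overline{C}_{S^+}\,b$. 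Since $a\in F_A$ gives $A\subseteq a$, $b\in F_B$ gives $B\subseteq b$, and $C_{S^+}$ is the restriction of $C$ to $M_S$, these $a,b\in M_S$ are exactly the witnesses required by ($C$-separation). Applying this to all three relations yields all separation conditions, so $S^+$ is a stable subalgebra of $RC(S)$, proving (i).

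For (ii), recall that $S^+$ is a DCA by axiom (S6), and that in $RC(S)$ the relations $C^s_S,C^t_S$ are contacts and $\mathcal{B}_S$ is a precontact (the remark in Definition~\ref{dynamic mereotopological space}, using (S4) for $C^s_S$). Thus the hypotheses of the Lifting Lemma~\ref{lifting lemma} are met, and each DCA axiom $(C^s\subseteq C^t)$, $(C^tE)$, $(C^t\mathcal{B})$, $(\mathcal{B}C^t)$ holding in $S^+$ lifts to $RC(S)$; hence $RC(S)$ is a DCA. I expect the only real subtlety to be bookkeeping: applying the equivalences of Lemma~\ref{canonical filter lema} in the correct direction and keeping track of the restriction $C_{S^+}=C|_{M_S}$ so that the witnesses $a,b$ genuinely lie in $M_S$. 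The density step and the final invocation of the Lifting Lemma are routine.
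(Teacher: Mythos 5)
Your proposal is correct and follows essentially the same route as the paper: the $C$-separation conditions are extracted from parts (iv)--(vi) of Lemma \ref{canonical filter lema} exactly as in the text, and (ii) is obtained by combining axiom (S6) with the Lifting Lemma \ref{lifting lemma}. The only (harmless) divergence is in the density step, where the paper verifies (Co-dense) via Lemma \ref{canonical filter lema}(iii) while you verify the equivalent condition (Dense) directly from $A=Cl(Int(A))$ and the fact that $M_S$ is a closed base; both arguments are valid and neither requires DM-compactness.
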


\begin{proof}
(i) We first show that $S^{+}$ satisfies the lifting conditions
(see Definition \ref{lifting conditions}) and then (i) is a
corollary of Lemma \ref{lifting lemma}. First we verify the
lifting  condition (co-dense). Suppose $A\in RC(X^{t}_{S})$ and
$A\not=X^{t}_{S}$. Then by Lemma \ref{canonical filter lema} (iii)
there exists $a\not=M_{S}$ such that $a\not=X^{t}_{S}$ and
$A\subseteq a$. We do not treat (dense) because it is equivalent
to (co-dense).

To verify the condition (C-separation) for $C\in \{C^{t}_{S},
C^{s}_{S},\mathcal{B}_{S}\}$ we proceed as follows. Looking at the
conditions (iv), (v), (vi) of Lemma \ref{canonical filter lema} we see that they have the following
common  form. Let $R$ be the canonical relation between filters
corresponding to the relation $C$. Then for any $A,B\in
RC(X^{t}_{S})$: $ACB$ iff $F_{A}RF_{B}$. Taking the negation in
both sides we obtain: $A\overline{C}B$ iff
$F_{A}\overline{R}F_{B}$ iff there exists $a,b\in M_{S}$ such that
$a\in F_{A}$, $b\in F_{B}$ and $a\overline{C}b$ iff there exists
$a,b\in M_{S}$ such that $A\subseteq a$, $B\subseteq b$ and
$a\overline{C}b$. Thus: $F_{A}\overline{R}F_{B}$ implies that for
some $a,b\in M_{S}$, $A\subseteq a$, $B\subseteq b$ and
$a\overline{C}b$ which  is  the (C-separation) condition. Note that
just this implication needed DM-compactness in Lemma
\ref{canonical filter lema}.

(ii) is a corollary of (i) and the fact that $S^{+}$ is a DCA, so
by Lemma \ref{lifting lemma} the axioms $(C^{s}\subseteq C^{t})$,
$(C^{t}E)$, $(C^{t}\mathcal{B}$ and $(\mathcal{B}C^{t})$  are
lifted from $S^{+}$ to $RC(S)$.\end{proof}

\begin{lemma}\label{Lemma time axioms in DMS}  Let  $(\varphi)$ be any of the time axioms:  \textbf{(rs)}, \textbf{(ls)}, \textbf{(up dir)},
 \textbf{(down dir)}, \textbf{(circ)}, \textbf{(dens)}, \textbf{(ref)},
 \textbf{(lin)}, \textbf{(tri)}, \textbf{(tr)}.
 Then the following conditions are equivalent:

(i) $(\varphi)$  is true in the algebra $S^{+}$.

(ii) $(\varphi)$ is true in the algebra $RC(S)$.
\end{lemma}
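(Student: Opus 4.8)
The plan is to deduce this lemma as an immediate corollary of the Lifting Lemma (Lemma \ref{lifting lemma}) together with the structural fact already established in Lemma \ref{stable extensions}(i), namely that $S^{+}$ is a stable subalgebra of $RC(S)$. The Lifting Lemma is stated precisely so as to cover every time axiom in the list, so once its hypotheses are in place the equivalence of (i) and (ii) is automatic and no individual time axiom has to be verified directly in the concrete algebra $RC(S)$.

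First I would set $A_{1}=S^{+}=(M_{S}, C^{t}_{S}, C^{s}_{S}, \mathcal{B}_{S})$ and $A_{2}=RC(S)=(RC(X^{t}_{S}), C^{t}_{S}, C^{s}_{S}, \mathcal{B}_{S})$ and check that both are algebras with the signature of a DCA in which $C^{s}$ and $C^{t}$ are contact relations and $\mathcal{B}$ is a precontact relation: for $A_{2}$ this is the remark immediately following the Definitions in Definition \ref{dynamic mereotopological space}, and for $A_{1}$ it is axiom (S6). Next I would invoke Lemma \ref{stable extensions}(i) to conclude that $A_{1}$ is a stable subalgebra of $A_{2}$ in the sense of Definition \ref{lifting conditions}, i.e.\ that the (Dense)/(Co-dense) condition and all three (C-separation) conditions hold. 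Finally, since each of the time axioms \textbf{(rs)}, \textbf{(ls)}, \textbf{(up dir)}, \textbf{(down dir)}, \textbf{(circ)}, \textbf{(dens)}, \textbf{(ref)}, \textbf{(lin)}, \textbf{(tri)}, \textbf{(tr)} is explicitly among the axioms \textbf{Ax} to which Lemma \ref{lifting lemma} applies, taking \textbf{Ax}$=(\varphi)$ yields that $(\varphi)$ holds in $A_{1}=S^{+}$ iff $(\varphi)$ holds in $A_{2}=RC(S)$, which is exactly the claimed equivalence.

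In this argument there is essentially no remaining obstacle, because the genuine work has already been carried out: the hard step was establishing stability in Lemma \ref{stable extensions}, which in turn rested on the canonical-filter characterizations of the three relations (Lemma \ref{canonical filter lema}(iv)--(vi)) and crucially on the DM-compactness of $S$, used to lift $A\overline{C}B$ to a separating pair $a,b\in M_{S}$ with $A\subseteq a$, $B\subseteq b$ and $a\overline{C}b$. The only thing I would double-check is a matter of bookkeeping, namely that the list of time axioms in the present statement coincides with the list covered by the Lifting Lemma --- it does, with \textbf{(irr)} excluded in both places for the reasons discussed in Remark \ref{why not irr}.
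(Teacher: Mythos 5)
Your proposal matches the paper's own proof exactly: the paper also derives this lemma directly from Lemma \ref{stable extensions}(i) (stability of $S^{+}$ in $RC(S)$) combined with the Lifting Lemma \ref{lifting lemma}. The additional bookkeeping you perform (checking signatures and the exclusion of \textbf{(irr)}) is consistent with the paper and introduces no discrepancy.
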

\begin{proof}

The proof follows from Lemma \ref{stable extensions} (i) and Lemma \ref{lifting lemma}.\end{proof}

\begin{lemma}\label{Lemma lifting time conditions}  Let $S$ be  DM-compact DMS, $RC(S)$ be its regular-sets algebra, \newline $(T_{S},\prec_{S})$ be its time structure and let $(T_{S^{+}},\prec_{S^{+}})$ be the canonical time structure of $S^{+}$ (see Definition \ref{canonical time structure}). Let $(\Phi)$ be the  time condition from the list (RS), (LS), (Up Dir), (Down Dir),
(Circ), (Dens), (Ref), (Irr), (Lin), (Tr) ( condition (Tri) is excluded).   Then the following conditions are true:

(i) $(\Phi)$ is true in $(T_{S},\prec_{S})$ iff $(\Phi)$ is true in $(T_{S^{+}},\prec_{S^{+}})$.

(ii) If $S$ is $T0$ DMS, then: (Tri) is true in $(T_{S},\prec_{S})$ iff (Tri) is true in $(T_{S^{+}},\prec_{S^{+}})$.
\end{lemma}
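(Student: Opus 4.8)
The plan is to realize the correspondence between the two time structures through the restriction of $\rho_{S}$ to $T_{S}$ and then to read each time condition across that map, treating (Tri) separately because it is the only condition in the list that mentions equality. First I would record the two structural facts that carry the whole argument. Since $S$ is DM-compact, Lemma \ref{DM-compactness is equivalent to surjectivity of ro} tells us that $\rho_{S}$ maps $T_{S}$ \emph{onto} $Clusters(S^{+})$; and axiom (S7), in the form of Lemma \ref{pointclan}(iv), gives for all $x,y\in X^{t}_{S}$, hence in particular for all $x,y\in T_{S}$,
\[
x\prec_{S}y \iff \rho_{S}(x)\prec_{S^{+}}\rho_{S}(y).
\]
Writing $\rho$ for the restriction of $\rho_{S}$ to $T_{S}$, this says that $\rho\colon (T_{S},\prec_{S})\to(Clusters(S^{+}),\prec_{S^{+}})$ is a \emph{surjective} map that both \emph{preserves and reflects} the before-after relation.

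For part (i) I would note that each of (RS), (LS), (Up Dir), (Down Dir), (Circ), (Dens), (Ref), (Irr), (Lin), (Tr) is, after putting it into prenex form, a sentence of shape $\forall\bar{x}\,\exists\bar{y}\,\chi(\bar{x},\bar{y})$ (the universal conditions being the degenerate case in which $\bar{y}$ is empty) whose matrix $\chi$ is built from atoms $u\prec v$ and their negations and contains \emph{no} equality symbol. Any such sentence transfers in both directions along $\rho$: a universal instance over $Clusters(S^{+})$ is reached by writing each cluster as $\rho(x)$ using surjectivity, and is pulled back to $T_{S}$ by substitution; an existential witness found in $(Clusters(S^{+}),\prec_{S^{+}})$ is pulled back to a witness in $T_{S}$ again by surjectivity, while a witness found in $T_{S}$ is pushed forward by applying $\rho$; and every (possibly negated) atom $u\prec v$ crosses the map by the displayed equivalence. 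I would spell out one genuinely existential case, say (Up Dir), and one universal case, say (Tr) or (Irr), to exhibit the pattern, and remark that the remaining cases are of identical form.

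For part (ii) the essential new feature is that (Tri), namely $\forall m,n\,(m=n \vee m\prec n \vee n\prec m)$, contains the equality $m=n$. The forward implication still goes through exactly as in part (i): given clusters $\rho(x),\rho(y)$, trichotomy in $(T_{S},\prec_{S})$ yields $x=y$, $x\prec_{S}y$, or $y\prec_{S}x$, which map respectively to $\rho(x)=\rho(y)$, $\rho(x)\prec_{S^{+}}\rho(y)$, and $\rho(y)\prec_{S^{+}}\rho(x)$, and this uses only surjectivity. The converse is where the $T0$ hypothesis becomes unavoidable: trichotomy in $(Clusters(S^{+}),\prec_{S^{+}})$ applied to $\rho(x),\rho(y)$ produces $\rho(x)=\rho(y)$, $\rho(x)\prec_{S^{+}}\rho(y)$, or $\rho(y)\prec_{S^{+}}\rho(x)$, and in the first alternative I must deduce $x=y$ from $\rho(x)=\rho(y)$. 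This is precisely what injectivity of $\rho_{S}$ provides, and under $T0$ injectivity is guaranteed by Corollary \ref{T0 and DM-compactness together} (equivalently Lemma \ref{pointclan}(v)). The main point to get right is thus not a calculation but the bookkeeping of which ingredient each condition consumes: surjectivity of $\rho$ alone suffices for the equality-free conditions gathered in part (i), whereas the single equality atom in (Tri) forces the additional use of injectivity, which is exactly why (Tri) is isolated in part (ii) under the $T0$ assumption.
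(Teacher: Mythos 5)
Your proposal is correct and follows essentially the same route as the paper: DM-compactness gives surjectivity of $\rho_{S}$ from $T_{S}$ onto the clusters, axiom (S7) via Lemma \ref{pointclan}(iv) gives preservation and reflection of $\prec$, and the $T0$ hypothesis supplies the injectivity needed only for the equality atom in (Tri). The paper verifies a representative case (Dense) and handles (Tri) exactly as you do; your packaging of part (i) as a uniform transfer principle for equality-free sentences along a surjective strong homomorphism is just a cleaner way of saying the same thing.
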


\begin{proof} (i) Let us remind that the members of $T_{S^{+}}$ are clusters of $S^{+}$ , which we will denote by $\Gamma, \Delta, \Theta,...$. We will demonstrate the proof considering the case (Dense),  the proofs for the other cases go in the same manner.

(Dense) $(\forall i,j)(i\prec j\Rightarrow(\exists k)(i\prec k$ and $k\prec j)$.

($\Rightarrow$) Suppose (Dense) is true in $(T_{S},\prec_{S})$ and let $\Gamma,\Delta\in T_{S^{+}}$ and $\Gamma\prec_{S^{+}}\Delta$. Then by DM-compactness there exist $x,y\in T_{S}$ such that $\Gamma=\rho_{S}(x)$, and
$\Delta=\rho_{S}(y)$, so $\rho_{S}(x)\prec_{S^{+}}\rho_{S}(y)$. By Lemma \ref{pointclan} (iv) we obtain $x\prec_{S}y$ and by (Dence) there exists $z\in T_{S}$ such that $x\prec_{S}z\prec_{S} y$. Again by Lemma \ref{pointclan} (iv) we obtain $\rho_{S}(x)\prec_{S^{+}}\rho_{S}(z)\prec_{S^{+}}\rho_{S}(y)$. Because $\rho_{S}(z)$ is a cluster in $S_{+}$ we put
$\Theta=\rho_{S}(z)$ and obtain $\Gamma\prec_{S^{+}}\Theta\prec_{S^{+}}\Delta$ which shows that (Dense) is true in $(T_{S^{+}},\prec_{S^{+}})$.

($\Leftarrow$) Suppose (Dense) is true in $(T_{S^{+}},\prec_{S^{+}})$,  $x,y\in T_{S}$ and $x\prec_{S}y$. Then $\rho_{S}(x)\prec_{S^{+}}\rho_{S^{+}}(y)$. By (Dence) there exists a cluster $\Theta$ (hence  there exists $z\in T_{S}$ with $\rho_{S}(z)=\Theta$) such that  $\rho_{S}(x)\prec_{S^{+}}\rho_{S}(z)\prec_{S^{+}}\rho_{S^{+}}(y)$. This implies $x\prec_{S}z\prec_{S}y$ which shows that (Dense) is true in $(T_{S},\prec_{S})$.

(ii) The case of (Tri) $(\forall i,j)(i=j$ or $i\prec j$ or $j\prec i$.

($\Rightarrow$) The proof of this  implication is straightforward and  requires neither DM-compactness nor $T0$ property.

($\Leftarrow$) Suppose (Tri) is true in $(T_{S^{+}},\prec_{S^{+}})$ and let $x,y\in T_{S}$. Then $\rho_{S}(x), \rho_{S}(y)$ are clusters in $S^{+}$. Then by (Tri) we have $\rho_{S}(x)= \rho_{S}(y)$ or $\rho_{S}(x)\prec_{S^{+}} \rho_{S}(y)$ or $\rho_{S}(y) \prec_{S^{+}} \rho_{S}(x)$.

\textbf{Case 1}: $\rho_{S}(x)= \rho_{S}(y)$. Since $\rho_{S}(x)$ and  $\rho_{S}(y)$ are also t-clans then by the assumption that $S$ is a $T0$ space case 1 implies $x=y$ (by Lemma \ref{pointclan} (v)).

\textbf{Case 2}: $\rho_{S}(x)\prec_{S^{+}}\rho_{S}(y)$. By Lemma \ref{pointclan} (iv) this implies $x\prec_{S}y$.

\textbf{Case 3}: $\rho_{S}(y)\prec_{S^{+}}\rho_{S}(x)$. Again by Lemma \ref{pointclan} (iv) this implies $y\prec_{S}x$. Thus, (Tri) is fulfilled in the time structure $(T_{S},\prec_{S})$.\end{proof}

\begin{lemma}\label{Lemma topological definability} \textbf{Topological definability.} Let  $(T_{S}, \prec_{S})$ be the time structure of $S$, $(\Phi)$ be the  time condition from the list (RS), (LS), (Up Dir), (Down Dir),
(Circ), (Dens), (Ref), (Lin), (Tri) (Tr)   and  $(\varphi)$ be the corresponding time axiom from the list  \textbf{(rs)}, \textbf{(ls)}, \textbf{(up dir)},
 \textbf{(down dir)}, \textbf{(circ)}, \textbf{(dens)}, \textbf{(ref)},
 \textbf{(lin)}, \textbf{(tri)}, \textbf{(tr)}. Then the following conditions are equivalent  (for the case of (Tri) we assume also that $S$ is $T0$):

 (i) $(\Phi)$ is true in $(T_{S}, \prec_{S})$

 (ii) ($\varphi$) is true in $(RC)(S)$.

\end{lemma}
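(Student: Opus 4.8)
The plan is to prove the equivalence not directly but by routing through the canonical time structure $(T_{S^+},\prec_{S^+})$ of the dual DCA $S^{+}$, where by Definition \ref{canonical time structure} we have $T_{S^+}=Clusters(S^+)$ and $\prec_{S^+}$ is the restriction of the canonical precedence relation to clusters. This object sits exactly between the geometric time structure $(T_S,\prec_S)$ of $S$ and the algebraic condition $(\varphi)$ in $RC(S)$, and all the bridges I need have already been built in the preceding lemmas. So the proof will be a chain of three established biconditionals, one contributed by each lemma.

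First I would invoke Lemma \ref{Lemma lifting time conditions} to pass from the geometry of $S$ to the algebra $S^{+}$: for every time condition $(\Phi)$ in the list other than (Tri), part (i) gives that $(\Phi)$ holds in $(T_S,\prec_S)$ iff it holds in $(T_{S^+},\prec_{S^+})$, and for (Tri) part (ii) gives the same equivalence under the standing $T0$ hypothesis. Next I would apply the Correspondence Lemma 2 (Lemma \ref{ultrafilter and cluster correspondece for time axioms}) with $A=S^{+}$: its equivalence (ii)$\Leftrightarrow$(iii) says precisely that the cluster interpretation of $(\Phi)$ holds in the canonical time structure $(Clusters(S^+),\prec_{S^+})=(T_{S^+},\prec_{S^+})$ iff the corresponding time axiom $(\varphi)$ holds in the DCA $S^{+}$. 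Finally I would use Lemma \ref{Lemma time axioms in DMS} to lift $(\varphi)$ from the subalgebra $S^{+}$ to the full regular-sets algebra, giving that $(\varphi)$ holds in $S^{+}$ iff it holds in $RC(S)$. Concatenating the three equivalences yields $(\Phi)$ true in $(T_S,\prec_S)$ iff $(\varphi)$ true in $RC(S)$, which is the claim.

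The argument is essentially bookkeeping, so there is no deep obstacle; the only points requiring care are the matching of the various lists and the handling of (Tri). Lemma \ref{Lemma lifting time conditions}(i) covers every case in the present list except (Tri), and (Tri) is exactly the case for which the extra $T0$ assumption is imposed here, so the two halves fit together without gaps; likewise the Correspondence Lemma 2 is stated for the same list (with (Irr) excluded, which is also excluded in the present statement), so no case escapes its scope. The one substantive ingredient hidden in the background is DM-compactness, the standing assumption of this section, which is what makes Lemma \ref{Lemma lifting time conditions} available: via surjectivity of $\rho_S$ it guarantees that the clusters of $S^{+}$ are exactly the point-clusters $\rho_S(x)$, so that conditions on $(T_{S^+},\prec_{S^+})$ faithfully reflect conditions on $(T_S,\prec_S)$.
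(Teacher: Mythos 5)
Your proposal is correct and matches the paper's own proof exactly: the paper establishes the claim by the same chain of three equivalences, citing Lemma \ref{Lemma lifting time conditions}, then Lemma \ref{ultrafilter and cluster correspondece for time axioms}, then Lemma \ref{Lemma time axioms in DMS}. Your additional remarks on the (Tri)/$T0$ case and the role of DM-compactness are accurate and consistent with the paper's treatment.
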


\begin{proof}  $(\Phi)$ is true in $(T_{S}, \prec_{S})$ iff (by Lemma \ref{Lemma lifting time conditions})  $(\Phi)$ is true in  the canonical time structure of $S^{+}$, $(T_{S^{+}}, \prec_{S^{+}})$ iff (by Lemma \ref{ultrafilter and cluster correspondece for
time axioms}   ($\varphi$) is true in $S^{+}$ iff (by Lemma \ref{Lemma time axioms in DMS} ) $(\varphi)$ is true in the algebra $RC(S)$.
\end{proof}

%%%%%%%%%%%%%%%%%%%%%%%%%%%%%%%%%%%%%%%%%%%%%%%%%%%%
\subsection{Canonical DMS for DCA and topological representation \newline theorem for DCA}\label{Section Canonical DMS for DCA}
%%%%%%%%%%%%%%%%%%%%%%%%%%%%%%%%%%%%%%%%%%%%%%%%%%%%%%%%%%%

Let $A=(B_{A},C^{t}_{A}, C^{s}_{A}, \mathcal{B}_{A})$ be a DCA. We
associate to DCA in a canonical way a DM-space denoted by $A_{+}$
and called  the \textbf{canonical DMS of A} or the \textbf{dual DMS
 of $A$} as follows:

 \smallskip

$\bullet$ $A_{+}=_{def}(X^{t}_{A}, X^{s}_{A}, T_{A}, \prec_{A},
M_{A})$, where:

 \smallskip

  $\bullet$ $X^{t}_{A}=$t-$Clans(A)$,  $X^{s}_{A}=$s-$Clans(A)$
  and $T_{A}=Clusters(A)$.

\smallskip

$\bullet$ $\prec_{A}$ is the before-after relation in the set
$X^{t}_{A}$ defined by (9). The structure $(T_{A},\prec_{A})$ -
the time structure of $A$ is now the time structure of $A_{+}$.

\smallskip

  $M_{A}$ is defined as follows and is used to introduce a topology
  in the set $X^{t}_{A}$ considering it as a basis of the closed sets in the topology:

\smallskip

$\bullet$ For $a\in B_{A}$ let $g_{A}(a)=\{\Gamma\in $t-$Clans(A):
a\in \Gamma\}$   and put

\smallskip

$\bullet$ $M_{A}=\{g_{A}(a): a\in B_{A}\}$.

\smallskip

By the topological representation theory of contact algebras (see
Section \ref{Section topological representation of CA clans}) the
set $\{g_{A}(a): a\in B_{A}\}$ defines a topology in the set
$X^{t}_{A}$ and  $g_{A}$ is an isomorphic embedding of $B_{A}$
into the algebra $RC(X^{t}_{A})$ and $M_{A}$ is a Boolean
subalgebra of $RC(X^{t}_{A})$ isomorphic to $B_{A}$.

We define the algebra  $(A_{+})^{+}$ - the dual of $A_{+}$ as
follows.

 \smallskip

$\bullet$  $(A_{+})^{+}=_{def} (M_{A}, C^{t}_{A_{+}},
C^{s}_{A_{+}}, \mathcal{B}_{A_{+}})$.

 %\smallskip

 Having in mind the topological representation theory of contact algebras
 (see Section \ref{Section topological representation of CA clans} and  Lemma
\ref{Clan characterization of basic relations}  it can be seen
that $g_{A}$ is also an isomorphism from $A$ onto $(A_{+})^{+}$,
so $(A_{+})^{+}=g_{A}(B_{A})$ which proves the following lemma.

\begin{lemma} \label{topoisomorphism lemma for DCA} $A$ is
isomorphic to $(A_{+})^{+}$ and hence $(A_{+})^{+}$ is a DCA.
\end{lemma}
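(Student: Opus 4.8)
The plan is to show that the embedding $g_{A}$ introduced just above the statement is already the required isomorphism, so that $(A_{+})^{+}$ is merely an isomorphic copy of $A$ and therefore inherits the status of being a DCA. The Boolean part comes for free: by the Topological Representation Theorem for contact algebras (Theorem \ref{topological-representation-of-contact-algebras}) applied to the contact algebra $(B_{A},C^{t}_{A})$, the map $g_{A}$ is a Boolean isomorphism of $B_{A}$ onto $M_{A}$, the latter being a Boolean subalgebra of $RC(X^{t}_{A})$ whose members form the closed base of the topology on $X^{t}_{A}$. Hence it only remains to verify that $g_{A}$ preserves and reflects each of the three relations $C^{s}_{A}$, $C^{t}_{A}$ and $\mathcal{B}_{A}$; that is, for all $a,b\in B_{A}$, $aC^{s}_{A}b \Leftrightarrow g_{A}(a)C^{s}_{A_{+}}g_{A}(b)$, and likewise for $C^{t}$ and $\mathcal{B}$.

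Each of these equivalences is obtained by unwinding the DMS-definition of the relation on $M_{A}$ and matching it against the clan characterizations of Lemma \ref{Clan characterization of basic relations}. For time contact, $g_{A}(a)C^{t}_{A_{+}}g_{A}(b)$ means $g_{A}(a)\cap g_{A}(b)\not=\varnothing$, i.e. there is a $t$-clan $\Gamma$ with $a,b\in\Gamma$, which by Lemma \ref{Clan characterization of basic relations}(i) is exactly $aC^{t}_{A}b$. For space contact I would first use the identity $g_{A}(a)\cap X^{s}_{A}=g^{s}_{A}(a)$ (cf. (11)); then $g_{A}(a)C^{s}_{A_{+}}g_{A}(b)$ means $g_{A}(a)\cap g_{A}(b)\cap X^{s}_{A}=g^{s}_{A}(a)\cap g^{s}_{A}(b)\not=\varnothing$, i.e. there is an $s$-clan containing both $a$ and $b$, which by Lemma \ref{Clan characterization of basic relations}(ii) is $aC^{s}_{A}b$. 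For precedence, $g_{A}(a)\mathcal{B}_{A_{+}}g_{A}(b)$ means there are $x,y\in X^{t}_{A}$ with $x\prec_{A}y$, $a\in x$ and $b\in y$; since $X^{t}_{A}$ consists of $t$-clans and $\prec_{A}$ is the relation defined by (9), this says precisely that there are $t$-clans $\Gamma\prec_{A}\Delta$ with $a\in\Gamma$ and $b\in\Delta$, which by Lemma \ref{Clan characterization of basic relations}(iii) is $a\mathcal{B}_{A}b$.

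Putting these together, $g_{A}$ is a bijection of $B_{A}$ onto $M_{A}$ that is simultaneously a Boolean isomorphism and preserves and reflects $C^{s}$, $C^{t}$ and $\mathcal{B}$; hence it is an isomorphism of the relational-algebraic structure $A$ onto $(A_{+})^{+}$, so that $(A_{+})^{+}=g_{A}(B_{A})$. Finally, since $A$ is a DCA and the defining conditions of a DCA (the Boolean axioms, that $C^{s},C^{t}$ are contacts, that $\mathcal{B}$ is a precontact, and the interaction axioms $(C^{s}\subseteq C^{t})$, $(C^{t}E)$, $(C^{t}\mathcal{B})$, $(\mathcal{B}C^{t})$) are all expressed purely in terms of the Boolean operations and the three relations, they are transported verbatim along the isomorphism $g_{A}$; therefore $(A_{+})^{+}$ is a DCA. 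I expect the only point requiring genuine care to be the space-contact step, where one must not conflate $g_{A}(a)\cap X^{s}_{A}$ with $g_{A}(a)$ itself, so that $C^{s}_{A_{+}}$ truly detects a shared \emph{$s$-clan} rather than a mere shared $t$-clan; the remaining steps are a routine translation between the intersection form of the relations on regular closed sets and their clan form.
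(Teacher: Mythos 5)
Your proof is correct and follows essentially the same route as the paper: the paper's justification is precisely the combination of the topological representation theory of contact algebras (giving the Boolean isomorphism of $B_{A}$ onto $M_{A}$) with Lemma \ref{Clan characterization of basic relations} (giving preservation and reflection of $C^{s}$, $C^{t}$ and $\mathcal{B}$), which you have simply spelled out in detail, including the correct identification $g_{A}(a)\cap X^{s}_{A}=g^{s}_{A}(a)$ needed for the space-contact case.
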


By definition we have $\rho_{A_{+}}=_{def}\{g_{A}(a)\in M_{A}: \Gamma\in g_{A}(a)\}=\{g_{A}(a)\in M_{A}: a\in\Gamma\}$.

\begin{lemma} \label{ro(a)} (i) For any $\Gamma\in X^{t}_{A}$
$\rho_{A_{+}}(\Gamma)$ is a  t-clan in  $(A_{+})^{+}$.

(ii) For any $\Gamma\in X^{s}_{A}$
$\rho_{A_{+}}(\Gamma)$ is a  s-clan in  $(A_{+})^{+}$.

(iii) For any $\Gamma\in T_{A}$
$\rho_{A_{+}}(\Gamma)$ is a  cluster in  $(A_{+})^{+}$.

\end{lemma}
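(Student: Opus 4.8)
The plan is to reduce all three statements to a single identification together with the fact, already established in Lemma \ref{topoisomorphism lemma for DCA}, that $g_{A}$ is an isomorphism of DCAs from $A$ onto $(A_{+})^{+}$. First I would unwind $\rho_{A_{+}}$. The points of the space $X^{t}_{A_{+}}=X^{t}_{A}$ are the t-clans of $A$ and $M_{A}=\{g_{A}(a):a\in B_{A}\}$, so for a t-clan $\Gamma$ the definition of $\rho$ together with the equivalence $\Gamma\in g_{A}(a)$ iff $a\in\Gamma$ gives
$$\rho_{A_{+}}(\Gamma)=\{g_{A}(a)\in M_{A}:\Gamma\in g_{A}(a)\}=\{g_{A}(a):a\in\Gamma\}=g_{A}[\Gamma].$$
Hence each of (i), (ii), (iii) becomes the assertion that $g_{A}$ carries a t-clan (resp. s-clan, cluster) of $A$ to a t-clan (resp. s-clan, cluster) of $(A_{+})^{+}$.

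Next I would record the routine fact that an isomorphism of contact algebras preserves clan and cluster structure. If $g$ is a Boolean isomorphism that also satisfies $aCb$ iff $g(a)Cg(b)$ for a contact relation $C$, then for any clan $\Gamma$ the image $g[\Gamma]$ satisfies conditions (i)--(iv) of Definition \ref{definition of clan}: preservation of $0,1,\leq,+$ together with injectivity yields (i)--(iii), and preservation of $C$ yields (iv). Applying this with $C=C^{s}_{A}$ gives part (ii) and with $C=C^{t}_{A}$ gives part (i), since by Lemma \ref{topoisomorphism lemma for DCA} the map $g_{A}$ preserves both $C^{s}$ and $C^{t}$. For part (iii) I would additionally verify the cluster condition of Definition \ref{def-clusters}: given $y\notin g_{A}[\Gamma]$, surjectivity of $g_{A}$ onto $(A_{+})^{+}$ supplies $y=g_{A}(a)$ with $a\notin\Gamma$, whence the cluster property of $\Gamma$ yields $b\in\Gamma$ with $a\,\overline{C}^{t}_{A}\,b$, and preservation of $\overline{C}^{t}$ produces $g_{A}(b)\in g_{A}[\Gamma]$ with $y\,\overline{C}^{t}_{A_{+}}\,g_{A}(b)$; so $g_{A}[\Gamma]$ is again a cluster.

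There is essentially no obstacle here, and the only two points that require a little care are that clusters are defined relative to $C^{t}$ (so the cluster step must be carried out with $\overline{C}^{t}$), and that this step genuinely uses the \emph{surjectivity} of $g_{A}$ onto $(A_{+})^{+}$ rather than mere injectivity. Both are guaranteed precisely because Lemma \ref{topoisomorphism lemma for DCA} furnishes $g_{A}$ as a full isomorphism of DCAs. The three conclusions then follow immediately from the identification $\rho_{A_{+}}(\Gamma)=g_{A}[\Gamma]$ applied to $\Gamma\in X^{t}_{A}$, $\Gamma\in X^{s}_{A}$ and $\Gamma\in T_{A}$ respectively.
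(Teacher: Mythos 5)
Your proof is correct and follows essentially the same route as the paper: the paper's own argument is a routine verification of the clan/cluster conditions (it writes out case (iii) exactly as your cluster step does, taking $g_{A}(a)\notin\rho_{A_{+}}(\Gamma)$, deducing $a\notin\Gamma$, and applying the cluster property of $\Gamma$ together with $g_{A}(a)\cap g_{A}(b)=\varnothing$). Your identification $\rho_{A_{+}}(\Gamma)=g_{A}[\Gamma]$ and the appeal to Lemma \ref{topoisomorphism lemma for DCA} merely package that same verification a little more systematically, and there is no circularity since that lemma precedes this one.
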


\begin{proof} The proof is by a routine verification of the corresponding definitions and using the results about the clan structure of DCA developed in Section \ref{Section Facts about ultrafilters, clans and clusters in DCA}. As an example we will demonstrate the proof of (iii).

Let $\Gamma\in T_{A}$. Then $\Gamma$ is a cluster in $A$, so  $\Gamma$ is a t-clan in $A$. By (i) $\rho_{A_{+}}(\Gamma)$ is a  t-clan in  $(A_{+})^{+}$. We will show that $\rho_{A_{+}}(\Gamma)$ is a  cluster in  $(A_{+})^{+}$. Suppose that for some $a\in B_{A}$, $g_{A}(a)\not\in \rho_{A_{+}}(\Gamma)$. Then $\Gamma\not\in g_{A}(a)$, so $a\not\in \Gamma$. Then there exists $b\in B_{A}$ such that $ b\in \Gamma$ and $a\overline{C}_{A}b$. Then $g_{A}(b)\in \rho_{A_{+}}(\Gamma)$ and  $g_{A}(a)\cap g_{A}(b)=\varnothing$, so $g_{A}(a)\overline{C^{t}}_{A_{+}}g_{A}(b)$. Note that (iii) verifies the DMS axiom (S7) for $A_{+}$.
\end{proof}

\begin{lemma}\label{Lemma S7}
 Let $\Gamma,\Delta$ be t-clans in $A$. Then: $\Gamma\prec_{A}\Delta$ iff for all $g_{A}(a),
 \newline g_{A}(b)\in M_{A}$: if $\Gamma\in g_{A}(a)$ and $\Delta\in g_{A}(b)$,  then $g_{A}(a)\mathcal{B}_{(A_{+})^{+}}g_{A}(b)$.
\end{lemma}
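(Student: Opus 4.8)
The plan is to unfold both sides to the defining condition (9) of $\prec_A$ between t-clans and to the clause defining $\mathcal{B}_S$ for a DMS (Definition \ref{dynamic mereotopological space}), after which the equivalence becomes immediate. Recall that in the canonical DMS $A_+$ we have $X^{t}_{A}=$t-$Clans(A)$, that its before-after relation is $\prec_A$, and that $(A_+)^+$ carries $M_A$ with the relation $\mathcal{B}_{A_+}$ restricted to $M_A$. Hence for $g_A(a),g_A(b)\in M_A$ the statement $g_A(a)\mathcal{B}_{(A_+)^+}g_A(b)$ unfolds, via the DMS definition of precedence, to: there exist t-clans $x,y$ with $x\prec_A y$, $x\in g_A(a)$ and $y\in g_A(b)$, that is, with $a\in x$, $b\in y$ and $x\prec_A y$.

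For the forward direction I would assume $\Gamma\prec_A\Delta$ and take arbitrary $g_A(a),g_A(b)\in M_A$ with $\Gamma\in g_A(a)$ and $\Delta\in g_A(b)$, i.e. $a\in\Gamma$ and $b\in\Delta$. Then $\Gamma$ and $\Delta$ themselves serve as the witnessing t-clans $x,y$ in the unfolded form of $\mathcal{B}_{(A_+)^+}$, so $g_A(a)\mathcal{B}_{(A_+)^+}g_A(b)$ holds at once.

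For the converse I would assume the right-hand side and verify $\Gamma\prec_A\Delta$ by checking its definition (9): given $a\in\Gamma$ and $b\in\Delta$, I must show $a\mathcal{B}_A b$. Since $\Gamma\in g_A(a)$ and $\Delta\in g_A(b)$, the hypothesis yields $g_A(a)\mathcal{B}_{(A_+)^+}g_A(b)$, so there are t-clans $x,y$ with $a\in x$, $b\in y$ and $x\prec_A y$. Applying (9) to $x\prec_A y$ together with $a\in x$ and $b\in y$ gives $a\mathcal{B}_A b$, as required.

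The only point requiring care — and the closest thing to an obstacle — is the bookkeeping: correctly translating $\mathcal{B}_{(A_+)^+}$ through the DMS definition of precedence and keeping straight that the membership $\Gamma\in g_A(a)$ is the same as $a\in\Gamma$. Once these translations are set up, no further work is needed: the forward direction uses $\Gamma,\Delta$ directly as witnesses, and the converse is an immediate instance of definition (9). I note that this lemma is exactly the verification of axiom (S7) for the canonical DMS $A_+$.
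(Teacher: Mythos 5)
Your proposal is correct and follows essentially the same route as the paper: the forward direction uses $\Gamma,\Delta$ themselves as the witnessing t-clans, and the converse unfolds $g_A(a)\mathcal{B}_{(A_+)^+}g_A(b)$ to obtain witnessing t-clans $x,y$ (the paper calls them $\Gamma',\Delta'$) and then applies definition (9) to $x\prec_A y$ to conclude $a\mathcal{B}_A b$. No gaps.
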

\begin{proof}

 Let $\Gamma,\Delta$ be t-clans in $A$. Having in mind the relevant definitions the implication from left to the right is obvious. For the converse implication suppose that

\medskip

($\sharp$) For all $g_{A}(a), g_{A}(b)\in M_{A}$, the conditions $\Gamma\in g_{A}(a)$ and $\Delta\in g_{A}(b)$ imply $g_{A}(a)\mathcal{B}_{(A_{+})^{+}}g_{A}(b)$

\medskip

\noindent and proceed to show  $\Gamma\prec_{A}\Delta$. By (9) this means that for some $a\in \Gamma$ and $b\in \Delta$ we have $a\mathcal{B}_{A}b$. To this end suppose $a\in \Gamma$ and $b\in \Delta$. Then $\Gamma\in g_{A}(a)$ and $\Delta\in g_{A}(b)$. By ($\sharp$) we get $g_{A}(a)\mathcal{B}_{(A_{+})^{+}}g_{A}(b)$ which by the definition of $\mathcal{B}_{(A_{+})^{+}}$ means that for some t-clans $\Gamma',\Delta'$ in $A$ we have $\Gamma'\in g_{A}(a)$, $\Delta'\in g_{A}(b)$ and $\Gamma'\prec_{A}\Delta'$. This implies $a\in \Gamma'$ and $b\in \Delta'$ and by the definition of $\Gamma'\prec_{A}\Delta'$ (see (9)) that $a\mathcal{B}_{A}b$ - end of the proof. Note that this lemma verifies the DMS axiom (S7) for $A_{+}$.
\end{proof}

\begin{lemma} \label{corelation between clans in DCA  and DCA ++}
Let $A$ be a DCA and $\Gamma\subseteq M_{A}$. Define $\widehat{\Gamma}=_{def}\{a\in B_{A}: g_{A}(a)\in \Gamma\}$. Then the following conditions are true:

(i) If $\Gamma$ is a t-clan in $(A_{+})^{+}$, then
$\widehat{\Gamma}$  is a  t-clan
in $A$ and $\rho_{A_{+}}(\widehat{\Gamma})=\Gamma$.

(ii)  If $\Gamma$ is an s-clan in $(A_{+})^{+}$, then
$\widehat{\Gamma}$ is an s-clan in $A$ and $\rho_{A_{+}}(\widehat{\Gamma})=\Gamma$.

(iii) If $\Gamma$ is a cluster in $(A_{+})^{+}$, then
$\widehat{\Gamma}$  is a cluster
in $A$ and $\rho_{A_{+}}(\widehat{\Gamma})=\Gamma$.
\end{lemma}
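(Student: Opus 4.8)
The plan is to exploit Lemma \ref{topoisomorphism lemma for DCA}, which tells us that $g_{A}$ is a DCA-isomorphism of $A$ onto $(A_{+})^{+}$. Under this isomorphism the operator $\Gamma \mapsto \widehat{\Gamma}$ is precisely the inverse image $g_{A}^{-1}$: indeed $\widehat{\Gamma} = \{a \in B_{A} : g_{A}(a) \in \Gamma\} = g_{A}^{-1}(\Gamma)$, and since $g_{A} : B_{A} \to M_{A}$ is a bijection, every element of $M_{A}$ is $g_{A}(a)$ for a unique $a \in B_{A}$. First I would record these observations, since all three parts rest on them.

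The core point is that the four notions involved --- clan, s-clan, t-clan and cluster (Definitions \ref{definition of clan} and \ref{def-clusters}, together with the distinction between $s$-clans and $t$-clans from Section \ref{Section Facts about ultrafilters, clans and clusters in DCA}) --- are defined by conditions in the Boolean operations and the relations $C^{s}$, $C^{t}$ alone, and hence are both preserved and reflected by the isomorphism $g_{A}$. Concretely, for (i) I would verify the four clauses of Definition \ref{definition of clan} for $C^{t}$: the closure clauses transfer because $g_{A}$ preserves $0,1,\leq$ and $+$; and the contact clause transfers because $a,b \in \widehat{\Gamma}$ means $g_{A}(a), g_{A}(b) \in \Gamma$, so $g_{A}(a) C^{t}_{A_{+}} g_{A}(b)$, and reflecting the contact through $g_{A}$ gives $a C^{t}_{A} b$. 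Part (ii) is identical with $C^{t}$ replaced by $C^{s}$. For (iii) one adds the cluster clause of Definition \ref{def-clusters}: if $a \notin \widehat{\Gamma}$ then $g_{A}(a) \notin \Gamma$, so since $\Gamma$ is a cluster there is $y \in \Gamma$ with $g_{A}(a) \overline{C}^{t}_{A_{+}} y$; surjectivity of $g_{A}$ lets us write $y = g_{A}(b)$, and reflecting yields $b \in \widehat{\Gamma}$ with $a \overline{C}^{t}_{A} b$, as required.

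The remaining identity $\rho_{A_{+}}(\widehat{\Gamma}) = \Gamma$ is then immediate from the definitions: $\rho_{A_{+}}(\widehat{\Gamma}) = \{g_{A}(a) \in M_{A} : a \in \widehat{\Gamma}\}$, while $a \in \widehat{\Gamma}$ iff $g_{A}(a) \in \Gamma$, so $\rho_{A_{+}}(\widehat{\Gamma}) = \{g_{A}(a) : g_{A}(a) \in \Gamma\} = \Gamma$, using $\Gamma \subseteq M_{A}$. I expect no genuine difficulty here; the only step needing a moment's care is the cluster clause in (iii), where one must use that $g_{A}$ is \emph{onto} (not merely an embedding) in order to realise the witness in $\Gamma$ as some $g_{A}(b)$ --- and this surjectivity is exactly what Lemma \ref{topoisomorphism lemma for DCA} supplies. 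Everything else is a routine transport of algebraically definable properties across the isomorphism.
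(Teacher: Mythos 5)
Your proposal is correct and follows essentially the same route as the paper: verify the grill clauses, the contact clause, the cluster clause, and then the identity $\rho_{A_{+}}(\widehat{\Gamma})=\Gamma$ by the same chain of equivalences. The only organizational difference is that you obtain the reflection of $C^{t}$ and $C^{s}$ (and the realisation of the cluster witness as some $g_{A}(b)$) by citing the isomorphism of Lemma \ref{topoisomorphism lemma for DCA}, whereas the paper re-derives it on the spot by unfolding the topological definitions of $C^{t}_{(A_{+})^{+}}$ and $C^{s}_{(A_{+})^{+}}$ as non-empty intersections and invoking Lemma \ref{Clan characterization of basic relations}; both amount to the same underlying fact, and your packaging is a legitimate, slightly cleaner shortcut.
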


\begin{proof} (i) Let $\Gamma$ be a t-clan in $(A_{+})^{+}$. The verification of grill properties of $\widehat{\Gamma}$ is easy. Let us prove the t-clan property. Suppose $a,b\in \widehat{\Gamma}$. Then $g_{A}(a),g_{A}(b)\in \Gamma$. Then $(g_{A}(a))C^{t}_{(A_{+})^{+}}((g_{A}(b))$. By the definition of $C^{t}_{(A_{+})^{+}}$ we have $(g_{A}(a))\cap((g_{A}(b))\not=\varnothing$. So there exists   $\Gamma\in t-Clans(A)$ such that  $a,b\in \Gamma$, which implies $aC^{t}_{A}b$.

Let us show the equality  $\rho_{A_{+}}(\widehat{\Gamma})=\Gamma$.  The following sequence of equivalencies proves this:

$g_{A}(a)\in \rho_{A_{+}}(\widehat{\Gamma})$ iff $\widehat{\Gamma}\in g_{A}(a)$ iff $a\in\{b\in B_{A}: g_{A}(b)\in \Gamma\}$ iff $g_{A}(a)\in \Gamma$.

(ii) Let  $\Gamma$ be an s-clan in $(A_{+})^{+}$. We will show the s-clan property of $\widehat{\Gamma}$. Suppose that $a,b\in \widehat{\Gamma}$. Then $(g_{A}(a))C^{s}_{(A_{+})^{+}}((g_{A}(b))$. By the definition of $C^{s}_{(A_{+})^{+}}$, there exists $\Gamma\in s-Clans(A)$ such that $\Gamma\in g_{A}(a)\cap g_{A}(b)$. This implies $a, b\in \Gamma$ and consequently $aC^{s}b$. The proof of $\rho_{A_{+}}(\widehat{\Gamma})=\Gamma$ is as in (i).

(iii) Let $\Gamma$  a cluster in $(A_{+})^{+}$. So by it is a t-clan in $(A_{+})^{+}$  and by (i) $\widehat{\Gamma}$ is a t-clan in $A$. We will show that $\widehat{\Gamma}$ is a cluster in $A$.  Suppose $a\not in \widehat{\Gamma}$. Then $g_{A}(a)\not \in \Gamma$, hence there exists $g_{A}(b)\in M_{A}$ such that $g_{A}(b)\in \Gamma$ and $g_{A}(a)\overline{C^{t}}_{(A_{+})^{+}}g_{A}(b)$. This implies $b\in \widehat{\Gamma}$ and   $g_{A}(a)\cap g_{A}(b)=\varnothing$ which gives $a\overline{C^{t}}_{A}b$. This shows the cluster property of $\widehat{\Gamma}$. The proof of $\rho_{A_{+}}(\widehat{\Gamma})=\Gamma$ is as in (i).
\end{proof}

\begin{lemma}\label{Lemma S4} Let $A$ be a DCA, $A_{+}=(X^{t}_{A}, X^{s}_{A}, T_{A}, \prec_{A}, M_{A})$ be its dual space, $\alpha\in RC(X^{t}_{A})$ and  $\alpha\cap X^{t}_{A_{+}} \not=\varnothing$. Then $\alpha\cap X^{s}_{A_{+}} \not=\varnothing$.
\end{lemma}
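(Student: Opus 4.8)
The plan is to recognize this statement as axiom (S4) for the canonical space $A_{+}$. Since $X^{t}_{A_{+}}=X^{t}_{A}$ is the whole carrier of the topological space and $\alpha\subseteq X^{t}_{A}$, the hypothesis $\alpha\cap X^{t}_{A_{+}}\not=\varnothing$ is merely $\alpha\not=\varnothing$, and the conclusion $\alpha\cap X^{s}_{A_{+}}\not=\varnothing$ asks for a point of $X^{s}_{A}=$ s-Clans$(A)$ inside $\alpha$. The strategy will be to exhibit an \emph{ultrafilter} lying in $\alpha$, exploiting the fact that every ultrafilter is simultaneously a t-clan (hence a point of $X^{t}_{A}$) and an s-clan (hence a point of $X^{s}_{A}$).

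First I would pass to the interior. Since $\alpha$ is regular closed, $\alpha=Cl(Int(\alpha))$, so $\alpha\not=\varnothing$ forces $Int(\alpha)\not=\varnothing$. Next I use that $M_{A}=\{g_{A}(a):a\in B_{A}\}$ is a closed base of the topology of $X^{t}_{A}$ which, by $g_{A}(a+b)=g_{A}(a)\cup g_{A}(b)$ (Lemma \ref{properties of g}(ii)), is closed under finite unions; consequently the complements $X^{t}_{A}\setminus g_{A}(a)$ form a base of the open sets. Hence the nonempty open set $Int(\alpha)$ contains a nonempty basic open set, i.e. there is $a_{0}\in B_{A}$ with $\varnothing\not=X^{t}_{A}\setminus g_{A}(a_{0})\subseteq Int(\alpha)\subseteq\alpha$.

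Finally I would populate this basic open set with an ultrafilter. Nonemptiness of $X^{t}_{A}\setminus g_{A}(a_{0})$ gives $g_{A}(a_{0})\not=X^{t}_{A}$, so $a_{0}\not=1$ (Lemma \ref{properties of g}(iv)), i.e. $a_{0}^{*}\not=0$. Then $[a_{0}^{*})$ is a proper filter, which by the Extension Lemma (Lemma \ref{separation lemma for filters and ideals}(ii)) extends to an ultrafilter $U$ with $a_{0}^{*}\in U$, whence $a_{0}\notin U$. Since $U$ is an ultrafilter it is a clan for each contact, hence both a t-clan and an s-clan (Lemma \ref{clan-lemma1}(i)); thus $U$ is a point of $X^{t}_{A}$ and of $X^{s}_{A}$, and $a_{0}\notin U$ means $U\in X^{t}_{A}\setminus g_{A}(a_{0})\subseteq\alpha$. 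Therefore $U\in\alpha\cap X^{s}_{A}$, which is what is required.

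The only genuinely delicate point — and the step I expect to be the conceptual heart — is the appeal to \emph{regular} closedness to guarantee $Int(\alpha)\not=\varnothing$. A merely nonempty closed set need not meet $X^{s}_{A}$, because boundary t-clans may fail to be s-clans (indeed, as noted after Theorem \ref{topological-representation-of-contact-algebras}, boundary points carry no ultrafilter point); so it is essential to descend into the interior and there refine to an ultrafilter, rather than attempting to use an arbitrary point of $\alpha$.
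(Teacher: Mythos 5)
Your proof is correct, but it follows a genuinely different route from the paper's. The paper picks an arbitrary point $\Gamma\in\alpha$, forms the canonical filter $F_{\alpha}=\{g_{A}(a): \alpha\subseteq g_{A}(a)\}$, observes $F_{\alpha}\subseteq\rho_{A_{+}}(\Gamma)$ by Lemma \ref{canonical filter lema}(ii), extends $F_{\alpha}$ to an ultrafilter $U$ of $M_{A}$ \emph{inside} the t-clan $\rho_{A_{+}}(\Gamma)$ (using that the complement of a clan is an ideal), and then pulls $U$ back via Lemma \ref{corelation between clans in DCA and DCA ++}(ii) to an s-clan $\widehat{U}$ of $A$ lying in $\alpha$. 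You instead argue purely topologically: regular closedness gives $Int(\alpha)\not=\varnothing$, the complements of the closed base $M_{A}$ give a nonempty basic open set $X^{t}_{A}\setminus g_{A}(a_{0})\subseteq\alpha$, and an ultrafilter containing $a_{0}^{*}$ is a point of that set which is simultaneously an s-clan. Your version is more elementary -- it bypasses canonical filters and the clan-correlation machinery entirely, and it makes visible exactly where regular closedness is used (note that in the paper's route the same hypothesis enters more covertly, in the proof that $F_{\alpha}$ is a \emph{filter}: $\alpha\subseteq a$ and $\alpha\subseteq b$ only yield $\alpha\subseteq a\cdot b=Cl(Int(a\cap b))$ because $\alpha=Cl(Int(\alpha))$). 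What the paper's argument buys in exchange is a per-point refinement: for \emph{every} $\Gamma\in\alpha$ it produces an s-clan $\widehat{U}\subseteq\Gamma$ contained in $\alpha$, which is essentially the density statement of Lemma \ref{X-s is a dense subspace}, whereas your ultrafilter is only guaranteed to sit in $Int(\alpha)$. Your closing remark about why one must descend to the interior is sound and consistent with the paper's observation that boundary points of $g(a)$ contain no ultrafilter points.
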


\begin{proof} Suppose $\alpha\in RC(X^{t}_{A})$ and $\alpha\cap X^{t}_{A} \not=\varnothing$. (remind that $X^{t}_{A}=t-Clans(A)$ and  $X^{s}_{A}=s-Clans(A)$). Then there exists $\Gamma\in X^{t}_{A}$ such that $\Gamma\in \alpha$. Let $F_{\alpha}$ be the canonical filter of $\alpha$ (see Section \ref{canonical filter lema}). Then by Lemma \ref{canonical filter lema} (ii)  we have $F_{\alpha}\subseteq \rho_{A_{+}}(\Gamma)$. By Lemma \ref{ro(a)} $\rho_{A_{+}}(\Gamma)$ is a t-clan in $(A_{+})^{+}$. Then there exists an ultrafilter $U$ in $(A_{+})^{+}$ such that $F_{\alpha}\subseteq U\subseteq\rho_{A_{+}}(\Gamma)$. $U$ is both a t-clan and an s-clan in $(A_{+})^{+}$. By  Lemma \ref{corelation between clans in DCA  and DCA ++} (ii) there exists an s-clan $\widehat{U}$ such that $U=\rho_{A_{+}}(\widehat{U})$. So, we have $F_{\alpha}\subseteq\rho_{A_{+}}(\widehat{U})$  and $\widehat{U}\in  X^{s}_{A}$. Again by Lemma \ref{canonical filter lema} (ii) we get $\widehat{U}\in \alpha$ and consequently $\alpha \cap  X^{s}_{A}\not=\varnothing$.
\end{proof}
Note that we have used Lemma \ref{canonical filter lema} which presupposes DM-compactness. But as it was mentioned after the proof of this lemma condition (ii) which we used does not depend on DM-compactness. Note also that the above lemma verifies the   DMS axiom (S4) for $A_{+}$.

\begin{lemma} $A_{+}$ is a DMS.

\end{lemma}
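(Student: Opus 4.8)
The plan is to verify in turn each of the eight axioms (S1)--(S8) of Definition \ref{dynamic mereotopological space} for the system $A_{+}=(X^{t}_{A}, X^{s}_{A}, T_{A}, \prec_{A}, M_{A})$; almost all of them have already been prepared in the preceding lemmas, so the proof is largely a matter of collecting them in the right order. First I would dispose of the purely structural axioms. Axiom (S1) holds because $X^{t}_{A}=$t-$Clans(A)$ is non-empty (every ultrafilter is a t-clan) and, by the topological representation theory of contact algebras (Section \ref{Section topological representation of CA clans}), the family $\{g_{A}(a):a\in B_{A}\}$ is a well-defined closed base that topologizes $X^{t}_{A}$. Axiom (S2) is exactly the statement, recorded just after the definition of $M_{A}$, that $g_{A}$ embeds $B_{A}$ isomorphically into $RC(X^{t}_{A})$ with image the subalgebra $M_{A}$, which by construction is a closed base of the topology. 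Axiom (S3) follows from the inclusions $Ult(A)\subseteq$ s-$Clans(A)\subseteq$ t-$Clans(A)$ and $Clusters(A)\subseteq$ t-$Clans(A)$ established in Section \ref{Section Facts about ultrafilters, clans and clusters in DCA}, together with the non-emptiness of each layer; and (S5) is immediate, since $\prec_{A}$ is defined as a binary relation on t-clans by (9).

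Next I would treat the three axioms that carry real content, each of which is a lemma already proved for $A_{+}$. Axiom (S4) is precisely Lemma \ref{Lemma S4}: for $\alpha\in RC(X^{t}_{A})$ with $\alpha\neq\varnothing$ (equivalently $\alpha\cap X^{t}_{A}\neq\varnothing$) one has $\alpha\cap X^{s}_{A}\neq\varnothing$. Axiom (S7) is Lemma \ref{Lemma S7}, which states exactly that $\Gamma\prec_{A}\Delta$ holds iff $g_{A}(a)\,\mathcal{B}_{(A_{+})^{+}}\,g_{A}(b)$ whenever $\Gamma\in g_{A}(a)$ and $\Delta\in g_{A}(b)$; reading the t-clans $\Gamma,\Delta$ as partial time points $x,y$ this is the required characterization of $\prec_{A}$ through $M_{A}$. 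Axiom (S8) is Lemma \ref{ro(a)}(iii), asserting that $\rho_{A_{+}}(\Gamma)$ is a cluster in $(A_{+})^{+}$ for every $\Gamma\in T_{A}=Clusters(A)$.

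The axiom that requires the most care, and which I regard as the main point, is (S6): the dual algebra $S^{+}$ of $S=A_{+}$, namely $(A_{+})^{+}=(M_{A}, C^{t}_{A_{+}}, C^{s}_{A_{+}}, \mathcal{B}_{A_{+}})$, must itself be a DCA. Here the key step is to identify the three topologically defined relations on $RC(X^{t}_{A})$, restricted to $M_{A}$, with the abstract relations $C^{t}_{A}, C^{s}_{A}, \mathcal{B}_{A}$ transported along $g_{A}$; this is supplied by Lemma \ref{Clan characterization of basic relations}, and yields that $g_{A}$ is an isomorphism of $A$ onto $(A_{+})^{+}$, i.e. Lemma \ref{topoisomorphism lemma for DCA}. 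Since $A$ is a DCA and isomorphism preserves all the DCA axioms, $(A_{+})^{+}$ is a DCA, which gives (S6). The only remaining subtlety to flag is the harmless observation, already noted inside the Definitions block of Definition \ref{dynamic mereotopological space}, that $C^{s}_{A_{+}}$ and $C^{t}_{A_{+}}$ are genuine contact relations and $\mathcal{B}_{A_{+}}$ a genuine precontact relation on $RC(X^{t}_{A})$, the verification for $C^{s}_{A_{+}}$ invoking (S4). With (S1)--(S8) all in hand, $A_{+}$ is a DMS.
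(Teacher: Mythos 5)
Your proposal is correct and follows essentially the same route as the paper: the paper's proof likewise cites Lemma \ref{Lemma S4} for (S4), Lemma \ref{topoisomorphism lemma for DCA} for (S6), Lemma \ref{Lemma S7} for (S7) and Lemma \ref{ro(a)} for (S8), dismissing the remaining axioms as obvious. Your write-up merely spells out those ``obvious'' axioms in more detail, which is harmless.
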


\begin{proof} The proof follows from Lemma \ref{ro(a)}, Lemma \ref{Lemma S4}, Lemma \ref{Lemma S7} and Lemma \ref{topoisomorphism lemma for DCA} which establish the DMS axioms (S4), (S6), (S7) and (S8) for $A_{+}$. The other axioms are obviously true.
\end{proof}

The following theorem is important.
\begin{theorem} \label{DM-compactness of A+}  $A_{+}$ is T0 and DM-compact DMS.
\end{theorem}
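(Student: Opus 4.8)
The plan is to verify the two assertions separately, and in both cases the genuine work has already been carried out in the preceding lemmas, so the theorem is essentially a corollary. I would treat the $T0$ property first and DM-compactness second.

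For the $T0$ property, by Lemma \ref{pointclan}(v) it suffices to show that $\rho_{A_{+}}$ is injective on $X^{t}_{A}=$t-$Clans(A)$; that is, $\rho_{A_{+}}(\Gamma)=\rho_{A_{+}}(\Delta)$ should imply $\Gamma=\Delta$. Unwinding the definition gives $\rho_{A_{+}}(\Gamma)=\{g_{A}(a): a\in\Gamma\}$, and since $g_{A}$ is an isomorphic embedding of $B_{A}$ into $RC(X^{t}_{A})$ (recorded in the construction of $A_{+}$ and in Lemma \ref{topoisomorphism lemma for DCA}), the map $g_{A}$ is in particular injective. Hence $g_{A}(a)\in\rho_{A_{+}}(\Gamma)$ iff $a\in\Gamma$, so equality of the two sets forces $\Gamma$ and $\Delta$ to contain exactly the same elements of $B_{A}$, i.e. $\Gamma=\Delta$. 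By Lemma \ref{pointclan}(v) this yields that $A_{+}$ is $T0$.

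For DM-compactness, by Definition \ref{T0 and mereokompactness}(2) (equivalently, by the surjectivity criterion of Lemma \ref{DM-compactness is equivalent to surjectivity of ro}) I must show that every t-clan, every s-clan and every cluster of $(A_{+})^{+}$ is respectively a point t-clan, a point s-clan and a point cluster. This is exactly supplied by Lemma \ref{corelation between clans in DCA  and DCA ++}: given any t-clan $\Gamma$ of $(A_{+})^{+}$, part (i) yields that $\widehat{\Gamma}$ is a t-clan of $A$ (so $\widehat{\Gamma}\in X^{t}_{A}$) with $\rho_{A_{+}}(\widehat{\Gamma})=\Gamma$, exhibiting $\Gamma$ as a point t-clan. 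Applying parts (ii) and (iii) in the same way handles s-clans (with witness $\widehat{\Gamma}\in X^{s}_{A}=$s-$Clans(A)$) and clusters (with witness $\widehat{\Gamma}\in T_{A}=Clusters(A)$), so every clan of each type is a point clan.

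The step carrying the real content is the invocation of Lemma \ref{corelation between clans in DCA  and DCA ++}, where one checks that the pullback $\widehat{\Gamma}=\{a\in B_{A}: g_{A}(a)\in\Gamma\}$ of a clan in $(A_{+})^{+}$ is again a clan of the same type in $A$ and that $\rho_{A_{+}}$ returns it to $\Gamma$; this is precisely where the clan machinery of Section \ref{Section Facts about ultrafilters, clans and clusters in DCA} and the topological representation of contact algebras are used. Granting that lemma, the present theorem presents no further obstacle, since $T0$ reduces to injectivity of $g_{A}$ and DM-compactness reduces to surjectivity of $\rho_{A_{+}}$ onto the three families of clans. I expect the only point requiring a moment's care to be the bookkeeping that the witness $\widehat{\Gamma}$ lands in the correct one of the sets $X^{t}_{A}$, $X^{s}_{A}$, $T_{A}$ according to the type of clan, which is guaranteed by the three parts of Lemma \ref{corelation between clans in DCA  and DCA ++} matching the three clauses of Definition \ref{T0 and mereokompactness}.
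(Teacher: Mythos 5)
Your proposal is correct and follows essentially the same route as the paper: the $T0$ property is reduced via Lemma \ref{pointclan}(v) to injectivity of $\rho_{A_{+}}$, which both you and the paper derive from the fact that $g_{A}$ separates elements (the paper argues by contraposition, you argue directly, but the content is identical), and DM-compactness is obtained exactly as in the paper by applying the three parts of Lemma \ref{corelation between clans in DCA  and DCA ++} to exhibit $\widehat{\Gamma}$ as the required preimage point for each type of clan.
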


\begin{proof} By  Lemma \ref{pointclan}(v)  $A_{+}$ has  $T0$ property  iff for every two members $\Gamma,\Delta$ of  $X^{t}_{A}(=$t-$Clans(A)$) the following holds: if $\rho_{A_{+}}(\Gamma)=\rho_{A_{+}}(\Delta)$, then $\Gamma=\Delta$. Suppose $\rho_{A_{+}}(\Gamma)=\rho_{A_{+}}(\Delta)$ and for the sake of contradiction that $\Gamma\not=\Delta$, so $\Gamma\not\subseteq\Delta$ or $\Delta\not\subseteq\Gamma$. Considering the first case this means that there exists $a$ such that $a\in \Gamma$ and $a\not\in\Delta$.Then by Lemma \ref{ro(a)} $g_{A}(a)\in\rho_{A_{+}}(\Gamma)$ and $g_{A}(a)\not\in\rho_{A_{+}}(\Delta)$ which shows that $\rho_{A_{+}}(\Gamma)\not=\rho_{A_{+}}(\Delta)$ - a contradiction. In a similar way the second case also implies a contradiction.

 For DM-compactness we have to show the
following three things:

(i) Every t-clan $\Gamma$ of $(A_{+})^{+}$ is a point t-clan,

(ii) Every s-clan  of $(A_{+})^{+}$ is a point s-clan,

(iii) Every cluster of $(A_{+})^{+}$ is a point cluster.

Proof of (i). Let $\Gamma$ be a t-clan of $(A_{+})^{+}$. To show
that $\Gamma$ is a point t-clan we have to find $\Delta\in
X^{t}_{A}$ (= t-Clans(A)) such that $\Gamma=\rho_{A_{+}}(\Delta)$.
Let $\Delta=\widehat{\Gamma}=\{a\in B_{A}: g_{A}(a)\in \Gamma\}$. By Lemma \ref{corelation between clans in
DCA  and DCA ++} (i) $\widehat{\Gamma}$ is a t-clan in $A$ and hence it is in
$X^{t}_{A}$. More over we have $\rho_{A_{+}}(\widehat{\Gamma})=\Gamma$.

The proofs of (ii) and (iii) are similar by using Lemma \ref{corelation between clans in
DCA  and DCA ++} (ii) and (iii). \end{proof}

\begin{theorem} \label{topological representation theorem for DCA}
{\bf Topological representation theorem for DCA.}  Let  $A$ be a DCA.
 Then the following conditions for $A$ are  true:
%\begin{quote}

(i) $(A_{+})^{+}$ is a stable  subalgebra of the algebra $RC(A_{+})$.

(ii) The algebra $RC(A_{+})$ is a DCA.

(iii)  The function $g_{A}$ is a stable  isomorphic  embedding of
$A$ into $RC(A_{+})$.

(iv) If $\textbf{Ax}$ is a time axiom, then $\textbf{Ax}$ is true
in $A$ iff $\textbf{Ax}$ is true in $RC(A_{+})$.
%\end{quote}

\end{theorem}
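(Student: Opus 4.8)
The plan is to derive the whole theorem as a corollary of the structural facts already assembled, the crucial input being that the dual space $A_{+}$ is T0 and DM-compact (Theorem \ref{DM-compactness of A+}). Once DM-compactness of $A_{+}$ is in hand, each of the four clauses reduces to invoking one of the lifting/representation lemmas with the space $S$ instantiated as $A_{+}$, so that $S^{+}$ becomes $(A_{+})^{+}$ and $RC(S)$ becomes $RC(A_{+})$.

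First I would establish (i) and (ii) simultaneously. Since $A_{+}$ is DM-compact, Lemma \ref{stable extensions} applies verbatim with $S=A_{+}$: part (i) of that lemma yields that $(A_{+})^{+}$ is a stable Boolean subalgebra of $RC(A_{+})$, which is exactly clause (i), and part (ii) yields that $RC(A_{+})$ is a DCA, which is clause (ii). No new computation is required here; the content sits entirely in the canonical-filter analysis underlying Lemma \ref{stable extensions}.

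For (iii) I would combine the isomorphism lemma with the stability just obtained. By Lemma \ref{topoisomorphism lemma for DCA} the map $g_{A}$ is an isomorphism from $A$ onto $(A_{+})^{+}$. Regarding $g_{A}$ instead as a map into $RC(A_{+})$, its image is precisely $(A_{+})^{+}$, which by clause (i) is a stable subalgebra of $RC(A_{+})$. Since, by Definition \ref{lifting conditions}, an embedding is called stable exactly when its image is a stable subalgebra, it follows immediately that $g_{A}\colon A\to RC(A_{+})$ is a stable isomorphic embedding. The only point to check is that the lifting conditions (Dense), (Co-dense) and the three $C$-separation conditions are genuinely conditions on the image subalgebra and not on the particular embedding, which is clear from their statement.

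Finally, for (iv) I would chain two equivalences. Each time axiom is an algebraic sentence in the signature of DCA, hence preserved and reflected by isomorphisms; thus Lemma \ref{topoisomorphism lemma for DCA} (giving $A\cong (A_{+})^{+}$) shows that $\textbf{Ax}$ holds in $A$ iff it holds in $(A_{+})^{+}$. Applying Lemma \ref{Lemma time axioms in DMS} with $S=A_{+}$ shows that $\textbf{Ax}$ holds in $(A_{+})^{+}=S^{+}$ iff it holds in $RC(A_{+})=RC(S)$. Composing the two equivalences delivers clause (iv). I do not expect a genuine obstacle in this proof: all the difficulty has already been discharged in establishing DM-compactness of $A_{+}$ (Theorem \ref{DM-compactness of A+}) and the canonical-filter machinery behind Lemma \ref{stable extensions}; the theorem itself is a synthesis of these results, and the only mild care needed is the observation, noted above, that stability of an embedding depends only on its image.
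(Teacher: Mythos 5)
Your proposal is correct and follows essentially the same route as the paper: clauses (i) and (ii) from Theorem \ref{DM-compactness of A+} together with Lemma \ref{stable extensions}, clause (iii) from Lemma \ref{topoisomorphism lemma for DCA} combined with (i), and clause (iv) from Lemma \ref{Lemma time axioms in DMS} applied to $S=A_{+}$. Your explicit remark that stability of an embedding depends only on its image, and your explicit use of the isomorphism $A\cong (A_{+})^{+}$ to transfer time axioms in (iv), merely spell out steps the paper leaves implicit.
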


\begin{proof} (i) By Theorem \ref{DM-compactness of A+}
$A_{+}$ is a DM-compact DMS and hence by Lemma \ref{stable
extensions} (i) $(A_{+})^{+}$ is a stable Boolean subalgebra of
$RC(A_{+})$.

(ii) follows from (i) and Lemma \ref{stable
extensions} (ii).

(iii) By Lemma \ref{topoisomorphism lemma for DCA} $g_{A}$ is an
isomorphism from $A$ onto $(A_{+})^{+}$ and hence by (i) $g_{A}$
is a stable  isomorphic embedding of $A$ into $RC(A_{+})$.

(iv) follows from Lemma \ref{DM-compactness of A+} and Lemma \ref{Lemma time axioms in DMS}.
\end{proof}

%%%%%%%%%%%%%%%%%%%%%%%%%%%%%%%%%%%%%%%%%%%%%%%%%%%%%%%%%%%%%%%%%%%%%%%%%%%%%%%%%%%
\subsection{Contact algebra as a special case of dynamic contact algebra}\label{Section CA as DCA}
%%%%%%%%%%%%%%%%%%%%%%%%%%%%%%%%%%%%%%%%%%%%%%%%%%%%%%%%%%%%%%%%%%%%%%%%%%%%%%%%%%%%5

Let $A=(B_{A},C_{A})$ be a contact algebra. By Lemma \ref{DCA as a generalization of CA} $A$ can be considered as a DCA algebra on the base of the following definable relations:  $a,b\in B_{A}$:

(1) $aC^{t}_{A}b\Leftrightarrow_{def}a C^{max}_{A}b\Leftrightarrow a\not=0$ and $b\not=0$.

(2)  $a\mathcal{B}_{A}b\Leftrightarrow_{def}aC_{A}^{t}b$.

(3) $aC^{s}_{A}b\Leftrightarrow_{def}aC_{A}b$.

Let $A=(B_{A}, C^{s}_{A}, C^{t}_{A}, \mathcal{B}_{A})$ be a DCA which satisfies the above conditions. Then it is obviously equivalent to the contact algebra $(B_{A},C_{A})$. Condition (3) is just giving another name of $C^{s}_{A}$, and conditions (1) and (2) can be relaxed correspondingly to the following:

$(1')$ If $a\not=0$ and $b\not=0$, then $aC^{t}_{A}b$,

$(2')$ If $a\not=0$ and $b\not=0$, then $a\mathcal{B}_{A}b$.

Obviously $(1')$ implies (1) and $(2')$ implies (2). Hence if a DCA satisfies $(1')$ and $(2')$, t5hen it is equivalent to the contact algebra $(B_{A}, C^{s})$. Condition (1) then makes t-clans to coincide with grills, and in this case to have  only one cluster, denote it by $t_{0}$ (the only time point of $A$ which is just the union of all ultrafilters in $A$). Condition (2) implies that $\mathcal{B}_{A}=C^{t}_{A}$ which makes the relation $\prec_{A}$ to be  the universal relation between grills and especially for $t_{0}$ to have $t_{0}\prec_{A}t_{0}$. This suggests the following formal definition.

\begin{definition}\label{Definition trivial DCA}  We say that  $A$ is a \textbf{trivial DCA} if it satisfies the conditions $(1')$ and $(2')$.
\end{definition}

Thus for the  dual space $A_{+}$  of a trivial DCA we have  that $T_{A}=\{t_{0}\}$ is a singleton set and that $t_{0}$ is the only time point of $A$. This suggests to consider this as a characteristic property of  a DMS  corresponding in some sense to a trivial DCA and to adopt the following formal definition.

\begin{definition}\label{Definition trivial  DMS} We say that $S$ is a trivial DMS  if the set $T_{S}=\{t_{0}\}$ is a singleton with a single time point $t_{0}$ and $t_{0}\prec_{S}t_{0}$

\end{definition}

\begin{lemma}\label{Lemma equivalence for trivial DMS} Let $S$ be a $T0$ and DM-compact space. Then the following two conditions are equivalent:

(i) $S$ is trivial DMS.

(ii) The dual algebra $S^{+}$ is a trivial DCA.
\end{lemma}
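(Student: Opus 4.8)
The plan is to route everything through the one–one correspondence between the time points of $S$ and the clusters of $S^{+}$ that is available under the standing hypotheses. First I would record that, since $S$ is $T0$ and DM-compact, Corollary \ref{T0 and DM-compactness together} together with Lemma \ref{DM-compactness is equivalent to surjectivity of ro} makes $\rho_{S}$ restrict to a bijection from $T_{S}$ onto $Clusters(S^{+})$. Consequently $T_{S}$ is a singleton if and only if $S^{+}$ has exactly one cluster, and, writing $t_{0}$ for the unique time point and $\Gamma_{0}=\rho_{S}(t_{0})$ for the corresponding cluster, Lemma \ref{pointclan}(iv) (which holds because (S7) is an axiom of the DMS $S$) gives $t_{0}\prec_{S}t_{0}$ iff $\Gamma_{0}\prec_{S^{+}}\Gamma_{0}$. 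This reduces the whole statement to two purely algebraic equivalences about $S^{+}$.

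The first equivalence I would prove is: $S^{+}$ has exactly one cluster iff condition $(1')$ holds for $S^{+}$. Since $C^{t}_{S^{+}}$ is a contact relation, axiom (C1) gives $aC^{t}_{S^{+}}b\Rightarrow a\neq 0$ and $b\neq 0$, so $(1')$ is precisely the assertion that $C^{t}_{S^{+}}$ is the maximal contact $C^{max}$. For the direction $(1')\Rightarrow$ one cluster I would quote Remark \ref{Remark Cmax-cluster}, which states that the unique $C_{max}$-cluster is the maximal grill. For the converse I would argue that every ultrafilter of $S^{+}$ is contained in a unique cluster by Lemma \ref{clusters-are-maximal-clans}; if there is only one cluster $\Gamma_{0}$, then $\Gamma_{0}$ contains every ultrafilter, hence contains the maximal grill, and being a clan (hence a grill) it must equal the maximal grill $\{a:a\neq 0\}$. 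The cluster characterization of contact (Lemma \ref{cluster-lemma}, applicable since $C^{t}_{S^{+}}$ satisfies $(C^{t}E)$) then yields $aC^{t}_{S^{+}}b$ iff $a,b\in\Gamma_{0}$ iff $a\neq 0$ and $b\neq 0$, i.e.\ $(1')$.

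The second equivalence is: once $S^{+}$ has the single cluster $\Gamma_{0}=\{a:a\neq 0\}$, one has $\Gamma_{0}\prec_{S^{+}}\Gamma_{0}$ iff $(2')$ holds. This is immediate from the definition (9) of $\prec$ on t-clans: $\Gamma_{0}\prec_{S^{+}}\Gamma_{0}$ means $a\mathcal{B}_{S^{+}}b$ for all $a,b\in\Gamma_{0}$, and since membership $a\in\Gamma_{0}$ is the same as $a\neq 0$, this is literally condition $(2')$.

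Assembling the pieces gives both directions. For $(i)\Rightarrow(ii)$: triviality of $S$ makes $T_{S}$ a singleton, hence $S^{+}$ has one cluster and $(1')$ holds, while $t_{0}\prec_{S}t_{0}$ transfers through Lemma \ref{pointclan}(iv) to $\Gamma_{0}\prec_{S^{+}}\Gamma_{0}$, giving $(2')$. For $(ii)\Rightarrow(i)$: $(1')$ forces a unique cluster, so $T_{S}=\{t_{0}\}$ is a singleton, and $(2')$ yields $\Gamma_{0}\prec_{S^{+}}\Gamma_{0}$, which transfers back to $t_{0}\prec_{S}t_{0}$; thus $S$ is a trivial DMS. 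The step requiring the most care is the converse half of the first equivalence—identifying the unique cluster with the maximal grill $\{a:a\neq 0\}$—since this is what lets me read off $C^{t}_{S^{+}}=C^{max}$ from the mere existence of a single cluster; everything else is a direct transcription of the relevant definitions and of Lemma \ref{pointclan}(iv).
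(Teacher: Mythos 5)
Your proof is correct, and it takes a somewhat different route from the paper's. The paper's argument for (i)$\Rightarrow$(ii) works at the point level: for nonempty $\alpha,\beta\in RC(X^{t}_{S})$ it passes through the canonical filters $F_{\alpha},F_{\beta}$ of Lemma \ref{canonical filter lema}, extends the t-clans $\rho_{S}(x),\rho_{S}(y)$ to clusters, and uses uniqueness of the cluster to produce a common point $z\in T_{S}$ witnessing both $\alpha C^{t}_{S}\beta$ and, via $z\prec_{S}z$, $\alpha\mathcal{B}_{S}\beta$; for (ii)$\Rightarrow$(i) it only verifies that $T_{S}$ is a singleton. You instead lift the whole statement to the cluster level once and for all, using the bijection $\rho_{S}\colon T_{S}\to Clusters(S^{+})$ furnished by Corollary \ref{T0 and DM-compactness together} and Lemma \ref{DM-compactness is equivalent to surjectivity of ro}, together with Lemma \ref{pointclan}(iv), and then settle two purely algebraic equivalences inside $S^{+}$: ``exactly one cluster iff $(1')$'' (Remark \ref{Remark Cmax-cluster} in one direction; the identification of the unique cluster with the maximal grill $\{a:a\not=0\}$ plus Lemma \ref{cluster-lemma} in the other), and ``$\Gamma_{0}\prec_{S^{+}}\Gamma_{0}$ iff $(2')$'' (immediate from definition (9)). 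This buys you two things: you avoid canonical filters entirely, and --- unlike the paper's proof of (ii)$\Rightarrow$(i), which never checks the clause $t_{0}\prec_{S}t_{0}$ required by the definition of a trivial DMS --- you cover that clause explicitly by transporting $(2')$ back through Lemma \ref{pointclan}(iv). The step you flagged as delicate is indeed sound: by Lemma \ref{clusters-are-maximal-clans} every ultrafilter lies in the unique cluster $\Gamma_{0}$, so $\Gamma_{0}$ contains the union of all ultrafilters, i.e.\ every nonzero element, and being a clan it omits $0$; hence $\Gamma_{0}=\{a:a\not=0\}$ and Lemma \ref{cluster-lemma} yields $(1')$.
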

\begin{proof}. (i)$\Rightarrow$(ii). Suppose that $S$ is trivial DMS. First we will show that the DCA algebra $S^{+}$ has at most one cluster. Note that it has clusters. Let $\Gamma, \Delta$ be two clusters. By DM-compactness there is $x\in T_{S}$ such that $\rho_{S}(x)=\Gamma$ and $y\in T_{S}$ such that $\rho_{S}(y)=\Delta$. But  $T_{S}$ is a singleton, so $x=y$ which implies $\Gamma=\rho_{S}(x)=\rho_{S}(y)=\Delta$. So we have only one cluster, say $\Gamma_{0}$.

 In order to show (ii) it is sufficient that the following is true for arbitrary regular closed sets $\alpha,\beta \in RC(X^{t}_{S}$:

If $\alpha\not=\varnothing$ and $\beta\not=\varnothing$, then $\alpha C^{t}_{S} \beta$ and then $\alpha \mathcal{B}_{S} \beta$.

Suppose $\alpha\not=\varnothing$ and $\beta\not=\varnothing$, then there exist $x\in \alpha $ and $y\in \beta$. Now we will apply the properties of canonical filters (see Lemma \ref{canonical filter lema} from Section \ref{Section Canonical filters}).  Conditions  $x\in \alpha $ and $y\in \beta$ imply $F_{\alpha}\subseteq \rho_{S}(x)$ and $F_{\beta}\subseteq \rho_{S}(y)$. $\rho_{S}(x)$ and $\rho_{S}(y)$ are t-clans in $S^{+}$ and can be extended into clusters. But there is only one cluster $\Gamma_{0}=\rho_{S}(z)$ for some  $z\in T_{S}$. Hence $F_{\alpha}\subseteq\rho_{S}(z)$ and $F_{\beta}\subseteq\rho_{S}(z)$. Then by the properties of canonical filters we get $z\in \alpha $ and $z\in\beta$, so $\alpha\cap\beta\not=\varnothing$ which shows $\alpha C^{t}_{S}\beta$. Because $z$ is the only element of $T_{S}$ we have $z\prec_{S}z$ which also shows that $\alpha \mathcal{B}_{S}\beta$.

(ii)$\Rightarrow$(i) Let  $S^{+}$ be  a trivial DCA. We mentioned that the condition (1) makes t-clans to coincide with grills. Because there exists only one maximal grill - the union of all ultrafilters, then there exists only one cluster, say $\Gamma_{0}$. By DM-compactness there exists $x\in T_{S}$ such that $\rho_{S}(x)=\Gamma_{0}$. We will show that  $ T_{S}$ is a singleton. Suppose that $y\in T_{S}$. By axiom S8 of DMS $\rho_{S}(y)$ is a cluster an because we have only one cluster $\Gamma_{0}$ we have  $\rho_{S}(y)=\Gamma_{0}$. So $\rho_{S}(x)=\rho_{S}(y)$. Because $S$ is a $T0$ space this equality implies $x=y$.
\end{proof}

\begin{theorem}\label{Theorem, new representation CA} \textbf{New topological representation theorem for contact algebras. } Let $A=(B_{A},C_{A})$ be a contact algebra. Consider it as a trivial DCA. Then the following conditions are true.

(i) The regular set-algebra $RC(A_{+})$ is a trivial DCA.

(ii) The function $g_{A}$ is a stable  isomorphic  embedding of
$A$ into $RC(A_{+})$.

\end{theorem}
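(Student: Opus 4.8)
The plan is to obtain both parts directly from the general topological representation theorem for DCAs (Theorem \ref{topological representation theorem for DCA}) together with the triviality bookkeeping already set up for DM-spaces, the only genuine work being the transfer of the two triviality conditions to the full regular-sets algebra. As a preliminary I would record that $A$, equipped with the relations (1)--(3), is genuinely a DCA by Lemma \ref{DCA as a generalization of CA}, and moreover a \emph{trivial} one in the sense of Definition \ref{Definition trivial DCA}: by construction $aC^{t}_{A}b$ holds exactly when $a\not=0$ and $b\not=0$, so $(1')$ is immediate, and since $\mathcal{B}_{A}=C^{t}_{A}$ the condition $(2')$ coincides with $(1')$.

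For (ii) there is essentially nothing to prove beyond invoking the general theory: $A$ being a DCA, Theorem \ref{topological representation theorem for DCA}(iii) already asserts that $g_{A}$ is a stable isomorphic embedding of $A$ into $RC(A_{+})$.

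The substantive point is (i), that $RC(A_{+})$ is again trivial. I would argue as follows. By Lemma \ref{topoisomorphism lemma for DCA} the dual algebra $(A_{+})^{+}$ is isomorphic to $A$; since $(1')$ and $(2')$ are first-order conditions in the Boolean operations and the relations $C^{t},\mathcal{B}$, they are preserved by DCA-isomorphisms, so $(A_{+})^{+}$ is a trivial DCA. Next, Theorem \ref{DM-compactness of A+} gives that $A_{+}$ is $T0$ and DM-compact, whence by Theorem \ref{topological representation theorem for DCA}(i) (equivalently Lemma \ref{stable extensions}(i)) the algebra $(A_{+})^{+}$ is a \emph{stable} subalgebra of $RC(A_{+})$; in particular the condition (Dense) holds and the relations of $(A_{+})^{+}$ are restrictions of those of $RC(A_{+})$. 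It remains to lift $(1')$ and $(2')$ up to $RC(A_{+})$: given nonzero $\alpha,\beta\in RC(A_{+})$, (Dense) supplies nonzero $a,b\in (A_{+})^{+}$ with $a\leq\alpha$ and $b\leq\beta$; applying $(1')$ (resp. $(2')$) inside $(A_{+})^{+}$ yields $aC^{t}_{A_{+}}b$ (resp. $a\mathcal{B}_{A_{+}}b$), and the monotonicity axiom (C2) of the (pre)contact relations in the DCA $RC(A_{+})$ then gives $\alpha C^{t}_{A_{+}}\beta$ (resp. $\alpha\mathcal{B}_{A_{+}}\beta$). Hence $RC(A_{+})$ satisfies $(1')$ and $(2')$; since it is a DCA by Theorem \ref{topological representation theorem for DCA}(ii), it is a trivial DCA.

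The one place demanding care, and the main obstacle, is that $(1')$ and $(2')$ are \emph{not} among the axioms covered by the Lifting Lemma \ref{lifting lemma}, so one cannot simply cite it; I instead rely on the elementary density-plus-monotonicity argument above, which is exactly the mechanism underlying the proof of Lemma \ref{Lemma equivalence for trivial DMS}. As an alternative route I would note that one may first deduce, via the (ii)$\Rightarrow$(i) direction of Lemma \ref{Lemma equivalence for trivial DMS}, that $A_{+}$ is a trivial DMS, and then appeal to the canonical-filter computation in the (i)$\Rightarrow$(ii) direction of that lemma, which in fact establishes $(1')$ and $(2')$ for all of $RC(A_{+})$ and not merely for the subalgebra $(A_{+})^{+}$.
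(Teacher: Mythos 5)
Your proof is correct and follows essentially the same route as the paper: part (ii) is read off from Theorem \ref{topological representation theorem for DCA}(iii), and part (i) is obtained by lifting $(1')$ and $(2')$ from the stable subalgebra $(A_{+})^{+}\cong A$ to $RC(A_{+})$. The paper dismisses that lifting step with ``it is easy to see that the Lifting Lemma is true for the formulas $(1')$ and $(2')$''; your density-plus-monotonicity argument is exactly the verification being alluded to.
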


\begin{proof} The Theorem is a consequence of of Theorem \ref{topological representation theorem for DCA}
 - Topological representation theorem for DCA. Condition (iii) of the theorem says that the function $g_{A}$ is a stable  isomorphic  embedding of
$A$ into $RC(A_{+})$. This proves our condition (ii). Let us note that it is easy to see that the lifting Lemma \ref{lifting lemma} is true for the formulas ($1'$) and ($2'$). This implies that the conditions ($1'$) and ($2'$) are true in $RC(A_{+})$, so $RC(A_{+})$ is a trivial DCA and this proves our condition (i).
\end{proof}

%%%%%%%%%%%%%%%%%%%%%%%%%%%%%%%%%%%%%%%%%%%%%%%%%%%%%%%%%%%%%%%%%%%%%%%%%%%%%%%%%%%%%%%%%%%%%
\section{Topological duality theory for DCA}\label{Section duality for DCA}
%%%%%%%%%%%%%%%%%%%%%%%%%%%%%%%%%%%%%%%%%%%%%%%%%%%%%%%%%%%%%%%%%%%%%%%%%%%%%%
In this section we extend the topological representation of DCAs
to a topological duality theory of DCAs in terms of DMSes. We
assume  basic knowledge of category theory: categories, morphisms,
functors and natural isomorphisms (see,   for instance, Chapter I from
\cite{Category}). Since DCA is a generalization of contact
algebra, and DMS is a generalization of mereotopological space,
the developed duality theory in this section will generalize the
duality theory for contact algebras and mereotopological spaces
presented by Goldblatt and Griece in \cite{G2016} and some proofs
below will be  the same as in \cite{G2016}. Other topological
dualities  for contact and precontact algebras are presented
in \cite{DiElVak} and it is possible to generalize them for DCAs,
but in this paper we follow the scheme of \cite{G2016} for two
purposes: first, because the corresponding notion of DMS fits
quite well to the topological representation theory for DCS-s, and
second, because the proofs in this case are more short.

%%%%%%%%%%%%%%%%%%%%%%%%%%%%%%%%%%%%%%%%%%%%%%%%%%%%%%%%%%%%%%%%%%%%
\subsection{The categories DCA and DMS}\label{The categories DCA and
DMS}
%%%%%%%%%%%%%%%%%%%%%%%%%%%%%%%%%%%%%%%%%%%%%%%%%%%%%%%%%%%%%%%%%%%%%

\begin{definition} \label{category DCA}
The category \textbf{DCA} consists of the class of all DCAs
supplied with the following morphisms, called DCA-morphisms.

Let
$A_{i}=(B_{A_{i}}, C^{s}_{A_{i}}, C^{t}_{A_{i}},
\mathcal{B}_{A_{i}})$, $i=1,2$ be two DCAs.  Then $f:
A_{1}\longrightarrow A_{2}$ is a DCA-morphism if it is a mapping
 $f:B_{A_{1}}\longrightarrow B_{A_{2}}$ which satisfies the following conditions:

(f 1) $f$ is a Boolean homomorphism from $B_{A_{1}}$ into
$B_{A_{2}}$.

 For all $a,b \in B_{A_{1}}$:

 (f 2) if $f(a)C_{A_{2}}^{s}f(b)$, then
$aC^{s}_{A_{1}}b$,

(f 3)  if $f(a)C_{A_{2}}^{t}f(b)$, then $aC^{t}_{A_{1}}b$,

(f 4)    if $f(a)\mathcal{B}_{A_{2}}^{s}f(b)$, then
$a\mathcal{B}^{s}_{A_{1}}b$.

$A_{1}$ is the domain of $f$ and $A_{2}$ the codomain of $f$.

\noindent We define $f_{+}=_{def}f^{-1}$ acting on t-clans of $A_{2}$ as
follows: for $\Gamma\in t$-$Clans(A_{2})$,
$f^{-1}(\Gamma)=_{def}\{a\in B_{A_{1}}: f(a)\in \Gamma\}$.

A DCA-morphism $f:A_{1}\longrightarrow A_{2}$  is a
DCA-isomorphism (in the sense of category theory) if there is a
DCA-morphism $g: A_{2}\longrightarrow A_{1}$ such that the
compositions $f\circ g$ and $g\circ f$ are the  identity morphism
of their domains. It is a well known fact that this definition is
equivalent to the standard algebraic definition of isomorphism in
universal algebra.
\end{definition}

\begin{definition}\label{category DMS}
The category \textbf{DMS} consists of the class of all DMSes
equipped with suitable morphisms called DMS morphism. The
definition is as follows. Let $S_{i}=(X^{t}_{S_{i}},
X^{s}_{S_{i}}, T_{S_{i}},\prec_{S_{i}}, M_{S_{i}})$, $i=1,2$ be
two DMSes. A DMS-morphism is a mapping

$\theta$: $X^{t}_{S_{1}}\longrightarrow X^{t}_{S_{2}}$
such that:

($\theta$ 1) if $x\in X^{s}_{S_{1}}$, then $\theta(x)\in X^{s}_{S_{2}}$,

($\theta$ 2) If $x\prec_{S_{1}}y$, then $\theta(x)\prec_{S_{2}}\theta(y)$.

 Let $a\subseteq X^{t}_{S_{2}}$ and $\theta^{-1}(a)=_{def}\{x\in
X^{t}_{S_{1}}: \theta(x)\in a\}$. We define $\theta^{+}=_{def}  \theta^{-1}$.

The next two requirements for
$\theta$ are the following:

($\theta$ 3) If $a\in M_{S_{2}}$ then $\theta^{-1}(a)\in M_{S_{1}}$ and

($\theta$ 4) the map $\theta^{-1}: M_{S_{2}}\longrightarrow M_{S_{1}}$
is a Boolean algebra homomorphism from $(M_{2})$ into $(M_{1})$.

Note that in $M_{S}$ the join operation is a set theoretical union of regular closed sets. Since  meets in Boolean algebra is definable by the join and the complement *, for the condition  ($\theta$ 4) it is sufficient to assume that $\theta^{-1}$ preserves complement.

A DMS-morphism $\theta: S_{1}\longrightarrow S_{2}$ is a
DMS-isomorphism if there exists a converse DMS-morphism
$\eta:S_{2}\longrightarrow S_{1}$  such that the compositions
$\theta\circ\eta$ and $\eta\circ\theta$ are identity morphisms in
the corresponding domains.

\end{definition}

The following lemma states an equivalent definition of DMS-isomorphism. Similar statement for mereotopological isomorphism is Theorem 2.2 from \cite{G2016}.

\begin{lemma} \label{Lemma DMS isomorphism} Let $S,S'$ be DM-spaces and  $\theta:S\mapsto S'$ be  a DMS-morphism from $S$ into $S'$. Let $a\subseteq X^{t}_{S}$ and define $\theta[a]=\{\theta(x): x\in a\}$. Then the following two conditions are equivalent:

(i) $\theta$ is a DMS-isomorphism from $S$ onto $S'$.

(ii) $\theta$ is a DMS-morphism which is is a bijection from $X^{t}_{S}$ onto $X^{t}_{S'}$ satisfying the following conditions:

\quad (1)   If  $\theta(x)\in X^{s}_{S'}$, then $x\in X^{s}_{S}$.

\quad (2)   If $\theta(x)\prec_{S'}\theta(y)$, then $x\prec_{S}y$.

\quad (3) If $a\in M_{S}$, then $\theta[a]\in M_{S'}$.

\end{lemma}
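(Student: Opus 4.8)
The plan is to prove the equivalence by establishing both implications, and the single technical observation that drives everything is that whenever $\theta$ is a bijection of the underlying sets $X^{t}_{S}\to X^{t}_{S'}$ with set-theoretic inverse $\eta=\theta^{-1}$, the preimage operator of $\eta$ coincides with the image operator of $\theta$: for every $a\subseteq X^{t}_{S}$ one has $\eta^{-1}(a)=\theta[a]$. First I would record this identity, since both directions rely on it, together with the elementary facts that a two-sided inverse makes $\theta$ a bijection, that for injective $\theta$ one has $\theta^{-1}(\theta[a])=a$, and that the preimage operator of a surjection is injective on power sets.

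For (i) $\Rightarrow$ (ii): assume $\theta$ is a DMS-isomorphism, so there is an inverse DMS-morphism $\eta:S'\to S$ with $\eta\circ\theta=\mathrm{id}$ and $\theta\circ\eta=\mathrm{id}$. The two-sided inverse immediately gives that $\theta$ is a bijection. Conditions (1), (2), (3) then follow by applying the morphism clauses of $\eta$ (Definition \ref{category DMS}) together with the identity $\eta^{-1}(a)=\theta[a]$: if $\theta(x)\in X^{s}_{S'}$, clause ($\theta$ 1) for $\eta$ gives $\eta(\theta(x))=x\in X^{s}_{S}$, which is (1); if $\theta(x)\prec_{S'}\theta(y)$, clause ($\theta$ 2) for $\eta$ gives $x\prec_{S}y$, which is (2); and for $a\in M_{S}$, clause ($\theta$ 3) for $\eta$ gives $\theta[a]=\eta^{-1}(a)\in M_{S'}$, which is (3).

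For (ii) $\Rightarrow$ (i): given that $\theta$ is a bijective DMS-morphism satisfying (1)--(3), set $\eta=\theta^{-1}$ and verify that $\eta$ is a DMS-morphism; since $\theta$ and $\eta$ are mutually inverse as maps, the two compositions are identity maps, which are trivially DMS-morphisms, so this will finish the proof. Clauses ($\theta$ 1) and ($\theta$ 2) for $\eta$ follow directly from (1) and (2) after writing each point of $X^{t}_{S'}$ as $\theta(x)$ for a unique $x$, and clause ($\theta$ 3) for $\eta$ is just (3) rewritten through $\eta^{-1}(a)=\theta[a]$.

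The main obstacle is clause ($\theta$ 4) for $\eta$, i.e. that $\eta^{-1}=\theta[\cdot]:M_{S}\to M_{S'}$ is a Boolean homomorphism. Here one cannot simply invoke that preimages preserve Boolean operations, because the Boolean structure of $M_{S}\subseteq RC(X^{t}_{S})$ is that of a regular-closed-set algebra, in which the meet is $Cl(Int(a\cap b))$ and the complement is $Cl(-a)$ rather than the set-theoretic operations. The way around this is to argue through the already-available homomorphism $\theta^{-1}:M_{S'}\to M_{S}$ coming from clause ($\theta$ 4) for $\theta$. Using (3) and the bijectivity of $\theta$ I would show this map is in fact a bijection of $M_{S'}$ onto $M_{S}$: it is injective because the preimage operator of a bijection is injective, and surjective because every $a\in M_{S}$ equals $\theta^{-1}(\theta[a])$ with $\theta[a]\in M_{S'}$ by (3). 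A bijective Boolean homomorphism is a Boolean isomorphism, whose functional inverse $\theta[\cdot]=\eta^{-1}$ is therefore again a Boolean homomorphism, giving ($\theta$ 4) for $\eta$ and completing the verification that $\eta$ is a DMS-morphism.
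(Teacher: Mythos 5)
Your proof is correct and follows essentially the same route as the paper's: both directions pass to the set-theoretic inverse $\eta$ and exploit the identity $\eta^{-1}(a)=\theta[a]$. The one place you go beyond the paper is clause ($\theta$ 4) for $\eta$ in (ii)$\Rightarrow$(i): the paper merely asserts that bijectivity of $\theta$ makes $a\mapsto\eta^{-1}(a)$ a Boolean homomorphism, whereas you justify it by showing $\theta^{-1}\colon M_{S'}\to M_{S}$ is a bijective Boolean homomorphism (injectivity from surjectivity of $\theta$, surjectivity from condition (3) and $\theta^{-1}(\theta[a])=a$) and then inverting it --- a worthwhile addition, since the operations of the regular-closed-set algebra are not the set-theoretic ones and so preservation by $\theta[\cdot]$ is not automatic.
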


\begin{proof}  (i)$\Rightarrow$(ii) Suppose that $\theta$ is a DMS isomorphism from $S$ onto $S'$. Then obviously $\theta$ is a bijection with converse $\eta$ such that $\theta$ is a DMS-morphisms from $S$ onto $S'$ and $\eta$ is a DMS-morphism from $S'$ onto $S$ such that the composition $\theta\circ\eta$ is the identity in $S'$ and $\eta\circ\theta$ is the identity in $S$. To show (1) let $\theta(x)\in X^{s}_{S'}$. Then $x=\eta(\theta(x))\in X^{s}_{S}$, because $\eta$ is a DMS-morphism from $S'$ onto $S$. In a similar way we show (2).
To show (3) let $a\in M_{S}$. Then $\eta^{-1}(a)\in M_{S'}$, because $\eta$ is a DMS-morphism from $S'$ onto $S$. This means that for any $x\in X^{t}_{S'}$ and $a\in M_{S}$ the following holds: $x\in \eta^{-1}(a)$ iff $\eta(x)\in a$ iff (by the definition of $\theta[a]$) $\theta(\eta(x))\in \theta[a]$ iff (because $\theta(\eta(x))=x$) $x\in \theta[a]$. This shows that $\theta[a]=\eta^{-1}(a)$, which shows that $\theta[a]\in M_{S'}$.

(i)$\Leftarrow$(ii) Suppose that $\theta$ is a DMS-morphism from $S$ into $S'$ and that (ii) is true. Conditions (1), (2) and (3) imply that $\eta$ satisfy conditions ($\theta 1)$, ($\theta 2)$ and ($\theta 3)$ for DMS-morphism. Since $\theta$ is a DMS morphism, it follows that the map $a \mapsto \theta^{-1}(a)$ is a Boolean homomorphism from $M_{S'}$ to $M_{S}$. Because $\theta$ is a bijection, it follows that for its converse $\eta$, the map  $a\mapsto\eta^{-1}(a)$ is a Boolean homomorphism from $M_{S}$ to $M_{S'}$, which shows that the condition ($\theta 4)$ is also fulfilled. So $\eta$ is a DMS morphism from $S'$ to $S$. Because $\theta$ and $\eta$ are converses to each other, their compositions are the identity mappings in the corresponding domains. So, $\theta$ is a DMS-isomorphism from $S$ onto $S'$.
\end{proof}

Let $f: A_{1}\longrightarrow A_{2}$ and $g:A_{2}\longrightarrow
A_{3}$ be two DCA-morphisms. The composition $h=f\circ g$ is a
mapping $h: B_{A_{1}}\longrightarrow B_{A_{3}}$ acting as follows;
for $a\in B_{A_{1}}$: $h(a)=g(f(a))$. In a similar way we define
composition for DMS morphisms.

The following lemma has an easy proof.

\begin{lemma}\label{composition of morphisms}
(i) The composition of two DCA-morphisms is a DCA-mor
phism.
 The identity mapping $1_{A}$ on each DCA $A$ is a DCA-morphism. Hence \textbf{DCA} is indeed a category.

(ii) The composition of two DMS-morphisms is a DMS-morphism.
 The identity mapping $1_{S}$ on each DMS $S$ is a DMS-morphism. Hence \textbf{DMS} is indeed a category.
\end{lemma}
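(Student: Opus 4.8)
The plan is to treat both parts as direct verifications from the definitions, since each of the two structures (DCA-morphism, DMS-morphism) is specified by a short list of closure conditions, and both the identity maps and the composites inherit these conditions mechanically from their constituents. The only two points deserving attention are, first, the \emph{contravariant} form of the relational conditions on DCA-morphisms (they read ``if $f(a)\,C\,f(b)$ then $a\,C\,b$'', so one pulls hypotheses back along the composite in the correct order), and second, the elementary identity $(\theta\circ\psi)^{-1}=\theta^{-1}\circ\psi^{-1}$ for inverse images, which is what makes the Boolean-homomorphism and base-preservation clauses for DMS-morphisms compose correctly. Throughout I follow the paper's diagrammatic convention, so for $f\colon A_1\to A_2$, $g\colon A_2\to A_3$ the composite $h=f\circ g$ satisfies $h(a)=g(f(a))$, and similarly for DMS-morphisms.

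For part (i), I would first check that $1_A\colon B_A\to B_A$ is a DCA-morphism: it is trivially a Boolean homomorphism, and since $1_A(a)=a$ each of the conditions (f2)--(f4) reduces to the tautology $a\,C\,b\Rightarrow a\,C\,b$ for $C\in\{C^{s}_A,C^{t}_A,\mathcal{B}_A\}$. For composition, let $f\colon A_1\to A_2$ and $g\colon A_2\to A_3$ be DCA-morphisms and set $h=f\circ g$, $h(a)=g(f(a))$. Then (f1) holds because a composite of Boolean homomorphisms is a Boolean homomorphism. For (f2), assuming $h(a)\,C^{s}_{A_3}\,h(b)$, i.e.\ $g(f(a))\,C^{s}_{A_3}\,g(f(b))$, condition (f2) for $g$ yields $f(a)\,C^{s}_{A_2}\,f(b)$, and then (f2) for $f$ yields $a\,C^{s}_{A_1}\,b$; the same two-step pull-back argument handles (f3) with $C^{t}$ and (f4) with $\mathcal{B}$. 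Hence $h$ is a DCA-morphism.

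For part (ii), the identity $1_S$ is immediate: $(\theta 1)$ and $(\theta 2)$ hold since $1_S(x)=x$, while $(\theta 3)$ and $(\theta 4)$ hold because $1_S^{-1}$ is the identity map on $M_S$, which is a Boolean homomorphism. For composition, let $\theta\colon S_1\to S_2$ and $\psi\colon S_2\to S_3$ be DMS-morphisms and $\chi=\theta\circ\psi$, $\chi(x)=\psi(\theta(x))$. Conditions $(\theta 1)$ and $(\theta 2)$ for $\chi$ follow by applying the corresponding condition first for $\theta$ and then for $\psi$ (a point in $X^{s}_{S_1}$ maps into $X^{s}_{S_2}$ and then into $X^{s}_{S_3}$, and $\prec$ is transported stepwise). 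For $(\theta 3)$ and $(\theta 4)$ I would use $\chi^{-1}(a)=\{x:\psi(\theta(x))\in a\}=\theta^{-1}(\psi^{-1}(a))$, so that $\chi^{-1}=\theta^{-1}\circ\psi^{-1}$ on $M$; thus if $a\in M_{S_3}$ then $\psi^{-1}(a)\in M_{S_2}$ and hence $\theta^{-1}(\psi^{-1}(a))\in M_{S_1}$, and the map $a\mapsto\chi^{-1}(a)$ is a composite of the Boolean homomorphisms $\psi^{-1}\colon M_{S_3}\to M_{S_2}$ and $\theta^{-1}\colon M_{S_2}\to M_{S_1}$, hence itself a Boolean homomorphism. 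Finally, the category axioms (associativity of composition, left and right identity laws) are inherited verbatim from composition of underlying functions; there is no genuine obstacle here, the content of the lemma being precisely that the two morphism classes are closed under composition and contain the identities, which the above closure checks establish.
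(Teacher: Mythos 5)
Your proof is correct and is exactly the routine verification the paper has in mind: the paper itself offers no argument beyond the remark that the lemma ``has an easy proof,'' and your direct check of (f1)--(f4) and $(\theta 1)$--$(\theta 4)$ for identities and composites, using the contravariant pull-back of the relational conditions and the identity $(\theta\circ\psi)^{-1}=\theta^{-1}\circ\psi^{-1}$, supplies precisely the missing details.
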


It follows from Lemma \ref{composition of morphisms} that \textbf{DCA} and \textbf{DMS} are indeed categories.

We denote by \textbf{DMS$^{*}$} the full subcategory of
\textbf{DMS} of all T0 and DM-compact DMSes.

We introduce  two contravariant functors

$\Phi$: \textbf{DCA}$\rightarrow$\textbf{DMS}, and $\Psi$: \textbf{DMS}$\rightarrow$\textbf{DCA}  as follows:

(I) For a given DCA $A$ we put $\Phi(A)=A_{+}$ and for a
DCA-morphism $f: A\longrightarrow A'$ we put $\Phi(f)=f_{+}$ and
prove that $f_{+}$ is a DMS-morphism from $(A')_{+}$ into $A$.

(II) For a given DMS $S$ we put $\Psi(S)=S^{+}$ and for a
DMS-morphism $\theta: S\longrightarrow S'$ we put
$\Psi(\theta)=\theta^{+}$ and prove that $\theta^{+}:$ is a DMS
morphism from $(S')^{+}$ into $S$.

(III) We show that for each DCA $A$ the mapping
$g_{A}(a)=\{\Gamma\in t$-$Clans(A): a\in \Gamma\}$, $a\in B_{A}$
is a natural isomorphism (in the sense of category theory (see
\cite{Category}
 Chapter I, 4.)) from $A$ to $\Psi(\Phi(A))=(A_{+})^{+}$.

(IV) We show that for each T0 and DM-compact DMS $S$ the mapping
$\rho_{S}(x)=\{a\in M_{S}: x\in a\}$, $x\in X^{t}_{S}$, is a
natural isomorphism from $S$ to $\Phi(\Psi(S)=
(S^{+})_{+}$.

All  this shows that the category \textbf{DCA} is dually
equivalent to the category \textbf{DMS$^{*}$} of T0 an DM-compact
DMS. The realization of (I)-(IV) is given in the next subsection.

%%%%%%%%%%%%%%%%%%%%%%%%%%%%%%%%%%%%%%%%%%%%%%%%%%%%%%%%%%%%%%%%%%%%%%%
\subsection{Facts for DCA-morphisms and
DMS-morphisms}\label{Section Some facts for DCA-  and
DMS-morphisms}
%%%%%%%%%%%%%%%%%%%%%%%%%%%%%%%%%%%%%%%%%%%%%%%%%%%%%%%%%%%%%%%%%%%%%%%%%%%%%%%

\begin{lemma}\label{DMS morphisms are continuous}
 Every DMS-morphism is a continuous mapping.

\end{lemma}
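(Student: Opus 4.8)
The plan is to reduce continuity to the behaviour of $\theta$ on the closed base $M_{S_2}$ and then to read off the required fact directly from axiom ($\theta$ 3). Recall that a map between topological spaces is continuous precisely when the preimage of every closed set is closed, and that it suffices to verify this for the members of a base of the closed sets: if $\mathcal{B}$ is a closed base of the codomain, then every closed set $F$ has the form $F=\bigcap_{i\in I}b_{i}$ with $b_{i}\in\mathcal{B}$, and since preimage commutes with arbitrary intersections, $\theta^{-1}(F)=\bigcap_{i\in I}\theta^{-1}(b_{i})$. Hence $\theta^{-1}(F)$ is closed as soon as each $\theta^{-1}(b_{i})$ is closed.

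First I would invoke axiom (S2) for $S_2$, which states that $M_{S_2}$ is a closed base of the topology of $X^{t}_{S_2}$; so it is enough to show that $\theta^{-1}(a)$ is a closed subset of $X^{t}_{S_1}$ for every $a\in M_{S_2}$. This is immediate from condition ($\theta$ 3) in the definition of a DMS-morphism: it guarantees $\theta^{-1}(a)\in M_{S_1}$, and since $M_{S_1}$ is a subalgebra of $RC(X^{t}_{S_1})$ by (S2) for $S_1$, every element of $M_{S_1}$ is a regular closed --- in particular closed --- subset of $X^{t}_{S_1}$. Combining this with the reduction above yields that $\theta^{-1}(F)$ is closed for every closed $F\subseteq X^{t}_{S_2}$, that is, $\theta$ is continuous.

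There is essentially no serious obstacle in this argument; it is a direct unwinding of the definitions. The only points that deserve explicit care are the standard reduction to a closed base (the commutation of preimage with arbitrary intersections and the fact that a map is continuous iff preimages of basic closed sets are closed) and the observation that membership in $M_{S_1}$ entails closedness, because the elements of $M_{S_1}$ are by definition regular closed sets. In fact, condition ($\theta$ 3) was incorporated into the definition of DMS-morphism precisely so that this continuity argument goes through.
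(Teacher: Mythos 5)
Your proposal is correct and follows essentially the same route as the paper: both arguments reduce continuity to checking that $\theta^{-1}$ sends the closed base $M_{S_2}$ into $M_{S_1}$, which is exactly condition ($\theta$ 3), with elements of $M_{S_1}$ being regular closed and hence closed. You simply spell out the standard reduction to basic closed sets that the paper leaves implicit.
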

\begin{proof} Let $\theta: S\longrightarrow S'$ be a DMS-morpism.
Since $\theta^{-1}$ maps $M_{S'}$ (which is the closed basis of
the topology of $S'$) into $M_{S}$, then $\theta$ is  continuous.
\end{proof}

\begin{lemma}\label{morphism images of clans}  Let $f: A\longrightarrow A'$ be a
DCA-morphism. Then:

(i) If $\Gamma$ is a t-clan in $A'$ then
$f^{-1}(\Gamma)=_{def}\{a\in B_{A}: f(a)\in \Gamma\}$ is a t-clan in $A$.

(ii) If $\Gamma$ is an s-clan in $A'$ then
$f^{-1}(\Gamma)=_{def}\{a\in B_{A}: f(a)\in \Gamma\}$ is an s-clan in $A$.

\end{lemma}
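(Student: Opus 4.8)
The plan is to verify that $\Delta := f^{-1}(\Gamma) = \{a \in B_A : f(a) \in \Gamma\}$ satisfies the four defining conditions of a clan (Definition \ref{definition of clan}) with respect to the appropriate contact relation. The work splits cleanly into two parts: the three grill conditions, which rely only on the fact that $f$ is a Boolean homomorphism (f 1), and the clan condition, which is exactly where the ``back-reflection'' morphism axioms (f 2)/(f 3) are used. Note that a t-clan and an s-clan share the same grill part, so the grill argument is common to (i) and (ii).

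First I would handle the grill conditions. Since $f$ is a Boolean homomorphism, $f(1)=1\in\Gamma$ and $f(0)=0\notin\Gamma$, giving $1\in\Delta$ and $0\notin\Delta$; this is condition (i) of the clan definition. For upward closure, if $a\in\Delta$ and $a\leq b$ then $f(a)\in\Gamma$ and $f(a)\leq f(b)$ by monotonicity of $f$, so $f(b)\in\Gamma$ by upward closure of $\Gamma$, whence $b\in\Delta$. For the join condition, if $a+b\in\Delta$ then $f(a)+f(b)=f(a+b)\in\Gamma$, so $f(a)\in\Gamma$ or $f(b)\in\Gamma$ because $\Gamma$ is a grill, i.e. $a\in\Delta$ or $b\in\Delta$. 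Hence $\Delta$ is a grill.

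It remains to check the clan condition, and this is the only place where the two parts diverge. For (i): if $a,b\in\Delta$ then $f(a),f(b)\in\Gamma$; since $\Gamma$ is a t-clan we get $f(a)C^{t}_{A'}f(b)$, and then axiom (f 3) yields $aC^{t}_{A}b$, so $\Delta$ is a t-clan in $A$. Part (ii) is identical, replacing $C^{t}$ by $C^{s}$, the t-clan hypothesis on $\Gamma$ by an s-clan hypothesis, and invoking (f 2) in place of (f 3).

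The argument is entirely routine and I anticipate no genuine obstacle; the only point to keep straight is the direction of the implications in axioms (f 2)/(f 3), which run from the image side back to the source side --- precisely the direction needed to transport a contact of $f(a),f(b)$ inside $\Gamma$ back to a contact of $a,b$ inside $\Delta$. This is what makes $f^{-1}$ send clans to clans and underlies the contravariant nature of the functor $\Phi$ being set up for the duality theory.
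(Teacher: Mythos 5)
Your proof is correct and is exactly the verification the paper intends: its own proof simply states that the lemma follows by ``a routine check of the corresponding definitions of t-clan and s-clan,'' and your write-up carries out precisely that check, with the grill part resting on (f 1) and the clan part on the back-reflection axioms (f 2)/(f 3). No discrepancy with the paper's approach.
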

\begin{proof} The proof consists of a routine check of the
corresponding definitions of t-clan and s-clan.
\end{proof}

\begin{lemma}\label{f+ is DMS morphism} (i) Let $A,A'$ be two DCAs and $f:A\longrightarrow A'$
be a DCA-morphism. Then $f_{+}$ is a DMS-morphism from $(A')_{+}$ to $A_{+}$.

(ii)The mapping $g_{A}(a)=\{\Gamma\in t$-$Clans(A): a\in
\Gamma\}$, $a\in B_{A}$ is a natural DCA-isomorphism of $A$ onto
$\Psi(\Phi(A))=(A_{+})^{+}$.

\end{lemma}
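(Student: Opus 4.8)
The plan is to reduce the whole lemma to a single bookkeeping identity and then read off all the required conditions from it. The identity I would establish first is that, for every DCA-morphism $f\colon A\to A'$ and every $a\in B_A$,
\[
(f_+)^{-1}(g_A(a))=g_{A'}(f(a)),
\]
which I would obtain by the chain of equivalences $\Gamma\in (f_+)^{-1}(g_A(a))$ iff $f_+(\Gamma)\in g_A(a)$ iff $a\in f^{-1}(\Gamma)$ iff $f(a)\in\Gamma$ iff $\Gamma\in g_{A'}(f(a))$, valid for every $\Gamma\in$ t-$Clans(A')$. Call this $(\star)$; it drives both parts of the statement.

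For (i), I would first record that $f_+$ is a well-defined map from t-$Clans(A')$ into t-$Clans(A)$ by Lemma \ref{morphism images of clans}(i), and that it carries s-clans to s-clans by Lemma \ref{morphism images of clans}(ii); the latter is exactly condition ($\theta$ 1). For ($\theta$ 2) I would take t-clans with $\Gamma\prec_{(A')_+}\Delta$ and arbitrary $a\in f_+(\Gamma)$, $b\in f_+(\Delta)$, so that $f(a)\in\Gamma$ and $f(b)\in\Delta$; the definition (9) of $\prec$ gives $f(a)\mathcal{B}_{A'}f(b)$, and condition (f 4) of a DCA-morphism then yields $a\mathcal{B}_A b$, i.e.\ $f_+(\Gamma)\prec_{A_+}f_+(\Delta)$. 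Conditions ($\theta$ 3) and ($\theta$ 4) I would deduce directly from $(\star)$: since $M_{A_+}=M_A=\{g_A(a):a\in B_A\}$, the identity shows that $(f_+)^{-1}$ sends $M_A$ into $M_{A'}$, and because $g_A,g_{A'}$ are Boolean isomorphisms onto $M_A,M_{A'}$ and $f$ is a Boolean homomorphism, $(\star)$ exhibits $(f_+)^{-1}$ as the composite $g_{A'}\circ f\circ g_A^{-1}$, hence a Boolean homomorphism. As remarked after ($\theta$ 4) it even suffices to note that it preserves the complement $*$, which is immediate from $(\star)$.

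For (ii), I would invoke Lemma \ref{topoisomorphism lemma for DCA} to get that each $g_A$ is a DCA-isomorphism of $A$ onto $(A_+)^+$ (it is a bijective Boolean homomorphism preserving and reflecting $C^s_A$, $C^t_A$, $\mathcal{B}_A$, so both it and its inverse satisfy (f 1)--(f 4)). The remaining task is naturality: for each $f\colon A\to A'$ the square with horizontal arrows $g_A,g_{A'}$ and vertical arrows $f$ and $\Psi(\Phi(f))=(f_+)^+=(f_+)^{-1}$ must commute. Evaluating the two composites at $a\in B_A$, the identity $(\star)$ gives $(f_+)^{+}(g_A(a))=(f_+)^{-1}(g_A(a))=g_{A'}(f(a))$, so $(f_+)^+\circ g_A=g_{A'}\circ f$, and hence $g=\{g_A\}_A$ is a natural isomorphism from the identity functor to $\Psi\circ\Phi$.

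I do not expect a genuine obstacle here, as the computations are routine; the points requiring care are organisational. First, one must track the contravariance of $\Phi$ and $\Psi$ so that $\Psi\circ\Phi$ is covariant and the naturality square is oriented correctly. Second, it is worth emphasising that, in contrast with the representation theorems of the previous section, none of ($\theta$ 1)--($\theta$ 4) needs DM-compactness: they follow from $(\star)$, from (f 4), and from Lemma \ref{morphism images of clans} alone. The only mild subtlety is matching the quantifier form of ($\theta$ 2) to definition (9), and confirming that $g_A$ both preserves and reflects the three relations, which is precisely what makes it an isomorphism rather than a mere morphism.
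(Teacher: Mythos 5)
Your proposal is correct and follows essentially the same route as the paper: the identity $(\star)$ is exactly the paper's equality (13), established by the same chain of equivalences, and both parts of the lemma are then read off from it together with Lemma \ref{morphism images of clans}, condition (f 4), and Lemma \ref{topoisomorphism lemma for DCA}. Your remark that $(\star)$ exhibits $(f_+)^{-1}$ as $g_{A'}\circ f\circ g_A^{-1}$ is a slightly tidier way to obtain ($\theta$ 4) than the paper's direct computation with complements, but the content is identical.
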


\begin{proof} (i) Remind that $(A')_{+}=(t$-$Clans(A'), s$-$Clans(A'),
Clusters(A')$, $\prec_{A'},\newline  M_{A'})$. If $\Gamma\in t$-$Clans(A')$,
then by Lemma \ref{morphism images of clans} $f^{-1}(\Gamma)$ is a
t-clan of $A$ and similarly for the case when  $\Gamma$ is an
s-clan. This shows that the condition ($\theta 1$)  for DMS-morphisms is fulfilled. For the
condition ($\theta 2$)  let $\Gamma\prec_{A'}\Delta$,
$\Gamma, \Delta\in t$-$Clans(A')$. We have to show that
$f^{-1}(\Gamma)\prec_{A}f^{-1}(\Delta)$.  By the definition of
$\prec_{A}$ for clans (see (9)) this means the following. Let $a\in
f^{-1}(\Gamma)$, $b\in f^{-1}(\Delta)$. Then $f(a)\in \Gamma$ and
$f(b)\in \Delta$. But $\Gamma \prec_{A'}\Delta$, so
$f(a)\mathcal{B}_{A'}f(b)$, which by (f 4) implies $a\mathcal{B}_{A}b$. This  shows that $\Gamma\prec_{A}\Delta$.

 The next step is to verify the condition ($\theta 3$) of DMS-morphisms, namely that $(f_{+})^{+}$ maps the members of $M_{A'}$ into the members of $M_{A}$. Note that the
  members of $M_{A}$ are of the form $g_{A}(a)$ for $a\in B_{A}$ and that $g_{A}(a)=\{\Gamma\in t-Clans(A):a\in \Gamma\}$ and similarly for the members of $M_{A'}$. In order to verify ($\theta 3$) we will show that for any $a\in B_{A}$  the following equality holds which indeed shows that
  $(f_{+})^{+}$ maps $M_{A}$ into $M_{A'}$:

  $(f_{+})^{+}(g_{A}(a))=g_{A'}(f(a))$ \hfill  (13)

 To show (13) note that $(f_{+})^{+}(g_{A}(a))$ is a subset of t-Clans$(A')$. So let $\Gamma\in t$-$Clans(A')$. Then the following
 sequence of equivalences proves (13):

 $\Gamma\in (f_{+})^{+}(g_{A}(a))$ iff $\Gamma\in(f^{-1})^{-1} (g_{A}(a))$ iff $f^{-1}(\Gamma)\in g_{A}(a)$ iff
 $a\in f^{-1}(\Gamma)$ iff $f(a)\in \Gamma$ iff $\Gamma\in
 g_{A'}(f(a))$.

Now we verify the condition ($\theta 4$) of DMS-morphisms: $(f_{+})^{+}$
preserves the Boole-an complement. We show this by applying (13) and
the facts  that $f$ and $g_{A'}$ acts as Boolean homomorphisms:

$(f^{+})_{+}((g_{A}(a))^{*})$=$(f^{+})_{+}(g_{A}(a^{*}))$=$g_{A'}f(a^{*})$=$(f^{+})_{+}(g_{A}(a^{*}))$=

$((f^{+})_{+}(g_{A}(a)))^{*}$.

(ii) The statement that $g_{A}$ is a natural isomorphism in the
sense of category theory means the following: first, that $g_{A}$
is indeed an isomorphism from $A$ onto $A_{+}$ (this is the
Theorem \ref{topoisomorphism lemma for DCA}) and second, that for
any DCA-morphism $f: A\longrightarrow A'$, the following equality should be true: $g_{A'}\circ f
= (f_{+})^{+}\circ g_{A}$. By the definition of the composition  $\circ$ for
DCA-morphisms this equality is equivalent to the following: for
any $a\in B_{A}$ the following holds:

$g_{A'}(f(a))=(f_{+})^{+}(g_{A}(a))$,
which is just (13).
\end{proof}

\begin{lemma}\label{theta + is a DCA-morphism}
 Let $S,S'$ be two DMS-s and  $\theta:S\longrightarrow S'$ be a
DMS-morphism from $S$ to $S'$. Then $\theta^{+}$ is a DCA-morphism
from $(S')^{+}$ to $S^{+}$.

\end{lemma}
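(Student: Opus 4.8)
The plan is to verify directly that the pull-back map $\theta^{+}=\theta^{-1}$, viewed as a map on $M_{S'}$, satisfies the four conditions (f 1)--(f 4) of Definition \ref{category DCA}, with domain DCA $(S')^{+}=(M_{S'}, C^{t}_{S'}, C^{s}_{S'}, \mathcal{B}_{S'})$ and codomain $S^{+}=(M_{S}, C^{t}_{S}, C^{s}_{S}, \mathcal{B}_{S})$. The first condition comes for free: by clauses ($\theta$ 3) and ($\theta$ 4) in the definition of DMS-morphism, $\theta^{-1}$ maps $M_{S'}$ into $M_{S}$ and acts there as a Boolean homomorphism, which is precisely (f 1). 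So the only real work is the three relational clauses.

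For these the uniform strategy is to exploit the contravariant shape of (f 2)--(f 4), each of which reads ``if $\theta^{+}(a)$ is related to $\theta^{+}(b)$ in $S^{+}$, then $a$ is related to $b$ in $(S')^{+}$'': in every case I would extract a witnessing point for the hypothesis in $X^{t}_{S}$ and transport it forward along $\theta$ into $X^{t}_{S'}$ to witness the conclusion. Fix $a,b\in M_{S'}$. For (f 3), if $\theta^{+}(a) C^{t}_{S} \theta^{+}(b)$ then by the definition of $C^{t}_{S}$ we have $\theta^{-1}(a)\cap\theta^{-1}(b)\neq\varnothing$; choosing $x$ in this intersection gives $\theta(x)\in a\cap b$, hence $a C^{t}_{S'} b$. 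For (f 2), if $\theta^{+}(a) C^{s}_{S} \theta^{+}(b)$ then there is $x\in\theta^{-1}(a)\cap\theta^{-1}(b)\cap X^{s}_{S}$; clause ($\theta$ 1) ensures $\theta(x)\in X^{s}_{S'}$, so $\theta(x)\in a\cap b\cap X^{s}_{S'}$ and $a C^{s}_{S'} b$. For (f 4), if $\theta^{+}(a)\mathcal{B}_{S}\theta^{+}(b)$ then the definition of $\mathcal{B}_{S}$ supplies $x,y\in X^{t}_{S}$ with $x\prec_{S}y$, $x\in\theta^{-1}(a)$, $y\in\theta^{-1}(b)$; then $\theta(x)\in a$, $\theta(y)\in b$, and clause ($\theta$ 2) yields $\theta(x)\prec_{S'}\theta(y)$, whence $a\mathcal{B}_{S'}b$.

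Since (f 1)--(f 4) all hold, $\theta^{+}$ is a DCA-morphism from $(S')^{+}$ to $S^{+}$. I do not expect a genuine obstacle: the whole argument is a routine unwinding of the relational definitions of $C^{t}$, $C^{s}$ and $\mathcal{B}$ from Definition \ref{dynamic mereotopological space}, combined with the two point-level clauses ($\theta$ 1) and ($\theta$ 2) of a DMS-morphism. The only item requiring care is the bookkeeping of the contravariant direction of DCA-morphisms, which is exactly matched by the fact that $\theta^{-1}$ pulls the relation back while $\theta$ carries the witnessing point forward in the correct direction.
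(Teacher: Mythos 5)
Your proof is correct and follows essentially the same route as the paper's: (f\,1) from clauses ($\theta$\,3)--($\theta$\,4), and (f\,2)--(f\,4) by extracting a witnessing point for the relation in $S$ and transporting it forward along $\theta$ using ($\theta$\,1) and ($\theta$\,2). If anything, you are slightly more explicit than the paper on the $C^{s}$ case, which it leaves as ``similar''.
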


\begin{proof}  We have to verify that $\theta^{+}=\theta^{-1}$ satisfies the conditions (f1)-(f4) for
DCA-morphism. Condition (f1) is fulfilled by the condition ($\theta 4$) for
 DMS-morphisms. For condition (f2) suppose that for some $a,b\in
M_{S'}$, $\theta^{-1}(a)C^{t}_{S}\theta^{-1}(b)$ and proceed to show
$aC^{t}_{S'}b$. This implies that there exists  $x\in X^{t}_{S}$
such that $x\in \theta^{-1}(a)$ and $x\in \theta^{-1}(b)$. From here we obtain $\theta(x)\in a$,
$\theta(x)\in b$ and  $\theta(x)\in X^{t}_{S'}$ (by condition ($\theta 1$) for DMS morphism) which yields
$aC^{t}_{S'}b$. In a similar way one can verify condition (f3).

For (f4) suppose $\theta^{-1}(a)\mathcal{B}_{S}\theta^{-1}(b)$ and proceed to show that $a\mathcal{B}_{S'}b$. Then there exist $x,y\in X^{t}_{S}$ such that $x\prec_{S}y$, $x\in \theta^{-1}(a)$, $y\in \theta^{-1}(b)$. This implies $\theta(x)\in a$, $\theta(y)\in b$, and by ($\theta 1$) and ($\theta 2$) that  $\theta(x), \theta(y)\in X^{t}_{S'}$ and $\theta(x)\prec_{S'}\theta(y)$. This implies $a\mathcal{B}_{S'}b$.
\end{proof}

Before the formulation of the next statement let us see what is $(S^{+})_{+}$ for a DMS $S$. $S^{+}$ is the dual of $S$ which is  the DCA algebra $(M_{S}, C^{t}_{S}, C^{s}_{S}, \mathcal{B}_{S})$ (see Definition \ref{dynamic mereotopological space}). Then $(S^{+})_{+}$ is the dual space of the algebra $S^{+}$ which is $(S^{+})_{+}=(X^{t}_{S^{+}}, X^{s}_{S^{+}}, T_{S^{+}}, \prec_{S^{+}}, M_{S^{+}})$, where $X^{t}_{S^{+}}$ is the set of t-clans of $S^{+}$, $X^{s}_{S^{+}}$ is the set of s-clans of $S^{+}$, $T_{S^{+}}$ is the set of clusters of $S^{+}$, $\prec_{S^{+}}$ is the relation defined by (9) between t-clans, and  $M_{S^{+}}$ is the set $\{g_{S^{+}}(a): a\in M_{S}\}$, where $g_{S^{+}}(a)=_{def}\{\Gamma\in t-clans(S^{+}):a\in \Gamma\}$ (see Section \ref{Section Canonical DMS for DCA}).

\begin{lemma}\label{rho is isomorphism}
(i) Let $S$ be a   DMS. Then $\rho_{S}$ is a DMS-morphism from $S$
 to $(S^{+})_{+}$.

(ii) Let $S$ be a DM-compact DMS and let for $a\subseteq X^{t}_{S}$, $\rho_{S}[a]=_{def}\{\rho_{S}(x): x\in a\}$. Then for $a\in M_{S}$: $\rho_{S}[a]=g_{S^{+}}(a)$ (for the function $g_{A}$ for a DCA A see Section \ref{Section Canonical DMS for DCA}).

 (iii) If $S$ is $T0$ and DM-compact, then $\rho_{S}$ is a
 DMS-isomorphism from $S$ onto $(S^{+})_{+}$.

(iv) If $S$ is a $T0$ and DM-compact DMS, then $\rho_{S}$ is a
natural isomorphism  from $S$ to $\Phi(\Psi(S))=(S^{+})_{+}$.
\end{lemma}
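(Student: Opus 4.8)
The plan is to handle the four parts in order, reducing each to the clan-structure facts already established for $S^{+}$ together with the injectivity and surjectivity properties of $\rho_{S}$.

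For (i), I would verify the four morphism conditions ($\theta$ 1)--($\theta$ 4) for the map $\rho_{S}\colon X^{t}_{S}\to X^{t}_{(S^{+})_{+}}=t\text{-}Clans(S^{+})$. Condition ($\theta$ 1) is Lemma \ref{pointclan}(ii): a space point is sent to an s-clan. Condition ($\theta$ 2) is the forward direction of Lemma \ref{pointclan}(iv) (i.e.\ axiom (S7)), so $x\prec_{S}y$ yields $\rho_{S}(x)\prec_{S^{+}}\rho_{S}(y)$. For ($\theta$ 3) and ($\theta$ 4) the key computation is that for every $a\in M_{S}$, $(\rho_{S})^{-1}(g_{S^{+}}(a))=\{x:\rho_{S}(x)\in g_{S^{+}}(a)\}=\{x:a\in\rho_{S}(x)\}=\{x:x\in a\}=a$. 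Since the members of $M_{(S^{+})_{+}}=M_{S^{+}}$ are exactly the sets $g_{S^{+}}(a)$, this shows $(\rho_{S})^{-1}$ carries $M_{S^{+}}$ onto $M_{S}$ and is the two-sided inverse of the Boolean isomorphism $a\mapsto g_{S^{+}}(a)$ furnished by the canonical DMS construction of Section \ref{Section Canonical DMS for DCA}; hence $(\rho_{S})^{-1}$ is a Boolean homomorphism, giving ($\theta$ 4).

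For (ii), I would prove $\rho_{S}[a]=g_{S^{+}}(a)$ by double inclusion. The inclusion $\rho_{S}[a]\subseteq g_{S^{+}}(a)$ holds in any DMS: if $x\in a$ then $a\in\rho_{S}(x)$, which is a t-clan by Lemma \ref{pointclan}(i). The reverse inclusion is where DM-compactness enters: given a t-clan $\Gamma$ with $a\in\Gamma$, Lemma \ref{DM-compactness is equivalent to surjectivity of ro} supplies $x\in X^{t}_{S}$ with $\rho_{S}(x)=\Gamma$, so $a\in\rho_{S}(x)$ gives $x\in a$ and $\Gamma\in\rho_{S}[a]$. For (iii) I would then invoke the characterization of DMS-isomorphisms in Lemma \ref{Lemma DMS isomorphism}(ii): part (i) already gives that $\rho_{S}$ is a DMS-morphism, and Corollary \ref{T0 and DM-compactness together} gives that under $T0$ and DM-compactness $\rho_{S}$ is a bijection of $X^{t}_{S}$ onto $t\text{-}Clans(S^{+})$ preserving $X^{s}_{S}$ and $T_{S}$. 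The three converse conditions then follow: condition (1) from surjectivity of $\rho_{S}$ onto s-clans (Lemma \ref{DM-compactness is equivalent to surjectivity of ro}) together with injectivity from $T0$ (Lemma \ref{pointclan}(v)); condition (2) from the converse direction of Lemma \ref{pointclan}(iv); and condition (3) is exactly part (ii), since $\rho_{S}[a]=g_{S^{+}}(a)\in M_{S^{+}}$.

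Finally, for (iv) the isomorphism claim is (iii), so only naturality of the family $(\rho_{S})$ remains. The functor $\Phi\Psi$ is covariant (a composite of two contravariant functors), and for a DMS-morphism $\theta\colon S\to S'$ one gets $\Phi\Psi(\theta)=(\theta^{+})_{+}\colon (S^{+})_{+}\to ((S')^{+})_{+}$, so the square to check is $(\theta^{+})_{+}\circ\rho_{S}=\rho_{S'}\circ\theta$. I would verify it pointwise: for $x\in X^{t}_{S}$, unwinding the definition of $f_{+}$ from Definition \ref{category DCA} yields $(\theta^{+})_{+}(\rho_{S}(x))=\{a\in M_{S'}:\theta^{-1}(a)\in\rho_{S}(x)\}=\{a\in M_{S'}:x\in\theta^{-1}(a)\}=\{a\in M_{S'}:\theta(x)\in a\}=\rho_{S'}(\theta(x))$, which is precisely $(\rho_{S'}\circ\theta)(x)$. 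The only step requiring real care is the bookkeeping of the two contravariant functors and the direction of the naturality square in (iv); the rest are direct applications of the clan-structure lemmas together with the bijectivity of $\rho_{S}$.
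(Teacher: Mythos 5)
Your proposal is correct and follows essentially the same route as the paper: the same verification of ($\theta$ 1)--($\theta$ 4) via the identity $(\rho_{S})^{-1}(g_{S^{+}}(a))=a$, the same double-inclusion argument for (ii) with DM-compactness supplying the reverse inclusion, the same reduction of (iii) to Lemma \ref{Lemma DMS isomorphism}(ii) together with Corollary \ref{T0 and DM-compactness together}, and the same pointwise verification $(\theta^{+})_{+}(\rho_{S}(x))=\rho_{S'}(\theta(x))$ of the naturality square in (iv). No gaps.
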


\begin{proof} (i) We have to verify whether $\rho_{S}$ satisfies the conditions ($\theta 1$)-($\theta 4$) for
DMS-morphisms. By  Lemma \ref{pointclan} $\rho_{S}(x)$ is a t-clan
in $S^{+}$  for  $x\in X^{t}_{S}$ and an s-clan in $S^{+}$  for  $x\in X^{s}_{S}$.
This verifies the conditions ($\theta 1$) and ($\theta 2$) for DMS-morphisms. Condition ($\theta 2$) is guaranteed by axiom (7) for DMS and
Lemma \ref{pointclan} (iv). For condition ($\theta 3$) we have to show that
$(\rho_{S})^{-1}$ transforms the members from $M_{S^{+}}$ into the
members from $M_{S}$ (recall that the members of $M_{S^{+}}$ are
of the form $g_{S^{+}}(a)$, $a\in M_{S}$, see the text before the lemma). This can be seen from the following equality

$(\rho_{S})^{-1}(g_{S^{+}}(a))=a$                   \hfill (14)

Indeed, for $x\in X^{t}_{S}$ we have:

$x\in(\rho_{S})^{-1}(g_{S^{+}}(a))$ iff $\rho_{S}(x)\in
g_{S^{+}}(a)$ iff $a\in\rho_{S}(x)$ iff $x\in a$.

For condition ($\theta 4$)  we have to show that
$(\rho_{S})^{-1}$ preserves Boolean complement. The following sequence of equalities proves this:
$(\rho_{S})^{-1}(g_{S^{+}}(a^{*}))=a^{*}=((\rho_{S})^{-1}(g_{S^{+}}(a)))^{*}$,
which is true on the base of (14).

(ii) Suppose  $a\in M_{S}$ and let us show first $\rho_{S}[a]\subseteq g_{S^{+}}(a)$:

$\rho_{S}(x)\in  \rho_{S}[a]$ $\Rightarrow$ $x\in a$ $\Rightarrow$ $a\in \rho_{S}(x)$ $\Rightarrow$ $\rho_{S}(x)\in g_{S^{+}}(a)$ (because  $\rho_{S}(x)$ is a t-clan in the DCA algebra $S^{+}$). For the converse inclusion,  let $\Gamma$ be a t-clan in $S^{+}$. The by DM-compactness there exists $x\in X^{t}_{S}$ such that $\Gamma=\rho_{S}(x)$. Then for $a\in M_{S}$:

$\Gamma\in g_{S^{+}}(a)$ $\Rightarrow$ $a\in \Gamma$ $\Rightarrow$ $a\in \rho_{S}(x)$ and $x\in a$ $\Rightarrow$ $\rho_{S}(x) \in \rho_{S}[a]$ $\Rightarrow$ $\Gamma\in\rho_{S}[a]$.

(iii) Let $S$ be $T0$ and DM-compact. Then by Lemma \ref{T0 and DM-compactness together}Then
 $\rho_{S}$ is a one-one mapping from $X^{t}_{S}$ onto the set of
all t-clans of $S^{+}$, which are the points of  $(S^{+})_{+}$. By (i) $\rho_{S}$  is a DMS-morphism from $S$ to $(S^{+})_{+}$. So in order to show that   $\rho_{S}$ is a DMS-isomorphism from $S$ onto $(S^{+})_{+}$ we have to see if $\rho_{S}$ satisfies the conditions (1), (2)  and (3)   of Lemma \ref{Lemma DMS isomorphism} (ii).

For condition (1) suppose $\rho_{S}(x)\in X^{s}_{S^{+}}$. Then $\rho_{S}(x)$ is a t-clan in $M_{S}$. By DM-compactness there exists $y\in X^{s}_{S}$ such that $\rho_{S}(x)=\rho_{S}(y)$. By $T0$ condition this implies $x=y$, so $x\in X^{s}_{S}$.

For condition (2) suppose  $\rho_{S}(x)\prec_{S^{+}} \rho_{S}(y)$. Then by Lemma \ref{pointclan}
and axiom (S7) for DMS we obtain $x\prec_{S}y$.

For condition (3) suppose $a\in M_{S}$ and proceed to show that $\theta[a]\in M_{(S^{+})_{+}}$. By (ii)  $\theta[a]=g_{S^{+}}(a)$ and since  $g_{S^{+}}(a)\in M_{(S^{+})_{+}}$ we get $\theta[a]\in M_{(S^{+})_{+}}$.

Thus the conditions (1), (2) and (3) are fulfilled which proves that  $\rho_{S}$ is a DMS-isomorphism from $S$ onto $(S^{+})_{+}$.

(iv) Let $S$ be a T0 and DM-compact DMS.  In order $\rho_{S}$ to
be a natural isomorphism from $S$ to $(S^{+})_{+}$ it has to
satisfy
 the following two conditions: first, $\rho_{S}$ have to
be a DMS-isomorphism - this is guaranteed by (iii), and second, for every DMS morphism $\theta: S\Rightarrow S'$: the following equality should be true:
$\theta\circ \rho_{S'}=\rho_{S}\circ (\theta^{+})_{+}$. This
equality is equivalent to the following condition: for $x\in
X^{t}_{S}$

$(\theta^{+})_{+}(\rho_{S}(x)=\rho_{S'}(\theta(x))$               \hfill (15)

 The following sequence of equivalencies proves (15). For $a\in M_{S'}$:

$a\in (\theta^{+})_{+}(\rho_{S}(x))$ iff $a\in (\theta^{+})^{-1}(\rho_{S}(x))$ iff $\Theta^{+}(a)\in \rho_{S}(x)$ iff $x\in \theta^{+}(a)$ iff $x\in \theta^{-1}(a)$
$\theta(x)\in a$ iff $a \in\rho_{S'}(\theta(x))$.
\end{proof}

As applications of the developed theory we establish some
isomorphism correspondences between the objects of the two
categories. The isomorphism between two objects will be denoted by
the symbol $\cong$.

\begin{lemma}\label{isomorphism lemma 1} Let $A,A'$ be two DCAs. Then
the following conditions are equivalent:

(i) $A\cong A'$,

(ii) $A_{+}\cong (A')_{+}$,

(iii) $(A_{+})^{+}\cong ((A')_{+})^{+}$

\end{lemma}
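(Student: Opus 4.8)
The plan is to treat this as a formal consequence of the duality machinery assembled in Lemmas \ref{f+ is DMS morphism}, \ref{theta + is a DCA-morphism}, \ref{rho is isomorphism} and \ref{composition of morphisms}, rather than to argue directly about the algebras. First I would record the two contravariant functors $\Phi:\mathbf{DCA}\to\mathbf{DMS}$, $\Phi(A)=A_+$, $\Phi(f)=f_+$, and $\Psi:\mathbf{DMS}\to\mathbf{DCA}$, $\Psi(S)=S^+$, $\Psi(\theta)=\theta^+$; that these are well defined on morphisms is exactly Lemma \ref{f+ is DMS morphism}(i) and Lemma \ref{theta + is a DCA-morphism}, and that they respect composition and identities is Lemma \ref{composition of morphisms}. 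The key elementary categorical fact I would then invoke is that any functor (covariant or contravariant) preserves isomorphisms: if $f$ is invertible then $\Phi(f)$ is invertible with inverse $\Phi(f^{-1})$, since $\Phi$ carries the two identities witnessing $f^{-1}f=\mathrm{id}$ and $ff^{-1}=\mathrm{id}$ to the corresponding identities (the contravariance merely swaps which composite is which).

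With this in hand I would prove the cycle (i)$\Rightarrow$(ii)$\Rightarrow$(iii)$\Rightarrow$(i). For (i)$\Rightarrow$(ii): a DCA-isomorphism $A\cong A'$ is sent by $\Phi$ to a DMS-isomorphism $A_+\cong (A')_+$. For (ii)$\Rightarrow$(iii): a DMS-isomorphism $A_+\cong (A')_+$ is sent by $\Psi$ to a DCA-isomorphism $(A_+)^+\cong((A')_+)^+$ (note that $A_+$ and $(A')_+$ are legitimate objects of $\mathbf{DMS}$, indeed of $\mathbf{DMS}^{*}$, by Theorem \ref{DM-compactness of A+}, so $\Psi$ applies). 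For (iii)$\Rightarrow$(i): by Lemma \ref{topoisomorphism lemma for DCA} the map $g_A$ is a DCA-isomorphism $A\cong(A_+)^+$, and likewise $g_{A'}$ gives $A'\cong((A')_+)^+$; combining these with (iii) and using that $\cong$ is an equivalence relation yields $A\cong(A_+)^+\cong((A')_+)^+\cong A'$.

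I do not expect a genuine obstacle here: every implication is either an application of functoriality or a chaining of already-established natural isomorphisms. The only points that need care are bookkeeping ones: checking that $\Phi$ and $\Psi$ really are functors (so that ``functors preserve isomorphisms'' is available), which is precisely the content cited above, and keeping track of the paper's diagrammatic composition convention $h=f\circ g$, $h(a)=g(f(a))$, when verifying that the images of inverse morphisms are again mutually inverse. One could alternatively phrase the whole argument as follows: the pair $(\Phi,\Psi)$ together with the natural isomorphisms $g$ (Lemma \ref{f+ is DMS morphism}(ii)) and $\rho$ (Lemma \ref{rho is isomorphism}(iv)) is a dual equivalence between $\mathbf{DCA}$ and $\mathbf{DMS}^{*}$, and an equivalence of categories both preserves and reflects isomorphisms; the cycle above is just the hands-on version of this remark.
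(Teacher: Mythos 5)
Your proposal is correct and follows essentially the same route as the paper: the paper likewise obtains (i)$\Leftrightarrow$(iii) from Lemma \ref{topoisomorphism lemma for DCA} and proves (i)$\Rightarrow$(ii) and (ii)$\Rightarrow$(iii) by checking by hand that the images under $(\cdot)_{+}$ resp.\ $(\cdot)^{+}$ of a pair of mutually inverse morphisms are again mutually inverse, which is exactly the instance of ``functors preserve isomorphisms'' you invoke. The only caveat is that Lemma \ref{composition of morphisms} only establishes that \textbf{DCA} and \textbf{DMS} are categories, not that $\Phi$ and $\Psi$ respect composition, so the small preimage computation $(f\circ h)_{+}=h_{+}\circ f_{+}$ (which the paper writes out explicitly) still needs to be recorded to justify your appeal to functoriality.
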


\begin{proof} \textbf{(i)$\Leftrightarrow$(iii)}.By Lemma \ref{topoisomorphism lemma for DCA} we have $A\cong (A_{+})^{+}$ and $A'\cong (A'_{+})^{+}$. This makes obvious the equivalence (i)$\Leftrightarrow$(iii).

\textbf{(i)$\Rightarrow$(ii)}
Suppose $A\cong A'$, then there exists a on-one mapping $f$ from $A$ onto $A'$ with a converse mapping $h$ such that $f: A\mapsto A'$ is a DCA morphism from $A$ onto $A'$ and $h: A'\mapsto A$ is a DCA- morphism from $A'$ onto $A$ such that the composition $f\circ h$ is the identity mapping in $A'$ and the composition $h\circ f$ is the identity mapping in $A$.   Then by Lemma \ref{f+ is DMS morphism}
 $f_{+}$ is a DMS-morphism from $A'_{+}$ onto $A_{+}$ and $h_{+}$ is a DMS-morphism from $A_{+}$ onto $A'_{+}$.

 We shall show the following:

  (1) The composition $f_{+}\circ h_{+}$ is the identity in $A'_{+}$, and

  (2) The composition $h_{+}\circ f_{+}$ is the identity in $A_{+}$.

Then, by the definition of DMS- isomorphism this will imply that both $f_{+}$ and $h_{+}$ are DMS-isomorphisms in the corresponding directions.

Note that the members of $A_{+}$ are the t-clans of $A$ and similarly for $A'_{+}$.

  To show (1) let $\Gamma$ be a point of the space $A'_{+}$, i.e. $\Gamma$ is a t-clan in $A'$. We shall show that
   $(f_{+}\circ h_{+})(\Gamma)=\Gamma$ which will prove (1). This is seen from the  following sequence of equivalencies where $a$ is an arbitrary element of  $B_{A'}$:

  $a\in (f_{+}\circ h_{+})(\Gamma$ ) iff
   $a\in (f_{+}(h_{+}(\Gamma))$ iff $a \in f^{-1}( h_{+}(\Gamma))$ iff $f(a)\in  h_{+}(\Gamma)$ iff $f(a)\in h^{-1}(\Gamma)$  iff $h(f(a))\in \Gamma$ iff $a\in \Gamma$.

  Here we use that $h(f(a))=a$ for $a\in B_{A'}$ because $h$ is the converse of the one-one mapping $f$ from $B_{A}$ onto $B_{A'}$.

  In a similar way we show (2).

\textbf{(ii)$\Rightarrow$(iii )} The proof is similar to the above one.  Suppose  $A_{+}\cong (A')_{+}$, then there exists a one-one mapping   $\theta$ and its converse $\eta$
 such that $\theta$ is a DMS-morphism from $A_{+}$ onto $(A')_{+}$ and $\eta$ is a DMS-morphism from $(A')_{+}$ onto $A_{+}$. Then by Lemma  \ref{theta + is a DCA-morphism} $\theta^{+}$ is a DCA-morphism from $(A'_{+})^{+}$ into $(A_{+})^{+}$ and $\eta^{+}$ is a DCA-morphism from $(A'_{+})^{+}$ into $(A_{+})^{+}$. We shall show that both $\theta^{+}$ and $\eta^{+}$ are DCA-isomorphisms in the corresponding directions by showing that their compositions are identities in the corresponding domains. Let us note that the domain of $\theta^{+}$ is the members of the algebra $(A'_{+})^{+}$ which are of the form $g_{A'}(a)$, $a\in_{B_{A'}}$, and similarly for the members of $(A_{+})^{+}$. Namely we will show the following two things:

(3) $(\theta^{+}\circ\eta^{+})(g_{A'}(a))=g_{A'}(a)$ for any $a\in B_{A'}$,

(4) $(\eta^{+}\circ\theta^{+})(g_{A'}(a))=g_{A'}(a)$ for any $a\in B_{A}$,

To show (3) note that $g_{A'}(a)=\{\Gamma\in t-clans(A'):a\in \Gamma$. So let $\Gamma\in t-clans(A')$. Then the following sequence of equivalents proves (3):

 $\Gamma\in (\theta^{+}\circ\eta^{+})(g_{A'}(a))$ iff $\Gamma\in (\theta^{+}(eta^{+}(g_{A'}(a))))$ iff $\Gamma\in (\theta^{-1}(eta^{+}(g_{A'}(a))))$ iff $\theta(\Gamma)\in (eta^{+}(g_{A'}(a)))$ iff $\theta(\Gamma)\in (eta^{-1}(g_{A'}(a)))$ iff $\eta(\theta(\Gamma))\in g_{A'}(a)$ iff $\Gamma\in g_{A'}(a)$.

  We have just used that $\eta(\theta(\Gamma))=\Gamma$, because  $\eta$ is the converse of the one-one mapping $\theta$ from $X^{t}_{A_{+}}=t-Calans(A)$ onto $X^{t}_{(A')_{+}}=t-clans(A')$. The proof of (4) is similar.
\end{proof}

\begin{lemma}\label{isomorphism lemma 2} Let $S, S'$ be two DMSes.
Then the following conditions are equivalent:

(i) $S\cong S'$,

(ii) $S^{+}\cong (S')^{+}$,

(iii) $(S^{+})_{+}\cong ((S')^{+})_{+}$.

\end{lemma}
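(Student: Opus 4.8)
The plan is to prove Lemma~\ref{isomorphism lemma 2} as the exact dual of Lemma~\ref{isomorphism lemma 1}, exploiting the machinery already assembled for the two contravariant functors $\Phi=(\cdot)_{+}$ and $\Psi=(\cdot)^{+}$. The three tools I would invoke are: Lemma~\ref{theta + is a DCA-morphism} (a DMS-morphism $\theta\colon S\to S'$ yields a DCA-morphism $\theta^{+}\colon (S')^{+}\to S^{+}$), Lemma~\ref{f+ is DMS morphism} (a DCA-morphism $f\colon A\to A'$ yields a DMS-morphism $f_{+}\colon (A')_{+}\to A_{+}$, and $g_{A}$ is a natural DCA-isomorphism), and Lemma~\ref{rho is isomorphism}(iii)--(iv) (for a $T0$ and DM-compact $S$, $\rho_{S}$ is a natural DMS-isomorphism $S\to (S^{+})_{+}$). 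As in Lemma~\ref{isomorphism lemma 1}, I would establish the cycle $(i)\Leftrightarrow(iii)$, then $(i)\Rightarrow(ii)$, then $(ii)\Rightarrow(iii)$.

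For $(i)\Leftrightarrow(iii)$ I would appeal directly to Lemma~\ref{rho is isomorphism}: since $S\cong (S^{+})_{+}$ and $S'\cong ((S')^{+})_{+}$, the statements $S\cong S'$ and $(S^{+})_{+}\cong ((S')^{+})_{+}$ are interchangeable by transitivity of $\cong$. This is precisely the dual of the step in Lemma~\ref{isomorphism lemma 1} that used $A\cong (A_{+})^{+}$ (Lemma~\ref{topoisomorphism lemma for DCA}). I would stress here that this is where the hypotheses enter: $\rho_{S}$ is an isomorphism only for $T0$ and DM-compact spaces (Corollary~\ref{T0 and DM-compactness together}), so the equivalence $(i)\Leftrightarrow(iii)$ should be read for objects of \textbf{DMS}$^{*}$; without these hypotheses one retains only the implications $(i)\Rightarrow(ii)\Rightarrow(iii)$, which follow from functoriality alone.

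For $(i)\Rightarrow(ii)$, given a DMS-isomorphism $\theta\colon S\to S'$ with converse $\eta\colon S'\to S$, I would apply Lemma~\ref{theta + is a DCA-morphism} to obtain DCA-morphisms $\theta^{+}\colon (S')^{+}\to S^{+}$ and $\eta^{+}\colon S^{+}\to (S')^{+}$, and then verify that the two compositions $\theta^{+}\circ\eta^{+}$ and $\eta^{+}\circ\theta^{+}$ are the identity morphisms on their domains; this is a short computation on the elements $a\in M_{S}$ (resp. $a\in M_{S'}$) using $\theta^{+}=\theta^{-1}$ together with $\eta(\theta(x))=x$ and $\theta(\eta(x))=x$, exactly as in the corresponding step of Lemma~\ref{isomorphism lemma 1}. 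By Definition~\ref{category DCA} this makes $\theta^{+}$ a DCA-isomorphism, so $S^{+}\cong (S')^{+}$. For $(ii)\Rightarrow(iii)$ I would run the dual argument: from a DCA-isomorphism $f\colon S^{+}\to (S')^{+}$ with converse $h$, Lemma~\ref{f+ is DMS morphism} produces mutually inverse DMS-morphisms $f_{+}$ and $h_{+}$ between $(S^{+})_{+}$ and $((S')^{+})_{+}$, and the same kind of elementwise check (now on t-clans, via $f_{+}=f^{-1}$) shows their compositions are identities, giving $(S^{+})_{+}\cong ((S')^{+})_{+}$.

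The bulk of the work, and the only place where anything can go wrong, is the careful bookkeeping of the contravariance: one must track which object is the domain and which the codomain at each application of $\Phi$ and $\Psi$, and confirm that the converses $\eta$, $h$ really induce two-sided inverses of $\theta^{+}$, $f_{+}$. The computations themselves are routine equalities of the type already displayed in Lemma~\ref{isomorphism lemma 1} (``$a\in(f_{+}\circ h_{+})(\Gamma)$ iff $a\in\Gamma$'', etc.), so no genuinely new idea is needed. Alternatively, once Lemmas~\ref{f+ is DMS morphism} and~\ref{rho is isomorphism} are read as asserting that $\Phi,\Psi$ constitute a dual equivalence between \textbf{DCA} and \textbf{DMS}$^{*}$, all three conditions become equivalent by the general fact that a (dual) equivalence of categories preserves and reflects isomorphisms, and the explicit verifications above merely unwind this.
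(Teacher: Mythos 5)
Your proof is correct and is exactly what the paper intends: the paper's entire proof of this lemma is the single sentence that it is analogous to the proof of Lemma~\ref{isomorphism lemma 1}, and you carry out precisely that dualization using Lemma~\ref{theta + is a DCA-morphism}, Lemma~\ref{f+ is DMS morphism} and Lemma~\ref{rho is isomorphism}. Your added observation that the implication (iii)$\Rightarrow$(i) genuinely requires $S,S'$ to be $T0$ and DM-compact (so that $\rho_{S}$ is a DMS-isomorphism onto $(S^{+})_{+}$), while (i)$\Rightarrow$(ii)$\Rightarrow$(iii) hold by functoriality alone, is a point the paper's one-line proof glosses over and is worth retaining.
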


\begin{proof} The proof is analogous to the proof of Lemma \ref{isomorphism lemma
1}
\end{proof}
 As a corollary from Lemma \ref{isomorphism lemma
1} and Lemma \ref{isomorphism lemma 2} we obtain the following addition to
 the topological representation theorem for DCAs.

\begin{corollary} There exists a bijective correspondence between
the class of  all, up to DCA-isomorphism DCAs, and the class of all,
up to DMS-isomorphism DMSes; namely, for every DCA-algebra $A$ the corrseponding DMS of $A$ is  $A_{+}$ - the canonical DM-space of $A$; and for every DMS $S$ the corresponding DCA  of $S$ is $S^{+}$ -- The canonical DC-algebra of $S$.

\end{corollary}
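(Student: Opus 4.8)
The plan is to realise the two assignments $A\mapsto A_{+}$ and $S\mapsto S^{+}$ as mutually inverse bijections on isomorphism classes, taking the single map $\Psi\colon[S]\mapsto[S^{+}]$ from DMS-iso-classes to DCA-iso-classes as the principal object and recovering $\Phi\colon[A]\mapsto[A_{+}]$ as its inverse. First I would verify that both assignments are well defined on iso-classes: the implication (i)$\Rightarrow$(ii) of Lemma \ref{isomorphism lemma 1} gives $A\cong A'\Rightarrow A_{+}\cong(A')_{+}$, and the implication (i)$\Rightarrow$(ii) of Lemma \ref{isomorphism lemma 2} gives $S\cong S'\Rightarrow S^{+}\cong(S')^{+}$, so the class of the image depends only on the class of the argument.

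The core of the argument is to show that $\Psi$ is a bijection. Injectivity is precisely the direction (ii)$\Rightarrow$(i) of Lemma \ref{isomorphism lemma 2}: from $S^{+}\cong(S')^{+}$ one concludes $S\cong S'$. For surjectivity I would use the representation theorem: given any DCA $A$, Theorem \ref{topoisomorphism lemma for DCA} supplies $A\cong(A_{+})^{+}$, so $[A]=\Psi([A_{+}])$ lies in the image of $\Psi$, witnessed by the DMS $A_{+}$. Hence $\Psi$ is a bijection from the class of all DMS-iso-classes onto the class of all DCA-iso-classes.

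It then remains to identify the inverse with $\Phi$ and to read off the stated correspondence. Since $\Psi$ is injective and $\Psi([A_{+}])=[(A_{+})^{+}]=[A]$ by Theorem \ref{topoisomorphism lemma for DCA}, the class $[A_{+}]$ is the unique $\Psi$-preimage of $[A]$, so $\Psi^{-1}=\Phi$. As the inverse of a bijection, $\Phi$ is a bijection as well and both composites are identities; in particular $\Phi(\Psi([S]))=[(S^{+})_{+}]=[S]$. This is exactly the bijective correspondence asserted, realised by $[A]\mapsto[A_{+}]$ in one direction and $[S]\mapsto[S^{+}]$ in the other.

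The step carrying the weight of the theorem is that the correspondence covers \emph{every} DMS, not merely the T0 and DM-compact ones. Although $\Phi\circ\Psi=\mathrm{id}$ already follows from the bijectivity of $\Psi$ established above, it is worth isolating its content: the composite returns the bidual $(S^{+})_{+}$, which by Theorem \ref{DM-compactness of A+} is always T0 and DM-compact, so the correspondence forces $(S^{+})_{+}\cong S$ for an arbitrary $S$. Directly, this comes from Lemma \ref{isomorphism lemma 2}: applying its equivalence (i)$\Leftrightarrow$(iii) to the pair $S,(S^{+})_{+}$ reduces $S\cong(S^{+})_{+}$ to $(S^{+})_{+}\cong\bigl(((S^{+})_{+})^{+}\bigr)_{+}$, whose right-hand side collapses to $(S^{+})_{+}$ since $((S^{+})_{+})^{+}\cong S^{+}$ (Theorem \ref{topoisomorphism lemma for DCA} with $A=S^{+}$) transports through the $(\cdot)_{+}$ construction by Lemma \ref{isomorphism lemma 1}; the reduced statement is then a tautology and (iii)$\Rightarrow$(i) delivers $S\cong(S^{+})_{+}$. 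The genuinely delicate point I would verify is that Lemma \ref{isomorphism lemma 2} is legitimately applied to spaces outside the T0 DM-compact range of Lemma \ref{rho is isomorphism}(iii), since it is there that the whole extension from the subcategory of T0 DM-compact spaces to all of \textbf{DMS} resides.
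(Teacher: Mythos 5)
Your overall architecture is the paper's own: the paper derives this corollary in one line from Lemma \ref{isomorphism lemma 1} and Lemma \ref{isomorphism lemma 2}, and your expansion --- well-definedness from the (i)$\Rightarrow$(ii) directions, injectivity of $[S]\mapsto[S^{+}]$ from (ii)$\Rightarrow$(i) of Lemma \ref{isomorphism lemma 2}, surjectivity from $A\cong(A_{+})^{+}$ (Lemma \ref{topoisomorphism lemma for DCA}) --- is a faithful unpacking of that. But the ``delicate point'' you flag at the end is not something you could have verified: it is a genuine gap, and it already infects your injectivity step, not only your closing remark. The paper proves Lemma \ref{isomorphism lemma 2} only by the phrase ``analogous to Lemma \ref{isomorphism lemma 1}'', and in that analogy the pivot $A\cong(A_{+})^{+}$, valid for \emph{every} DCA, gets replaced by $S\cong(S^{+})_{+}$, which Lemma \ref{rho is isomorphism} (iii) supplies only for T0 and DM-compact $S$. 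Hence the directions (ii)$\Rightarrow$(i) and (iii)$\Rightarrow$(i) of Lemma \ref{isomorphism lemma 2} carry no proof outside the class \textbf{DMS$^{*}$}, and your ``direct'' derivation of $S\cong(S^{+})_{+}$ for arbitrary $S$ is circular: it invokes (iii)$\Rightarrow$(i), whose only available justification is precisely the bidual isomorphism being derived.

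Worse, at full generality the claim is false, not merely unproven. The bidual $(S^{+})_{+}$ is always T0 and DM-compact (Theorem \ref{DM-compactness of A+}); a DMS-isomorphism matches $M_{S}$ with $M_{S'}$ compatibly with $\rho$, so by Lemma \ref{pointclan} (v) the T0 property is a DMS-isomorphism invariant; and non-T0 DMSes exist (double one point $x$ of any dual space $A_{+}$, giving the twin the same $\rho$-value: the axioms (S1)--(S8) survive and the dual algebra is unchanged). For such an $S$ one has $S^{+}\cong\bigl((S^{+})_{+}\bigr)^{+}$ but $S\not\cong(S^{+})_{+}$, so $[S]\mapsto[S^{+}]$ is not injective on all iso-classes and no correspondence of the stated shape can be bijective. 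The corollary --- and Lemma \ref{isomorphism lemma 2}, on which both you and the paper rest it --- is correct only with ``DMS'' read as ``T0 and DM-compact DMS'', in accordance with the duality theorem (Theorem \ref{Duality theorem}). Your instinct in isolating this point was exactly right; the resolution is to restrict the statement, not to verify the unrestricted lemma.
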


%%%%%%%%%%%%%%%%%%%%%%%%%%%%%%%%%%%%%%%%%%%%%%%%%%%%%%%%%%%%%%%%
\subsection{Topological duality theorem for DCAs}\label{Section Duality Theorem}
%%%%%%%%%%%%%%%%%%%%%%%%%%%%%%%%%%%%%%%%%%%%%%%%%%%%%%%%%%%%%%%%%%%%

In this section we prove  the third important theorem of this
paper.

\begin{theorem}\label{Duality theorem} {\bf Topological  duality theorem
for DCAs.}
 The category \textbf{DCA} of all dynamic contact algebras  is dually equivalent to the
 category \textbf{DMS$^{*}$} of all $T0$ and DM-compact DMSes.

 \end{theorem}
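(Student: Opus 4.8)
The plan is to assemble the duality directly from the machinery already established, by verifying the two ingredients that constitute a dual equivalence of categories: (a) that $\Phi$ and $\Psi$ are contravariant functors between $\mathbf{DCA}$ and $\mathbf{DMS}^{*}$, and (b) that the families $\{g_{A}\}$ and $\{\rho_{S}\}$ are natural isomorphisms witnessing $\Psi\circ\Phi\cong \mathrm{Id}_{\mathbf{DCA}}$ and $\Phi\circ\Psi\cong \mathrm{Id}_{\mathbf{DMS}^{*}}$.

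First I would check that $\Phi$ is a well-defined contravariant functor landing in $\mathbf{DMS}^{*}$. On objects $\Phi(A)=A_{+}$, and by Theorem \ref{DM-compactness of A+} $A_{+}$ is $T0$ and DM-compact, hence an object of $\mathbf{DMS}^{*}$; this is exactly what makes the restriction to the full subcategory $\mathbf{DMS}^{*}$ coherent. On morphisms, for $f:A\to A'$ we set $\Phi(f)=f_{+}=f^{-1}$, and Lemma \ref{f+ is DMS morphism}(i) shows that $f_{+}$ is a DMS-morphism from $(A')_{+}$ to $A_{+}$, with the direction reversed, so $\Phi$ is contravariant. Functoriality, namely $\Phi(1_{A})=1_{A_{+}}$ and $\Phi(g\circ f)=\Phi(f)\circ\Phi(g)$, follows from the definition of $f_{+}$ as a preimage operation together with Lemma \ref{composition of morphisms}. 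Symmetrically, $\Psi(S)=S^{+}$ and $\Psi(\theta)=\theta^{+}=\theta^{-1}$, where Lemma \ref{theta + is a DCA-morphism} guarantees that $\theta^{+}:(S')^{+}\to S^{+}$ is a DCA-morphism; functoriality of $\Psi$ is again routine bookkeeping with preimages.

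Second I would establish naturality of the two candidate unit/counit families. By Lemma \ref{f+ is DMS morphism}(ii), each $g_{A}:A\to (A_{+})^{+}=\Psi(\Phi(A))$ is a DCA-isomorphism and the naturality square $g_{A'}\circ f=(f_{+})^{+}\circ g_{A}$ commutes (this is precisely equation (13)); hence $g$ is a natural isomorphism $\mathrm{Id}_{\mathbf{DCA}}\Rightarrow \Psi\circ\Phi$. Dually, by Lemma \ref{rho is isomorphism}(iv), for each $T0$ and DM-compact $S$ the map $\rho_{S}:S\to (S^{+})_{+}=\Phi(\Psi(S))$ is a DMS-isomorphism and the square $\theta\circ\rho_{S'}=\rho_{S}\circ(\theta^{+})_{+}$ commutes (this is equation (15)); hence $\rho$ is a natural isomorphism $\mathrm{Id}_{\mathbf{DMS}^{*}}\Rightarrow \Phi\circ\Psi$. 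Combining, the pair $(\Phi,\Psi)$ together with the natural isomorphisms $g$ and $\rho$ constitutes a dual equivalence, so $\mathbf{DCA}$ and $\mathbf{DMS}^{*}$ are dually equivalent.

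The step I expect to be the crux, though it has in fact already been discharged in the preceding lemmas, is the passage to the full subcategory $\mathbf{DMS}^{*}$: the map $\rho_{S}$ is only a DMS-\emph{morphism} in general, and it upgrades to an \emph{isomorphism} precisely because of the two standing hypotheses, $T0$ providing injectivity (via Lemma \ref{pointclan}(v)) and DM-compactness providing surjectivity onto the t-clans (via Lemma \ref{DM-compactness is equivalent to surjectivity of ro}). This is the reason the duality must be stated against $\mathbf{DMS}^{*}$ rather than all of $\mathbf{DMS}$, and Theorem \ref{DM-compactness of A+} is what ensures $\Phi$ actually takes values inside $\mathbf{DMS}^{*}$, closing the loop. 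Everything remaining is verification of the commuting squares (13) and (15) and the elementary functor axioms.
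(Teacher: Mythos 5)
Your proposal is correct and follows essentially the same route as the paper: the paper's proof simply cites Lemma \ref{f+ is DMS morphism}, Lemma \ref{theta + is a DCA-morphism} and Lemma \ref{rho is isomorphism}, which is exactly the program (I)--(IV) of Section \ref{The categories DCA and DMS} that you spell out, with Theorem \ref{DM-compactness of A+} ensuring $\Phi$ lands in \textbf{DMS$^{*}$} and equations (13) and (15) giving the naturality squares. Your added remark on why $T0$ and DM-compactness are precisely what upgrade $\rho_{S}$ from a morphism to an isomorphism is a faithful reading of Lemma \ref{pointclan}(v) and Lemma \ref{DM-compactness is equivalent to surjectivity of ro}, not a departure from the paper's argument.
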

\begin{proof} The proof follows from Lemma \ref{f+ is DMS
morphism}, Lemma \ref{theta + is a DCA-morphism} and Lemma \ref{rho is isomorphism}.
\end{proof}

The above theorem has several consequences to some important subcategories of \textbf{DCA} and \textbf{DMS}. The first example is the following. Let $Ax$ be a subset of the set of temporal axioms (rs), (ls), (up dir), (down dir), (circ), (dens), (ref), (lin), (tri), (tr). Consider the class of all DCAs satisfying the axioms from $Ax$. It is easy to see that this class forms a full subcategory of the category of all DCAs under the DCA-morphism. Denote this subcategory by  $\textbf{DCA(Ax)}$. Let $\widehat{Ax}$ be the subset of the corresponding to Ax time condition from the list (RS), (LS), (Up Dir), (Down Dir),
(Circ), (Dens), (Ref), (Lin), (Tri), (Tr). Consider the class of all $T0$  and DM-compact DMSes which satisfy the axioms $\widehat{Ax}$. It is easy to see that this class is a full subcategory of the category \textbf{DMS$^{*}$} of all T0 and DM-compact dynamic
 mereotopological spaces. Denote this subcategory by \textbf{DMS$(\widehat{Ax})^{*}$}

\begin{theorem}\label{Duality for DCA+Ax}  The category $\textbf{DCA(Ax)}$ of all dynamic contact algebras satisfying Ax is dually equivalent to the
 category \textbf{DMS$(\widehat{Ax})^{*}$} of all $T0$ and DM-compact DMSes satisfying $\widehat{Ax}$.
 \end{theorem}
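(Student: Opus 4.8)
The plan is to obtain this result by \emph{restricting} the dual equivalence of Theorem \ref{Duality theorem} to the two full subcategories $\textbf{DCA(Ax)}$ and $\textbf{DMS}(\widehat{Ax})^{*}$. Since both are \emph{full} subcategories, morphisms, identities and compositions are inherited verbatim, and the two contravariant functors $\Phi$ and $\Psi$ together with the natural isomorphisms $g_{A}\colon A\to(A_{+})^{+}$ and $\rho_{S}\colon S\to(S^{+})_{+}$ constructed in the proof of Theorem \ref{Duality theorem} are already at hand. Hence the only thing that must be checked is that $\Phi$ and $\Psi$ send objects of one subcategory into objects of the other; once this is done, the restricted functors and the restricted natural transformations automatically witness the dual equivalence.

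First I would verify that $\Phi$ restricts, i.e. that $A\in\textbf{DCA(Ax)}$ implies $A_{+}\in\textbf{DMS}(\widehat{Ax})^{*}$. By Theorem \ref{DM-compactness of A+} the dual space $A_{+}$ is always $T0$ and DM-compact, so it lies in $\textbf{DMS}^{*}$. Its time structure is precisely the canonical time structure $(Clusters(A),\prec_{A})$ of $A$ (Definition \ref{canonical time structure}). Now for each time axiom $(\varphi)$ in $Ax$ with corresponding time condition $(\Phi)$ in $\widehat{Ax}$, the Correspondence Lemma \ref{ultrafilter and cluster correspondece for time axioms} (parts (ii) and (iii)) gives that $(\varphi)$ holds in $A$ iff $(\Phi)$ holds, in its cluster interpretation, in $(Clusters(A),\prec_{A})$. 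Since $A$ satisfies every axiom of $Ax$, the time structure of $A_{+}$ satisfies every condition of $\widehat{Ax}$, whence $A_{+}\in\textbf{DMS}(\widehat{Ax})^{*}$.

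Next I would verify that $\Psi$ restricts, i.e. that $S\in\textbf{DMS}(\widehat{Ax})^{*}$ implies $S^{+}\in\textbf{DCA(Ax)}$. That $S^{+}$ is a DCA is axiom (S6). For the time axioms I would chain two earlier results: by Lemma \ref{Lemma lifting time conditions} a time condition $(\Phi)$ holds in the time structure $(T_{S},\prec_{S})$ of $S$ iff it holds in the canonical time structure $(T_{S^{+}},\prec_{S^{+}})$ of $S^{+}$ (here the hypothesis that $S$ is $T0$ is used precisely for the case $(\Phi)=$ (Tri)), and by Lemma \ref{ultrafilter and cluster correspondece for time axioms} the latter holds iff the corresponding time axiom $(\varphi)$ holds in $S^{+}$. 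Equivalently, one may invoke Lemma \ref{Lemma topological definability} together with Lemma \ref{Lemma time axioms in DMS}. As $S$ satisfies all of $\widehat{Ax}$, we conclude that $S^{+}$ satisfies all of $Ax$, so $S^{+}\in\textbf{DCA(Ax)}$.

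Finally, the natural isomorphisms carry over: time conditions are invariant under DMS-isomorphism and time axioms under DCA-isomorphism, so $g_{A}$ and $\rho_{S}$ keep both their domains and codomains inside the respective subcategories, and the naturality squares coincide with those of Theorem \ref{Duality theorem}. This yields the asserted dual equivalence. The main obstacle is not categorical but the careful bookkeeping of \emph{lifting} the time conditions across the three layers --- the time structure $(T_{S},\prec_{S})$ of the DMS, the canonical time structure $(T_{S^{+}},\prec_{S^{+}})$ of $S^{+}$, and the algebra $S^{+}$ itself --- where one must keep track of the special role of the $T0$ hypothesis for (Tri) and of the systematic exclusion of (Irr) throughout the correspondence lemmas.
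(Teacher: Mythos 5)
Your proposal is correct and follows essentially the same route as the paper: the theorem is obtained as a corollary of Theorem \ref{Duality theorem} by checking that the functors respect the subcategories, which the paper does via Lemma \ref{Lemma topological definability} (itself the chain of Lemma \ref{Lemma lifting time conditions} and the Correspondence Lemma \ref{ultrafilter and cluster correspondece for time axioms} that you invoke). Your write-up is merely somewhat more explicit than the paper's, which only records the direction $S\mapsto S^{+}$ and leaves the restriction of $\Phi$ and of the natural isomorphisms implicit.
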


 \begin{proof} Let $S$ be a $T0$ and DM-compact DMS. It follows by Lemma \ref{Lemma topological definability} that $S$ satisfies $\widehat{Ax}$ iff $S^{+}$ satisfies Ax. Now the theorem is a corollary of Theorem \ref{Duality theorem}.
 \end{proof}

Another subcategory of \textbf{DCA} is the class of all trivial DCAs with the same morphisms. Denote it by \textbf{DCA}$_{trivial}$. The corresponding subcategory of \textbf{DMS$^{*}$} with the same morphisms is the class of all trivial $T0$ and DM-compact DMSes. Denote it by \textbf{DMS$^{*}$}$_{trivial}$. The following theorem is also an obvious consequence of Theorem \ref{Duality theorem}

\begin{theorem} \label{Theorem duality for trivial DMS} The category \textbf{DCA}$_{trivial}$ is dually isomorphic to the category \newline\textbf{DMS$^{*}$}$_{trivial}$.
\end{theorem}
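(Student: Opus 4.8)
The plan is to show that the dual equivalence of Theorem \ref{Duality theorem} simply restricts to the two trivial subcategories. Since \textbf{DCA}$_{trivial}$ and \textbf{DMS$^{*}$}$_{trivial}$ are by construction \emph{full} subcategories (they carry the same morphisms as their ambient categories \textbf{DCA} and \textbf{DMS$^{*}$}), it suffices to verify three things: (a) the contravariant functor $\Phi$ sends a trivial DCA to a trivial $T0$ and DM-compact DMS; (b) the contravariant functor $\Psi$ sends a trivial $T0$ and DM-compact DMS to a trivial DCA; and (c) the natural isomorphisms $g_A$ and $\rho_S$ produced in the proof of Theorem \ref{Duality theorem} are still available when $A$ and $S$ are restricted to the trivial objects. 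Once (a)--(c) are in place, the restricted functors together with the restricted natural isomorphisms witness the claimed duality, and nothing further is needed.

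For (b), let $S$ be a trivial $T0$ and DM-compact DMS. Then $\Psi(S)=S^{+}$ is a DCA by axiom (S6), and Lemma \ref{Lemma equivalence for trivial DMS} gives directly that $S^{+}$ is a trivial DCA; hence $\Psi$ indeed maps \textbf{DMS$^{*}$}$_{trivial}$ into \textbf{DCA}$_{trivial}$. For (a), let $A$ be a trivial DCA. By Theorem \ref{DM-compactness of A+} the dual space $A_{+}=\Phi(A)$ is already $T0$ and DM-compact, so Lemma \ref{Lemma equivalence for trivial DMS} applies to $S=A_{+}$ and tells us that $A_{+}$ is a trivial DMS iff its dual $(A_{+})^{+}$ is a trivial DCA. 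But Lemma \ref{topoisomorphism lemma for DCA} yields $A\cong (A_{+})^{+}$, and triviality (conditions $(1')$ and $(2')$, which speak only through $\neq 0$, $C^{t}$ and $\mathcal{B}$) is preserved by DCA-isomorphism; therefore $(A_{+})^{+}$ is trivial, and consequently so is $A_{+}$. Thus $\Phi$ maps \textbf{DCA}$_{trivial}$ into \textbf{DMS$^{*}$}$_{trivial}$.

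For (c) there is nothing new to establish: the morphism statements Lemma \ref{f+ is DMS morphism} and Lemma \ref{theta + is a DCA-morphism}, the natural-isomorphism statement Lemma \ref{rho is isomorphism}, and Lemma \ref{topoisomorphism lemma for DCA} all hold for arbitrary objects and hence in particular for the trivial ones; the components $g_{A}$ and $\rho_{S}$ are literally the same maps as before, now indexed by trivial objects. Combining this with (a) and (b), the pair $(\Phi,\Psi)$ co-restricts to a dual equivalence between \textbf{DCA}$_{trivial}$ and \textbf{DMS$^{*}$}$_{trivial}$, which is precisely the asserted statement.

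The only point requiring genuine attention --- and the step I expect to carry the argument --- is the back-and-forth preservation of triviality, i.e. that $\Phi$ and $\Psi$ truly co-restrict so that no trivial object is created or destroyed. This rests entirely on Lemma \ref{Lemma equivalence for trivial DMS} being a biconditional together with the observation that triviality is an isomorphism-invariant property of DCAs (so that $A\cong(A_{+})^{+}$ transports it). Everything else is a purely formal restriction of an already-proved duality, which is why the paper can record the result as an immediate consequence of Theorem \ref{Duality theorem}.
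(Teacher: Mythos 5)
Your proposal is correct and follows exactly the route the paper intends: the paper records this theorem as an ``obvious consequence'' of Theorem \ref{Duality theorem} without further argument, and your verification that $\Phi$ and $\Psi$ co-restrict (via Lemma \ref{Lemma equivalence for trivial DMS}, Theorem \ref{DM-compactness of A+}, the isomorphism $A\cong(A_{+})^{+}$, and the isomorphism-invariance of the universal conditions $(1')$ and $(2')$) together with the fullness of both subcategories is precisely the content being left implicit. Nothing is missing.
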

\begin{remark}\label{Remark DCA and trivial DCA} Note that the category of contact algebras can be identified in an obvious way  with the category of trivial DCAs by enriching contact algebras with some definable relations. Having in mind Lemma \ref{X-s is a dense subspace}, Lemma \ref{Lemma isomorphism for dense subspaces} and Corollary \ref{Corollary RC(X^{s}) is isomorphic to  RC(X^{t})} it can be shown that the category of mereocompact and $T0$ mereotopological spaces from \cite{G2016} can also be identified with the category of   $T0$ and DM-compact trivial DMS. This implies that the duality theorem for contact algebras from \cite{G2016} can be derived from Theorem \ref{Theorem duality for trivial DMS}.

\end{remark}

%%%%%%%%%%%%%%%%%%%%%%%%%%%%%%%%%%%%%%%%%%%%%%%%%%%%%%%%%%%%%%
\section{Concluding remarks}\label{Section Concluding remarks}
%%%%%%%%%%%%%%%%%%%%%%%%%%%%%%%%%%%%%%%%%%%%%%%%%%%%%%%%%%%%%
\textbf{Overview.} The aim of this paper is to present with
some details a version of  point-free theory of space and time
based on a special representative example of a dynamic contact algebra (DCA).
The axioms of the
algebra are true sentences from a concrete point-based model, the
snapshot model, developed in Section 3. Theorem
\ref{representation theorem for DCA} - the Representation theorem
for DCA by snapshot models snows that the chosen axioms are enough
to code the intuition based on  snapshot construction which can be
considered as the cinematographic model of spacetime.   In Section
4 we introduced topological models of DCAs giving them another
intuition based on topology. These models are based on the notion
of Dynamic mereotopological space (DMS). Let us note that
the abstract definition of DCA can be considered as a `dynamic
generalization' of contact algebra, which in a sense is a certain
point-free theory of space called also a mereotopology. In this
relation contact algebras can be considered as a `static
mereotopology' while dynamic contact algebras can be considered as
a `dynamic mereotopology'. Let us note that topological models of contact algebras, which are considered as the standard models of this notion, contain one
type of points, which are just the `space points' while dynamic
mereotopological spaces contain several kinds of points: partial
time points, time points and space points, which in turn are also
partial time points. Time
points realize the time contact, while space points realize the
space contact. The fact that each space point is a partial time
point says that \emph{space} in this model is reduced to
\emph{time}, a feature quite similar to the Robb's axiomatic
treating of Minkowskian spacetime geometry in which space is reduced
to time (see \cite{Robb} and
the discussion in Section \ref{Section Point-based and point-free
theories of space and time}). Another common feature of both
snapshot and topological models is that the properties of the
underline time structure correspond to the validity of time
axioms which are point-free conditions for dynamic regions
formulated by the relations  of time contact $C^{t}$ and
precedence relation $\mathcal{B}$. Because regions are observable
things, then recognizing which time axioms they satisfy we may
conclude which abstract properties satisfies the corresponding
time structure.

 \textbf{Discussions and some open problems.}
 Time contact relation $aC^{t}b$, and precedence relation $a\mathcal{B}b$  between two dynamic regions $a$ and $b$  in snapshot models are defined by the predicate `existence' defined in Boolean algebras as follows: $E(a)$ iff $a\not=0$. One may ask if this predicate is a good one. It has the following disadvantage - there are too many existing regions and only one non-existing - the zero region. For instance we can not see the zero region, but we can see on the sky a non-existing star - see  Remark \ref{Remarksnapshotconstruction}. What we see is different from 0 but this does not mean that it is existing at the moment of observation. So the adopted in this paper definition for `existence' is approximate one and we need a more exact definition corresponding to what we intuitively  mean by `actual existence'. This is a  problem discussed in our papers \cite{Vak2017a,Vak2017b} in which we introduce an axiomatic definition and corresponding models of predicate `actual existence' (denoted by $AE(a)$) and a corresponding relation between regions called  `actual contact'. The predicate $E(a)$ satisfies the axioms of $AE(a)$ and is the simplest one, but $AE$ is more general - it is possible for some region $a$ to have $a\not=0$ but not $AE(a)$ like `non-existing stars' discussed in Remark \ref{Remarksnapshotconstruction}.  One of our future plans is to reconstruct the theory of the present paper on the base of the more realistic predicates of actual existence and actual contact.

 Another subject of discussion is the relation $aC^{t}b$ called `time contact' which is a kind of simultaneity relation. Special relativity theory (SR) also studies a kind of simultaneity relation and states
  that it is not absolute and  is relative to the observer. Is it possible to relate these two notions? In general these two relations are different because in our system this is a relation between regions and in SR it is between events, which are not regions but space-time points. Nevertheless  we will try to find some correspondence.  By event in SR one normally assume a space point, taken from our ordinary space,  with attached time-point (a date), taken from a clock attached to the space point with the assumption that all attached clocks work synchronously (the possibility to have synchronized clocks in all points of our space is  explained  by Einstein in \cite{Einstein} by a special synchronization procedure). So events are pairs $(A,t)$, where $A$ is a space point and $t$ is a real number interpreted as a date. According to Einstein's natural definition,  two events $(A_{1},t_{1})$, $(A_{2},t_{2})$  are simultaneous   if $t_{1}=t_{2}$ which shows that simultaneity is an equivalence relation. Note that Einstein did not give formal definition of `event', but  in the terminology of Minkowski spacetime, which is a formal explication of SR spacetime, events are just spacetime points and two spacetime points are simultaneous if they have equal time coordinates. In our system we do not introduce the notion of event but in the abstract DCA an (approximate) analog of event can be identified with a pair $(U,\Gamma)$ where $U$ is an ultrafilter and $\Gamma$ is a cluster  containing $U$ -  $U$ is a space point and $\Gamma$ is a time point (see Section \ref{Section Canonical DMS for DCA}). Let $(U_{i},\Gamma_{i})$, i=1,2 be two events in DCA. Then, according to the simultaneity relation between events it can be easily seen that  $(U_{1},\Gamma_{1})$ is simultaneous with $(U_{2},\Gamma_{2})$ iff $U_{1}R^{t}U_{2}$ which is just the canonical relation between ultrafilters corresponding to the contact relation $C^{t}$. Note that $R^{t}$ is also an equivalence relation as the simultaneity relation in  SR is. So an analog of SR simultaneity relation in our theory is the  relation $R^{t}$ considered between `events' in the sense of DCA.

 An analog of our before-after relation $\prec$ between events     in SR is $(A_{1},t_{1})\prec(A_{2},t_{2})$ iff $t_{1}
 <t_{2}$. This relation, like simultaneity, is not absolute and is relative to the observer. Note also  that it is different from the Robb's causal relation `before' taken as the unique basic relation between events in the axiomatic presentation of Minkowski geometry \cite{Robb}). The natural analog of the above  relation  between DCAs `events' is $(U_{1},\Gamma_{1})\prec(U_{2},\Gamma_{2})\Leftrightarrow_{def}\Gamma_{1}\prec\Gamma_{2}$. But we have $\Gamma_{1}\prec\Gamma_{2}$ iff $U_{1}\prec U_{2}$ which shows that the relation coincides with the canonical relation $\prec$ between ultrafilters corresponding to the precontact relation $\mathcal{B}$. This shows that the canonical relation $\prec$ between ultrafilters which is used to characterize $\mathcal{B}$ is not an analog of the Robb's causal relation (let us denote it by $\prec_{Robb}$) which  has a special definition in Minkowski spacetime  by means of its  metric. An analog of this definition in Einstein's SR is the following: $(A_{1},t_{1})\prec_{Robb}(A_{2},t_{2})$ iff $|A_{1}A_{2}|\leq|t_{1}-t_{2}|$ and $t_{1}<t_{2}$. This relation is stronger than the relation $\prec$. It will be nice to have an abstract version of DCA containing stronger than $\mathcal{B}$ precontact relation  corresponding to causality. We put this problem to the list of our future plans.

Comparing the presented in this paper theory with SR we see that there is another feature which differs the corresponding theories: RS considers many observers and can prove that some relations between events like simultaneity are relative to corresponding observer, while a given DCA $A$ is based on only  one observer, denote it by $O(A)$ (this observer can be identified with an abstract person describing the standard dynamic model of space which is isomorphic to $A$). So, because we have only one observer in our formalism, we can not give formal proofs whether the basic relations between regions are relative or not to the observer. Hence, building a theory like DCA incorporating many observers is the next open problem.

One possibility for a theory with many observers describing one and the same reality is to consider a family  of DCAs with some  relations between them. Let $A$ and $A'$ be two DCAs from such a set. Examples of possible relations between them are, for instance, the following:

\smallskip

  (1) The observers $O(A)$ and $O(A')$ are at rest to each other, they have synchronous clocks, and have some possibilities to communicate. For instance if we have two observers with  equal cameras who are at rest to each other and are  filming the same reality with their cameras. The communication is when on of them can point out  to the other some of the observed objects.

  (2)  The observers $O(A)$ and $O(A')$ are not at rest to each other but have synchronous clocks and some possibilities to communicate. A situation similar to the above but one of the observers is moving with respect to the other.

\smallskip

Is it possible to find a meaningful abstract characterizations of such relations by using some morphism like
   relations between the algebras $A$ and $A'$? An example of a
  set of DCAs with some morphisms between them is
  the category \textbf{DCA} considered as a small category (the class od DCAs is
  a set). Then a natural question is what are saying the DCA-morphisms
  between the algebras considered as algebras produced by  observers describing  one and the same reality.
  For instance, what is the meaning of the  condition on DCA-morphism $f: A\longrightarrow A'$:

($\sharp$)  If $f(a)C_{A'}^{t}f(b)$, then $aC^{t}_{A}b$

If we interpret $f$ as  a way for the observer $O(A)$ to point out
some regions to the observer $O(A')$, then $(\sharp)$ says that if $O(A')$
sees that the pointed regions are in a time contact, then the same has
been seen by $A$. Similar interpretation have the other conditions
on DCA-morphisms concerned $C^{s}$ and $\mathcal{B}$. This means
that $O(A')$ is seen the reality in the same way as $O(A)$ from which we may conclude that observers are at rest to each other. So an open problem is to study small categories of DCAs with different kinds of meaningful morphisms between them.

Let us finish this section by formulating one more open problem. The axiomatization of Minkowski geometry presented by Robb \cite{Robb} is point-based: the primitive concepts  are points and the binary relation `before' on points satisfying some axioms. The problem is to present a point-free characterization of Minkowskian geometry similar to DCA eventually with more spatio-temporal primitive relations between regions and probably by axiomatizing some special regions in this geometry, for instance, Minkowski's light cones. An analogous result for Euclidean geometry is the Tarski result in \cite{Tarski}, where he presented an abstract axiomatization of Euclidean balls. Euclidean balls are the regions in Euclidean geometry from which it is possible to extract the Euclidean metrics. In Minkowskian geometry light cones coded in some way Minkowskian metrics. Similar proposal for a point-free characterization of affine geometry was proposed by Whitehead in \cite{W1929} by an abstract characterization of the set of convex regions (called by Whitehead `ovals').

\section*{Acknowledgments}
The author is sponsored by Contract DN02/15/19.12.2016
with Bulgarian NSF. Project title: Space, Time and Modality:
Relational, Algebraic and Topological Models.

 Thanks are due to  my colleagues Georgi Dimov,
Tinko Tinchev and  Philippe Balbiani for the collaboration and many
stimulating  discussions related to the field, and to my former Ph.D.
students Vladislav Nenchev and Tatyana Ivanova for their excellent
dissertations and to Veselin Petrov for introducing me to
Whitehead's philosophy.

\def\DITTO{---}

\section*{Appendix: Short review of papers on RBTS}
In this appendix we present a short, probably incomplete review of
papers on RBTS appeared after 1977 and not discussed in
\cite{Vak2007}. The papers are classified in several groups.

\textbf{(I) Results on mereology.} First I want to mention here
some papers devoted to a detailed analysis of results obtained by
Polish logicians in the field of mereology and RBTS. The book
Metamereology \cite{Petruz} extends some results on mereology, the
paper \cite{Grusz1} is devoted to a detailed analysis of
Grzegorczyk point-free theory of space \cite{Grz} and the paper
\cite{Grusz2} - to a full analysis of Tarski geometry of solids
(\cite{Tarski}).

\textbf{(II) Further results on contact and precontact algebras.}
The papers \cite{Du,Sabine} contain some technical results on
contact algebras. The paper \cite{DiElDu} transfers the notion of
dimension from topology to the corresponding notion of some
classes of contact algebras and the paper \cite{TiVak} extends
contact algebras with connectedness predicates and studies the
corresponding quantifier-free logics. The paper \cite{Dimov2012}
characterizes contact algebras on Euclidean spaces. The papers
\cite{DiVak2017,DiVak2018} presented topological representation
theorem for precontact algebras and new representation theorems
for some classes of contact algebras.

 \textbf{(III) Duality theory of contact and precontact algebras and some related systems.}
 There are many papers generalizing De Vries duality theorem
 \cite{DeVries} mainly with applications to topology: \cite{Bez},
 \cite{BezBez},  \cite{Dimov2009}, \cite{Dimov2009a}, \cite{Dimov2010}, \cite{Dimov2011}, \cite{Celani} -
 for Boolean algebras with quasi-modal operators which are equivalent to precontact algebras, \cite{Celani2019} - for subordination
 Tarski algebras with application to De Vries duality.
 A paper about duality theory for contact and precontact algebras
 is
  \cite{DiElVak} which include also some generalizations of the Stone Duality Theorem. Another duality theorem for contact algebras is
  based on mereotopological spaced is presented in \cite{G2016}.

\textbf{(IV) Generalizations of contact algebras.} The paper
\cite{NenovVak} contains a generalization of contact algebra based
only on the standard mereological relations part-of, overlap and
underlap plus standard mereotopological relations of contact, dual
contact and non-tangential inclusion and studies also a modal
logic based on these relations. The paper \cite{Tatyana2016}
studies generalizations of contact algebras based on distributive
lattices with three basic mereotopological relations of contact,
dual contact and non-tangential inclusion taken as primitive
relations. Representation theorems for extended contact algebras
based on equivalence relations is in the paper \cite{Balbiani}.
Generalization of contact algebra based on non-distributive
lattices is presented in \cite{HWG,Winter1,Winter2}.

 Another generalization of contact algebra is the notion
of sequent algebra which presents Tarski and Scott consequence
relations as mereotopological relations - see \cite{Vak2017} and
\cite{Tatyana2019a}.  In standard models with regular closed
subsets of a topological space Tarski consequence relation
$a_{1},\dots,a_{n}\vdash b$ is defined as $a_{1}\cap,\ldots,\cap
a_{n}
\newline\subseteq b$, which makes possible to define n-ary contact by
$C_{n}(a_{1},\dots,a_{n})
\Leftrightarrow_{def}
 a_{1},\ldots,
 \newline a_{n}\not\vdash 0$ and ordinary contact as
$aCb\Leftrightarrow_{def}
 a,b\not\vdash 0$. Generalizations of
contact algebras with  predicates of actual existence and actual
contact are subject of \cite{Vak2017a,Vak2017b}. In standard
contact algebras the predicate of existence  is defined as
follows: $E(a)\Leftrightarrow_{def} a\not=0$. This is a quite
weak predicate, because the only non-existing region is $0$. The
generalization is to relax this definition as follows: take a fixed
grill $\Gamma$ (see Definition \ref{definition of clan}) and
define $E(a)\Leftrightarrow_{def} a\in \Gamma$. Another line of
generalizations is to consider Boolean algebras with contact
relation and measure - see \cite{Lando} and \cite{LandoScott}.

\textbf{(V) Modal and Quantifier-free logics based on contact and
precontact algebras.} Modal logics based on mereological and
mereotopological relations arising from contact algebras or
topology are presented in \cite{Lutz Wolter} and \cite{NenovVak}.
Papers on quantifier-free logics in the style of \cite{BTV} related
to contact algebras and their extensions and generalizations are
\cite{TiVak} for logics with connectedness predicates, \cite{KHWZ}
- studying them form computational point of view,
\cite{Tatyana2016},\cite{Tatyana2019}, \cite{Tatyana2019a} - for
logics based on extended contact algebras.
 Quantifier-free logics
related to contact algebras with measure are \cite{Lando} and
\cite{LandoScott}.


\begin{thebibliography}{99}
{\small


\bibitem{A}
 \newblock  Marco Aiello, Ian Pratt-Hartmann, and Johan van Benthem (eds.),
  \newblock  {\sf Handbook of Spatial Logics},
   \newblock Springer, 2007.


\bibitem{Andreka}
\newblock Hajnal Andreka, Judit Madarasz, and  Istvan Nemeti
\newblock {\sl Logic of spacetime and relativity theory},
\newblock Chapter 11 In: {\sf Handbook   of Spatial Logics}, Marko Aiello, Ian Pratt, and Johan
van Benthem (Eds.), Springer, 2007, pp. 607-712.


\bibitem{Balbiani}
\newblock Philipe  Balbiani,  and Tatyana  Ivanova
\newblock {\sl Representation theorems for extended contact algebras based on
equivalence relations}, 2019, submitted.


\bibitem{BTV}
 \newblock Philippe Balbiani, Tinko Tinchev and Dimiter Vakarelov
 \newblock {\sl Modal Logics for
 Region-based Theory of Space},
\newblock {\sf Fundamenta Informaticae},
Special Issue: {\sf Topics in Logic, Philosophy and Foundation of
Mathematics and Computer Science in Recognition of Professor
Andrzej Grzegorczyk 81}, (1-3), 2007, pp. 29-82.

\bibitem{Benthem}
\newblock Johan van Benthem
\newblock {\sf The Logic of Time:} {\sl A Model-Theoretic Investigation into the Varieties of
Temporal Ontology and Temporal Discourse}, Second Edition,
\newblock Synthese Library : v. 156,
\newblock Springer 1983.



\bibitem{Bez}
\newblock Guram Bezhanishvili
\newblock {\sl Stone duality and Gleason covers through de
Vries duality},
\newblock {\sf Topology and its Applications 157}, 2010, pp. 1064--1080.

\bibitem{BezBez}
\newblock Guram Bezhanishvili, Nick Bezhanishvili, S. Sourabh, and Yde
Venema
\newblock {\sl Irreducible equivalence relations, Gleason spaces, and de Vries
duality},
\newblock {\sf Applied Categorical Structures 25}, 2017, pp. 381-401.


\bibitem{BD}
\newblock Brandon Bennett, and  Ivo D\"{u}ntsch
\newblock {\sl Axioms, Algebras and Topology}.
\newblock In: {\sf Handbook   of Spatial Logics}, M. Aiello, I. Pratt, and J.
van Benthem (Eds.), Springer, 2007, pp. 99-160.

\bibitem{Celani}
\newblock  Sergio Celani
\newblock  {\sl Precontact relations and quasi-modal operators in Boolean algebras},
\newblock {\sf Actas del XIII Congreso Dr. Antonio A. R. Monteiro}, 2016, pp. 63-79.


\bibitem{Celani2019}
\newblock  \DITTO
\newblock {\sl Subordination Tarski algebras},
\newblock     {\sf Journal of Applied
Non-Classical Logics 29},  2019, pp. 288-306.



\bibitem{Sabine}
\newblock Sabine  Copelberg, Iivo D\"{u}ntsch, and Michael Winter
\newblock {\sl Remarks on contact
relations on Boolean algebras},
\newblock {\sf Algebra Universalis 68}, 2012, pp 353--366.

\bibitem{Comfort}
\newblock W. Comfort, and S. Negrepoints
\newblock {\sf Chain Conditions in
Topology,}
\newblock Cambridge University Press, 1982.

\bibitem{Plamen}
\newblock Plamen Dimitrov, and Dimiter Vakarelov
\newblock {\sl Dynamic contact algebras and quantifier-free logics for space and time}
\newblock {\sf Siberian Electronic Mathematical Reports 15}, 2018, pp. 1103-1144.

\bibitem{Dimov2009}
\newblock Georgi Dimov
\newblock {\sl A generalization of De Vries' Duality Theorem},
\newblock {\sf Applied
Categorical Structures 17}, 2009, pp. 501-516.


\bibitem{Dimov2009a}
\newblock \DITTO
\newblock {\sl Some generalizations of the Fedorchuk Duality Theorem -- I},
\newblock {\sf Topology and its Applications  156}, 2009, pp. 728-746.


\bibitem{Dimov2010}
\newblock \DITTO
\newblock {\sl A de Vries-type duality theorem for the category of locally
compact spaces and continuous maps - I},
\newblock {\sf Acta Mathematica Hungarica  129(4)},
2010, pp. 314-349.

\bibitem{Dimov2011}
\newblock \DITTO
\newblock  {\sl A de Vries-type duality theorem for the category
of locally compact spaces and continuous maps - II},
\newblock {\sf Acta Mathematica
Hungarica  130(1)}, 2011, pp. 50-77.


\bibitem{Dimov2012}
\newblock \DITTO
\newblock {\sl A Whiteheadian-type description of Euclidean spaces,
spheres, tori and Tychonoff cubes},
\newblock {\sf Rendiconti dell'Istituto di Matematica dell'Universit\`{a} di Trieste 44}, 2012,  pp. 45-74.

\bibitem{DiElDu}
\newblock  Georgi Dimov, Elza Ivanova-Dimova, and Ivo Duentsch
\newblock {\sl On dimension and weight of a local contact
algebra},
\newblock {\sf Filomat 32(15)}, 2018, pp. 5481--5500.

\bibitem{DiElVak}
\newblock Georgi Dimov, Elza Ivanova-Dimova, and Dimiter Vakarelov
\newblock {\sl  A generalization of the Stone Duality Theorem},
\newblock {\sf Topology and its Applications 221}, 2017, pp. 237-261.

\bibitem{DiVak2005}
\newblock Georgi Dimov, and Dimiter Vakarelov
\newblock {\sl Topological representation of
precontact algebras},
\newblock In: {\sf  ReLMiCS'2005, St Catharines, Canada,
February 22-26, 2005, Proceedings}, Wendy MacCaul, Michael Winter, and Ivo
D\"{u}ntsch (Eds.), LNCS 3929, Springer, 2006, pp. 1-16.

\bibitem{DiVak2006}
\newblock \DITTO
\newblock {\sl Contact algebras and region-based theory of space:} {\sl A
proximity approach, I and II},
\newblock {\sf Fundamenta Informaticae 74, 2-3}, 2006 pp. 209-249, pp. 251-282,

\bibitem{DiVak2017}
\newblock  \DITTO
\newblock  {\sl Topological representation of precontact algebras and a
connected version of the Stone Duality Theorem I},
\newblock  {\sf Topology and its Applications 227}, 2017, pp. 64-101.

\bibitem{DiVak2018}
\newblock \DITTO
{\sl Topological representation of precontact algebras and a connected version of the Stone Duality Theorem II},
\newblock {\sf Serdica Mathematical Journal 44}, 2018, pp. 31-80.

\bibitem{Du}
\newblock Ivo Duntsch, and Sanjian Li
\newblock {\sl On the homogeneous countable Boolean contact algebra},
\newblock {\sf Logic and Logical Philosophy 22}, 2013, pp. 213--251

\bibitem{DuVak2007}
\newblock  Ivo D{\"u}ntsch  and Dimiter Vakarelov
\newblock {\sl Region-based theory of discrete spaces:} {\sf A proximity
approach}.
\newblock In: Nadif, M., Napoli, A., SanJuan, E., and Sigayret, A.
EDS,
\newblock {\sf  Proceedings of Fourth International Conference
Journ{\'e}es de l'informatique Messine}, 123-129,
\newblock Metz, France, 2003.
\newblock Journal version in: {\sf Annals of Mathematics and
Artificial Intelligence 49}, 2007, pp.: 5-14.

%\bibitem{DW1}
%\newblock  Ivo D\"{u}ntsch and M. Winter
%\newblock {\sl A representation theorem for Boolean contact algebras},
%\newblock {\sf Theoretical Computer Science (B)} 347 (2005), 498-512

\bibitem{DW2}
\newblock  Ivo D\"{u}ntsch and Michael Winter
\newblock  {\sl Moving Spaces},
\newblock  In:
 Stephane Demri and  Christian S. Jensen Eds. {\sf Proceedings of the 15th
International Symposium on Temporal Representation and Reasoning
(TIME 2008)}, IEEE Computer Society, 2008, pp. 59-63.

\bibitem{DW3}
\newblock  \DITTO
\newblock  {\sl Timed Contact Algebras},
\newblock  {\sf
Proceedings of the 16-th International Symposium on Temporal
Representation and Reasoning (TIME 2009)}, IEEE
Computer Society, 2009, pp. 133-138.


\bibitem{Einstein}
\newblock Albert Einstein
\newblock {\sl On the electrodynamics of moving bodies},
published as {\sl Zur Elektrodynamik bewegter K\"{o}rper}, {\sf Annalen der Physik 17:891}, 1905,
\newblock   English translation in: http://www.fourmilab.ch/
24.

\bibitem{Evangelidis}
\newblock Basil Evangelidis
\newblock {\sl Space and Time as Relations: The Theoretical Approach of Leibniz},
\newblock {\sf Philosophies 3} 2018, pp. 1-15.

\bibitem{E}
\newblock  Ryszard Engelking
\newblock {\sf General Topology}, PWN, 1977.

\bibitem{G1980}
\newblock Robert Goldblatt
{\sl Diodorean modality in Minkowski spacetime},
\newblock  {\sf Studia Logica 39}, 198), pp. 219-236.

\bibitem{G1987}
\newblock \DITTO
\newblock {\sf Orthogonality and Spacetime Geometry}
\newblock Springer-Verlag, 1987.


\bibitem{G2016}
\newblock Robert Goldblatt,  and M. Grice
\newblock  {\sl Mereocompactness and duality for
mereotopological spaces},
\newblock  in Katalin Bimbo (Ed), {\sf J. Michael Dunn on
 Information Based Logics:} {\sl Outstanding Contributions to
Logic}, vol. 8. Springer International Publishing, 2016, 313--330.

\bibitem{Grz}
\newblock Andrzej Grzegorczyk
\newblock{\sl Axiomatizability of geometry without points},
\newblock {\sf Synthese 12}, 1960, pp. 109-127.

\bibitem{Grusz1}
\newblock  Rafal Gruszczy\'nski,  and Andrzej Pietruszczak
\newblock {\sl  A Study in Grzegorczyk
Point-Free Topology Part I: Separation and Grzegorczyk Structures},
\newblock {\sf  Studia Logica 106}, 2018, pp. 1197-1238,
\newblock {\sl  Part
II: Spaces of Points},
 {\sf Studia Logica 107}, 2019, pp. 809-843.

\bibitem{Grusz2}
\newblock \DITTO
\newblock {\sl Full Development of Tarski's Geometry of Solids},
\newblock  {\sf The Bulletin of Symbolic Logic 14(4)}, 2008, pp. 481-540.

\bibitem{HG}
\newblock  Torsten   Hahmann, and Michael Gr\"{o}uninger
 \newblock  {\sl Region-based Theories of Space: Mereotopology and Beyond},
\newblock in {\sf Qualitative Spatio-Temporal Representation and Reasoning:} {\sl Trends
and Future Directions}, Shyamanta Hazarika ed, IGI Publishing,
2012, pp. 1-62.

\bibitem{HWG}
\newblock  Torsten Hahman, Michael Winter, and Michael Gr\"{o}uninger
 \newblock  {\sl Stonian p-Ortholattices: A new approach to the mereotopology RT0}
\newblock  {\sf Artificial Intelligence 173}, 2009, pp. 1424-1440.

\bibitem{Tatyana2019}
\newblock Tatyana Ivanova
\newblock {\sl Logics for extended distributive contact lattices},
\newblock {\sf Journal of Applied Non-Classical Logics 28}, 2018, pp. 140-162.

\bibitem{Tatyana2019a}
\newblock \DITTO
\newblock {\sl Extended contact algebras and internal connectedness},
\newblock  {\sf Studia Logica 108},  2020, pp. 239-254.

\bibitem{Tatyana2016}
\newblock Tatyana Ivanova and Dimiter Vakarelov
\newblock {\sl Distributive mereotopology: Extended
distributive contact lattices},
\newblock {\sf Annals of Mathematics and Artificial
Intelligence 77(1-2)}, 2016, pp. 3-41.

\bibitem{Simultaneity}
\newblock Max Jammer
\newblock {\sf Concepts of Simultaneity
From Antiquity to Einstein and Beyond,}
\newblock The Johns Hopkins University Press,
Baltimore, 2006.

\bibitem{Point-free topology}
\newblock Peter Jonstone
\newblock  {\sl The point of pointless topology},
\newblock  {\sf  Buletin of The American Mathematical Society  (New Series)
8(1)}, 1983, pp. 41-53.

\bibitem{KKWZ}
\newblock Roman Konchakov, Agi Kurucz, Frank Wolter, and Michael Zakharyaschev
\newblock {\sl Spatial logic + temporal logic=??}
\newblock In Marco Aiello, Johan Van Benthem, and Ian Pratt-Hartmann (Eds), {\sf Handbook of Spatial Logics}, Chapter 9,
\newblock Springer, 2007, pp. 497-564.

\bibitem{KHWZ}
\newblock Roman Kontchakov, Ian Pratt-Hartmann, Frank Wolter, Michael Zakharyaschev
  {\sl Topology, connectedness, and modal logic},
\newblock in R.
Goldblatt and Carlos Areces (eds.), {\sf Advances in Modal Logic,}
Volume 7,
\newblock College Publications, 2008, pp. 151-176.

\bibitem{deL1}
\newblock Teodore de Laguna {\sl The nature of space - I}.
\newblock  {\sf  The Journal of Philosophy 19}, 1922, pp. 393--407.

 \bibitem{deL2}
\newblock\DITTO {\sl  The nature of space - II },
\newblock  {\sf  The Journal of Philosophy 19}, 1922, pp. 421--440.

\bibitem{deL3}
\newblock  \DITTO
 {\sl Point, line and surface as sets of solids},
\newblock  {\sf  The Journal of Philosophy 19}, 1922, pp. 449--461.

\bibitem{Lando}
\newblock Tamar Lando
{\sl Topology and measure in logics for region-based theories
of space},
\newblock {\sf Annals of Pure and Applied Logic 169}, 2018, pp. 277-311.

\bibitem{LandoScott}
\newblock Tamar Lando, Dana Scott
\newblock {\sl A Calculus of regions respecting both measure
and topology},
\newblock {\sf Journal of Philosophical Logic 48},
2019, pp. 825-850.

\bibitem{Category}
\newblock Saunders Mac Lane
\newblock {\sf Categories for the  Working Mathematician}, 2nd edn,
\newblock Springer-Verlag, 1998.
%
%%\bibitem{Mormann}
%%\newblock T. Mormann,
%%\newblock  Continuous lattices and Whiteheadian theory of space.
%%\newblock \emph{Logic and Logical Philosophy} 6 (1998),  35-54.
%

\bibitem{Lutz Wolter}
\newblock Carsten Lutz and Frank Wolter
\newblock{\sl Modal logics for topological relations},
\newblock {\sf Logical Methods in Computer Science 2(2-5)}, 2006, pp. 1-41.

\bibitem{Proximity}
\newblock Somashekhar A. Naimpally, and Brian D. Warrack
\newblock {\sf  Proximity Spaces},
\newblock Cambridge University Press, Cambridge, 1970.

\bibitem{Nenchev2011}
\newblock Vladislav Nenchev
{\sl Logics for stable and unstable mereological relations},
\newblock {\sf Central European Journal of Mathematics 9(6)}, 2011, pp. 1354-1379.

\bibitem{Nenchev2013}
\newblock
{\sl  Dynamic relational mereotopology. logics for stable
and unstable relations},
\newblock {\sf Logic and Logical Philosophy 22},
2013, pp.  295-325.

\bibitem{NenchevVak}
\newblock Vvladislav Nenchev, and Dimiter Vakarelov
\newblock {\sl An axiomatization of dynamic ontology of stable and unstable
mereological relations},
\newblock {\sf Proceedings of 7th Panhellenic Logic Symposium}, Patras, Greece,
2009, pp. 137-141.

\bibitem{NenovVak}
\newblock Yavor Nenov and Dimiter Vakarelov
\newblock {\sl Modal logics for mereotopological relations}
\newblock in Robert Goldblatt,
and Carlos Areces (eds.), {\sf Advances in Modal Logic 7},
College Publications, 2008, pp. 249-272.

\bibitem{Petruz}
\newblock Andrzej Pietruszczak
\newblock {\sf Metamereology},
\newblock   The Nicolaus Copernicus University
Scientific Publishing House, Torun 2018 328 pages, PDF (open access).

\bibitem{Pratt}
\newblock Ian Pratt-Hartmann
{\sl  First-order region-based theories of space},
\newblock  In: {\sf Handbook of Spatial Logics} , Marco Aiello, Ian Pratt-Hartmann, and Johan van
Benthem (Eds.), Springer, 2007, pp. 13-97.

\bibitem{Robb}
\newblock Alfred  A. Robb
\newblock {\sf A Theory of Time and Space},
\newblock  Cambrdge University Press, 1914,
\newblock Revised edition, {\sf Geometry of Time and Space}, 1936.

%%\bibitem{Roeper}
%%\newblock P. Roeper Region-based topology,
%%\newblock  \emph{J. Philos. Logic }26 (1997), 251-
%%309.

\bibitem{Shehtmann}
\newblock Valentin Shehtman
{\sl  Modal logics of domains on the real plane},
\newblock {\sf Studia Logica 42}, 1983, pp. 63-80.

\bibitem{Sikorski}
\newblock   Roman Sikorski
\newblock {\sf Boolean Algebras},
\newblock Springer-Verlag, Berlin, 1964.

\bibitem{Simons}
\newblock Peter Simons
{\sf  Parts:} {\sl A Study in Ontology}, Oxford, Clarendon
Press, 1987.

\bibitem{Stone}
\newblock Marshal Harvey Stone
{\sl The theory of representations for Boolean algebras},
\newblock {\sf Transactions of the Americal Mathematical Society 40}, 1937, 37-111.

\bibitem{Tarski}
\newblock Alfred Tarski
{\sl Foundations of the geometry of solids},
\newblock Joseph Henry Woodger,
editor, {\sf Logic, Semantics, Metamathematics}, pp. 24-29.
Clarendon Press. Oxford, 1956.
\newblock Translation of the summary of
address given by Tarski to the First Polish Mathematical Congress,
Lw\'{o}w, 1927.

\bibitem{Thron}
\newblock Wolfgang J. Thron
\newblock {\sl Proximity structures and grills},
\newblock {\sf Mathematische Annalen 206}, 1973, pp. 35-62.

\bibitem{TiVak}
\newblock Tinko Tinchev, and Dimiter Vakarelov
\newblock {\sl Logics of space with connectedness predicates: complete
axiomatizations},
\newblock  {\sf Advances in Modal logic 8}, College
Publications, 2010, pp. 419-437.
\newblock
%
%%\bibitem{VakDiDuBE}
%%\newblock Dimiter Vakarelov, Georgi Dimov, Ivo Duntsch, and Brandon
%%Bennett
%%\newblock {\sl A proximity approach to some region-based theories of
%%space},
%%\newblock {\sf J. Appl. Non- Classical Logics} 12 (2002), no. 3-4,
%%527-559.
%
\bibitem{Vak2007}
\newblock Dimiter Vakarelov
{\sl Region-based theory of space: algebras of
regions, representation theory and logics},
\newblock In: Dov Gabbay, Sergey goncharov, and Michael Zakharyaschev
(Eds.) {\sf Mathematical Problems from Applied Logics}. New
Logics for the XXIst Century. II. Springer, 2007, pp. 267-348.

\bibitem{Vak2010}
\newblock \DITTO
\newblock {\sl  Dynamic mereotopology I: A point-free theory of
changing regions.: stable and unstable  mereotopological
relations},
\newblock {\sf Fundamenta Informaticae 100(1-4)}, 2010,
pp. 159-180.

\bibitem{Vak2012}
\newblock \DITTO
{\sl Dynamic mereotopology II: axiomatizing some
Whiteheadian type spacetime logics},
\newblock In:. Thomas Bolander, Torben
Bra\"{u}ner, Silvio Ghilardi, and Lawrence Moss Eds. {\sf  Advances in Modal
Logic 9}, College Publications, 2012, pp. 538-558.

\bibitem{Vak2014}
\DITTO
{\sl Dynamic mereotopology III. Whiteheadean type of
integrated point-free theories of space and time},
 \newblock Part I,  {\sf Algebra and Logic 53(3)}, 2014, pp. 191-205,
 \newblock Part II, {\sf Algebra and Logic 55(1)}, 2016, pp. 9-197,
 \newblock Part III, {\sf Algebra and Logic 55(3)}, 2016, pp. 181-197.

\bibitem{Vak2017}
\newblock \DITTO
\newblock {\sl A Mereotopology based on sequent algebras},
\newblock {\sf Journal of Applied Non-Classical Logics 27(3-4)}, 2017, Published online: 22 Jan 2018.

\bibitem{Vak2017a}
\newblock \DITTO
\newblock {\sl  Actual existence predicate in mereology and mereotopology},
In:  Lech Polkowski, Yiyu Yao, Piotr Artiermjew, Davide Ciucci, Dun Liu, Dominik Slezak, and Beata Zielosko (Eds.)  {\sf Rough Sets 2}, IJCRS 2017,
pp. 138-157. Springer,

\bibitem{Vak2017b}
\newblock \DITTO
\newblock {\sl Mereotopologies with predicates of actual existence and actual
contact},
\newblock  {\sf Fundamenta Informaticae 156(3-4)}, 2017, pp. 413-432,

\bibitem{DeVries}
\newblock H. de Vries
\newblock {\sf Compact spaces and Compactifications},
\newblock Van Gorcum, 1962.

\bibitem{Winter1}
\newblock Michael Winter, Torsten Hahmann, and Michael  Gruninger
\newblock {\sl On the algebra of regular sets: properties of representable
Stonian p-ortholattices},
\newblock {\sf Annals of Mathematics and Artificial
Intelligence 65(1)}, 2012, pp. 25-60.

\bibitem{Winter2}
\newblock \DITTO
\newblock{\sl On the skeleton of
Stonian p-ortholattices},
\newblock in {\sf Conference on Relational Methods in
Computer Science (RelMiCS-11)}, LNCS 5827, Springer, 2009, pp.
351-365.

\bibitem{W1917}
\newblock Alfred  North Whitehead
\newblock {\sl The Organization of Thought}, {\sf Science 44},
\newblock 1916, pp. 409--419.

\bibitem{W1919}
\newblock \DITTO
\newblock {\sf An Enquiry Concerning the Principles of Natural Knowledge},
\newblock Cambridge Univ. Press, Cambridge, 1919.

\bibitem{W1920}
\newblock \DITTO
\newblock {\sf The Concept of Nature,}
\newblock Cambridge Univ. Press, Cambridge, 1920.

\bibitem{W1925}
\newblock \DITTO
\newblock {\sf Science and the Modern World, Lowel Lectures},
\newblock Macmillan, New York, 1925.

\bibitem{W1929}
 \newblock \DITTO
 \newblock {\sf Process and Reality},
 \newblock New York, MacMillan, New York, 1929.

\bibitem{Pincipia}
\newblock Alfred  North Whitehead, Bertrand Russell
\newblock {\sf Principia Mathematica},
\newblock Vol. I 1910, Vol. II 1912, Vol. 3 1913, Cambridge University Press, Cambridge.

\bibitem{Winnie}
\newblock John A. Winnie
\newblock  {\sl The Causal Theory of Space-Time},
\newblock in John Earman,
Clark Glymour, and John Stachel (eds.): {\sf Foundations
of Space-Time Theories,}
\newblock University of Minnesota Press,
1977, pp. 134-205.

%%\bibitem{zeeman}
%%\newblock E. C. Zeeman
%%^\newblock {\sl The topology of Minkowski space}
%%\newblock {\sf Topology} Vol. 6,1967, 161-170, Pergamon Press,
}
\end{thebibliography}
\end{document}